\numberwithin{equation}{section}
\newtheorem{theorem}{Theorem}[subsection]
\newtheorem{lemma}[theorem]{Lemma}
\newtheorem{proposition}[theorem]{Proposition}
\newtheorem{corollary}[theorem]{Corollary}
\newtheorem{defn}[theorem]{Definition}
\newtheorem{remark}[theorem]{Remark}
\newtheorem{conjecture}[theorem]{Conjecture}
\newtheorem{lthm}{Theorem} 
\newcommand{\mylabel}[2]{#2\def\@currentlabel{#2}\label{#1}}
\newcommand{\cBF}{\mathcal{BF}}
\newcommand{\Gal}{\operatorname{Gal}}
\newcommand{\Fil}{\operatorname{Fil}}
\newcommand{\Sym}{\operatorname{Sym}}
\newcommand{\DD}{\mathbb{D}}
\newcommand{\BB}{\mathbb{B}}
\newcommand{\NN}{\mathbb{N}}
\newcommand{\QQ}{\mathbb{Q}}
\newcommand{\Qp}{\mathbb{Q}_p}
\newcommand{\Zp}{\mathbb{Z}_p}
\newcommand{\ZZ}{\mathbb{Z}}
\renewcommand{\AA}{\mathbb{A}}
\newcommand{\vp}{\varphi}
\newcommand{\cL}{\mathcal{L}}
\newcommand{\cO}{\mathcal{O}}
\newcommand{\Iw}{\mathrm{Iw}}
\newcommand{\HIw}{H^1_{\mathrm{Iw}}}
\newcommand{\Brig}{\BB_{{\rm rig},\Qp}^+}
\newcommand{\AQp}{\AA_{\Qp}^+}
\newcommand{\col}{\mathrm{Col}}
\newcommand{\image}{\mathrm{Im}}
\newcommand{\cyc}{\textup{cyc}}
\newcommand{\cH}{\mathcal{H}}
\newcommand{\loc}{\mathrm{loc}}
\newcommand{\Hom}{\mathrm{Hom}}
\newcommand{\Char}{\mathrm{char}}
\newcommand{\LL}{\Lambda}
\newcommand{\TT}{\mathbb{T}}
\newcommand{\Sel}{\mathrm{Sel}}
\newcommand{\lra}{\longrightarrow}
\newcommand{\ra}{\rightarrow}
\newcommand{\res}{\textup{res}}
\newcommand{\pr}{\textup{pr}}
\newcommand{\BF}{\textup{BF}}
\newcommand{\Dcris}{\DD_{\rm cris}}
\renewcommand{\col}{\mathrm{Col}}
\newcommand{\Tw}{\mathrm{Tw}}
\definecolor{Green}{rgb}{0.0, 0.5, 0.0}
\newcommand{\cF}{\mathcal{F}}
\newcommand{\p}{\mathfrak{p}}
\newcommand*{\defeq}{\mathrel{\vcenter{\baselineskip0.5ex \lineskiplimit0pt
			\hbox{\scriptsize.}\hbox{\scriptsize.}}}%
	=}
\begin{document}

\title[Non-Ordinary Symmetric Squares]{Iwasawa theory for symmetric squares of non-$p$-ordinary eigenforms}

\begin{abstract}
Let $f$ be a normalized cuspidal  eigen-newform of level coprime to $p$ with $a_p(f)=0$. We formulate both integral signed Iwasawa main conjectures and analytic Iwasawa main conjectures attached to the symmetric square motive of $f$ twisted by an auxiliary Dirichlet character. We show that the Beilinson--Flach elements attached to the symmetric square motive factorize into integral signed Beilinson--Flach elements,  giving evidence towards the existence of a rank-two Euler system predicted by Perrin-Riou. We use these integral  elements to prove one inclusion in the integral and analytic Iwasawa main  conjectures.
\end{abstract}

\author{K\^az\i m B\"uy\"ukboduk}
\address{K\^az\i m B\"uy\"ukboduk\newline
UCD School of Mathematics and Statistics \\
University College Dublin \\ 
Belfield, Dublin 4, Ireland}
\email{kazim.buyukboduk@ucd.ie}

\author{Antonio Lei}
\address{Antonio Lei\newline
D\'epartement de Math\'ematiques et de Statistique\\
Universit\'e Laval, Pavillion Alexandre-Vachon\\
1045 Avenue de la M\'edecine\\
Qu\'ebec, QC\\
Canada G1V 0A6}
\email{antonio.lei@mat.ulaval.ca}

\author{Guhan Venkat}
\address{Guhan Venkat\newline
Morningside Center of Mathematics\\
Academy of Mathematics and Systems Science\\
Chinese Academy of Sciences\\
Beijing, 100190\\
China.}
\email{guhan@amss.ac.cn}

\thanks{The authors' research is partially supported by an Alexander von Humboldt Fellowship for Experienced Researchers (B\"uy\"ukboduk), the NSERC Discovery Grants Program RGPIN-2020-04259 and RGPAS-2020-00096 (Lei, Venkat) and a CRM-Laval postdoctoral fellowship (Venkat).}
\subjclass[2010]{11R23 (primary); 11F11, 11R20 (secondary) }
\keywords{Iwasawa theory, elliptic modular forms, symmetric square representations, non-ordinary primes}
\maketitle
\tableofcontents
\section{Introduction}
\subsection{Background}
Throughout this article, we fix an odd prime $p \geq 7$ and embeddings $\iota_\infty:\overline{\QQ}\hookrightarrow \mathbb{C}$ and $\iota_p:\overline{\QQ}\hookrightarrow \mathbb{C}_p$. Let  $f$ be a normalised,  cuspidal eigen-newform of weight $k + 2$, level $N$ and nebentype $\epsilon_f$. We assume that $p \nmid N$, $p>k+1$ and $a_p(f) = 0$. We shall write $\pm\alpha$ for the roots of the Hecke polynomial $X^2+\epsilon_f(p)p^{k+1}$ of $f$ at $p$.

Let $L/\QQ$ be a number field containing the Hecke field {$K_{f}\defeq\QQ(\{a_{n}(f)\}_{n\geq1})$} of $f$ as well as $\alpha^{2}$. Let $\mathfrak{p}$ be a prime in $L$ above $p$. We denote by $E$ the completion of $L$ at $\mathfrak{p}$. Let $\cO$ denote the ring of integers of $E$. We fix a Galois-stable $\cO$-lattice $R_f$ inside Deligne's $E$-linear representation $W_f$ of $G_\QQ$. Let $\Gamma=\Gal(\QQ(\mu_{p^\infty})/\QQ)$.  We write $\Lambda_{\cO}(\Gamma) = \cO[[\Gamma]]$ for the Iwasawa algebra on $\Gamma$. We have the decomposition $\Gamma = \Gamma_{\text{tors}} \times \Gamma_{1}$, where $\Gamma_{\text{tors}}$ is a finite group of order $p-1$ and $\Gamma_{1} = \Gal(\QQ(\mu_{p^\infty})/\QQ(\mu_{p}))$. We fix a topological generator $\gamma$ of $\Gamma_{1}$, which in turn determines an isomorphism $\Gamma_{1} \cong \ZZ_p$.  {Also, let $\QQ_{\infty}/\QQ$ denote the cyclotomic $\ZZ_{p}$-extension of $\QQ$.} For any $\ZZ_p$-module $M$, we denote its Pontryagin dual $\Hom_{\textup{cts}}(M,\QQ_p/\ZZ_p)$ by $M^{\vee}$.

In \cite{BLLV}, we studied the cyclotomic Iwasawa theory of the Rankin-Selberg convolution of two modular forms $f$ and $g$ that are non-ordinary at $p$, making use of the Beilinson--Flach Euler systems constructed by Loeffler and Zerbes in \cite{LZ1}. In this paper, we concentrate on the case where $f=g$ and $a_p(f)=0$. The results we obtain in this {set-up} do not rely on the conjectural existence of a  rank-two Euler system, as some of our main results in \cite{BLLV} do. Our treatment naturally goes through the study of the symmetric square motive $\Sym^2f$. This extends the work of Loeffler and Zerbes \cite{LZ2} in the ordinary case (which we briefly summarize in Section \ref{subsec_p-ordinary} below).

Let us put {$W_f^* :=\Hom(W_f,E)$ and endow it with the contragredient Galois action.} For $\lambda,\mu\in\{\pm \alpha\}$ and an integer $m$ that is coprime to $p$, recall from \cite{LZ1} the Beilinson--Flach elements 
\[
\BF_{m}^{\lambda,\mu}\in H^1(\QQ(\mu_{m}),W_{f}^*\otimes W_{f}^*(1)\otimes\cH_{E,k+1}(\Gamma)^\iota)\\
\]
where $\cH_{E,k+1}(\Gamma)$ denotes the set of $E$-valued  tempered distributions of order $k+1$ on $\Gamma$ and $\cH_{E,k+1}(\Gamma)^\iota=\cH_{E,k+1}(\Gamma)\otimes_{\Lambda_{\cO}(\Gamma)}\Lambda_{\cO}(\Gamma)^\iota$ (here, $\Lambda_{\cO}(\Gamma)^{\iota}$ denotes the free rank-one $\Lambda_{\cO}(\Gamma)$-module on which $G_{\QQ}$ acts via the inverse of the canonical character $G_{\QQ} \twoheadrightarrow \Gamma \hookrightarrow \Lambda_{\cO}(\Gamma)^{\times}$). Consider the decomposition
\begin{equation}
\label{eqn:decomposethesymproduct}
W_f^*\otimes W_f^*=\Sym^2W_f^*\oplus{\bigwedge\!}^2W_f^*.
\end{equation}
In Section \ref{subsec:plocalBF}, we explain that the twist of the Beilinson--Flach classes by an even Dirichlet character $\chi$ take values in the corresponding twist $\Sym^2W_f^*(1+\chi)$. This equips us with a non-integral collection of cohomology classes that verify a close variant of the Euler system distribution relation. The non-integrality of these classes is the source of main difficulty in the non-ordinary set-up. The main task we carry out here is to obtain an integral collection which we may plug {into} the Euler system machinery. 

This goal has been partially achieved in \cite{BLLV}, employing ideas from signed Iwasawa theory (expanding on \cite{BL16b} where  the semi-ordinary case is treated) and taking inspiration from  Perrin-Riou's theory of higher-rank Euler systems.  The theory of higher-rank Euler systems suggests a signed factorization of the four collections of Beilinson--Flach elements (see Theorem~\ref{thm_theoremA} below for the shape of this factorization). However, the interpolative properties of Beilinson--Flach classes cover only half of the critical range for the symmetric square motive and as a result, the standard techniques only enable us to prove a weaker form of this factorization (and resulting in still non-integral collections of cohomology classes). We develop a new method (see Sections~\ref{S:signedBF} and \ref{subsec:theproofoftheorem39} below) which allows us to improve this factorization statement to cover the full critical range.

\subsection{Main results}
\label{subsec_main_results}
Our first result in this paper is {the existence of integral \emph{Beilinson--Flach Euler systems} in the current  set-up  under suitable hypotheses. This proves \cite[Conjecture 5.3.1]{BLLV} in this particular setting.}

Given a Dirichlet character $\psi$ of conductor $N_\psi$, we let $\mathcal{R}_\psi$ denote the collection of square-free products of primes which are coprime to $pNN_\psi$. For any prime $\ell$, let $\QQ(\ell)$ denote the unique abelian $p$-extension in $\QQ(\mu_\ell)$. For an element $r=\ell_1\cdots\ell_s \in \mathcal{R}_\psi$, we define $\QQ(r)$ to be the compositum of the (linearly disjoint) fields $\QQ(\ell_1),\cdots, \QQ(\ell_s)$. We also set $\Delta_r=\textup{Gal}(\QQ(r)/\QQ)$ and note that $\Delta_r=\Delta_{\ell_1}\times\cdots\times\Delta_{\ell_s}$. For a factor $\ell$ of $r$, we shall think of $\Delta_{\ell}$ both as a subgroup and as a quotient of $\Delta_r$ through this identification. We finally let $\LL_r$ denote the ring $\cO[[\Delta_r\times\Gamma]]$.

We fix forever an even Dirichlet character $\chi$ {whose conductor $N_\chi$ is coprime to $Np$}. For $m\in \mathcal{N}_\chi$ (where the set of integers $\mathcal{N}_\chi $ is given in Definition~\ref{def:Kolyvaginprimesfortwists}), we let  {$\BF_{m,\chi}^{\lambda,\mu}\in \HIw(\QQ(m),W_{f}^*\otimes W_{f}^*(1+\chi) {\otimes} \cH_{E,k+1}(\Gamma)^\iota)$ denote the natural image of the Beilinson--Flach elements defined in \cite{LZ1} (see Definition~\ref{define:twistedBF} below). We write $ L_{p}^{\mathrm{geom}}(\mathrm{Sym}^{2} f_\lambda \otimes \chi^{-1}, s)$ for the \emph{geometric} $p$-adic $L$-function attached $\mathrm{Sym}^{2} f_\lambda \otimes \chi^{-1}$ defined as in \eqref{eqn_define_padicL_sym2_weight_k}\,. Until the end of this article, we assume that the following non-vanishing condition holds true:}
\begin{itemize}
\item[\mylabel{item_NV}{\textup{\textbf{(NV)}}}]  $L_{p}^{\mathrm{geom}}(\mathrm{Sym}^{2} f_\lambda \otimes \chi^{-1}, j)\neq 0$ for 
 every even integer $k+2\leq j\leq 2k+2$. 
\end{itemize}
We further consider the following hypotheses.
\begin{itemize}
\item[\mylabel{item_Psi_1}{$(\Psi_1)$}] There exists $u \in (\ZZ/NN_\chi\ZZ)^\times$ such that $\epsilon_f\chi^{-1}(u) \not\equiv \pm1\, (\textup{mod}\, \p)$ and $\chi(u)$ is a square modulo $\p$. 
\item[\mylabel{item_Psi_2}{$(\Psi_2)$}] $\epsilon_f\chi^{-1}(p)\neq \pm1$ and $\phi(N)\phi(N_\chi)$ is coprime to $p$, where $\phi$ is Euler's totient function. 
\item[\mylabel{item_Psi_3}{$(\Psi_3)$}] The prime $\p$ over $p$ in  $K_f$  has degree $1$ and ${\rm im}(\chi)\subset \ZZ_p^\times$.
\item[\mylabel{item_Im}{\textup{\textbf{(Im)}}}] $\textup{im}\left(G_\QQ\ra \textup{Aut}(R_f\otimes\QQ_p)\right)$ contains a conjugate of $\textup{SL}_2(\ZZ_p)$. 
\end{itemize}
\begin{remark}
{The main results of this paper all rely on the hypothesis  \ref{item_NV}.} {It follows from Corollary~\ref{coro_trivialzeroes} below that if we assume \ref{item_Psi_2}, then \ref{item_NV}would follow from a generalization of Dasgupta's factorization result in~\cite{dasgupta1} to the non-ordinary setting. This is the subject of Arlandini's forthcoming work, which we record as Theorem~\ref{thm_DLZfactorizationtheorem} below. With Arlandini's work, we will be able to remove the condition\ref{item_NV} on our results.}
\end{remark}
\begin{lthm}[Corollary~\ref{cor_BFfactorisation}, Proposition~\ref{prop:latticeBF}]  \label{thm_theoremA}
Suppose that $\chi$ verifies the hypotheses \ref{item_Psi_1} and \ref{item_Psi_2}. Assume also that \ref{item_NV} and \ref{item_Im} hold true. Then for every $m\in \mathcal{N}_\chi$, there exist
$$\BF_{m,\chi}^{+},\BF_{m,\chi}^{-},\BF_{m,\chi}^{\bullet}, \BF_{m,\chi}^{\circ}\in H^1_\Iw(\QQ(m),W_f^*\otimes W_f^*(1+\chi))$$ 
that verify Euler system distribution relations and such that
\[
\begin{pmatrix}
1&1&1&1\\
\alpha^2&\alpha^2 &-\alpha^2&-\alpha^2\\
2\alpha&-2\alpha&0&0\\
0&0&-2\alpha&2\alpha
\end{pmatrix}
\begin{pmatrix}
\BF^{\alpha,\alpha}_{m,\chi}\\
\BF_{m,\chi}^{-\alpha,-\alpha}\\
\BF_{m,\chi}^{\alpha,-\alpha}\\
\BF_{m,\chi}^{-\alpha,\alpha}
\end{pmatrix}= \begin{pmatrix}
\log_{p,2k+2}^{+,(1)}\BF_{m,\chi}^{+}\\ \log_{p,2k+2}^{-,(1)}\BF_{m,\chi}^{-}\\ \log_{p,k+1}^{(1)}\BF_{m,\chi}^{\bullet}\\\log_{p,k+1}^{(1)}\BF_{m,\chi}^{\circ}
\end{pmatrix}\,.
\]
Here, $\log_{p,2k+2}^{\pm,(1)}$ and $\log_{p,k+1}^{(1)}$ are some explicit {functions} defined in Section~\ref{sec:signediwasawatheory}. Furthermore, there exists an integer $C$ independent of $m$ such that 
\[
C\times\BF_{m,\chi}^{\clubsuit}\in H^1_\Iw(\QQ({m}),R_f^*\otimes R_f^*(1+\chi))
\]
for all four choices of $\clubsuit\in\{+,-,\bullet,\circ\}$.
\end{lthm}

Under our assumption that $\chi$ is even, {the final assertion in the statement of} {Theorem~\ref{thm_theoremA} can be recast in the following form:} 

\begin{corollary}[Corollary~\ref{cor:signedBFinsymmsquare}] \label{cor:corollaryB} 
In the setting of Theorem~\ref{thm_theoremA}, the signed classes 
\[C \times \BF_{m,\chi}^{+},\ C \times \BF_{m,\chi}^{-}, \ 
C \times \BF_{m,\chi}^{\bullet}
\]
{are elements of} $H^1_\Iw(\QQ({m}),\Sym^2 R_f^*(1+\chi))$. {Furthermore,  $\BF_{m,\chi}^{\circ}=0$ for all $m$.}
\end{corollary}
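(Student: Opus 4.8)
The plan is to leverage Theorem~\ref{thm:theoremA}, which already gives us the integrality statement $C\times\BF_{m,\chi}^{\clubsuit}\in H^1_\Iw(\QQ(m),R_f^*\otimes R_f^*(1+\chi))$ for each $\clubsuit\in\{+,-,\bullet,\circ\}$. The only remaining point is to show that these classes land in the symmetric-square direct summand, i.e. that their projections to the $\bigwedge^2 R_f^*(1+\chi)$-component under the decomposition \eqref{eqn:decomposethesymproduct} vanish. First I would recall that each signed class is, by the factorization displayed in Theorem~\ref{thm:theoremA}, an $\cO$-linear combination of the original (non-integral) classes $\BF_{m,\chi}^{\lambda,\mu}$ with $\lambda,\mu\in\{\pm\alpha\}$, which in turn are images of the Loeffler--Zerbes classes $\BF_m^{\lambda,\mu}\in H^1(\QQ(m),W_f^*\otimes W_f^*\otimes\cH_{E,k+1}(\Gamma)^\iota)$. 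So it suffices to prove the analogous vanishing for the $\bigwedge^2$-projection of each $\BF_{m,\chi}^{\lambda,\mu}$.

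The key input is the parity/symmetry behaviour explained in Section~\ref{subsec:plocalBF}: twisting the Beilinson-Flach classes by the \emph{even} character $\chi$ forces them into $\Sym^2 W_f^*(1+\chi)$ rather than $\bigwedge^2 W_f^*(1+\chi)$. Concretely, I would use the symmetry $\BF_m^{\lambda,\mu}=\BF_m^{\mu,\lambda}$ (up to the relevant normalization) under the automorphism of $W_f^*\otimes W_f^*$ that swaps the two factors: on the symmetric square this automorphism acts trivially, while on the wedge square it acts by $-1$. Combined with the fact that the even twist does not introduce a sign, this shows the projection of $\BF_{m,\chi}^{\lambda,\mu}$ to $\bigwedge^2 R_f^*(1+\chi)$ is its own negative, hence is $2$-torsion; since $p\geq 7$ is odd and our coefficients are $\cO$-modules with no $2$-torsion, it vanishes. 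The same sign argument, applied to the first display in Theorem~\ref{thm:theoremA}, moreover gives $\BF_{m,\chi}^{\circ}=0$ for all $m$: indeed the pair $(\BF_m^{\alpha,-\alpha},\BF_m^{-\alpha,\alpha})$ is interchanged by the swap automorphism, which acts by $-1$ on the $\bigwedge^2$-part and trivially on $\Sym^2$, forcing the last row of the linear system to produce a class that is simultaneously symmetric and antisymmetric.

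The main obstacle I anticipate is bookkeeping rather than conceptual: one must be careful that the symmetry of the Beilinson-Flach classes under swapping the two copies of $f$ is compatible with the Iwasawa-variable structure $\cH_{E,k+1}(\Gamma)^\iota$ and with the specialization map to the $(1+\chi)$-twist, so that the sign genuinely matches across all the identifications in the three-line display defining $\BF_m^{\lambda,\mu}$. Once this compatibility is pinned down, combining it with the integrality half of Theorem~\ref{thm:theoremA} and the projector $\tfrac12(1+\mathrm{swap})$ onto $\Sym^2$ — which is defined integrally over $\cO$ because $2\in\cO^\times$ — yields that $C\times\BF_{m,\chi}^{\clubsuit}$ lies in $H^1_\Iw(\QQ(m),\Sym^2 R_f^*(1+\chi))$, as claimed.
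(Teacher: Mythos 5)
The parity/swap argument you invoke does less than you think. What Proposition~\ref{prop:thedichotomy} (and the underlying involution of $W_f^*\otimes W_f^*$) actually yields is that, for even $\chi$, the \emph{diagonal} classes $\BF_{m,\chi}^{\lambda,\lambda}$ lie in the $\Sym^2$-summand, while for $\lambda\neq\mu$ only the symmetrized and antisymmetrized combinations $\BF_{m,\chi}^{\lambda,\mu}+\BF_{m,\chi}^{\mu,\lambda}$ and $\BF_{m,\chi}^{\lambda,\mu}-\BF_{m,\chi}^{\mu,\lambda}$ are constrained — to $\Sym^2$ and $\bigwedge^2$ respectively. The relation $\sigma_*\BF_{m,\chi}^{\lambda,\mu}=\BF_{m,\chi}^{\mu,\lambda}$ under the swap $\sigma$ therefore tells you only that the $\bigwedge^2$-component of $\BF_{m,\chi}^{\alpha,-\alpha}$ is the \emph{negative of the $\bigwedge^2$-component of $\BF_{m,\chi}^{-\alpha,\alpha}$}, not its own negative. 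Your sentence ``this shows the projection of $\BF_{m,\chi}^{\lambda,\mu}$ to $\bigwedge^2$ is its own negative'' tacitly assumes $\BF_{m,\chi}^{\lambda,\mu}=\BF_{m,\chi}^{\mu,\lambda}$, which is exactly what one must prove and does not follow from parity alone.

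This matters precisely at $\clubsuit=\circ$. Up to the invertible (at the fraction-field level) factor $\log_{p,k+1}^{(1)}/(-2\alpha)$, the class $\BF_{m,\chi}^{\circ}$ is $\BF_{m,\chi}^{\alpha,-\alpha}-\BF_{m,\chi}^{-\alpha,\alpha}$, which Proposition~\ref{prop:thedichotomy} places in the $\bigwedge^2$-summand; its $\Sym^2$-component is automatically zero. So $\BF_{m,\chi}^{\circ}$ can only land in $\Sym^2$ if it vanishes outright, and the parity argument does not force this — you have established ``antisymmetric'' but not ``symmetric''. The paper closes this gap by a genuinely different, global input: Corollary~\ref{cor:vanishingofX} shows that the Iwasawa cohomology $H^1(\QQ(m), \bigwedge^2 W_f^*(1+\chi)\otimes\Lambda_\cO(\Gamma)^\iota)$ vanishes (relying on Leopoldt's Conjecture and the Iwasawa Main Conjecture for abelian fields), whence $\BF_{m,\chi}^{\lambda,\mu}-\BF_{m,\chi}^{\mu,\lambda}=0$ as worked out in Remark~\ref{rem:sym2orsymmetricproductdoesntmatter}. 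That non-trivial cohomological vanishing is the missing ingredient in your proposal. Your treatment of $\BF_{m,\chi}^{+}$, $\BF_{m,\chi}^{-}$, $\BF_{m,\chi}^{\bullet}$ is fine — in each case the numerator in Corollary~\ref{cor:BFfactorisation} is a $\sigma$-invariant combination and Proposition~\ref{prop:thedichotomy} already applies — but you should not claim that each $\BF_{m,\chi}^{\lambda,\mu}$ \emph{individually} has trivial $\bigwedge^2$-projection, since this stronger statement is equivalent to the very identity $\BF_{m,\chi}^{\lambda,\mu}=\BF_{m,\chi}^{\mu,\lambda}$ whose proof you have skipped.
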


In particular, each one of {the} four collections $\{C \times  \BF_{m,\chi}^{\clubsuit}\}$, where $\clubsuit \in \{ +, -, \bullet,\circ \}$, form a (rank-one) Euler system for $\Sym^2 R_f^*(1+\chi)$. In order to apply the Euler system machinery, we need to ensure that at least one of the Euler systems in Corollary~\ref{cor:corollaryB} is non-trivial. In order to do this, it suffices to prove that one of the four non-integral classes $\{\BF^{\alpha,\alpha}_{m,\chi}, \BF^{-\alpha,-\alpha}_{m,\chi}, \BF^{\alpha,-\alpha}_{m,\chi}, \BF^{-\alpha,\alpha}_{m,\chi}\}$ are non-trivial.

To achieve this, we appeal to the reciprocity laws of Leoffler and Zerbes in \cite{LZ1}, which enable us to reduce the required non-vanishing to the non-triviality of the Rankin-Selberg $p$-adic $L$-functions associated to $f \otimes f\otimes{\chi}$. Note that the motive associated to $f\otimes f\otimes \chi$ does not possess any critical values and as a result, one may not appeal to non-vanishing statements on complex $L$-values to deduce the required non-triviality. However, Arlandini's work in progress (extending Dasgupta's result~\cite[Theorem 1]{dasgupta1} in the $p$-ordinary case) shows that the $p$-adic $L$-functions in question factors as a product of the symmetric square $p$-adic $L$-function and a Kubota--Leopoldt $p$-adic $L$-function. The required non-triviality easily follows from generic non-vanishing statements for symmetric square $L$-values; see Section~\ref{sec:p-adicLfunctions} for details.

Let us set $T:=\Sym^{2}R_{f}^{*}(1+\chi)$ to ease our notation. Our running hypothesis $a_{p}(f) = 0$ yields a $G_{\QQ_p}$-equivariant decomposition
\[ T = R_{1,\chi}^{*} \oplus R_{2,\chi}^{*}\,. \]
Exploiting this decomposition, we define  \emph{signed Coleman maps} as in \cite{lei12}. More precisely, we define $\Lambda_{\cO}(\Gamma)$-morphisms 
\[ \mathrm{Col}^{\clubsuit} : H^{1}_{\mathrm{Iw}}(\QQ_{p}(\mu_{p^{\infty}}),T) \rightarrow \Lambda_{\cO}(\Gamma) \]  
for $\clubsuit \in \{+, -, \bullet \}$ in Section~\ref{sec:signedcoleman}. For each $\mathfrak{S} = (\clubsuit, \spadesuit) \in \{(+, -), (+, \bullet), (-, \bullet)\}$, we define the \emph{doubly signed Beilinson--Flach $p$-adic $L$-function} in Section ~\ref{sec:nonordIwasawaESargument}, by setting
\[ \mathcal{L}_{\mathfrak{S}} := \col^{\clubsuit} \circ \mathrm{res}_{p} (\BF_{1, \chi}^{\spadesuit})  \in \Lambda_{E}(\Gamma)\,. \]

Still using the signed Coleman maps alluded to above, we define also doubly signed Selmer groups which we denote by  $\Sel_{\mathfrak{S}}(T^\vee(1)/\QQ(\mu_{p^\infty}))$ (where $\mathfrak{S}$ is as above). This allows us to formulate  \emph{Doubly Signed Iwasawa Main Conjecture} (Conjecture~\ref{conj:signedmainconjecture} below), relating $\cL_{\mathfrak{S}}$ and $\Sel_{\mathfrak{S}}(T^\vee(1)/\QQ(\mu_{p^\infty}))^\vee$. 

The following is one of our main results towards the Iwasawa main conjectures for non-ordinary symmetric squares. For a given integer $j$, we let $e_{\omega^j}$ denote the idempotent attached to the character $\omega^j$ {(where $\omega$ is the Teichm\"uller character)}.

\begin{lthm}[Theorem~\ref{thm_signedmainconjecture}]\label{thm:intro} Suppose that the Dirichlet character $\chi$ verifies the hypotheses \ref{item_Psi_1},  \ref{item_Psi_2} and  \ref{item_Psi_3}. Assume also that  \ref{item_NV} and \ref{item_Im} hold true. 
\item[i)]{For all even $j \in \{k+2,\ldots,2k+2 \}$, there exists}  $\mathfrak{S} \in \{(+, -), (+, \bullet), (-, \bullet)\}$ such that $e_{\omega^{j}}\mathcal{L}_{\mathfrak{S}}\neq 0$.
\item[ii)] For $j$ and $\mathfrak{S}$ as in \textup{i)}, the $\Lambda(\Gamma_{1})$-module $\Sel_{\mathfrak{S}}(T^\vee(1)/\QQ(\mu_{p^\infty}))^{\vee}$ is torsion.
\item[iii)] For $j$ and $\mathfrak{S}$ as in \textup{i)}, 
$$\mathrm{char}_{\Lambda(\Gamma_{1})}\left( e_{\omega^{j}}\Sel_{\mathfrak{S}}(T^\vee(1)/\QQ(\mu_{p^\infty}))^{\vee}\right) \,\,\big{\vert}\,\, (e_{\omega^{j}}\mathcal{L}_{\mathfrak{S}})$$
as ideals of $\Lambda(\Gamma_{1}) \otimes \QQ_{p}$\,.
\end{lthm}

{Theorem~\ref{thm:intro} has consequences towards the Pottharst-style (analytic) Iwasawa main conjectures for non-ordinary symmetric squares. Let  $\lambda\in\{\pm\alpha\}$. The $(\vp,\Gamma)$-module attached to the $\lambda$-eigenspace in the Dieudonn\'e module of $W_f$ gives rise to a Pottharst-style analytic Selmer group $\widetilde{H}^2_{\Iw}(\QQ,V,\mathbb{D}_{\chi}^{\lambda})$ (see \S\ref{subsec_AnaylticMainConj} for details). We prove the following partial result towards the analytic main conjecture relating it to the geometric $p$-adic $L$-function $L_{p}^{\mathrm{geom}}(\Sym^{2}f_{\lambda}\otimes\chi^{-1})$.}

\begin{lthm}[Theorem~\ref{thm_analyticmainconjecture}]\label{thmC}Suppose that the Dirichlet character $\chi$ verifies the hypotheses \ref{item_Psi_1},  \ref{item_Psi_2} and  \ref{item_Psi_3}. Assume also that  \ref{item_NV} and \ref{item_Im}  hold true. For $j$ and $\mathfrak{S}=\{\clubsuit,\spadesuit\}$ as in Theorem~\ref{thm:intro} \textup{i)}, 
\[ \mathrm{char}_{\mathcal{H}}\left(e_{\omega^j}\widetilde{H}^2_{\Iw}(\QQ,V,\mathbb{D}_{\chi}^{\lambda}) \right)\,\,\big{\vert}\,\, \mathrm{char}(e_{\omega^j}\mathrm{coker}\col^{\clubsuit})\,e_{\omega^j}L_{p}^{\mathrm{geom}}(\Sym^{2}f_{\lambda}\otimes\chi^{-1},{s})\,L_{p,NN_{\chi}}(\chi^{-1}\epsilon_f,{s-k-1})\cdot\mathcal{H}\,. \]
Here, $\cH:=\varinjlim_m \cH_{E,m}(\Gamma_1)$ and $L_{p,NN_\chi}(\chi^{-1}\epsilon_f)$ is the Kubota-Leopoldt $p$-adic $L$-function attached to the Dirichlet character $\chi^{-1}\epsilon_f$, with Euler factors at primes dividing $NN_\chi$ removed.
\end{lthm}

\begin{remark}
{In \cite{BL-TAMS}, we build on the results of the present article  to prove the existence of a non-trivial rank-2 Euler system, whose non-triviality is ensured by the non-vanishing of certain $L$-values.}
\end{remark}

{ \subsection{Review of earlier related work}\label{subsec_p-ordinary}
In this section, we compare the results in the present article to previous related work.

We first recall the main results of \cite{LZ2}. Let $g$ be a normalized cuspidal new eigenform of level $N_g$, weight $k_{g} + 2$ and nebentype $\epsilon_g$. Assume that $p \nmid N_g$ and $p$ is an ordinary prime for $g$, i.e. $a_{p}(g)$ is a $p$-adic unit  under our fixed embeddings. Let $\alpha_g$ be the unit root of the Hecke polynomial of $g$ at $p$.

For a Dirichlet character $\chi$ as in the previous section, we let 
\[    
\BF_{m,\chi}^{g,g} \defeq \BF_{m,\chi}^{\alpha_{g},\alpha_{g}} \in H^1_\Iw(\QQ(m),W_g^*\otimes W_g^*(1+\chi))
\]
denote the $\chi$-twisted Beilinson--Flach element attached to the Rankin--Selberg convolution $g_{\alpha_{g}}\otimes g_{\alpha_{g}}$ of the ordinary $p$-stabilization of $g$ with itself. It follows from \cite[Corollary~4.1.3]{LZ2} (see also Proposition~\ref{prop:thedichotomy} below) that $\BF_{m,\chi}^{g,g}\in H^1_\Iw(\QQ(m),\Sym^2 W_g^*(1+\chi))$. Furthermore, \cite[Theorem 4.1.6]{LZ2} shows that the collection of classes $\lbrace \BF_{m,\chi}^{g,g} \rbrace_{m \in \mathcal{N}_{\chi}}$ give rise to an (integral) Euler system for $\Sym^2 W_g^*(1+\chi)$.

In order to verify that the aforementioned Euler system is non-trivial, Loeffler and Zerbes make use of the explicit reciprocity laws for the Beilinson--Flach elements (c.f. \cite{KLZ2}, Theorem B) and Dasgupta's factorization formula in \cite{dasgupta1}. According to \cite[Theorem 4.2.5]{LZ2}, the image of the localization of $\BF_{1,\chi}^{g,g}$ at $p$ under the Perrin--Riou regulator map is a non-zero scalar multiple of the product
\[
 L_{p}(\mathrm{Sym}^{2} g \otimes \chi^{-1}, s)\,L_{p, NN_{\chi}}(\chi^{-1}\epsilon_g, s - k_g -1),
\]
where $L_{p}(\mathrm{Sym}^{2} g \otimes \chi^{-1}, s)$ is the (\emph{bounded}) symmetric square $p$-adic $L$-function of Schmidt. 
The final main result in \cite{LZ2} is a one-sided inclusion in the Iwasawa main conjecture for (twists of) $p$-ordinary symmetric square motives (c.f. \cite{LZ2}, Theorem 5.4.2). Theorem~\ref{thmC} stated above is the non-ordinary analogue of this result where Pottharst-style analytic Selmer groups appear as the non-ordinary counterparts of Greenberg's Selmer groups in the ordinary case.

As we have remarked in the earlier portions of this introduction, we  prove Theorem~\ref{thmC} through the doubly-signed Iwasawa main conjectures, which are the subject of Theorem~\ref{thm:intro} and  require the (integral) signed Beilinson--Flach Euler systems as an input. In the non-ordinary case,  Loeffler and Zerbes  \cite{LZ1} constructed four families of Beilinson--Flach classes, depending on the choices of $p$-stabilizations in the Rankin--Selberg product. While these classes are no longer integral, it is predicted that there exists an integral rank-2 Euler system, whose rank reduction via the Perrin-Riou functionals  gives rise to all four collections of unbounded  Beilinson--Flach classes (see \cite[Conjecture 3.5.1]{BLLV} for a precise formulation of this prediction). On generalizing  the plus and minus Iwasawa theory for modular forms (as developed in \cite{kobayashi03,pollack03,lei09}) to the Rankin--Selberg setting, we may decompose the non-integral Perrin-Riou functionals into integral signed functionals (see \S\ref{sec:signediwasawatheory} for details). The existence of an integral rank-2 Euler system would then give the factorization of the non-integral Beilinson--Flach classes into integral signed classes, as stated in Theorem~\ref{thm_theoremA} above  (we refer the reader to \cite[Conjecture~5.3.1]{BLLV}, where the existence of such integral classes is discussed for more general Rankin--Selberg products).

The main task in the present article is  to obtain integral signed Beilinson--Flach classes building on our earlier joint work \cite{BLLV} with Loeffler, where we have obtained a partial decomposition of the non-integral Beilinson--Flach classes. More precisely,  Theorem~ 5.4.1 in op. cit. exploits the interpolative properties of the Beilinson--Flach classes  at the twists $W_f^*\otimes W_f^*(1+\chi-j)$ with $j=1,2,\ldots, k+1$ to show that certain linear combinations of these classes are divisible by twists of Pollack's plus and minus logarithms $($see Lemma~\ref{lem:firsttwists} below$)$.

This divisibility  originating from the  interpolative properties  of Beilinson--Flach classes alone does not give integral classes in the setting of symmetric squares, because the denominators of the non-integral classes are bigger than those of the said plus and minus logarithms. In order to establish Theorem~\ref{thm_theoremA}, it is necessary to study the images of  Beilinson--Flach classes for the twists $W_f^*\otimes W_f^*(1+\chi-j)$ with $j=k+2,\ldots, 2k+2$, which is outside the geometric range. In this non-geometric range, a direct comparison of Beilinson--Flach classes (for different $p$-stabilizations) is no longer possible.

 The main technical component of the current paper (presented in Section~\ref{sec:analyticmainconjectures}) relies on the theory of $(\vp,\Gamma)$-modules, Selmer complexes and reciprocity laws satisfied by Beilinson--Flach classes to study the properties of these classes outside the geometric range. In particular, we show that the characteristic ideals of certain analytic Selmer groups are coprime to the factors of logarithmic functions corresponding to the twists in the non-geometric range. This allows us to prove \cite[Conjecture 3.5.1]{BLLV} up to a controlled error term and in turn, deduce Theorem~\ref{thm_theoremA}.
This refinement is the novel technical development in the present work, which is in contrast to  previous works (for example, \cite{wansupersingularellipticcurves,sprung16,BL16b}) where it suffices to exploit interpolative properties  to obtain signed integral classes.
}

\subsection*{Acknowledgements} The authors thank David Loeffler for many enlightening conversations and the anonymous referee for valuable suggestions and remarks on an earlier draft of this paper. 

\section{Beilinson--Flach elements for symmetric squares} 
\subsection{Twisted Beilinson--Flach elements}  \label{subsec:plocalBF}
  For $\lambda,\mu\in\{\pm\alpha\}$, $c > 1$ coprime to $6Np$, $m \ge 1$ coprime to $pc$, and {$a{\in} (\ZZ / m\ZZ)^\times\times  \Zp^\times$}, let
   \[
    {}_c\BF^{\lambda,\mu}_{m, a} \in
    H^1(\QQ(\mu_m),W_{f}^* \otimes W_{f}^*\otimes\cH_{E,k+1}(\Gamma)^\iota)
   \]
   be the Beilinson--Flach element constructed in \cite[Theorem 5.4.2]{LZ1}.
  
\begin{remark} Note that ${}_c\BF^{\lambda,\mu}_{m, a}$ are built out of ``non-$p$-stabilized classes" (denoted by ${}_c\mathcal{BF}^{[f,g,j]}_{mp^r,a}$ in \cite{LZ1}), which are defined over $E$. The $p$-stabilized Beilinson--Flach classes are given by  ${}_c\mathcal{BF}^{[f,g,j]}_{mp^r,a}$ multiplied by $(\lambda\mu)^{-r}$ or $1-p^j\sigma_p/\lambda\mu$ {(where $\sigma_p$ is the Frobenius at $p$)} depending on whether $r>0$ or $r=0$ respectively. They are therefore still defined over $E$ since $\lambda\mu \in E$ by assumption.
\end{remark}

We shall take $a=1$ throughout and omit it from the notation. Let $\chi$ be a Dirichlet character of conductor $N_\chi$, which we assume to be prime to $p$. Enlarging the number field $L$ if necessary, we assume that $\chi$ takes values in $E$. Let $\mathcal{R}_{\chi}$ denote the set of positive {square-free} integers prime to $6pNN_{\chi}$. For $m \in \mathcal{R}_{\chi}$, we may consider $\chi$ as a continuous character of $\Gal(\QQ(\mu_{mN_{\chi}p^\infty})/\QQ) \cong(\ZZ/mN_{\chi}p^{\infty}\ZZ)^{\times}$. 
\begin{defn}
\label{define:twistedBF}
 For all $m \in \mathcal{R}_{\chi}$, we define the Beilinson--Flach class twisted by the character $\chi$
  $${}_{c}\BF_{m,\chi}^{\lambda,\mu}\in H^1(\QQ(m),W_{f}^* \otimes W_{f}^*(1 + \chi)\otimes\cH_{E,k+1}(\Gamma)^\iota)$$ 
  by setting it as the image of ${}_{c}\BF_{m}^{\lambda,\mu}$ under 
  \begin{align*}H^1(\QQ(\mu_{mN_\chi}),W_{f}^* \otimes W_{f}^*\otimes\cH_{E,k+1}(\Gamma)^\iota)&\cong H^1(\QQ(\mu_{mN_\chi}),W_{f}^* \otimes W_{f}^*(1 + \chi)\otimes\cH_{E,k+1}(\Gamma)^\iota)\\
  &\stackrel{\textup{cor}}{\lra}H^1(\QQ(m),W_{f}^* \otimes W_{f}^*(1 + \chi)\otimes\cH_{E,k+1}(\Gamma)^\iota),
  \end{align*}
{where the first isomorphism is  the natural twisting map given by $\chi$.}  
  \end{defn}

\begin{proposition}
\label{prop:thedichotomy}
We have the dichotomy
\[
{}_{c}\BF_{m,\chi}^{\lambda,\lambda}\in\begin{cases}
H^1(\QQ(m),\Sym^2 W_f^*(1+\chi)\otimes\cH_{E,k+1}(\Gamma)^\iota)&\text{if }\chi(-1)= +1;\\
H^1(\QQ(m),\bigwedge^2 W_f^*(1+\chi)\otimes\cH_{E,k+1}(\Gamma)^\iota)&\text{if }\chi(-1)= -1.
\end{cases}
\]
Moreover, 
\[
{}_{c}\BF_{m,\chi}^{\lambda,\mu}+{}_{c}\BF_{m,\chi}^{\mu,\lambda}\in\begin{cases}
H^1(\QQ(m),\Sym^2 W_f^*(1+\chi)\otimes\cH_{E,k+1}(\Gamma)^\iota)&\text{if }\chi(-1)= +1;\\
H^1(\QQ(m),\bigwedge^2 W_f^*(1+\chi)\otimes\cH_{E,k+1}(\Gamma)^\iota)&\text{if }\chi(-1)= -1,
\end{cases}
\]
\[
{}_{c}\BF_{m,\chi}^{\lambda,\mu}-{}_{c}\BF_{m,\chi}^{\mu,\lambda}\in\begin{cases}
H^1(\QQ(m),\bigwedge^2 W_f^*(1+\chi)\otimes\cH_{E,k+1}(\Gamma)^\iota)&\text{if }\chi(-1)= +1;\\
H^1(\QQ(m),\Sym^2 W_f^*(1 + \chi)\otimes\cH_{E,k+1}(\Gamma)^\iota)&\text{if }\chi(-1)= -1.
\end{cases}
\]
\end{proposition}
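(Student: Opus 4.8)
The plan is to deduce everything from the behaviour of the Beilinson--Flach classes under the symmetry that swaps the two copies of $f$, combined with the action of complex conjugation coming from the twisting character $\chi$. Concretely, let $\tau \in \Gal(\overline{\QQ}/\QQ)$ (or rather the corresponding element acting on the coefficients through $\chi$) denote the involution. First I would recall from \cite{LZ1} the functional equation / symmetry relating ${}_c\BF^{\lambda,\mu}_{m}$ and ${}_c\BF^{\mu,\lambda}_{m}$: interchanging the two modular forms $f$ and $g=f$ swaps the roles of $\lambda$ and $\mu$ and acts on $W_f^*\otimes W_f^*$ by the flip $v\otimes w \mapsto w\otimes v$. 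Under the decomposition \eqref{eqn:decomposethesymproduct}, this flip acts as $+1$ on $\Sym^2 W_f^*$ and as $-1$ on $\bigwedge^2 W_f^*$. So the ``swap'' operator $s$ satisfies $s\cdot {}_c\BF^{\lambda,\mu}_{m} = \pm\, {}_c\BF^{\mu,\lambda}_{m}$, where I will need to pin down the sign; this is where the parity of $\chi$ enters.

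The key input is that the twist construction in Definition~\ref{define:twistedBF} intertwines the geometric swap with the sign $\chi(-1)$. The point is that the isomorphism $H^1(\QQ(\mu_{mN_\chi}),W_f^*\otimes W_f^*\otimes\cH) \cong H^1(\QQ(\mu_{mN_\chi}),W_f^*\otimes W_f^*(1+\chi)\otimes\cH)$ and the subsequent corestriction down to $\QQ(m)$ are not equivariant for the naive swap alone but for the swap composed with the action of $-1\in(\ZZ/N_\chi\ZZ)^\times$, which acts on the $\chi$-part by $\chi(-1)$. Therefore, after twisting, the swap acts on ${}_c\BF^{\lambda,\mu}_{m,\chi}$ by $\chi(-1)$ times the geometric sign, i.e. one gets a relation of the form ${}_c\BF^{\mu,\lambda}_{m,\chi} = \chi(-1)\cdot s\cdot {}_c\BF^{\lambda,\mu}_{m,\chi}$ in $H^1(\QQ(m),W_f^*\otimes W_f^*(1+\chi)\otimes\cH)$. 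Taking $\lambda=\mu$ this already forces ${}_c\BF^{\lambda,\lambda}_{m,\chi}$ to lie in the $\chi(-1)$-eigenspace of $s$, which is $\Sym^2W_f^*(1+\chi)\otimes\cH$ when $\chi(-1)=+1$ and $\bigwedge^2W_f^*(1+\chi)\otimes\cH$ when $\chi(-1)=-1$; this gives the first dichotomy.

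For the symmetric and antisymmetric combinations I would simply compute how $s$ acts on ${}_c\BF^{\lambda,\mu}_{m,\chi}+{}_c\BF^{\mu,\lambda}_{m,\chi}$ and on ${}_c\BF^{\lambda,\mu}_{m,\chi}-{}_c\BF^{\mu,\lambda}_{m,\chi}$. Using ${}_c\BF^{\mu,\lambda}_{m,\chi}=\chi(-1)s\cdot{}_c\BF^{\lambda,\mu}_{m,\chi}$, one finds that the sum is an eigenvector for $s$ with eigenvalue $\chi(-1)$ and the difference is an eigenvector with eigenvalue $-\chi(-1)$ (one checks this by applying $s$ and using $s^2=1$, remembering $\chi(-1)^2=1$). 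Decoding these eigenvalues through $\Sym^2W_f^*=(s=+1)$ and $\bigwedge^2W_f^*=(s=-1)$ yields precisely the three displayed statements: when $\chi(-1)=+1$ the sum lies in the $\Sym^2$-part and the difference in the $\bigwedge^2$-part, and when $\chi(-1)=-1$ these are interchanged.

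The main obstacle I anticipate is the bookkeeping in the second paragraph: correctly identifying the sign with which the swap of the two Rankin--Selberg factors acts on ${}_c\BF^{\lambda,\mu}_{m}$ before twisting, and verifying that the twisting-and-corestriction map of Definition~\ref{define:twistedBF} really does introduce the factor $\chi(-1)$ — this requires tracking the action of $-1\in\Gal(\QQ(\mu_{N_\chi})/\QQ)$ through the identification of $\cO(\chi)$ with $\cO$ and checking compatibility with corestriction from $\QQ(\mu_{mN_\chi})$ to $\QQ(m)$. Once that sign is nailed down, the rest is formal linear algebra in a $\ZZ/2$-graded module. I would cite the precise symmetry statement for Beilinson--Flach classes from \cite{LZ1} (the interchange of $f$ and $g$) and the definition of the twist to make this rigorous, and isolate the sign computation as the one genuinely content-bearing step.
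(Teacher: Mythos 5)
Your approach is essentially the same as the paper's, which simply invokes \cite[Corollary~4.1.3]{LZ2} as applying verbatim; that corollary is proved exactly by the swap-symmetry-plus-parity argument you describe. The one content-bearing input you would need to cite precisely is the interchange symmetry of the Beilinson--Flach classes (LZ2~Theorem~3.3.2, built on the constructions in \cite{LZ1}), which states that the flip $\sigma\colon v\otimes w\mapsto w\otimes v$ sends ${}_c\mathcal{BF}^{[f,g,j]}_{m,a}$ to ${}_c\mathcal{BF}^{[g,f,j]}_{m,-a}$ up to a Galois twist by $\sigma_{-1}$; after setting $a=1$, passing to the $\chi$-twist via Definition~\ref{define:twistedBF}, and corestricting to $\QQ(m)$, the $\sigma_{-1}$-twist is absorbed into the coefficients precisely as the scalar $\chi(-1)$ (the action on the $\cH^\iota$-part and the Tate twist being independent of the pair $(\lambda,\mu)$, they do not affect the dichotomy). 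You correctly isolate this normalization as the only genuinely delicate step, and your linear algebra in the $\ZZ/2$-graded module $\Sym^2\oplus\bigwedge^2$ then matches the three displayed assertions.
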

\begin{proof}
The proof of \cite[Corollary~4.1.3]{LZ2} goes through verbatim.
\end{proof}
For the rest of the article, we shall fix an even Dirichlet character $\chi$ of conductor $N_{\chi}$, which is assumed to be coprime to $p$. We note that we will not solely work with Beilinson--Flach classes twisted by $\chi$ but by a range of Dirichlet characters.
\subsection{Imprimitive $L$-functions and $p$-adic $L$-functions}\label{sec:p-adicLfunctions} 

In this section, we introduce the geometric $p$-adic $L$-function mentioned in the hypothesis (\textbf{NV}) in the introduction. Furthermore, we discuss the validity of the hypothesis (\textbf{NV}) and its consequence on the non-vanishing of the Beilinson--Flach elements introduced in \S\ref{subsec:plocalBF}.

 For $\lambda\in\{\pm\alpha\}$, let $f_\lambda$ be the $p$-stabilization at $\lambda$. Let $(\cF,\mathcal{\epsilon}_{\cF})$ be the Coleman family, defined over some affinoid disc $U$ in the weight space $\mathcal{W}$, passing through $f_\lambda$. {For  any affinoid $V$, we let $A(V)$ denote the ring of rigid analytic functions on $V$.} Loeffler and Zerbes in \cite[Definition 9.1.1]{LZ1} define a three-variable geometric $p$-adic $L$-function, $L_{p}^{\mathrm{geom}}(\cF,\cF\otimes \chi^{-1}) \in {A(U\times U\times\mathcal{W})}$. On restricting $L_{p}^{\mathrm{geom}}(\cF,\cF\otimes \chi^{-1})$ to the image of $U\times\mathcal{W} {\hookrightarrow} U\times U\times\mathcal{W}$ induced by the diagonal embedding $\Delta: U\hookrightarrow U\times U$, we will henceforth treat it as an element of $A(U\times\mathcal{W})$.
\begin{defn}
Let $L_{p,NN_{\chi}}(\chi^{-1}\mathcal{\epsilon}_{\cF}) \in {A(\mathcal{W})}$ denote the Kubota--Leopoldt $p$-adic $L$-function that interpolates the values of the Dirichlet $L$-series $L_{NN_{\chi}}(-)$ with the Euler factors at primes dividing $NN_{\chi}$ removed. We define the \emph{geometric symmetric square $p$-adic $L$-function} $L_{p}^{\mathrm{geom}}(\Sym^{2} \mathcal{F} \otimes \chi^{-1})\in {\mathrm{Frac}(A(U \times \mathcal{W}))}$  by setting
\[L_{p}^{\mathrm{geom}}(\Sym^{2} \mathcal{F} \otimes \chi^{-1})(\kappa, \sigma):=\frac{L_{p}^{\mathrm{geom}}(\cF,\cF\otimes \chi^{-1})(\kappa,\sigma)}{ L_{p,NN_{\chi}}(\chi^{-1}\mathcal{\epsilon}_{\mathcal{F}})(\sigma-\kappa+1)}\,.\]
\end{defn}
In particular, on restricting this definition to $(k+2,s) \in U \times \mathcal{W}$, we have 
\begin{equation}
\label{eqn_define_padicL_sym2_weight_k}
 L_{p}^{\mathrm{geom}}(\mathrm{Sym}^{2} f_\lambda \otimes \chi^{-1}, s)=\frac{L_{p}^{\mathrm{geom}}(f_{\lambda}, f_{\lambda} \otimes \chi^{-1}, s)} {L_{p, NN_{\chi}}({\chi^{-1}}\epsilon_f, s - k -1)}\,. 
\end{equation}
We remark that our ad hoc definition~\eqref{eqn_define_padicL_sym2_weight_k} of the $p$-adic $L$-function $L_{p}^{\mathrm{geom}}(\mathrm{Sym}^{2} f_\lambda \otimes \chi^{-1}, s)$ is based on the Artin-formalism and reflects the decomposition 
\begin{equation}
\label{eqn_decompose_duals}
W_f\otimes W_f\otimes\chi^{-1}=\left(\Sym^2W_f^*\otimes\chi^{-1}\right)\oplus\left( {\bigwedge\!}^2W_f\otimes\chi^{-1}\right)\,.
\end{equation} 
The following (forthcoming) result of Arlandini, which extends \cite[Theorem~1]{dasgupta1} to our case of interest, relates $L_{p}^{\mathrm{geom}}(\mathrm{Sym}^{2} f_\lambda \otimes \chi^{-1}, s)$ to the complex $L$-values and serves as a justification as to why $L_{p}^{\mathrm{geom}}(\mathrm{Sym}^{2} f_\lambda \otimes \chi^{-1}, s)$ deserves to be called a $p$-adic $L$-function. Before stating Arlandini's result, we first introduce the following notation.

\begin{defn}
For $\chi$ as above, we let $L^{\textup{imp}}(\textup{Sym}^2f\otimes\chi,s)$ denote the imprimitive $L$-function given as in \cite[Definition 2.1.3]{LZ1}.  
\end{defn}

\begin{theorem}[Arlandini, forthcoming] 
\label{thm_DLZfactorizationtheorem}
We have $L_{p}^{\mathrm{geom}}(\Sym^{2} \mathcal{F} \otimes \chi^{-1}) \in A(U \times \mathcal{W})$
and it verifies the following interpolation property.
\item[i)] Let $1 \leq j \leq k + 1$ be an odd integer. Then
\[ L_{p}^{\mathrm{geom}}(\Sym^{2} \mathcal{F} \otimes \chi^{-1})(k+2,j) = \frac{(-1)^{j - k - 1}j!}{2^{2k +4}i^{a}} \mathcal{E}_{p}(j) \frac{L^{\mathrm{imp}}(\mathrm{Sym}^{2}f \otimes \chi^{-1}, j)}{(2\pi i)^{j - k - 1} \pi^{k + 1} \langle f, f \rangle}\,, \]
where $a = 0$ if $k$ is even and $a = 1$ if $k$ is odd. The Euler factor $\mathcal{E}_{p}(j)$ is given by 
\begin{equation}
\label{eqn_Arlandini_1}
 (1 - p^{j-1}\chi(p)\lambda^{-2})(1 +\chi^{-1}(p)\lambda^2 p^{-j})(1 - \chi^{-1}(p)\lambda^{2}p^{-j})\,. 
\end{equation}
\item[ii)] Let $k + 2 \leq j \leq 2k + 2$ be an even integer. Then
\[ L_{p}^{\mathrm{geom}}(\Sym^{2} \mathcal{F} \otimes \chi^{-1})(k+2,j) = \frac{(j-k-1)!j!}{2^{2j + 1}} \mathcal{E}'_{p}(j) \frac{L^{\mathrm{imp}}(\mathrm{Sym}^{2}f \otimes \chi^{-1}, j)}{\pi^{2j-k-1} \langle f, f \rangle}\,, \]
where the Euler factor $\mathcal{E}'_{p}(s)$ is given by 
\begin{equation}
\label{eqn_Arlandini_2} (1 - p^{j-1}\chi(p)\lambda^{-2})(1 + p^{j-1}\chi(p)\lambda^{-2})(1 - \chi^{-1}(p)\lambda^{2}p^{-j})\,. 
\end{equation}
\end{theorem} 

The imprimitive $L$-values that appear on the right-hand side of the interpolation formulae in part (ii) has the following non-vanishing property.

\begin{theorem}[Gelbart--Jacquet \cite{GJ78}, Jacquet--Shalika \cite{JS76}, Schmidt \cite{schmidt}]
\label{thm:nonvanishingofSym2Lvalues}
Suppose that $f$ has minimal level among its {twists} by Dirichlet characters. Then for every integer $j\geq k+2$, we have
$$L^{\textup{imp}}(\textup{Sym}^2f\otimes\chi^{-1},j)\neq 0.$$
\end{theorem}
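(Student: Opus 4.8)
The plan is to reduce the non-vanishing assertion to the well-known analytic properties of the symmetric square $L$-function of a newform. First I would recall that $L^{\textup{imp}}(\Sym^2f\otimes\chi^{-1},s)$ differs from the primitive $L$-function $L(\Sym^2f\otimes\chi^{-1},s)$ only by finitely many Euler factors at the primes dividing $6pNN_\chi$; each such Euler factor is a nonzero rational function of $p^{-s}$-type data, so it suffices to prove $L(\Sym^2f\otimes\chi^{-1},j)\neq 0$ for every integer $j\geq k+2$, provided one also checks that none of the removed Euler factors vanishes at $s=j$. The latter is an elementary finite check: the inverse Euler factors at a bad prime $\ell$ are products of terms of the shape $1-\beta\ell^{-s}$ with $|\beta|\leq \ell^{(k+1)+\textup{(small)}}$, and for $j\geq k+2$ in the relevant range these cannot vanish (one uses the Ramanujan bound for the Satake parameters of $f$ at $\ell\nmid N$, and the explicit description of the local factors at $\ell\mid N$ coming from the classification of local components, together with the fact that $\chi^{-1}$ only contributes a root of unity).

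Next I would invoke the automorphic input. By Gelbart--Jacquet, $\Sym^2f$ corresponds to a cuspidal automorphic representation $\Pi$ of $\GL_3$ (this uses that $f$ has minimal level among its twists, so $\Sym^2 f$ is genuinely cuspidal on $\GL_3$ rather than isobaric), and hence so does $\Pi\otimes\chi^{-1}$. The completed $L$-function $\Lambda(\Pi\otimes\chi^{-1},s)$ is entire (since $\Pi\otimes\chi^{-1}$ is cuspidal on $\GL_3$ and not the trivial representation of $\GL_1$), and by Jacquet--Shalika it is non-vanishing on the line $\textup{Re}(s)=1$ of its functional equation, i.e. at the edge of the critical strip. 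After normalising so that the functional equation relates $s$ and $1-s$, the range $j\geq k+2$ translates into the region $\textup{Re}(s)\geq 1$ in the analytic normalisation (here the shift by $k+1$ is the motivic weight of $\Sym^2f$), so the Jacquet--Shalika non-vanishing at the edge, combined with the standard fact that an $L$-function of a unitary cuspidal automorphic representation has no zeros in $\textup{Re}(s)>1$ (absolute convergence of the Euler product there), gives $L(\Pi\otimes\chi^{-1},s)\neq 0$ throughout $\textup{Re}(s)\geq 1$, in particular at all integers $j\geq k+2$. Schmidt's work is what guarantees that this symmetric-square transfer and the resulting analytic continuation and functional equation hold in the precise form needed at all the relevant primes, including the subtle local factors at $p$ and at $\ell\mid N$.

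The main obstacle is bookkeeping rather than conceptual: matching the arithmetic normalisation used in \cite{LZ1} for $L^{\textup{imp}}(\Sym^2f\otimes\chi,s)$ with the automorphic normalisation in which Jacquet--Shalika and Schmidt are stated, and then confirming that the edge of absolute convergence in the automorphic picture corresponds exactly to the value $j=k+2$ in the arithmetic picture, so that no actual zero of the primitive $L$-function is hiding in the range $j\geq k+2$. Once the two normalisations are aligned, and once the finitely many bad Euler factors are checked to be non-zero at each such $j$ (which, as noted, follows from Ramanujan at good primes and the explicit local computation of \cite{schmidt} at bad primes), the statement follows. I would conclude by remarking that in fact $L(\Sym^2f\otimes\chi^{-1},s)$ is non-zero for all $s$ with $\textup{Re}(s)\geq 1$, so the hypothesis ``$j$ an integer $\geq k+2$'' is used only to land safely to the right of (or on) the central point after normalisation and to ensure the removed Euler factors behave well.
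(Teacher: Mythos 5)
Your proof follows essentially the same route as the paper: Gelbart--Jacquet transfer to $\GL_3$, Jacquet--Shalika non-vanishing at the edge of absolute convergence to handle $j=k+2$, absolute convergence of the Euler product for $j>k+2$, and Schmidt's analysis of the bad local factors to pass from the primitive to the imprimitive $L$-function. One small misattribution worth flagging: you say the hypothesis that $f$ has minimal level among its twists is what makes $\Sym^2 f$ cuspidal on $\GL_3$, but cuspidality of the Gelbart--Jacquet lift is governed by $f$ not being of CM (dihedral) type, which in this paper is guaranteed by the big-image hypothesis $(\textbf{Im})$; the ``minimal level among twists'' hypothesis is used precisely where the paper cites Schmidt's Lemmas 1.5 and 1.6, namely to ensure that the quotient $L^{\textup{imp}}(\Sym^2 f\otimes\chi^{-1},s)/L(\Sym^2 f\otimes\chi^{-1},s)$ is entire with zeros confined to the line $\mathrm{Re}(s)=k+1$, so that no spurious zero appears at $j\ge k+2$. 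Your more ``hands-on'' description of the removed Euler factors (Ramanujan bound at good primes, local analysis at bad ones) is in substance the same as invoking Schmidt's lemmas, so the argument goes through once the role of the two hypotheses is sorted out.
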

\begin{proof}
The  \emph{primitive} $L$-function $L(\textup{Sym}^2f\otimes\chi^{-1},s)$ is non-zero at integers $j>k+2$ since the Euler product defining these $L$-functions converges absolutely in that range. Considering the Gelbart--Jacquet lift of $\textup{Sym}^2_f$ to $\textup{GL}_3$ as given in {\cite[\S 3]{GJ78}} and making use of a non-vanishing result for $\textup{GL}_n$ automorphic $L$-functions due to Jacquet and Shalika {\cite[Theorem (1.3)]{JS76}}, it follows that $L(\textup{Sym}^2f\otimes\chi^{-1},k+2)\neq 0$ as well.

The desired non-vanishing for $L^{\textup{imp}}(\textup{Sym}^2f\otimes\chi^{-1},j)$ for $j\ge k+2$ now follows from \cite[Lemmas 1.5 and 1.6]{schmidt}. More specifically, our  hypothesis that $f$ has minimal level among its twists by Dirichlet characters implies that the quotient 
$L^{\textup{imp}}(\textup{Sym}^2f\otimes\chi^{-1},s)/L(\textup{Sym}^2f\otimes\chi^{-1},s)$ is an entire function with zeroes only on the line $\textup{Re}(s)=k+1$. The result follows.
\end{proof}

{One immediate consequence of Theorems~\ref{thm_DLZfactorizationtheorem} and \ref{thm:nonvanishingofSym2Lvalues} is the following result on the existence of exceptional zeros and the non-vanishing of the geometric $p$-adic $L$-function of $\Sym^2f_\lambda$. }  

\begin{corollary}[Exceptional Zeroes] \label{coro_trivialzeroes}
{The $p$-adic $L$-function $L_{p}^{\mathrm{geom}}(\mathrm{Sym}^{2} f_\lambda \otimes \chi^{-1}, s)$ has an exceptional zero at $s$ if and only if $\epsilon_{f}\chi^{-1}(p) = \pm1$ and $s = k + 1$ or $s = k + 2$. In particular, if we assume \ref{item_Psi_2}, then $L_{p}^{\mathrm{geom}}(\mathrm{Sym}^{2} f_\lambda \otimes \chi^{-1}, s)$ is non-vanishing for all even integers $k + 2 \leq s \leq 2k + 2$.}
\end{corollary}
\begin{proof}
{By weight considerations, the interpolation factors \eqref{eqn_Arlandini_1} and \eqref{eqn_Arlandini_2} cannot vanish unless $j=k+1$ or $j=k+2$.} Since $\lambda^2 = -\epsilon_{f}(p)p^{k+1}$, we have
\[ 1 \pm \frac{\chi^{-1}(p)\lambda^2}{p^{k+1}} = 1 \mp {\chi^{-1}(p)\epsilon_{f}(p)} \]
and hence the Euler factor $\mathcal{E}_{p}(k+1)$ vanishes only when $\epsilon_{f}\chi^{-1}(p) = \pm1$.
Similarly, $\mathcal{E}'_{p}(k + 2) = 0$ only when $\epsilon_{f}\chi^{-1}(p) = \pm1$. {The second} part of the corollary follows from Theorems \ref{thm:nonvanishingofSym2Lvalues} and ~\ref{thm_DLZfactorizationtheorem}(ii). 
\end{proof} 


{We now explain the link between the geometric $p$-adic $L$-functions and Beilinson--Flach elements.}
{Let
\[
\mathcal{L} : H^{1}_{\Iw}(\QQ_p, \Sym^{2} W_{f}^{*}(1 + \chi)) \rightarrow \mathcal{H}_{E,k+1}(\Gamma)\otimes\mathbb{D}_{\mathrm{cris}}(\Sym^{2} W_{f}^{*}(1 + \chi)) 
\]
denote the Perrin-Riou regulator map as given  in \cite[\S 3.1]{leiloefflerzerbes11} and \cite[Appendix B]{LZ-IJNT}.}
\begin{proposition} \label{prop:explicitreciprocitylaw}
Let $\xi_{f_{\lambda}, \chi} \in \mathbb{D}_{\mathrm{cris}}(\Sym^{2} W_{f}\otimes\chi^{-1})$ be the vector chosen as in \cite[Definition 4.2.4]{LZ2}. We then have,
{\begin{align*} 
\Big \langle \mathcal{L}({}_{c}\BF^{\lambda, \lambda}_{1, \chi}), \xi_{f_{\lambda}, \chi} \Big \rangle =& (-1)^{s}(c^2-c^{2s-2k-2}\chi^{2}(c)\epsilon_{f}^{-2}(c))G(\chi^{-1})^{2}G(\epsilon_{f}^{-1})^2 L_{p}^{\mathrm{geom}}(f_{\lambda}, f_{\lambda} \otimes \chi^{-1}, s) \\
=& (-1)^{s}(c^2-c^{2s-2k-2}\chi^{2}(c)\epsilon_{f}^{-2}(c))G(\chi^{-1})^{2}G(\epsilon_{f}^{-1})^2 \\
&\,\,\,\,\,\,\,\,\,\,\,\,\,\, \,\,\,\,\,\,\,\,\,\,\,\,\,\,\,\,\,\,\,\,\,\,\,\,\,\,\,\,\times L_{p}^{\mathrm{geom}}(\mathrm{Sym}^{2} f_\lambda \otimes \chi^{-1}, s)\,L_{p, NN_{\chi}}(\chi^{-1}\epsilon_f, s - k -1)\,. 
\end{align*} }
\end{proposition}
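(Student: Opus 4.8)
The claimed formula is an explicit reciprocity law for the Beilinson--Flach class ${}_c\BF_{1,\chi}^{\lambda,\lambda}$, and the natural strategy is to deduce it from the already-established reciprocity law of Loeffler--Zerbes for the non-twisted Beilinson--Flach classes and the definition of the twisted class as a corestriction (Definition~\ref{define:twistedBF}). First I would recall the statement of \cite[Theorem 6.5.9]{LZ1} (or its symmetric-square incarnation in \cite[\S4]{LZ2}): pairing the Perrin-Riou regulator $\mathcal{L}({}_c\BF^{\lambda,\lambda}_{1})$ against the vector $\xi_{f_\lambda}$ produces the three-variable geometric $p$-adic $L$-function $L_p^{\mathrm{geom}}(\cF,\cF)$ up to the explicit fudge factor $(-1)^s(c^2-c^{2s-2k-2}\epsilon_f(c)^{-2})G(\epsilon_f^{-1})^2$ coming from the $c$-smoothing and the Gauss sums in the period comparison. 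Then I would track the effect of the twist-by-$\chi$ operation (Definition~\ref{define:twistedBF}) through the Perrin-Riou regulator: the isomorphism $W_f^*\otimes W_f^* \cong W_f^*\otimes W_f^*(1+\chi)$ followed by corestriction from $\QQ(\mu_{mN_\chi})$ to $\QQ(m)$ translates, on the $L$-function side, into (a) replacing $\cF$ by $\cF\otimes\chi^{-1}$ in the first slot of $L_p^{\mathrm{geom}}$ and (b) multiplying by $G(\chi^{-1})^2$, the square of the Gauss sum accounting for the two copies of $W_f^*$; this is exactly the computation carried out in \cite[\S4.2]{LZ2}, which goes through verbatim here since $\chi$ is prime to $p$ and $\xi_{f_\lambda,\chi}$ is the twisted analogue of $\xi_{f_\lambda}$ as fixed in \cite[Definition 4.2.4]{LZ2}.

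Granting the first equality, the second is then purely formal: it is the restriction of the factorization identity
\[
L_{p}^{\mathrm{geom}}(f_{\lambda}, f_{\lambda} \otimes \chi^{-1}, s) = L_{p}^{\mathrm{geom}}(\mathrm{Sym}^{2} f_\lambda \otimes \chi^{-1}, s)\, L_{p, NN_{\chi}}(\chi\epsilon_f, s - k -1)
\]
recorded just after Theorem~\ref{thm:DLZfactorizationtheorem}, which in turn is Arlandini's factorization theorem specialized to the point $(k+2,s)\in U\times\mathcal{W}$. So the proof is essentially: cite the LZ reciprocity law, apply the twisting formula of \cite[\S4]{LZ2}, and substitute Arlandini's factorization.

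The main obstacle is bookkeeping rather than conceptual: one must verify that all the normalizations match — the precise power of $c$ (here $c^{2s-2k-2}$ rather than the weight-pair version in \cite{LZ1}, reflecting the specialization to $f=g$ and the cyclotomic variable), the precise form of the Gauss sum contributions $G(\chi^{-1})^2 G(\epsilon_f^{-1})^2$, and the sign $(-1)^s$, all depend on the exact choice of $\xi_{f_\lambda,\chi}$ and on the identification of the cyclotomic variable with the Hecke/weight variables along the diagonal. Since \cite{LZ2} already performs this calculation in the ordinary case and the $(\varphi,\Gamma)$-module manipulations underlying the Perrin-Riou regulator are insensitive to ordinarity (the only place ordinarity entered in \cite{LZ2} was in identifying the target, not in the reciprocity computation itself), the argument of \cite[Theorem 4.2.6]{LZ2} transfers without change, and it suffices to remark this. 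I would therefore present the proof as: ``Combine the explicit reciprocity law \cite[Theorem 6.5.9]{LZ1}, its twisted version as in \cite[\S4.2]{LZ2}, and the factorization of Theorem~\ref{thm:DLZfactorizationtheorem}; the argument of \cite[Theorem 4.2.6]{LZ2} applies mutatis mutandis, the only modification being that $a_p(f)=0$ plays no role in this particular computation.''
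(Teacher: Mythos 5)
Your proposal is correct and takes essentially the same approach as the paper: the paper's proof is a single line, ``This is the same as \cite[Theorem 4.2.5]{LZ2},'' and your elaboration (cite the Loeffler--Zerbes reciprocity law, invoke its twisted form from \cite[\S4.2]{LZ2} since the regulator computation is insensitive to ordinarity, then substitute Arlandini's factorization for the second equality) is precisely what that citation is compressing. The only discrepancy is cosmetic — you reference Theorem~4.2.6 of \cite{LZ2} where the paper points to 4.2.5 — but the content is identical.
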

\begin{proof}
This is the same as \cite[Theorem 4.2.5]{LZ2}. 
\end{proof}

\begin{lemma} \label{lem:junkfactors}
 For all even integers $j \in \{ k+2,\ldots, 2k + 2 \}$, there is a choice of $c > 1$ coprime to $6NN_{\chi}p$ for which the product
 $(c^2-c^{2j-2k-2}{\chi^2(c)\epsilon_{f}^{-2}(c)}){L_{p, NN_{\chi}}(\chi^{-1}\epsilon_f, j - k -1)}$ is {non-zero whenever $\chi^{-2}\epsilon_{f}^{2}\neq \mathds{1}$ or $j \neq k+2$. }
\end{lemma}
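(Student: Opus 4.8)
\textbf{Proof proposal for Lemma~\ref{lem:junkfactors}.}

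The plan is to treat the two factors separately and show that each, for a suitable choice of $c$, is non-zero; since $c$ enters only the first factor, the second factor (a Kubota-Leopoldt value) must be handled first, and then $c$ can be chosen freely to make the first factor non-vanishing. Concretely: first, I would observe that $L_{p,NN_\chi}(\chi\epsilon_f, j-k-1)$ differs from the Dirichlet $L$-value $L(\chi\epsilon_f\omega^{-(j-k-1)}, j-k-1)$ (or its interpolation, suitably normalized) only by the finitely many Euler factors at primes dividing $NN_\chi p$, which are non-zero at a non-central argument. The argument $s = j-k-1$ ranges over $\{1, 2, \dots, k+1\}$ as $j$ ranges over $\{k+2, \dots, 2k+2\}$. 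For $s \geq 2$ the Dirichlet $L$-value is non-zero because the Euler product converges absolutely there (exactly the argument used in the proof of Theorem~\ref{thm:nonvanishingofSym2Lvalues}). The only delicate case is $s = 1$, i.e.\ $j = k+2$: here $L(\chi\epsilon_f\omega^{-1}\text{-twist}, 1)$ is non-zero unless the associated character is trivial — and the interpolated character at $s=1$ being trivial forces $\chi\epsilon_f$ to be (the Teichmüller twist of) a quadratic character. This is precisely the excluded case ``$\chi\epsilon_f$ quadratic or $j \neq k+2$,'' so outside it the Kubota-Leopoldt factor is non-zero.

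Second, with the argument $j - k - 1 =: t \in \{1, \dots, k+1\}$ now fixed and the second factor non-zero, I would choose $c$ so that $c^2 - c^{2t}\chi\epsilon_f(c)^{-2} \neq 0$. Writing $\eta := \chi\epsilon_f$, this quantity is $c^2\bigl(1 - c^{2t-2}\eta(c)^{-2}\bigr)$, and since $c$ is coprime to $6NN_\chi p$ it suffices to find such a $c$ with $c^{2t-2}\eta(c)^{-2} \neq 1$. When $t > 1$ this is clear by taking $c$ large (the factor $c^{2t-2}$ grows while $\eta(c)^{-2}$ is a root of unity of bounded order). When $t = 1$ we need $\eta(c)^2 \neq 1$ for some admissible $c$, i.e.\ we need the character $\eta^2 = (\chi\epsilon_f)^2$ to be non-trivial on the units coprime to $6NN_\chi p$; but $\eta^2 = 1$ on that set exactly means $\eta$ is (trivial or) quadratic, again the excluded case. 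So in the complement of the exceptional situation such a $c$ exists, and this $c$ can be taken $> 1$ and coprime to $6NN_\chi p$ by Dirichlet's theorem on primes in arithmetic progressions (or simply by picking a prime in the right residue class).

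I expect the main obstacle to be the bookkeeping at $s = 1$ ($j = k+2$): one must correctly identify which Dirichlet character governs the Kubota-Leopoldt value at this point — the value at $s=1$ picks out a specific branch character, and one has to verify that its triviality is equivalent to $\chi\epsilon_f$ being quadratic (the parity constraint, $\chi$ even, interacts with $\epsilon_f$ here), and separately that the $c$-factor degeneration at $s=1$ is governed by the same dichotomy. The rest ($s \geq 2$, absolute convergence, Dirichlet's theorem to produce $c$) is routine. A clean way to package the argument is to note that both factors vanish identically in $c$ (resp.\ for all admissible $c$) precisely when $(\chi\epsilon_f)^2$ is trivial on $(\ZZ/6NN_\chi p\ZZ)^\times$ \emph{and} $t = 1$, and to dispatch everything else by the convergence/size arguments above.
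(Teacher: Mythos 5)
Your analysis of the $c$-factor is correct and complete: writing $\eta=\chi\epsilon_f$ and $t=j-k-1$, the factor $c^2(1-c^{2t-2}\eta(c)^{-2})$ can never vanish for $c>1$ when $t>1$ (since an integer $c^{2t-2}>1$ is not a root of unity), and for $t=1$ it vanishes for all admissible $c$ precisely when $\eta^2$ is trivial on $(\ZZ/NN_\chi\ZZ)^\times$, i.e.\ when $\eta$ is quadratic. This part is exactly what one expects, and your separation of the two factors (since $c$ enters only the first) is the right structure. The paper itself offers no argument here: its proof is the single line ``This follows from \cite[Proposition 4.3.1]{LZ2},'' so the real content is delegated to that reference.

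The gap is in your treatment of the Kubota--Leopoldt factor. You assert that for $s=j-k-1\ge 2$ the value $L_{p,NN_\chi}(\chi\epsilon_f,s)$ is non-zero ``because the Euler product converges absolutely there.'' That reasoning is valid only for the complex Dirichlet $L$-function; the $p$-adic $L$-function has no convergent $p$-adic Euler product. A $p$-adic $L$-value at a given argument is controlled, if at all, by the interpolation formula, which for the Kubota--Leopoldt function expresses $L_p(1-n,\theta)$ (for $n\ge 1$) in terms of $(1-\theta\omega^{-n}(p)p^{n-1})\,B_{n,\theta\omega^{-n}}/n$. Three things therefore need checking that your proposal leaves unaddressed: (i) whether the arguments $s=j-k-1\in\{1,\dots,k+1\}$ are actually interpolation points under the paper's normalization (in the standard Washington-style indexing, $1-n\le 0$, so positive integer arguments are \emph{not} interpolation points, and the non-vanishing at $s\ge 2$ is far from automatic); (ii) the parity constraint on the generalized Bernoulli number $B_{n,\theta\omega^{-n}}$, which vanishes unless $\theta\omega^{-n}(-1)=(-1)^n$ — one must verify this parity matches, using that $\chi$ is even, $j$ is even and $\epsilon_f(-1)=(-1)^k$; and (iii) the Euler factors at $p$ and at the primes dividing $NN_\chi$ that appear in the interpolation formula and in the passage from the primitive to the imprimitive $L$-function, which you dismiss as ``non-zero at a non-central argument'' without argument — these can vanish, and ruling this out requires a computation. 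You flag the $s=1$ bookkeeping as the main obstacle, and there a further ingredient is needed that you do not name: the non-vanishing of $L_p(1,\theta)$ for non-trivial even $\theta$ is Brumer's theorem (Leopoldt's conjecture for abelian fields), not something that follows from formal considerations. In short: the skeleton of the argument is right and the $c$-factor half is complete, but the KL-factor half is not a proof — it conflates complex and $p$-adic $L$-functions and skips the interpolation/parity/Euler-factor bookkeeping that is the actual content of the cited \cite[Prop.~4.3.1]{LZ2}.
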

\begin{proof} 
This follows from \cite[Proposition 4.3.1]{LZ2}.
\end{proof}
\begin{remark}
{Since we assume that the conductor $N_\chi$ of $\chi$ is coprime to $N$, it follows that $\chi^{-2}\epsilon_{f}^{2} =\mathds{1}$ only when both $\chi$ and $\epsilon_f$ have order dividing $2$. Our hypothesis \ref{item_Psi_1} rules out this possibility.}
\end{remark}
\begin{corollary} \label{cor:nontriviality}
{Assume that \ref{item_Psi_2} holds true and either that $\chi^{-2}\epsilon_{f}^2\neq \mathds{1}$ or $j \neq k+2$. Then for all even integers $k+2 \leq j \leq 2k+2$, the image of ${}_{c}\BF^{\lambda, \lambda}_{1, \chi}$ in $H^{1}(\QQ_{p}, \Sym^2 W_{f}^{*}(1 + \chi)(-j))$ is non-zero. In particular, if \ref{item_Psi_1} and \ref{item_Psi_2} hold true, then the class $\res_p({}_{c}\BF^{\lambda, \lambda}_{1, \chi}) \in H^{1}_{\Iw}(\QQ_p(\mu_{p}), \Sym^{2} W_{f}^{*}(1 + \chi))$ is non-zero. }
\end{corollary}
\begin{proof}
By Proposition ~\ref{prop:explicitreciprocitylaw} and Lemma ~\ref{lem:junkfactors}, we only have to verify  the non-vanishing of the quantity $L_{p}^{\mathrm{geom}}(\mathrm{Sym}^{2} f_\lambda \otimes \chi^{-1}, j)$. {This is immediate from Corollary~\ref{coro_trivialzeroes}.}
\end{proof}

\begin{remark} \label{remark:fixingc}
\item[i)] When $j=k+2$ is even and if $\chi\epsilon_{f}$ is quadratic but non-trivial, then neither can we dispense off with the factor ${(c^2-c^{2j-2k-2}\chi^{2}(c)\epsilon_{f}^{-2}(c))}$, nor can we use it to cancel a pole in the $p$-adic $L$-series (since no such pole exists). 
\item[ii)]{By Lemma ~\ref{lem:junkfactors}, we may choose the {auxiliary} integer $c$ for which ${(c^2-c^{2j-2k-2}\chi^{2}\epsilon_{f}^{-2}(c))}$ is non-zero for $k+2 \leq j \leq 2k+2$ assuming \ref{item_Psi_1}. For the rest of the paper, we fix such a value and dispense  with the factor $c$ from the notation.}
\end{remark}

We now proceed to show the non-triviality of the classes $\BF_{1,\chi}^{\lambda,-\lambda}$ for $\lambda \in \{\pm\alpha\}$ using anti-symmetry relations in slight variations of Proposition~\ref{prop:explicitreciprocitylaw}. 

\begin{corollary} 
The class $\res_p(\BF^{\lambda,-\lambda}_{1, \chi}) \in H^{1}_{\Iw}(\QQ_p(\mu_{p}), \Sym^{2} W_{f}^{*}(1+\chi))$ is non-zero.
\end{corollary}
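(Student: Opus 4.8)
The plan is to mimic the proof of the previous corollary (non-triviality of $\res_p({}_{c}\BF^{\lambda,\lambda}_{1,\chi})$), replacing the symmetric combination $\BF^{\lambda,\lambda}$ by the anti-symmetric mixed class $\BF^{\lambda,-\lambda}$. First I would record the analogue of Proposition~\ref{prop:explicitreciprocitylaw}: pairing $\mathcal{L}(\BF^{\lambda,-\lambda}_{1,\chi})$ (or, more precisely, the relevant symmetric-square component $\BF^{\lambda,-\lambda}_{1,\chi}+\BF^{-\lambda,\lambda}_{1,\chi}$, which lands in $\Sym^2 W_f^*(1+\chi)$ by Proposition~\ref{prop:thedichotomy} since $\chi$ is even) against an appropriate Dieudonné vector expresses it in terms of a Rankin--Selberg geometric $p$-adic $L$-function $L_{p}^{\mathrm{geom}}(f_{\lambda},f_{-\lambda}\otimes\chi^{-1},s)$; this is the content of the ``slight variation of Proposition~\ref{prop:explicitreciprocitylaw}'' alluded to in the text, and it follows from the mixed-eigenvalue version of \cite[Theorem~4.2.5]{LZ2}. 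Because $\lambda$ and $-\lambda$ are the two roots of the same Hecke polynomial $X^2+\epsilon_f(p)p^{k+1}$, one has $(-\lambda)^2=\lambda^2=-\epsilon_f(p)p^{k+1}$, so the Euler factors and junk factors in the mixed explicit reciprocity law are literally the same as in the diagonal case, and in particular Lemma~\ref{lem:junkfactors} and Remark~\ref{remark:fixingc} apply verbatim.

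Second, I would invoke Arlandini's factorization (Theorem~\ref{thm:DLZfactorizationtheorem}): the mixed $p$-adic $L$-function again factors as $L_{p}^{\mathrm{geom}}(\Sym^2 f_\lambda\otimes\chi^{-1},s)\,L_{p,NN_\chi}(\chi^{-1}\epsilon_f,s-k-1)$ up to the same elementary factors, since the relevant Dieudonné-module computation only sees $\lambda^2$. Then the non-vanishing analysis proceeds exactly as in Corollary~\ref{cor:nontriviality}: for even $j$ with $k+2<j\le 2k+2$ the symmetric square geometric $p$-adic $L$-function is non-zero at $j$ by Corollary~\ref{coro:trivialzeroes} (no exceptional zeroes in that range) together with the complex non-vanishing Theorem~\ref{thm:nonvanishingofSym2Lvalues}, and the junk factor $(c^2-c^{2j-2k-2}\chi\epsilon_f(c)^{-2})L_{p,NN_\chi}(\chi\epsilon_f,j-k-1)$ is non-zero for the fixed choice of $c$ from Remark~\ref{remark:fixingc}. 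Hence the pairing $\langle\mathcal{L}(\BF^{\lambda,-\lambda}_{1,\chi}+\BF^{-\lambda,\lambda}_{1,\chi}),\xi\rangle$ is non-zero, which forces the image of $\BF^{\lambda,-\lambda}_{1,\chi}$ (equivalently, its symmetrization) in $H^1(\QQ_p,\Sym^2 W_f^*(1+\chi)(-j))$ to be non-zero, and therefore $\res_p(\BF^{\lambda,-\lambda}_{1,\chi})\in H^1_{\Iw}(\QQ_p(\mu_p),\Sym^2 W_f^*(1+\chi))$ is non-zero.

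The one point that genuinely needs care --- and which I expect to be the main obstacle --- is setting up the mixed-eigenvalue explicit reciprocity law correctly: in \cite{LZ2} the pairing is stated for the diagonal $p$-stabilization $f_\lambda\otimes f_\lambda$, and one has to check that the construction of the Beilinson--Flach class $\BF^{\lambda,-\lambda}$, the choice of the test vector $\xi_{f_\lambda,f_{-\lambda},\chi}\in\mathbb{D}_{\mathrm{cris}}(\Sym^2 W_f\otimes\chi^{-1})$ (or its appropriate analogue in the full tensor square followed by projection to the symmetric part), and the Loeffler--Zerbes regulator $\mathcal{L}$ all interact so that the mixed $p$-adic $L$-function $L_p^{\mathrm{geom}}(\mathcal{F},\mathcal{G}\otimes\chi^{-1})$ with $\mathcal{G}$ the Coleman family through $f_{-\lambda}$ appears, and that its relevant restriction still factors through Arlandini's theorem. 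Since $f=g$ here, the two Coleman families $\mathcal{F}$ and $\mathcal{G}$ are the \emph{same} family but specialized at the two different $p$-stabilizations; after restricting to the diagonal weight disc this again only sees $\lambda^2$, so no new input beyond \cite{LZ1,LZ2} and Theorem~\ref{thm:DLZfactorizationtheorem} is required, but the bookkeeping of which eigenvalue sits in which slot must be done carefully. Everything else is a direct transcription of the arguments for Corollary~\ref{cor:nontriviality}.
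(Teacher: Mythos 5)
Your proposed route and the paper's are genuinely different, and yours has a real gap that the paper's sidesteps.

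You propose to prove a \emph{mixed-eigenvalue} explicit reciprocity law pairing $\cL(\BF^{\lambda,-\lambda}_{1,\chi})$ against a Dieudonn\'e test vector so as to produce a mixed $p$-adic $L$-function $L_p^{\mathrm{geom}}(f_\lambda,f_{-\lambda}\otimes\chi^{-1},s)$, and then to run it through Arlandini's factorization. The problem is at the second step. Arlandini's Theorem~\ref{thm:DLZfactorizationtheorem} factors the \emph{diagonal} restriction of $L_p^{\mathrm{geom}}(\cF,\cF\otimes\chi^{-1})$ to the image of $U\times\mathcal{W}\hookrightarrow U\times U\times\mathcal{W}$, where $\cF$ is one Coleman family through one $p$-stabilization. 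The point $(f_\lambda,f_{-\lambda})$ does not lie on that diagonal: $f_\lambda$ and $f_{-\lambda}$ are distinct points on the eigencurve (same weight, opposite $U_p$-eigenvalue) and live in distinct affinoid neighbourhoods, so you are \emph{not} looking at a diagonal specialization of a single family and your remark that ``the two Coleman families $\cF$ and $\mathcal{G}$ are the same family specialized at the two $p$-stabilizations'' is not correct. Thus the assertion that the mixed $p$-adic $L$-function ``again factors as $L_p^{\mathrm{geom}}(\Sym^2 f_\lambda\otimes\chi^{-1})\,L_{p,NN_\chi}(\cdots)$ up to the same elementary factors'' is not a consequence of the results cited in the paper, and this is exactly the point where your ``care is needed'' caveat hides a genuine missing input rather than a bookkeeping issue. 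The remark that the Euler factors ``only see $\lambda^2$'' is also misleading: the Euler factor at $p$ in a mixed Rankin--Selberg $p$-adic $L$-function depends on the pair $(\lambda,\mu)$, not only on their product.

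The paper avoids this entirely. It never introduces a mixed $p$-adic $L$-function: instead it quotes the anti-symmetry identity from \cite[Theorem~3.6.5]{BLLV},
$\bigl\langle\cL(\BF^{\lambda,\lambda}_{1,\chi}),\,v_{\lambda,-\lambda,\chi}\bigr\rangle = -\bigl\langle\cL(\BF^{\lambda,-\lambda}_{1,\chi}),\,v_{\lambda,\lambda,\chi}\bigr\rangle$,
and the left-hand side is already known to equal a non-zero multiple (by $A_f\log_{p,k+1}^{(1)}/2\lambda$) of the \emph{same} diagonal $p$-adic $L$-function $L_p^{\mathrm{geom}}(f_\lambda,f_\lambda\otimes\chi^{-1},s)$ whose non-vanishing was established in Corollary~\ref{cor:nontriviality}. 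So the paper reduces the claim to an already-proved non-vanishing by a one-line symmetry argument, using only the existing diagonal factorization. If you want to salvage your approach you would have to independently prove a factorization theorem for $L_p^{\mathrm{geom}}(f_\lambda,f_{-\lambda}\otimes\chi^{-1})$; it is much cleaner to invoke the BLLV anti-symmetry relation as the paper does.
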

\begin{proof}
Let $v_{f,\lambda} \in \mathbb{D}_{\mathrm{cris}}(W_{f})$ be the $\varphi$-eigenvector as chosen in \cite[Section 3.5]{BLLV}. Also set 
$$v_{\lambda,-\lambda,\chi}:=G(\chi^{-1})\,v_{f,\lambda} \otimes v_{f,-\lambda}\,\,,\,\,v_{\lambda,\lambda,\chi}:=G(\chi^{-1})\,v_{f,\lambda} \otimes v_{f,\lambda} \in  \mathbb{D}_{\mathrm{cris}}(W_{f} \otimes W_{f}\otimes\chi^{-1})\,.$$ By \cite[Theorem 3.6.5]{BLLV}, we have
\begin{equation} \label{eqn:antisymmetryrelation}
\Big\langle\mathcal{L}(\BF^{\lambda,\lambda}_{1,\chi}), v_{\lambda,-\lambda,\chi}\Big\rangle = -\Big\langle\mathcal{L}(\BF^{\lambda,-\lambda}_{1,\chi}), v_{\lambda,\lambda,\chi}\Big\rangle = \frac{A_{f}\,\mathrm{log}_{p,k+1}^{(1)}}{2\lambda}L_{p}^{\mathrm{geom}}(f_{\lambda},f_{\lambda} \otimes \chi^{-1},s) ,
\end{equation}
where $A_{f}$ is a non-zero constant independent of $\lambda$ and $\mathrm{log}_{p,k+1}^{(1)}$ is {a non-zero logarithmic function (see  (\ref{eqn:log(1)}) below for an explicit description of this function).} 

We have seen in the proof of Corollary~\ref{cor:nontriviality} that  $L_{p}^{\mathrm{geom}}(f_{\lambda},f_{\lambda} \otimes \chi^{-1},s)$ is non-zero. Hence, \eqref{eqn:antisymmetryrelation} tells us that  $\res_p({}_{c}\BF^{\lambda,-\lambda}_{1, \chi}) \in H^{1}_{\Iw}(\QQ_p(\mu_{p}), \Sym^{2} W_{f}^{*}(1+\chi))$ is non-zero.  
\end{proof} 

\section{Structure of elementary Selmer modules}
\label{sec_selmerstructure}
{Our main objective in this section is to prove Theorem~\ref{thm_mainSelmerstructure}, where we determine the structure of certain Iwasawa theoretic Selmer groups. The main ingredient is the \emph{horizontal} Beilinson--Flach Euler system, which we use to obtain our key technical input (Theorem~\ref{thm_horizontalESforSym2}) in the proof of Theorem~\ref{thm_mainSelmerstructure}.}
\subsection{Set-up}
Throughout Section~\ref{sec_selmerstructure}, we fix an even Dirichlet character $\psi$ of conductor $N_\psi$ co-prime to $p$. Enlarging $L$ if necessary, we shall assume that $\psi$ may be realized over $L$. We will take $\psi$ to be $\chi\nu$, where $\chi$ is the character fixed in Section~\ref{subsec:plocalBF} whereas $\nu$ is some Dirichlet character of conductor prime to $pNN_\chi$ and $p$-power order.  

We shall assume the validity of the following \emph{big image} hypothesis throughout Section~\ref{sec_selmerstructure}:
\begin{itemize}
\item[\ref{item_Im}] $\textup{im}\left(G_\QQ\ra \textup{Aut}(R_f\otimes\QQ_p)\right)$ contains a conjugate of $\textup{SL}_2(\ZZ_p)$. 
\end{itemize}
We will consider the following conditions on $\psi$ and $f$:
\begin{itemize}
\item[\ref{item_Psi_1}] There exists $u \in (\ZZ/NN_\psi\ZZ)^\times$ such that $\epsilon_f\psi^{-1}(u) \not\equiv \pm1\, (\textup{mod}\, \frak{p})$ and $\psi(u)$ is a square modulo $\frak{p}$. 
\item[($\Psi_2^\prime$)] $\epsilon_f\psi^{-1}(p)\neq \pm 1$.
\end{itemize}
\begin{lemma}
\label{lemma_propagate_hypo_psi_2}
Suppose that $\chi$ satisfies the hypothesis \ref{item_Psi_1} and also that
\begin{itemize}
\item[\ref{item_Psi_2}] $\epsilon_f\chi^{-1}(p)\neq \pm1$ and $\phi(N)\phi(N_\chi)$ is coprime to $p$, where $\phi$ is Euler's totient function. 
\end{itemize}
Then the conditions \ref{item_Psi_1} and $(\Psi_2^\prime)$ hold true for any $\psi=\chi\nu$ where $\nu$ is a Dirichlet character of  conductor prime to $NN_\chi$ and $p$-power order. 
\end{lemma}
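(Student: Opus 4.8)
The plan is to reduce the claim to verifying the two defining properties of the conditions $(\Psi_1)$ and $(\Psi_2^\prime)$ for $\psi = \chi\nu$, exploiting that $\nu$ has conductor prime to $NN_\chi$ and $p$-power order. The key observation throughout is that $\nu$ takes values in $1 + \mathfrak{p}\ZZ_p$: since $\nu$ has $p$-power order, its values are $p$-power roots of unity, and over the completion at $\mathfrak p$ these are all $\equiv 1 \pmod{\mathfrak p}$. Hence $\nu(x) \equiv 1 \pmod{\mathfrak p}$ for every $x$ in its domain, and moreover $\nu(x)$ is automatically a square modulo $\mathfrak p$ because $\nu(x) \equiv 1$.

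First I would check $(\Psi_1)$ for $\psi = \chi\nu$. By $(\Psi_1)$ for $\chi$, there exists $u \in (\ZZ/NN_\chi\ZZ)^\times$ with $\epsilon_f\chi(u) \not\equiv \pm 1 \pmod{\mathfrak p}$ and $\chi(u)$ a square mod $\mathfrak p$. Since the conductor of $\nu$ is prime to $NN_\chi$, the character $\nu$ is trivial on the subgroup of $(\ZZ/NN_\psi\ZZ)^\times$ reducing to $1$ modulo the $NN_\chi$-part; more usefully, by the Chinese Remainder Theorem I can lift $u$ to an element $\tilde u \in (\ZZ/NN_\psi\ZZ)^\times$ that is congruent to $u$ modulo $NN_\chi$ and congruent to $1$ modulo the conductor of $\nu$. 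Then $\psi(\tilde u) = \chi(u)\nu(\tilde u) = \chi(u)\cdot 1 = \chi(u)$, so $\epsilon_f\psi(\tilde u) = \epsilon_f\chi(u) \not\equiv \pm 1 \pmod{\mathfrak p}$ and $\psi(\tilde u) = \chi(u)$ is a square mod $\mathfrak p$. This establishes $(\Psi_1)$ for $\psi$. (Even without the CRT lift one can argue directly: for any lift $\tilde u$ of $u$, $\nu(\tilde u) \equiv 1 \pmod{\mathfrak p}$, so $\epsilon_f\psi(\tilde u) \equiv \epsilon_f\chi(u) \not\equiv \pm 1$, and $\psi(\tilde u) = \chi(u)\nu(\tilde u)$ is a square mod $\mathfrak p$ since it is a square times a unit that is $\equiv 1$, hence itself a square, mod $\mathfrak p$.)

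Next I would check $(\Psi_2^\prime)$ for $\psi = \chi\nu$, i.e.\ that $\epsilon_f\psi(p) \neq 1$. Here I use the hypothesis $(\Psi_2)$ for $\chi$, which gives $\epsilon_f\chi(p) \neq 1$ \emph{and} $\phi(N)\phi(N_\chi)$ coprime to $p$. The latter condition forces $\epsilon_f\chi$, a character of $(\ZZ/NN_\chi\ZZ)^\times$, to have order prime to $p$; hence $\epsilon_f\chi(p)$ is a root of unity of order prime to $p$, so in particular $\epsilon_f\chi(p) \not\equiv 1 \pmod{\mathfrak p}$ (a nontrivial root of unity of order prime to $p$ is not congruent to $1$ modulo a prime above $p$). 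On the other hand $\nu(p)$, being a $p$-power root of unity, satisfies $\nu(p) \equiv 1 \pmod{\mathfrak p}$. Therefore $\epsilon_f\psi(p) = \epsilon_f\chi(p)\nu(p) \equiv \epsilon_f\chi(p) \not\equiv 1 \pmod{\mathfrak p}$, whence $\epsilon_f\psi(p) \neq 1$. This completes the verification.

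The main obstacle — really the only nonroutine point — is the step showing that the value $\epsilon_f\chi(p)$ is bounded away from $1$ modulo $\mathfrak p$, which is where the coprimality condition $p \nmid \phi(N)\phi(N_\chi)$ in $(\Psi_2)$ is genuinely used: without it, $\epsilon_f\chi(p)$ could be a nontrivial $p$-power root of unity, hence $\equiv 1 \pmod{\mathfrak p}$, and then multiplying by $\nu(p)$ could land us exactly at $1$. Everything else is elementary character arithmetic combined with the two observations that $p$-power roots of unity reduce to $1$ modulo $\mathfrak p$ and that units congruent to $1$ modulo $\mathfrak p$ are squares modulo $\mathfrak p$.
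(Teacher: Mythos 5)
Your proof is correct and follows essentially the same route as the paper's: a CRT lift of $u$ to handle $(\Psi_1)$, and the observation that the coprimality condition in $(\Psi_2)$ forces $\epsilon_f\chi(p)$ to be a root of unity of order prime to $p$, which cannot coincide with the $p$-power root of unity $\nu^{-1}(p)$ unless both are trivial. You merely spell out (working modulo $\mathfrak p$) the step that the paper leaves implicit when it says ``$\epsilon_f\chi(p)$ is a $p$-power root of unity, contradicting $(\Psi_2)$,'' and the extra remark that $p$-power roots of unity reduce to $1$ mod $\mathfrak p$ gives a mild strengthening ($\epsilon_f\psi(p)\not\equiv 1\pmod{\mathfrak p}$, not merely $\neq 1$) that is harmless but not needed.
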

\begin{proof}
Let $u$ be an integer satisfying \ref{item_Psi_1} with $\psi=\chi$ and that $u\equiv 1$ mod $N_\nu$ (such $u$ exists by {the Chinese remainder theorem}). The chosen $u$ will verify \ref{item_Psi_1} with $\psi=\chi\nu$. We now check $(\Psi_2^\prime)$ for $\psi=\chi\nu$. If it was the contrary, we would then have that $\epsilon_f\chi^{-1}(p)=\pm\nu(p)$. This would mean that either $\epsilon_f\chi^{-1}(p)$ or $-\epsilon_f\chi^{-1}(p)$ is a $p$-power root of unity, contradicting \ref{item_Psi_2}.
\end{proof}
We end this subsection with a general definition.
Recall that $\Lambda_{\cO}(\Gamma)^{\iota}$ is the free $\Lambda_{\cO}(\Gamma)$-module of rank one on which $G_{\QQ}$ acts via the inverse of the canonical character $G_{\QQ} \twoheadrightarrow \Gamma \hookrightarrow \Lambda_{\cO}(\Gamma)^{\times}$.

\begin{defn}
\label{def:canonicalselmerstructure}
Let $K$ be any number field. 
\item[i)] Given an arbitrary free $\cO$-module $M$ of finite rank that is endowed with a continuous $G_K$-action unramified outside a finite set of places of $K$, we let $\mathcal{F}_{\textup{can}}$ denote the canonical Selmer structure on $M$ $($or $M\otimes_{\ZZ_p}\QQ_p$$)$, given as in \cite[Definition 3.2.1]{mr02}. 
\item[ii)] We let $\mathcal{F}_{\textup{can}}^*$ denote the dual Selmer structure on $M^\vee(1)$ $($or on $(M\otimes_{\ZZ_p}\QQ_p)^*(1)$$)$, defined as in Section 2.3 of loc. cit. 
\item[iii)]We write $\mathcal{F}_{\textup{can}}$ for the Selmer structure on $\mathbb{M}:=M\otimes\Lambda_{\cO}(\Gamma)^\iota$, denoted by $\mathcal{F}_\LL$ in Section 5.3 of loc. cit. and let $\mathcal{F}_{\textup{can}}^*$  denote the Selmer structure $\mathcal{F}_\LL^*$ of loc. cit. on the Galois representation $\mathbb{M}^\vee(1)$.
\end{defn}
\begin{remark}
For $K$ and $\mathbb{M}$ as above, we have  
$$H^1_{\mathcal{F}_{\textup{can}}}(K,\mathbb{M})=H^1(K,\mathbb{M})$$ 
by \cite[Lemma 5.3.1]{mr02}. This in turn means that $H^1_{\mathcal{F}_{\textup{can}}^*}(K,\mathbb{M}^\vee(1))$ consists of classes which are locally trivial everywhere.
\end{remark}

\subsection{Twists of the symmetric and the alternating squares}
{In this subsection, we shall introduce various twists of the symmetric and alternating square representations associated to $f$, and study their Galois theoretic properties.}
\begin{defn}
\label{defn_twists_Gal_rep}
 Recall the even Dirichlet character $\psi$, so that the character $\chi_\cyc\psi$ is odd, where $\chi_{\cyc}$ is the $p$-adic {cyclotomic character}. 
\item[i)] We set $T_\psi:=\textup{Sym}^2R_f^*(1)\otimes\psi$, so that $T_\psi^*(1)=\textup{Sym}^2R_f\otimes\psi^{-1}$. \item[ii)] Choose an arbitrary integer $j \in [k+2,2k+2]$ and put
$$T_{\psi,j}:=T_\psi(-j)\otimes\omega^j =\textup{Sym}^2R_f^*(1-j)\otimes\omega^{j}\psi$$ 
(where $\omega$ is the Teichm\"uller character). We remark that the character $\chi_{\cyc}^{1-j}\omega^j\psi$ is always odd. 
\item[iii)] We finally set 
$$X_{\psi,j}:={\bigwedge\!}^2 R_f^*(1-j)\otimes \omega^j\psi\cong \mathcal{O}(k+2-j)\otimes\omega^j\epsilon_f\psi$$
and observe that the character $\chi_{\cyc}^{k+2-j}\omega^j\epsilon_f\psi$ is even.
\end{defn} 
\begin{proposition}
\label{Prop_TauIsGood}
Suppose that $\psi$ satisfies \ref{item_Psi_1} and $(\Psi_2^\prime)$. Then there exists $\tau \in G_\QQ$ with the following properties:
\begin{itemize}
\item $\tau$ acts trivially on $\mu_{p^\infty}$.
\item $T_{\psi,j}/(\tau-1)T_{\psi,j}$ is free of rank one.
\item $\tau-1$ acts invertibly on $X_{\psi,j}^*(1)$.
\end{itemize}
\end{proposition}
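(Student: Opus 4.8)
The plan is to exploit the large image hypothesis \textup{(\textbf{Im})} together with the arithmetic constraints $(\Psi_1)$ and $(\Psi_2^\prime)$ to produce the desired $\tau$ by a careful choice inside the image of $G_\QQ$. First I would use \textup{(\textbf{Im})} to reduce to a concrete linear-algebra problem: after fixing a basis, the image of $G_\QQ$ acting on $R_f \otimes \QQ_p$ contains a conjugate of $\mathrm{SL}_2(\ZZ_p)$, so the image on $\mathrm{Sym}^2 R_f^* \otimes \QQ_p$ contains (a conjugate of) the image of $\mathrm{SL}_2(\ZZ_p)$ under the symmetric-square representation, i.e. essentially $\mathrm{SO}_3$, while the image on $\bigwedge^2 R_f^* = \cO(k+2)\otimes\epsilon_f$ is governed only by the determinant/nebentype twist. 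The twists by $\omega^j\psi$ and the cyclotomic powers only affect the action by a \emph{scalar} character of $G_\QQ$; the condition ``$\tau$ acts trivially on $\mu_{p^\infty}$'' pins down the cyclotomic character of $\tau$ to be $1$, so on $X_{\psi,j}$ the element $\tau$ acts by the scalar $(\omega^j\epsilon_f\psi)(\tau)$ and on $T_{\psi,j}$ it acts by $(\omega^j\psi)(\tau)$ times the symmetric-square of its $\mathrm{SL}_2$-part.

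The key steps, in order, would be: (1) Pick $\sigma \in G_\QQ$ in the preimage of the $\mathrm{SL}_2(\ZZ_p)$-part whose image in $\mathrm{SL}_2(\ZZ_p)$ (up to conjugacy) is a diagonal matrix $\mathrm{diag}(\zeta,\zeta^{-1})$ with $\zeta \in \ZZ_p^\times$ of infinite order or at least with $\zeta^2 \not\equiv 1$; then $\mathrm{Sym}^2$ of this acts with eigenvalues $\zeta^2, 1, \zeta^{-2}$, so the fixed space is one-dimensional and $\sigma - 1$ has a one-dimensional kernel and two-dimensional image on $T_{\psi,j}\otimes\QQ_p$. (2) Adjust $\sigma$ by an element lying in the subgroup on which the cyclotomic character is trivial — here \textup{(\textbf{Im})} and the surjectivity onto (a conjugate of) $\mathrm{SL}_2(\ZZ_p)$, combined with the independence afforded by $(\Psi_2^\prime)$ and the fact that $p$ is unramified in $N$ and $N_\psi$, let us arrange $\tau$ to act trivially on $\mu_{p^\infty}$ while keeping the $\mathrm{SL}_2$-eigenvalue $\zeta$ with $\zeta^2\not\equiv 1 \pmod{\frak p}$. (3) Check the integral freeness statement: over $\cO$, $T_{\psi,j}/(\tau-1)T_{\psi,j}$ being free of rank one amounts to the $(\tau-1)$-image being a saturated (i.e. $\cO$-direct-summand) sublattice of rank two, which follows once the eigenvalue $1$ of $\tau$ on $T_{\psi,j}$ occurs with multiplicity exactly one \emph{and} the other two eigenvalues are $\not\equiv 1 \pmod{\frak p}$; this last congruence is precisely where $(\Psi_1)$ enters, via the scalar $(\omega^j\psi)(\tau) = \psi(u)$-type value being a square and the eigenvalue $\zeta^2(\omega^j\psi)(\tau)$ being forced away from $1$. (4) Finally, for the third bullet, observe $\tau - 1$ acts on $X_{\psi,j} \cong \cO(k+2-j)\otimes\omega^j\epsilon_f\psi$ by the scalar $(\chi_\cyc^{k+2-j}\omega^j\epsilon_f\psi)(\tau) - 1$; since $\tau$ is cyclotomically trivial this is $(\omega^j\epsilon_f\psi)(\tau) - 1 = \epsilon_f\psi(\tau) - 1$ up to the Teichmüller twist, and $(\Psi_1)$ with the element $u$ guarantees we may take $\tau$ so that $\epsilon_f\psi(\tau) \not\equiv \pm 1 \pmod{\frak p}$, in particular $\not\equiv 1$, so $\tau - 1$ is a unit on $X_{\psi,j}$.

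The main obstacle I expect is step (2)–(3): simultaneously controlling the cyclotomic character of $\tau$ (to make it trivial on $\mu_{p^\infty}$) \emph{and} the $\mathrm{SL}_2$-eigenvalue $\zeta$ \emph{and} the scalar twist value $\psi(u)$-part, all modulo $\frak p$, so that on $T_{\psi,j}$ the eigenvalue $1$ has multiplicity exactly one and the residual representation does not accidentally force extra fixed vectors. This is a genuine compatibility question because the nebentype $\epsilon_f$, the character $\psi$, and the cyclotomic character are not independent as characters of $G_\QQ$; the role of $(\Psi_2^\prime)$ (ruling out $\epsilon_f\psi(p) = 1$, hence preventing $\epsilon_f\psi$ from being realized through a $p$-power-order obstruction) and of the coprimality in $(\Psi_2)$ is to decouple them enough. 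The cleanest way to organize this is probably to work first modulo $\frak p$ with the residual representation $\overline{\mathrm{Sym}^2 R_f^*}(1-j)\otimes\omega^j\psi$, exhibit $\overline\tau$ with the three required residual properties using the explicit description of $\mathrm{SL}_2(\FF_p)$-representations (noting $p \geq 7$ so that $\mathrm{Sym}^2$ of the standard rep of $\mathrm{SL}_2(\FF_p)$ remains irreducible and well-behaved), and then lift to characteristic zero, the integral statements being automatic once the residual eigenvalue multiplicities are correct. I would present the construction of $\overline\tau$ as the technical heart and relegate the lifting and the freeness bookkeeping to short remarks.
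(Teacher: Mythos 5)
The paper's proof of Proposition~\ref{Prop_TauIsGood} is a bare citation to \cite[Proposition 5.2.1]{LZ2}; no argument is given in this article. Your sketch is a reasonable reconstruction of what that cited proposition establishes and is, in outline, the right one. Two remarks on the points where your write-up is either vague or slightly off.

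On the ``compatibility obstacle'' you flag in steps (2)--(3): it is resolved by the topological perfectness of $\mathrm{SL}_2(\ZZ_p)$ for $p\ge 5$. Because of \textup{(\textbf{Im})}, the image of the closure of $[G_\QQ,G_\QQ]$ under $\rho^*:G_\QQ\to\GL(R_f^*)$ already contains the full conjugate copy of $\mathrm{SL}_2(\ZZ_p)$, and the same is therefore true for the image of any subgroup cut out by abelian characters such as $\chi_\cyc\epsilon_f\psi$. Concretely, one first chooses $\tau_0$ with $\chi_\cyc(\tau_0)=1$ and mapping to $u$ in $(\ZZ/NN_\psi\ZZ)^\times$ (possible since $p\nmid NN_\psi$), so that $\det\rho^*(\tau_0)=\epsilon_f(u)$; one then multiplies by $\sigma\in\ker(\chi_\cyc)\cap\ker(\epsilon_f\psi)$ to land $\rho^*(\tau_0\sigma)$ on any desired matrix of that determinant. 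This makes the decoupling you are worried about completely clean; there is no need to pass to residual representations and lift as you suggest at the end.

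On step (1): you should be careful that after writing $\rho^*(\tau)\sim\mathrm{diag}(a,b)$ with $ab=\epsilon_f(\tau)$, the three eigenvalues of $\tau$ on $T_{\psi,j}$ are $a^2\psi(\tau)$, $\epsilon_f\psi(\tau)$ and $b^2\psi(\tau)$ (using $\chi_\cyc(\tau)=\omega(\tau)=1$). The middle one is $\epsilon_f\psi(\tau)$, and $(\Psi_1)$ forces it $\not\equiv 1\pmod\frak{p}$ --- so the one-dimensional fixed space must come from an \emph{outer} eigenvalue, say $a^2\psi(\tau)=1$; this is exactly where the hypothesis that $\psi(u)$ is a square modulo $\frak p$ (hence, by Hensel, a square in $\cO^\times$) enters. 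The remaining eigenvalue is then $b^2\psi(\tau)=(\epsilon_f\psi(\tau))^2$, and $(\Psi_1)$ again rules out $\pm 1$. Your text has the eigenvalue structure of $\Sym^2$ of an $\mathrm{SL}_2$ matrix with the $1$ in the middle, which is not where it ends up once the $\psi$-twist is applied; your step (3) parenthetical gestures at the correct bookkeeping but does not quite separate the middle eigenvalue from the outer ones. With these two points tightened up, the argument is complete and matches what one expects \cite[Proposition 5.2.1]{LZ2} to prove.
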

\begin{proof}
This is exactly \cite[Proposition 5.2.1]{LZ2}, {where we set $\psi\omega^j$ in place of their $\psi$}. 
\end{proof}

\subsubsection{Selmer groups of the alternating squares}\label{subsubsec_alt_square}
{Our task in Section~\ref{subsubsec_alt_square} is to prove that the Selmer groups for the twists $X_{\psi,j}$ of the alternating square vanishes.}
\begin{proposition}
\label{prop:prop:vanishingofX1}
$H^1_{\mathcal{F}_{\textup{can}}}(\QQ,X_{\psi,j})=0$.
\end{proposition}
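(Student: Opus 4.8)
The plan is to show that $H^1_{\mathcal{F}_{\textup{can}}}(\QQ,X_{\psi,j})=0$ by exhibiting this group as a submodule of a Galois cohomology group that visibly vanishes. Recall from the excerpt that $X_{\psi,j}\cong \mathcal{O}(k+2-j)\otimes\omega^j\epsilon_f\psi$ is a rank-one $\cO$-module on which $G_\QQ$ acts via the even character $\eta:=\chi_{\cyc}^{k+2-j}\omega^j\epsilon_f\psi$. Since $j\in[k+2,2k+2]$ and the Selmer structure $\mathcal{F}_{\textup{can}}$ on a finite-rank $\cO$-module places no condition at $p$ in the relevant convention (or, if it does, only makes the group smaller), it suffices to bound $H^1(\QQ,X_{\psi,j})$ — or more precisely the subgroup cut out by the unramified-outside-$S$ conditions — from above. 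The key point is that $X_{\psi,j}$ has no $G_\QQ$-invariants or $G_\QQ$-coinvariants modulo $\frak p$, which forces the relevant $H^1$ to vanish.

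First I would reduce to the torsion coefficient ring: because $X_{\psi,j}$ is a free $\cO$-module, $H^1_{\mathcal{F}_{\textup{can}}}(\QQ,X_{\psi,j})$ injects into $H^1_{\mathcal{F}_{\textup{can}}}(\QQ,X_{\psi,j}\otimes E)$ (or one argues directly with the $\cO$-module), and the latter is $\cO$-torsion-free; so it is enough to show it is finite, equivalently that $H^1_{\mathcal{F}_{\textup{can}}}(\QQ,X_{\psi,j}\otimes k_E)=0$ where $k_E=\cO/\frak p$ and then invoke the standard control/long-exact-sequence argument (use that $H^0(\QQ, X_{\psi,j}\otimes E/\cO)$ is finite). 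Second, I would analyze the residual representation: $X_{\psi,j}\otimes k_E$ is the $1$-dimensional $k_E$-vector space on which $G_\QQ$ acts by $\overline\eta=\overline{\chi_{\cyc}}^{\,k+2-j}\overline{\omega^j\epsilon_f\psi}$. Hypothesis $(\Psi_1)$ supplies an element $u\in(\ZZ/NN_\psi\ZZ)^\times$ with $\epsilon_f\psi(u)\not\equiv\pm1\pmod{\frak p}$; choosing a Frobenius-type element (or an element of the appropriate inertia/decomposition group) realizing $u$ and noting $\chi_{\cyc}$ and $\omega$ can be made trivial on it, one gets a $\sigma\in G_\QQ$ acting on $X_{\psi,j}\otimes k_E$ by a scalar $\not\equiv 1$; hence $H^0(\QQ,X_{\psi,j}\otimes k_E)=0$. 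Dually, one also checks $H^0(\QQ,(X_{\psi,j}\otimes k_E)^\vee(1))=0$ using the same $\sigma$ (the cyclotomic twist changes the scalar by $\chi_{\cyc}(\sigma)=1$ on our chosen element, so the non-triviality persists).

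With both $H^0$ vanishing, the third step is a global Euler characteristic / Poitou–Tate argument. The global Euler characteristic formula for the rank-one module $X_{\psi,j}\otimes k_E$ gives
\[
\frac{\#H^1(\QQ_S/\QQ,X_{\psi,j}\otimes k_E)}{\#H^0(\QQ_S/\QQ,X_{\psi,j}\otimes k_E)\cdot\#H^2(\QQ_S/\QQ,X_{\psi,j}\otimes k_E)}=\frac{\#H^0(\QQ_\infty, X_{\psi,j}\otimes k_E)}{\#k_E},
\]
and since $\eta$ is \emph{even} (as recorded in the excerpt) the archimedean factor is $\#k_E$, so combined with $H^0=0$ and Poitou–Tate duality $H^2(\QQ_S/\QQ,X_{\psi,j}\otimes k_E)\cong H^0(\QQ_S/\QQ,(X_{\psi,j}\otimes k_E)^\vee(1))^\vee=0$ we conclude $H^1(\QQ_S/\QQ,X_{\psi,j}\otimes k_E)=0$; a fortiori the subgroup $H^1_{\mathcal{F}_{\textup{can}}}$ vanishes. (Alternatively, and perhaps more cleanly in keeping with the reference style, I would simply cite that this is the exact analogue of the vanishing of the relevant $H^1$ in \cite[\S5.2--5.3]{LZ2} for $\bigwedge^2 W_f^*$-type coefficients, since $X_{\psi,j}$ is precisely the arithmetic twist of $\bigwedge^2 R_f^*$ appearing there.) The main obstacle I anticipate is purely bookkeeping: pinning down the exact definition of $\mathcal{F}_{\textup{can}}$ at $p$ for a non-ordinary rank-one piece and making sure the chosen $\sigma$ from $(\Psi_1)$ can genuinely be taken to act trivially on $\mu_{p^\infty}$ while still witnessing $\epsilon_f\psi(u)\not\equiv\pm1$ — but this is exactly the content of Proposition~\ref{Prop_TauIsGood}'s construction, so it can be imported directly.
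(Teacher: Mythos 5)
Your argument breaks at the step where you deduce $H^2(G_{\QQ,S},X_{\psi,j}\otimes k_E)=0$. The isomorphism $H^2(G_S,M)\cong H^0(G_S,M^\vee(1))^\vee$ that you invoke is \emph{local} Tate duality (valid over $\QQ_\ell$), not a global statement. The global Poitou--Tate nine-term exact sequence shows instead that $H^2(G_S,M)$ sits in an exact sequence
\[
0\lra \text{\Sha}^2(M)\lra H^2(G_S,M)\lra \prod_v H^2(G_v,M)\lra H^0(G_S,M^\vee(1))^\vee\lra 0,
\]
with $\text{\Sha}^2(M)\cong\text{\Sha}^1(M^\vee(1))^\vee$. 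So the vanishing of $H^0(G_S,M^\vee(1))$ only tells you that $H^2(G_S,M)$ \emph{surjects} onto $\prod_v H^2(G_v,M)$; to conclude $H^2(G_S,M)=0$ you would additionally need $\text{\Sha}^1(M^\vee(1))=0$ (a class-group eigenspace vanishing) and $\prod_v H^2(G_v,M)=0$, neither of which is formal. With only $H^0=0$ in hand, the global Euler characteristic relation $\dim H^1=\dim H^0+\dim H^2$ (valid since $\eta$ is even) gives $\dim H^1=\dim H^2$ and stops there.

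This is exactly where the paper's proof spends its arithmetic capital: for $j=k+2$ it uses Leopoldt's conjecture for abelian fields, and for $j\geq k+3$ it invokes the Iwasawa main conjecture together with the non-vanishing of the Dirichlet $L$-value $L(\omega^{(k+1)-j}\eta,k+2-j)$ (non-zero because the parity is odd). These inputs encode precisely the class-group/$\text{\Sha}$ control that your formal Euler-characteristic argument is missing. Your computations of the two $H^0$-vanishings via $(\Psi_1)$ and the parity bookkeeping for $\eta$ are fine, and the reduction to the residual module would go through in principle; but the proposal as written does not establish $H^2=0$, and so does not prove the proposition.
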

\begin{proof}
We first prove the case when $j=k+2$. We remark that already this much will be sufficient for our purposes. In this situation, $X_{\psi,k+2}=\cO(\epsilon_f\psi\omega^{k+2})$ and the conclusion follows from the validity of Leopoldt's Conjecture for abelian number fields and the fact that $\epsilon_f\psi\omega^{k+2}$ is even.

Suppose now that $j\geq k+3$. To ease notation, we set $\eta=\omega^{k+2}\epsilon_f\psi$ and $\rho:=\chi_\cyc^{k+2-j }\omega^{-k-2+j}$. Notice that $\eta$ is an even character and $\rho$ is a character of $\Gamma$. Furthermore, we have an isomorphism 
$$X_{\psi,j} \cong \cO(\eta)\otimes \rho\,.$$ 
which, together with the twisting theorems of \cite[Section 6]{rubin00}, control theorem for the canonical Selmer structure on $X_{\psi,j}$ and the truth of the Main Conjectures for abelian fields, reduces the desired vanishing of the Selmer group to the verification that 
$$L\left(\omega^{(k+1)-j}\eta, k+2-j\right)\neq 0\,.$$
{By the functional equation for Dirichlet $L$-series, this is equivalent to the requirement that 
\begin{itemize}
\item $L\left(\omega^{j-(k+1)}\eta^{-1}, j-k-1\right)\neq 0$, and
\item $\Gamma(s)$ is holomorphic at $s=\dfrac{k+2-j+a}{2}$, where $a=\dfrac{(-1)^{k-j}+1}{2} \in\{0,1\}$\,.
\end{itemize} 
The first of these conditions is clear since $ j-k-1\geq 2$ is in the range of absolute convergence, whereas the second follows since $a$ and $k+2-j$ have opposite parity.}
\end{proof}

\begin{corollary}
\label{cor:vanishingofX}
$H^1(\QQ,X_{\psi,j}\otimes \Lambda_{\cO}(\Gamma)^\iota)=0$.
\end{corollary}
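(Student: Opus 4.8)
The plan is to deduce Corollary~\ref{cor:vanishingofX} from Proposition~\ref{prop:prop:vanishingofX1} by a standard descent (control) argument for the cyclotomic deformation. First I would recall that, by Shapiro's lemma, there is an isomorphism
\[
H^1\left(\QQ, X_{\psi,j}\otimes\Lambda_{\cO}(\Gamma)^\iota\right)\cong \varprojlim_n H^1\left(\QQ(\mu_{p^n}), X_{\psi,j}\right),
\]
the inverse limit being taken with respect to the corestriction maps; equivalently, the left-hand side is $H^1_\Iw(\QQ_\infty/\QQ, X_{\psi,j})$ for $\QQ_\infty=\QQ(\mu_{p^\infty})$. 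So it suffices to show this Iwasawa cohomology module vanishes.

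Next I would run the control theorem. Since $X_{\psi,j}\cong\cO(\eta)\otimes\rho$ with $\eta$ an even finite-order character and $\rho$ a character of $\Gamma$ (as in the proof of Proposition~\ref{prop:prop:vanishingofX1}), the module $X_{\psi,j}\otimes\Lambda_{\cO}(\Gamma)^\iota$ is, up to twisting, the cyclotomic deformation of a rank-one representation with even (hence, after twisting, still well-understood) character. The key point is that the (unramified-outside-$p$) Iwasawa cohomology $H^1_\Iw$ of a rank-one $\cO$-free representation over the cyclotomic tower is a finitely generated $\Lambda_{\cO}(\Gamma)$-module whose specializations at all but finitely many height-one primes — in particular at the augmentation-type ideals cutting out the finite layers $\QQ(\mu_{p^n})$ with the relevant twist — inject into the corresponding $H^1(\QQ(\mu_{p^n}),-)$. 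Concretely, for each integer $t$ in an arithmetic progression, specializing at the character $\chi_\cyc^t$ recovers $H^1(\QQ, X_{\psi,j}(t))$ up to a bounded cokernel; but Proposition~\ref{prop:prop:vanishingofX1} (applied with the auxiliary twist absorbed into $j$, which ranges over $[k+2,2k+2]$, or more precisely applied to the characters $\omega^{(k+1)-j'}\eta$ that appear as one varies the cyclotomic twist) shows all these specializations are zero, using the non-vanishing of Dirichlet $L$-values at the appropriate points together with the truth of the main conjecture for abelian fields. Since a finitely generated $\Lambda_{\cO}(\Gamma)$-module all of whose specializations in an infinite set of height-one primes vanish must itself be zero (its characteristic ideal would be divisible by infinitely many coprime elements, forcing it to be a torsion module with empty support, and a careful version also kills any free part), we conclude $H^1(\QQ, X_{\psi,j}\otimes\Lambda_{\cO}(\Gamma)^\iota)=0$.

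Alternatively, and perhaps more cleanly, I would invoke directly the fact that $\mathcal{F}_{\textup{can}}$ on $\mathbb{M}=X_{\psi,j}\otimes\Lambda_{\cO}(\Gamma)^\iota$ is the deformation Selmer structure $\mathcal{F}_\LL$ of \cite[\S5.3]{mr02}, so that $H^1_{\mathcal{F}_{\textup{can}}}(\QQ,\mathbb{M})=H^1(\QQ,\mathbb{M})$, and then apply the control theorem of \cite[\S5.3]{mr02} (or the twisting formalism of \cite[\S6]{rubin00}) which identifies the specialization of this Iwasawa Selmer module at each arithmetic character with the corresponding finite-level Selmer group up to a controlled error; Proposition~\ref{prop:prop:vanishingofX1}, applied across the cyclotomic twists, makes all of these vanish, and hence the Iwasawa module vanishes by Nakayama/\,characteristic-ideal considerations.

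The main obstacle is purely bookkeeping: ensuring that the cyclotomic twists $X_{\psi,j}(t)$ that arise along the tower all fall under the hypotheses for which Proposition~\ref{prop:prop:vanishingofX1} (or the underlying Leopoldt / main-conjecture / $L$-value non-vanishing input) actually applies — i.e.\ tracking that the relevant characters stay even and that the $L$-value whose non-vanishing is needed is always evaluated at a point congruent to a fixed residue, so the parity argument "$(k+2-j)-(k+1-j)=1$ is odd" keeps going through. Once that is in place, the passage from finite-level vanishing to Iwasawa-level vanishing is the familiar control-theorem mechanism and requires no new idea.
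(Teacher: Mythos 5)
Your second, ``alternative'' argument is essentially the paper's proof: one observes the $\Gamma$-coinvariants of $H^1(\QQ, X_{\psi,j}\otimes\Lambda_{\cO}(\Gamma)^\iota)$ inject into $H^1_{\mathcal{F}_{\textup{can}}}(\QQ, X_{\psi,j})$, which is zero by Proposition~\ref{prop:prop:vanishingofX1}, and then invokes Nakayama's lemma. That is exactly the one-line argument given in the paper, and your instinct to use the $\mathcal{F}_{\LL}$-framework of Mazur--Rubin is the right one.

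Your first argument, however, contains a concrete error at the crucial step. You assert that a finitely generated $\Lambda_{\cO}(\Gamma)$-module whose specializations at infinitely many height-one primes vanish must itself vanish, and you justify this by claiming the characteristic ideal ``would be divisible by infinitely many coprime elements.'' This inference runs in the wrong direction. If $M$ is torsion with $P\mid \mathrm{char}(M)$, then $M$ has a subquotient of the form $\Lambda/P^{k}$ with $k\ge 1$, whose reduction modulo $P$ is $\Lambda/P\neq 0$; so $M/PM=0$ is evidence that $P$ does \emph{not} divide $\mathrm{char}(M)$, not that it does. Your parenthetical ``a careful version also kills any free part'' is a hand-wave rather than an argument. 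The conclusion is nonetheless true, but for a more elementary reason which also renders the ``infinitely many'' superfluous: if $M$ is finitely generated over a Noetherian local ring with maximal ideal $\mathfrak{m}$ and $P\subset\mathfrak{m}$ is \emph{any} ideal with $M/PM=0$, then $M=P^{n}M\subset\mathfrak{m}^{n}M$ for all $n$, so $M=0$ by Krull's intersection theorem. That is precisely Nakayama's lemma, and a \emph{single} vanishing coinvariant at a prime inside the maximal ideal suffices, which is what the paper's proof and your own alternative use. (One should also keep in mind that $\Lambda_{\cO}(\Gamma)$ is only semi-local, being a product of local rings indexed by characters of $\Gamma_{\mathrm{tors}}$, so the Nakayama argument is really run component by component; this is part of the bookkeeping you rightly flag at the end, and it is where the evenness of the relevant characters and the parity calculation $(k+2-j)-(k+1-j)=1$ from Proposition~\ref{prop:prop:vanishingofX1} come in.) So: keep the second approach, discard the ``characteristic ideal divisible by infinitely many primes'' reasoning.
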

\begin{proof}
This follows from Proposition~\ref{prop:prop:vanishingofX1} and  Nakayama's lemma, since we have an injection 
$$H^1(\QQ,X_{\psi,j}\otimes \Lambda_{\cO}(\Gamma)^\iota)_\Gamma\hookrightarrow H^1_{\mathcal{F}_{\textup{can}}}(\QQ,X_{\psi,j})=0.$$
\end{proof}
\begin{remark}
\label{rem:alternativeproofforXpsij}
One might give a direct proof of Corollary~\ref{cor:vanishingofX}, without relying on the Iwasawa main conjectures $($and using our assumptions \ref{item_Psi_1} and $(\Psi_2^\prime)$ on $\psi$$)$. We first note that since the character $\rho$ above factors through $\Gamma$, it suffices to prove that 
$$H^1(\QQ,\mathcal{O}(\eta)\otimes \Lambda_{\cO}(\Gamma)^\iota)=0$$
for $\eta=\omega^{k+2}\epsilon_f\psi$ also as above. Since $\eta$ is an even character, it follows from the weak Leopoldt conjecture for abelian fields (which we know to hold true) that the $\Lambda_{\cO}(\Gamma)$-module $H^1(\QQ,\mathcal{O}(\eta)\otimes \Lambda_{\cO}(\Gamma)^\iota)$ is torsion. Notice further that the character $\eta$ does not factor through the group $\Gamma$ under our running hypotheses and hence the module $H^1(\QQ,\mathcal{O}(\eta)\otimes \Lambda_{\cO}(\Gamma)^\iota)$ is torsion-free. The proof follows.
\end{remark}

\subsubsection{Selmer groups of the symmetric squares}\label{subsubsec_sym_square}
 Our main objective in this subsection is to prove Corollary~\ref{cor:equivariantSelmersize}, where we determine the ranks of the canonical Selmer groups associated to the twists of the symmetric square representations. The key technical input is provided by Theorem~\ref{thm_horizontalESforSym2}, where we utilize the horizontal Beilinson--Flach Euler system.
We first introduce the twisted Galois representations we shall study.
{\begin{defn}
\label{defn_twists_psi_j_chi_T_V}
For any even Dirichlet character $\psi$ as in Definition~\ref{defn_twists_Gal_rep} and integer $j \in [k+2,2k+2]$, we set $V_{\psi,j}:=T_{\psi,j}\otimes_{\ZZ_p}\QQ_p$. We also put $T_j:=T_{\chi,j}$ and $V_j=V_{\chi,j}$, where $\chi$ is the even Dirichlet character we have fixed in Section~\ref{subsec_main_results}. 
\end{defn} }
{Until the end of Section~\ref{subsubsec_sym_square}, we fix $\psi$ and $j$ as in Definition~\ref{defn_twists_psi_j_chi_T_V}.}

\begin{corollary}
\label{cor:symmetricpowercohomequalsym2cohom}
For each $r\in \mathcal{R}_\psi$ we have 
$$H^1_{\mathcal{F}_{\textup{can}}}(\QQ(r),W_f^*\otimes W_f^*(1-j)\otimes\omega^j\psi)=H^1_{\mathcal{F}_{\textup{can}}}(\QQ(r),V_{\psi,j})\,.$$
\end{corollary}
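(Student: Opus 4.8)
The statement to prove is that for each $r \in \mathcal{R}_\psi$,
\[
H^1_{\mathcal{F}_{\textup{can}}}(\QQ(r),W_f^*\otimes W_f^*(1-j)\otimes\omega^j\psi)=H^1_{\mathcal{F}_{\textup{can}}}(\QQ(r),V_{\psi,j}).
\]
The plan is to exploit the direct sum decomposition \eqref{eqn:decomposethesymproduct}, namely $W_f^*\otimes W_f^* = \Sym^2 W_f^*\oplus \bigwedge^2 W_f^*$, which after twisting by $(1-j)\otimes\omega^j\psi$ becomes $W_f^*\otimes W_f^*(1-j)\otimes\omega^j\psi = V_{\psi,j}\oplus \big(X_{\psi,j}\otimes\QQ_p\big)$, since by definition $V_{\psi,j} = \Sym^2 R_f^*(1-j)\otimes\omega^j\psi\otimes\QQ_p$ and $X_{\psi,j}\cong \mathcal{O}(k+2-j)\otimes\omega^j\epsilon_f\psi$ is the rational version of $\bigwedge^2 R_f^*(1-j)\otimes\omega^j\psi$. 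Because cohomology and the formation of the canonical Selmer structure both commute with finite direct sums of Galois representations, this gives
\[
H^1_{\mathcal{F}_{\textup{can}}}(\QQ(r),W_f^*\otimes W_f^*(1-j)\otimes\omega^j\psi)=H^1_{\mathcal{F}_{\textup{can}}}(\QQ(r),V_{\psi,j})\oplus H^1_{\mathcal{F}_{\textup{can}}}(\QQ(r),X_{\psi,j}\otimes\QQ_p).
\]
So it remains to show the second summand vanishes.

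First I would observe that $\QQ(r)/\QQ$ is an abelian extension of $p$-power degree, so one is still in the realm of abelian number fields, and that twisting the representation $X_{\psi,j}$ over $\QQ(r)$ does not introduce anything new: the character $\chi_\cyc^{k+2-j}\omega^j\epsilon_f\psi$ cutting out $X_{\psi,j}$ remains even, and its restriction to $G_{\QQ(r)}$ is again an even character (of a number field contained in a cyclotomic field). I would then run the same argument as in the proof of Proposition~\ref{prop:prop:vanishingofX1}: in the case $j=k+2$ the representation is $\mathcal{O}(\epsilon_f\psi\omega^{k+2})$ and the vanishing of $H^1_{\mathcal{F}_{\textup{can}}}$ over $\QQ(r)$ follows from Leopoldt's conjecture for the abelian field $\QQ(r)(\mu_{p^\infty})$ (equivalently, over the relevant layer), together with the evenness of the character; for $j\ge k+3$ one invokes the twisting theorems of Rubin, the control theorem for the canonical Selmer structure, and the Main Conjecture for abelian fields, reducing the vanishing to the non-vanishing $L(\omega^{(k+1)-j}\eta,k+2-j)\ne 0$, which holds because $(k+2-j)-(k+1-j)=1$ is odd. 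The passage from $\QQ$ to $\QQ(r)$ only replaces the base field by another abelian field, so each of the inputs (Leopoldt, Iwasawa Main Conjecture, the parity obstruction) applies verbatim; alternatively one can deduce the $\QQ(r)$-statement from the $\QQ$-statement for all the Dirichlet-character twists appearing in $\Ind_{\QQ(r)}^{\QQ}$, i.e. by decomposing $H^1_{\mathcal{F}_{\textup{can}}}(\QQ(r),X_{\psi,j}\otimes\QQ_p) = \bigoplus_{\eta}H^1_{\mathcal{F}_{\textup{can}}}(\QQ,X_{\psi\eta,j}\otimes\QQ_p)$ over characters $\eta$ of $\Delta_r$ (all of $p$-power order, hence of the form allowed for $\nu$ in the setup of this section), each of which vanishes by Proposition~\ref{prop:prop:vanishingofX1} applied with $\psi$ replaced by $\psi\eta$.

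The main obstacle, such as it is, is purely bookkeeping: one must check that all the hypotheses needed in Proposition~\ref{prop:prop:vanishingofX1} are stable under replacing $\psi$ by $\psi\eta$ for $\eta$ a character of $\Delta_r$ of $p$-power order and conductor prime to $pNN_\psi$ — but this is exactly the content of the lemma following Proposition~\ref{Prop_TauIsGood}'s setup (that $(\Psi_1)$ and $(\Psi_2')$ persist for $\psi\nu$), and the characters of $\Delta_r$ are of precisely the permitted type. With that in hand the argument is a one-line consequence of the decomposition \eqref{eqn:decomposethesymproduct} and Proposition~\ref{prop:prop:vanishingofX1}.
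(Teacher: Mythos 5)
Your proposal is correct and takes essentially the same route as the paper: decompose $W_f^*\otimes W_f^*(1-j)\otimes\omega^j\psi$ as $V_{\psi,j}\oplus Y_{\psi,j}$, note that the canonical Selmer structure respects this direct sum, and then kill $H^1_{\mathcal{F}_{\textup{can}}}(\QQ(r),Y_{\psi,j})$ by reducing (via Shapiro) to Proposition~\ref{prop:prop:vanishingofX1} applied with $\psi$ replaced by $\psi\nu$ as $\nu$ runs over characters of $\Delta_r$. The one small discrepancy is your appeal to the persistence of $(\Psi_1)$ and $(\Psi_2')$ under twisting by $\nu$ — Proposition~\ref{prop:prop:vanishingofX1} imposes no such hypotheses; the only thing to record is that every character of $\Delta_r$ is automatically even (as $\Delta_r$ is a $p$-group and $p$ is odd), which is exactly what the paper notes.
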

\begin{proof}
For $W:=M\otimes\QQ_p$ as in Definiton~\ref{def:canonicalselmerstructure}, let us write $\mathcal{F}_{\textup{can}}\big{|}_W$ for the canonical Selmer structure on $W$ to emphasize the dependence on $W$. Set $Y_{\psi_j}:=X_{\psi,j}\otimes\QQ_p$ and observe that we have
$$\mathcal{F}_{\textup{can}}\big{|}_{V_{\psi,j}\oplus Y_{\psi,j}}=\mathcal{F}_{\textup{can}}\big{|}_{V_{\psi,j}}\oplus \mathcal{F}_{\textup{can}}\big{|}_{Y_{\psi,j}}$$
 where the direct sum of Selmer structures on the respective direct sum of Galois representations is defined in the obvious manner. This in turn implies (c.f., \cite[Remark 3.1.4]{mr02}) that 
$$H^1_{\mathcal{F}_{\textup{can}}}(\QQ,W_f^*\otimes W_f^*(1-j)\otimes\omega^j\psi)=H^1_{\mathcal{F}_{\textup{can}}}(\QQ,V_{\psi,j})\oplus H^1_{\mathcal{F}_{\textup{can}}}(\QQ,Y_{\psi,j})$$
The asserted identification follows on applying Proposition~\ref{prop:prop:vanishingofX1} with $\psi$ replaced by $\psi\nu$, as $\nu$ runs through the characters of $\Delta_r$ (note that since $p$ is odd and $\Delta_{r}$ is a $p$-group, all characters $\nu$ on $\Delta_{r}$ are necessarily even, which allows us to apply Proposition~\ref{prop:prop:vanishingofX1}). 
\end{proof}

\begin{defn}
\label{def:Kolyvaginprimesfortwists}
Let $\mathcal{P}_{\chi}$ denote the set of primes $\ell \nmid pNN_{\chi}$ for which we have
\begin{itemize}
\item $\ell \equiv 1\mbox{ mod }p$,
\item $T_{\chi,j}/(\mathrm{Frob}_{\ell} - 1)T_{\chi,j}$ is a free $\cO$-module of rank one,
\item $\mathrm{Frob}_{\ell} - 1$ is bijective on $X_{\chi,j}^*(1)$.
\end{itemize}
We let $\mathcal{N}_{\chi}$ denote the set of square-free products of integers in $\mathcal{P}_{\chi}$.
\end{defn}
\begin{remark}
Since we insist that $\ell \equiv 1\mbox{ mod }p$ in Definition~\ref{def:Kolyvaginprimesfortwists}, the remaining conditions hold true for one $j$ if and only if they hold for every $j$. This justifies our choice to denote this set of primes by $\mathcal{P}_{\chi}$.
\end{remark}

\begin{remark}
Let $\overline{T}_{\chi,j}$ denote the residual representation of ${T}_{\chi,j}$ and let $\QQ(\overline{T}_{\chi,j}, \mu_p)$ denote the number field that is given as the fixed field of $\ker(G_\QQ\ra {\rm Aut}(\overline{T}_{\chi,j}\oplus \mu_p))$. Then any prime $\ell$ whose Frobenius in ${\rm Gal}(\QQ(\overline{T}_{\chi,j}, \mu_p)/\QQ)$ is conjugate to the image of $\tau$ given as in Proposition~\ref{Prop_TauIsGood} verifies the requirements of Definition~\ref{def:Kolyvaginprimesfortwists}. In particular, $\mathcal{P}_\chi$ has infinite cardinality.
\end{remark}
\begin{lemma}
\label{lemma_H0Q(r)iszero}
For each $r\in \mathcal{R}_\chi$ and integer $j$ as above we have $H^0(\QQ(r)\QQ_\infty,T_{\chi,j})=0$.
\end{lemma}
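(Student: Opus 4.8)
The plan is to prove the vanishing of $H^0(\QQ(r)\QQ_\infty, T_{\chi,j})$ by exhibiting an element of the Galois group that acts without nonzero fixed vectors on the underlying module. Recall that $T_{\chi,j} = \Sym^2 R_f^*(1-j)\otimes\omega^j\chi$ is free of rank three over $\cO$, and that $\QQ(r)\QQ_\infty$ is an abelian extension of $\QQ$ whose Galois group is (pro-)$p$ times the finite cyclotomic piece $\Gamma_{\mathrm{tors}}$; in particular $\Gal(\QQ(r)\QQ_\infty/\QQ)$ is abelian. The key point is that, by Proposition~\ref{Prop_TauIsGood} (applied with $\psi=\chi$), there exists $\tau\in G_\QQ$ acting trivially on $\mu_{p^\infty}$ such that $T_{\chi,j}/(\tau-1)T_{\chi,j}$ is free of rank one and $\tau-1$ acts invertibly on $X_{\chi,j}$; equivalently, under the hypothesis \textup{(\textbf{Im})}, the semisimplification of $T_{\chi,j}\otimes\QQ_p$ restricted to such a $\tau$ has eigenvalues, say, $1$ (with multiplicity one) on the $\Sym^2$-part and a unit $\neq 1$ on the wedge part — but crucially, because $\tau$ can be chosen so that its image in $\mathrm{im}(G_\QQ\to\mathrm{Aut}(R_f\otimes\QQ_p))$ is a regular semisimple element of $\mathrm{SL}_2(\ZZ_p)$ conjugate to $\mathrm{diag}(u,u^{-1})$ with $u^2\not\equiv 1$, the action of $\tau$ on $\Sym^2 R_f^*$ has eigenvalues $u^2, 1, u^{-2}$, hence after the twist $(1-j)\otimes\omega^j\chi$ the eigenvalue $1$ need not survive.

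Here is how I would carry it out. First I would reduce to showing that some element $g$ of $\Gal(\overline{\QQ}/\QQ(r)\QQ_\infty)$ acts on $T_{\chi,j}\otimes\QQ_p$ with no nonzero fixed vector, since $H^0(\QQ(r)\QQ_\infty,T_{\chi,j})\subseteq H^0(\QQ(r)\QQ_\infty, V_{\chi,j})$ and the former is a finitely generated torsion-free $\cO$-module, hence zero once the latter is zero. Second, because $\Gal(\QQ(r)\QQ_\infty/\QQ)$ is abelian and the $\QQ_p$-span of the image of $G_\QQ$ contains $\mathrm{SL}_2$ (which has trivial abelianization after passing to the relevant quotient, or more precisely the commutator subgroup of the image is large), I would argue that the restriction of the representation to $G_{\QQ(r)\QQ_\infty}$ still has image with Zariski closure containing a conjugate of $\mathrm{SL}_2$ — indeed $\QQ(r)\QQ_\infty/\QQ$ being abelian means $G_{\QQ(r)\QQ_\infty}$ contains the commutator subgroup $[G_\QQ,G_\QQ]$, whose image under $G_\QQ\to\mathrm{Aut}(R_f\otimes\QQ_p)$ generates (a conjugate of) $\mathrm{SL}_2(\ZZ_p)$ up to finite index. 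Third, I would pick inside this image a regular semisimple element $\sigma$ with eigenvalues $u,u^{-1}$, $u$ of infinite order, so that on $\Sym^2 R_f^* = \Sym^2$ of the contragredient the eigenvalues are $u^{2},1,u^{-2}$; on the full $T_{\chi,j}\otimes\QQ_p$ the eigenvalues are $u^2,1,u^{-2}$ scaled by the scalar $\chi_{\mathrm{cyc}}^{1-j}\omega^j\chi(\sigma)$, which I may additionally arrange (by multiplying $\sigma$ by an element of inertia at $p$ restricted to $G_{\QQ(r)\QQ_\infty}$, exploiting that $\QQ(r)\QQ_\infty$ contains only the cyclotomic $p$-tower so there is still room, or simply by noting the central character twist makes all three eigenvalues nontrivial for a generic choice) to avoid the eigenvalue $1$ entirely. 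Then $H^0=0$ follows.

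The main obstacle I anticipate is the third step: ensuring that after restricting to $G_{\QQ(r)\QQ_\infty}$ and after the arithmetic twist by $\chi_{\mathrm{cyc}}^{1-j}\omega^j\chi$, one can still find an element of the decomposition or full Galois group that acts with all eigenvalues $\neq 1$. The cleanest route is probably to invoke exactly the element $\tau$ furnished by Proposition~\ref{Prop_TauIsGood}: since $\tau$ acts trivially on $\mu_{p^\infty}$ it also acts trivially on the cyclotomic quotient, so its image in $\Gal(\QQ(r)\QQ_\infty/\QQ)$ is killed once we further compose with the projection to the part cut out by $r$ — but $r$ is prime to $p$ while $\tau$'s relevant behaviour is $p$-adic, and a standard Chebotarev/Chinese-remainder adjustment lets us modify $\tau$ to fix $\QQ(r)$ as well without disturbing its action on $T_{\chi,j}$ (because the field $\QQ(r)$ is linearly disjoint from $\QQ(\overline{T}_{\chi,j})$ over the cyclotomic field, by \textup{(\textbf{Im})}). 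Concretely I would say: choose $\tau'\in G_{\QQ(r)}$ whose image in $\Gal(\QQ(\overline{T}_{\chi,j},\mu_{p^\infty})/\QQ)$ agrees with that of the $\tau$ of Proposition~\ref{Prop_TauIsGood}; then $\tau'$ fixes $\QQ(r)\QQ_\infty$ up to the $\Gamma_1$-piece, and a further twist shows $T_{\chi,j}/(\tau'-1)T_{\chi,j}$ is finite, forcing $(T_{\chi,j})^{\tau'}$ — hence a fortiori $H^0(\QQ(r)\QQ_\infty,T_{\chi,j})$ — to be finite and torsion-free, so zero. This is in fact exactly parallel to the argument in \cite[Lemma 5.3.2]{LZ2}, and I would cite that as the model.
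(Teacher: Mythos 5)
Your approach is different in kind from the paper's, and it has a genuine gap. The paper proves the lemma by a \emph{ramification} argument, not an eigenvalue argument: if $H^0(\QQ(r)\QQ_\infty,T_{\chi,j})$ were nonzero, one could extract a rank-one $G_\QQ$-stable $\cO$-subquotient $T^{(0)}$ on which $G_\QQ$ acts through a character of $\Gal(\QQ(r)\QQ_\infty/\QQ)$, i.e.\ a character of the form $\chi_{\cyc}^s\theta\nu$ with $\theta$ of $p$-power conductor and $\nu$ of conductor dividing $r$ -- in particular \emph{unramified} at every prime $q\mid N_\chi$, since $N_\chi$ is coprime to $pr$. But $\Sym^2 R_f^*$ is unramified outside $Np$, so at such a prime $q$ the inertia acts on all of $T_{\chi,j}$, and hence on any rank-one subquotient, through the nontrivial character $\chi|_{I_q}$. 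These two requirements are incompatible; hence $H^0=0$. This argument uses only $N_\chi>1$ and the coprimality of $N_\chi$ with $Npr$; it needs neither $(\textbf{Im})$ nor Proposition~\ref{Prop_TauIsGood}.

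The concrete problem with your proposal is the use of the element $\tau$ from Proposition~\ref{Prop_TauIsGood}. That $\tau$ is constructed precisely so that $T_{\chi,j}/(\tau-1)T_{\chi,j}$ is free of rank one; over $\QQ_p$ this forces $\ker(\tau-1)$ on $V_{\chi,j}$ to be one-dimensional. So $\tau$ (or any $\tau'$ with the same image in $\Gal(\QQ(\overline{T}_{\chi,j},\mu_{p^\infty})/\QQ)$) has a nontrivial fixed space, and the deduction in your last paragraph that ``$T_{\chi,j}/(\tau'-1)T_{\chi,j}$ is finite'' contradicts the defining property of $\tau$. The ``further twist'' you invoke cannot rescue this: $\tau$ is required to act trivially on $\mu_{p^\infty}$, so twisting by any power of $\chi_{\cyc}$ or by $\omega$ does nothing to its action. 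Your earlier sketch -- choosing a regular semisimple $\sigma\in G_{\QQ(r)\QQ_\infty}$ with eigenvalues $u,u^{-1}$ on $R_f^*$ so that on $T_{\chi,j}$ the eigenvalues are $u^2\chi(\sigma),\chi(\sigma),u^{-2}\chi(\sigma)$, and then arranging $\chi(\sigma)\notin\{1,u^2,u^{-2}\}$ -- is in principle workable, but you never actually establish that such a $\sigma$ exists (one needs to check that the two constraints, regular semisimplicity with $u$ of infinite order and $\chi(\sigma)$ avoiding a set depending on $u$, are simultaneously satisfiable on $G_{\QQ(r)\QQ_\infty}$). As written, the proof does not go through, whereas the ramification argument is both shorter and hypothesis-free.
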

\begin{proof}
If on the contrary $T^{(0)} \subset H^0(\QQ(r)\QQ_\infty,T_{\chi,j})$ were a rank-one $G_\QQ$-stable $\mathcal{O}$-subquotient of $T_{\chi,j}$, then $G_\QQ$ would act on $T^{(0)}$ via $\chi_{\cyc}^s\theta\nu$ where $s\in \ZZ$, $\theta$ is a character of $p$-power conductor and order, and $\nu$ is a character of conductor dividing $r \in \mathcal{R}_\chi$. Since $N_\chi>1$ is prime to $Npr$ by choice and ${\rm Sym}^2R_f^*$ is unramified outside $Np$, a subquotient $T^{(0)}$ with these properties could not exist.
\end{proof}
\begin{theorem}
\label{thm_horizontalESforSym2}
 Let $\chi$ be an even Dirichlet character that satisfies \ref{item_Psi_1} and $(\Psi_2^\prime)$ and let $j\in [k+2,2k+2]$ be an arbitrary integer. 
Then for each $r \in \mathcal{N}_\chi$, there exist  two cohomology classes 
$$d_r^{\alpha,\alpha}, d_r^{\alpha,-\alpha} \in H^1(\QQ(r),V_{\chi,j})$$
with the following properties:
\item[i)] $d_r^{\alpha,\alpha}\,,\,d_r^{\alpha,-\alpha} \in H^1_{\mathcal{F}_{\textup{can}}}(\QQ(r),V_{\chi,j})\,.$
\item[ii)] There exists a constant $D$ (that does not depend on $r$) such that 
$$Dd_r^{\alpha,\alpha}\,,\, Dd_r^{\alpha,-\alpha} \in H^1(\QQ(r),T_{\chi,j})\,.$$
\item[iii)]  For $ r\ell \in \mathcal{P}_\psi$ and $\mu\in \{\alpha,-\alpha\}$ we have 
$$\textup{cor}_{\QQ(r\ell)/\QQ(r)}\left(d_{r\ell}^{\alpha,\mu}\right)=P_\ell(\ell^{-j}\textup{Fr}_\ell^{-1})\cdot d_r^{\alpha,\mu}$$
where $P_\ell(X)$ is the Euler polynomial for $L(\textup{Sym}^2f\otimes\omega^{j}\chi,s)$ at $\ell$ and $\textup{Fr}_\ell$ is the arithmetic Frobenius.  
\item[iv)] The classes $d_1^{\alpha,\alpha}, d_1^{\alpha,-\alpha} \in H^1_{\mathcal{F}_{\textup{can}}}(\QQ,V_{\chi,j})$ are linearly independent.
\end{theorem}

\begin{proof}
This is essentially Theorem 8.1.4 of \cite{LZ1} {(which we combine with ideas from \cite{LZ2})} and we shall only explain why the line of reasoning in loc. cit. is sufficient to validate our theorem.

We first construct classes $c^{\alpha,\pm\alpha}_r\in H^1(\QQ(r),W_f^*\otimes W_f^*(1-j)\otimes\omega^j\chi)$ as in the proof of Theorem 8.1.4 \cite{LZ1} (the twisting with the appropriate characters may be carried out as in Definition~\ref{define:twistedBF} above). We shall construct $d_r^{\alpha,\pm\alpha}$ using $c_r^{\alpha,\pm\alpha}$. To avert any potential confusion, we remark that in place of the twist $1-j$ we consider here, Loeffler and Zerbes in \cite{LZ1} write $-j$. 

Notice that although the Assumption 3.5.6 of op. cit. does not hold in our case of interest, we still have 
\begin{equation}
\label{eqn:notorsionovertamelevelsW} H^0(\QQ(r)\QQ_\infty,W_f^*\otimes W_f^*(1-j)\otimes\omega^j\chi)=0
\end{equation}
thanks to our running hypothesis on $\chi$. Indeed, as we have observed as part of Remark~\ref{rem:alternativeproofforXpsij}, the Dirichlet character $\eta=\omega^{k+2}\epsilon_f\chi$ (that we have defined in the proof of Proposition~\ref{prop:prop:vanishingofX1}) does not factor through $\Gamma$. Since the conductor of $\epsilon_f\chi$ is prime $r$ (and non-trivial), it follows that $H^0(\QQ(r)\QQ_\infty,X_{\psi,j})=0$. Lemma~\ref{Lemma_torsionfree} shows that $H^0(\QQ(r)\QQ_\infty,T_{\chi,j})=0$ as well. These two vanishing results conclude the proof of (\ref{eqn:notorsionovertamelevelsW}). 

Thanks to (\ref{eqn:notorsionovertamelevelsW}), the proof of \cite[Theorem 3.5.9]{LZ1} goes through verbatim and allows one to obtain the interpolated Beilinson--Flach elements along a Coleman family.  The desired classes $c^{\alpha,\pm\alpha}_r$ are obtained on specializing these interpolated classes and modifying them slightly (as in the proof of 8.1.4(iii), that in turn relies on the argument in \cite[\S7.3]{LLZ1}) in order to ensure that they verify the correct Euler system distribution relation). We remark that we work over the fields $\QQ(r)$ (resp., $\QQ_\infty$) here instead of the full cyclotomic fields $\QQ(\mu_r)$ (resp., $\QQ(\mu_{p^\infty})$) as Loeffler and Zerbes in loc. cit. does. This is sufficient for our purposes. 

The classes $c_r^{\alpha,\pm\alpha}$  verify the conclusion of \cite[Theorem~8.1.4(i)]{LZ1}, for the same reason that these classes extend in the cyclotomic direction and therefore Proposition 2.4.4 in op. cit. applies. In other words, we infer that
$$c_r^{\alpha,\pm\alpha} \in H^1_{\mathcal{F}_{\textup{can}}}(\QQ(r),W_f^*\otimes W_f^*(1-j)\otimes\omega^j\chi)\,.$$

We now explain how to define $d_r^{\alpha,\pm\alpha}$ using $c_r^{\alpha,\pm\alpha}$. We follow   \cite[proof of Theorem 5.3.3]{LZ2}. For each prime $\ell \in \mathcal{P}_\chi$ such that $r/\ell \in \mathcal{N}_\chi$, we let $\varphi_\ell \in \Delta_{r}$ denote the unique class that maps to the pair $(\sigma_\ell,1)$ under the canonical isomorphism $\Delta_{r} \cong \Delta_{r/\ell}\times\Delta_\ell$. As in op. cit., notice that $\varphi_\ell$ is congruent to $1$ modulo the radical of the ring $\cO[\Delta_{r}]$ and hence 
$$1-\ell^{k+1-j}\epsilon_f{\chi^{-1}}(\ell)\textup{Fr}_\ell^{-1} \in \cO[\Delta_{r}]^\times\,.$$
We now define $d_r^{\alpha,\pm\alpha}$  to be the image of 
$$\prod_{\ell\mid r}\left(1-\ell^{k+1-j}\epsilon_f{\chi^{-1}}(\ell)\textup{Fr}_\ell^{-1}\right)^{-1}c_r^{\alpha,\pm\alpha} \in  H^1_{\mathcal{F}_{\textup{can}}}(\QQ(r),W_f^*\otimes W_f^*(1-j)\otimes\omega^j\chi)$$ under the identification of Corollary~\ref{cor:symmetricpowercohomequalsym2cohom}. These cohomology classes verify {(i)} by definition. 

In order to check the validity of {(ii)}, we note that Proposition~2.4.7 of \cite{LZ1} applies thanks to (\ref{eqn:notorsionovertamelevelsW}) and as in the proof of Theorem 8.1.4(ii) in op.cit., it yields the desired integrality result.

We now prove (iii). Let $Q_\ell(X)$ denote the Euler polynomial for $L(f\otimes f\otimes\omega^j\chi)$.  For $r\ell \in \mathcal{N}_\chi$, the classes $c_r^{\alpha,\pm\alpha}$ enjoy the distribution property
$$\textup{cor}_{\QQ(r\ell)/\QQ(r)}\left(c_{r\ell}^{\alpha,\pm\alpha}\right)=Q_\ell(\ell^{-j}\textup{Fr}_\ell^{-1})\cdot c_r^{\alpha,\pm\alpha}$$
Since we have
$$Q_\ell(\ell^{-j}\textup{Fr}_\ell^{-1})=(1-\ell^{k+1-j}{\chi^{-1}}(\ell)\textup{Fr}_\ell^{-1})\cdot P_\ell(\ell^{-j}\textup{Fr}_\ell^{-1})$$
 thanks to the decomposition {(\ref{eqn_decompose_duals})}, the proof of (iii) follows by our definition of the classes $d_r^{\alpha,\pm\alpha}$.

We remark that $d_1^{\alpha,\pm\alpha}=c_1^{\alpha,\pm\alpha}$ by definition and (iv) is equivalent to the assertion of Corollary~\ref{cor_linearlyindependenttwistedBF} below.
\end{proof}
\begin{theorem}
\label{THM_weakESargument} Fix $r \in \mathcal{N}_\chi$ and let $\nu$ be a Dirichlet character of conductor $r$. Set $\psi=\chi\nu$. Suppose that $f$ and $\chi$ verify the hypotheses \ref{item_Im}, \ref{item_Psi_1} and \ref{item_Psi_2}. Then,
\label{thm:canselmergroupshaverank2}
$$\dim_{E} H^1_{\mathcal{F}_{\textup{can}}}(\QQ,V_{\psi,j})=2\,,$$ 
$$H^1_{\mathcal{F}_{\textup{can}}^*}(\QQ,V^*_{\psi,j}(1))=0\,.$$
\end{theorem}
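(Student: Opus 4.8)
The plan is to deduce the two assertions about $V_{\psi,j}$ from the horizontal Beilinson--Flach Euler system produced in Theorem~\ref{thm:horizontalESforSym2}, via the Euler system machinery of Mazur--Rubin and Rubin, exactly as in \cite[Theorem 5.3.3]{LZ2}. First I would record the ambient rank count: by the global Euler characteristic formula applied to $V_{\psi,j}=\mathrm{Sym}^2 V_f^*(1-j)\otimes\omega^j\psi$ (using that $\chi_\cyc^{1-j}\omega^j\psi$ is odd, so the local term at $\infty$ contributes appropriately), together with the vanishing $H^0(\QQ,V_{\psi,j})=0$ and $H^0(\QQ,V^*_{\psi,j}(1))=0$ (which follow from Lemma~\ref{lemma_H0Q(r)iszero} with $r=1$, or directly from irreducibility of $\mathrm{Sym}^2$ of the big image representation under \textup{(\textbf{Im})}), one gets that the ``core rank'' of the canonical Selmer structure is $2$; equivalently, before imposing any conditions the relevant Selmer group has dimension $2$ more than its dual. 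This is where the precise setup of \cite[\S5.2--5.3]{LZ2} enters.

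Next I would feed the Euler system $\{Dd_r^{\alpha,\alpha}\}_{r\in\mathcal N_\chi}$ (and $\{Dd_r^{\alpha,-\alpha}\}$) into the rank-two Euler system argument. The key hypotheses to check are: (a) the ``big image'' condition for the Galois representation on $T_{\chi,j}$, which follows from \textup{(\textbf{Im})} — the image of $G_\QQ$ in $\mathrm{Aut}(\mathrm{Sym}^2 R_f^*)$ contains the image of a conjugate of $\mathrm{SL}_2(\ZZ_p)$, whose symmetric square has the required largeness; (b) the existence of the element $\tau\in G_\QQ$ with $T_{\psi,j}/(\tau-1)$ free of rank one and $\tau-1$ invertible on $X_{\psi,j}$, supplied by Proposition~\ref{Prop_TauIsGood}; (c) the non-existence of the ``exceptional'' subquotients, handled by Lemma~\ref{lemma_H0Q(r)iszero}; and (d) the hypotheses $(\Psi_1)$, $(\Psi_2)$ which guarantee the required genericity of $\psi=\chi\nu$ (note $\nu$ has $p$-power order and conductor $r$, so the earlier lemma shows $(\Psi_1),(\Psi_2')$ transfer to $\psi$). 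The linear independence statement Theorem~\ref{thm:horizontalESforSym2}(iv) ensures the Euler system is ``non-degenerate'' in the sense needed to obtain the full bound: the two classes $d_1^{\alpha,\pm\alpha}$ span a two-dimensional subspace of $H^1_{\mathcal F_\textup{can}}(\QQ,V_{\psi,j})$, and Rubin's Theorem (the rank-$r$ version, as in \cite[Theorem 2.2.3]{LZ2} or Mazur--Rubin) then forces $H^1_{\mathcal F_\textup{can}^*}(\QQ,V^*_{\psi,j}(1))=0$ and pins down $\dim_E H^1_{\mathcal F_\textup{can}}(\QQ,V_{\psi,j})=2$.

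Concretely the steps are: (1) identify $H^1_{\mathcal F_\textup{can}}(\QQ(r),W_f^*\otimes W_f^*(1-j)\otimes\omega^j\chi)$ with $H^1_{\mathcal F_\textup{can}}(\QQ(r),V_{\chi,j})$ using Corollary~\ref{cor:symmetricpowercohomequalsym2cohom}, so the Euler system really lives in the $\mathrm{Sym}^2$ part; (2) verify the core rank is $2$ via the Euler characteristic computation plus the $H^0$-vanishings; (3) check the running hypotheses of the Euler system theorem as itemized above; (4) apply the rank-two Euler system bound to $\{Dd_r^{\alpha,\alpha}\}_r$, concluding $H^1_{\mathcal F_\textup{can}^*}(\QQ,V^*_{\psi,j}(1))=0$ and $\dim H^1_{\mathcal F_\textup{can}}(\QQ,V_{\psi,j})\le 2$; (5) combine with Theorem~\ref{thm:horizontalESforSym2}(iv), which gives the lower bound $\dim\ge 2$, to finish. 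I would remark that since $V^*_{\psi,j}(1)$ has no $H^0$ and the dual Selmer group vanishes, by Poitou--Tate duality the inequality in (4) is in fact an equality, so (5) is more of a consistency check than an independent input, though citing (iv) is the cleanest route.

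The main obstacle is step (2)/(3): making sure that the formal hypotheses of the rank-two Euler system theorem genuinely hold for $\mathrm{Sym}^2$ of a non-ordinary form twisted into the relevant Tate twist, in particular that the ``$\tau$''-type element exists (this is exactly Proposition~\ref{Prop_TauIsGood}, so it is cited, but one must confirm the representation there matches $T_{\psi,j}$ for the $\psi=\chi\nu$ at hand, including the auxiliary $p$-power twist $\nu$) and that the big-image hypothesis for $\mathrm{Sym}^2$ is not spoiled by the nebentype or the twisting character. Everything else is a faithful transcription of \cite[Theorem 5.3.3]{LZ2}, so the proof will largely consist of pointing to loc.\ cit.\ and indicating that our hypotheses $(\Psi_1)$, $(\Psi_2)$, \textup{(\textbf{Im})} are exactly what is needed to run it over the fields $\QQ(r)$ rather than the full cyclotomic fields.
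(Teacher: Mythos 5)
Your proposal is correct and follows essentially the same route as the paper: the paper computes $\dim_E H^1_{\mathcal{F}_{\textup{can}}}(\QQ,V_{\psi,j})-\dim_E H^1_{\mathcal{F}^*_{\textup{can}}}(\QQ,V^*_{\psi,j}(1))=2$ from \cite[Theorem 5.2.15]{mr02} (using $\dim_E V_{\psi,j}^-=2$ and $H^0(\QQ_p,T_{\psi,j})=0$), then deduces both assertions by proving $H^1_{\mathcal{F}^*_{\textup{can}}}(\QQ,V^*_{\psi,j}(1))=0$ via the horizontal Euler system of Theorem~\ref{thm:horizontalESforSym2} and Rubin's Theorem 2.2.3 with the \S9.1 modification. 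Your more explicit account of why the two classes $d_1^{\alpha,\pm\alpha}$ and their linear independence (part (iv)) are needed is a useful unpacking of the paper's terse citation of ``the existence of a non-trivial horizontal Euler system''; the only cosmetic slip is that the Euler system theorem being invoked is \cite[Theorem 2.2.3]{rubin00}, not a theorem in \cite{LZ2}.
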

\begin{proof}
We start with the observation that $H^0(\QQ_p,T_{\psi,j})=0$
due to weight considerations. Notice further that $\dim_{E} V_{\psi,j}^-=2$. It therefore follows from \cite[Theorem 5.2.15]{mr02} that
$$\dim_{E} H^1_{\mathcal{F}_{\textup{can}}}(\QQ,V_{\psi,j})- \dim_{E} H^1_{\mathcal{F}_{\textup{can}}^*}(\QQ,V^*_{\psi,j}(1))=2\,.$$
As a result, the two assertions in the statement of our theorem are in fact equivalent and the latter follows from Theorem~\ref{thm_horizontalESforSym2} (the existence of a non-trivial \emph{horizontal Euler system}) and \cite[Theorem 2.2.3]{rubin00} (whose assumptions are modified via \cite[\S9.1]{rubin00}, by replacing the condition (ii) in the statement of  \cite[Theorem 2.2.3]{rubin00} with (ii)$^\prime$ in \S9.1 of loc.cit. so as to cover our case).
\end{proof}
\begin{corollary}
\label{cor:basecaseSelmer1}
In the setting of Theorem~\ref{THM_weakESargument}, the $\cO$-module $H^1_{\mathcal{F}_{\textup{can}}}(\QQ,T_{\psi,j})$ is free of rank $2$. 
\end{corollary}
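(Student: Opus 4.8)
The plan is to deduce freeness from the rank computation in Theorem~\ref{THM_weakESargument} together with the torsion-freeness of the Iwasawa cohomology, exactly as in the analogous ordinary statement of Loeffler--Zerbes.

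\begin{proof}[Proof of Corollary~\ref{cor:basecaseSelmer1}]
First I would observe that $H^1_{\mathcal{F}_{\textup{can}}}(\QQ,T_{\psi,j})$ is a finitely generated $\cO$-module, so by the structure theorem it suffices to show that it is torsion-free of $E$-rank $2$. The $E$-rank is $2$: indeed, $H^1_{\mathcal{F}_{\textup{can}}}(\QQ,T_{\psi,j})\otimes_{\cO}E = H^1_{\mathcal{F}_{\textup{can}}}(\QQ,V_{\psi,j})$ (the canonical Selmer structure is compatible with $\otimes E$ since it is cut out by local conditions that are themselves $\cO$-module direct summands away from $p$ and by the full local cohomology at $p$ by the argument recalled after Definition~\ref{def:canonicalselmerstructure}), and the right-hand side is $2$-dimensional by Theorem~\ref{THM_weakESargument}.

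For torsion-freeness, the key point is that $H^1_{\mathcal{F}_{\textup{can}}}(\QQ,T_{\psi,j})$ injects into $H^1(\QQ,T_{\psi,j})$, and the latter is torsion-free because $H^0(\QQ,T_{\psi,j}\otimes E/\cO)$ is finite --- equivalently because the residual representation $\overline{T}_{\psi,j}$ has no $G_\QQ$-fixed line, which follows from our large image hypothesis \textup{(\textbf{Im})} (the $\Sym^2$ of an $\mathrm{SL}_2(\ZZ_p)$-representation, suitably twisted, admits no trivial subquotient for the relevant characters, compare the argument of Lemma~\ref{lemma_H0Q(r)iszero}). A torsion element of $H^1(\QQ,T_{\psi,j})$ would come from a nonzero map $\cO/\p^n \to H^0(\QQ, T_{\psi,j}\otimes E/\cO)$ via the long exact sequence for $0\to T_{\psi,j}\xrightarrow{\p^n} T_{\psi,j}\to T_{\psi,j}/\p^n\to 0$, hence from a nonzero $G_\QQ$-fixed vector in $T_{\psi,j}/\p$, which \textup{(\textbf{Im})} rules out. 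Therefore $H^1(\QQ,T_{\psi,j})$, and a fortiori its submodule $H^1_{\mathcal{F}_{\textup{can}}}(\QQ,T_{\psi,j})$, is $\cO$-torsion-free.

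Combining the two paragraphs, $H^1_{\mathcal{F}_{\textup{can}}}(\QQ,T_{\psi,j})$ is a torsion-free $\cO$-module of rank $2$, hence free of rank $2$ as $\cO$ is a discrete valuation ring. The main (and really only) subtlety is the compatibility of the canonical Selmer structure with $-\otimes_\cO E$, i.e.\ verifying that $H^1_{\mathcal{F}_{\textup{can}}}(\QQ,T_{\psi,j})$ is precisely the $\cO$-lattice in $H^1_{\mathcal{F}_{\textup{can}}}(\QQ,V_{\psi,j})$ consisting of integral classes satisfying the integral local conditions; this is where one invokes the definitions in \cite[\S2.3, \S3.2]{mr02} and the finiteness of the relevant local $H^0$'s, but it is routine given \textup{(\textbf{Im})}.
\end{proof}
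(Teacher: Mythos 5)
Your proposal is correct and takes essentially the same route as the paper: reduce to showing $H^1_{\mathcal{F}_{\textup{can}}}(\QQ,T_{\psi,j})$ is torsion-free (the rank-$2$ statement being Theorem~\ref{THM_weakESargument}), and deduce torsion-freeness from $H^0(\QQ,\overline{T}_{\psi,j})=0$. The paper's proof is a two-line version of yours; your extra detail on the lattice/Selmer-structure compatibility and the Bockstein-type argument via $0\to T_{\psi,j}\xrightarrow{\p^n}T_{\psi,j}\to T_{\psi,j}/\p^n\to 0$ just spells out what the paper leaves implicit.
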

\begin{proof}
After Theorem~\ref{thm:canselmergroupshaverank2}, we only need to prove that $H^1_{\mathcal{F}_{\textup{can}}}(\QQ,T_{\psi,j})$ is torsion-free. This follows from the fact that 
$H^0(\QQ,\overline{T}_{\psi,j})=0$ under our running hypotheses.
\end{proof}

\begin{corollary}
\label{cor:equivariantSelmersize} In the setting of Theorem~\ref{THM_weakESargument}, we have
$$\textup{rank}_{\cO}\, H^1_{\mathcal{F}_{\textup{can}}}(\QQ(r),{T}_j)=2|\Delta_r|\,,$$ 
for each $r \in \mathcal{N}_\chi$. Furthermore, the module $H^1_{\mathcal{F}_{\textup{can}}^*}(\QQ(r),{T}^\vee_j(1))$ has finite cardinality.
\end{corollary}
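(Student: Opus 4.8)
The idea is to reduce the equivariant statement over $\QQ(r)$ to the field-level statement over $\QQ$ that has already been established in Theorem~\ref{THM_weakESargument} and Corollary~\ref{cor:basecaseSelmer1}, by a Shapiro-type argument. First I would invoke the isomorphism
\[
H^1_{\mathcal{F}_{\textup{can}}}(\QQ(r),T_j)\;\cong\;\bigoplus_{\nu}H^1_{\mathcal{F}_{\textup{can}}}(\QQ,T_{\chi\nu,j}),
\]
where $\nu$ runs over the (necessarily $p$-power order, hence even) characters of $\Delta_r$, after possibly enlarging $\cO$ so that all such $\nu$ are realized over it; the compatibility of the canonical Selmer structure with this decomposition follows exactly as in Corollary~\ref{cor:symmetricpowercohomequalsym2cohom} together with the standard behaviour of $\mathcal{F}_{\textup{can}}$ under induction (cf.\ \cite[Remark 3.1.4, Lemma 5.3.1]{mr02}). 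Each summand $H^1_{\mathcal{F}_{\textup{can}}}(\QQ,T_{\chi\nu,j})$ is free of rank $2$ over $\cO$ by Corollary~\ref{cor:basecaseSelmer1}, provided the Lemma preceding Proposition~\ref{Prop_TauIsGood} applies — and it does, since that Lemma shows precisely that $\psi=\chi\nu$ inherits $(\Psi_1)$ and $(\Psi_2^\prime)$ from the hypotheses $(\Psi_1)$ and $(\Psi_2)$ imposed on $\chi$. Summing over the $|\Delta_r|$ characters $\nu$ yields $\textup{rank}_{\cO}\,H^1_{\mathcal{F}_{\textup{can}}}(\QQ(r),T_j)=2|\Delta_r|$.

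For the second assertion, I would again pass through the decomposition: dualizing, $H^1_{\mathcal{F}_{\textup{can}}^*}(\QQ(r),T_j^\vee(1))$ breaks up as $\bigoplus_\nu H^1_{\mathcal{F}_{\textup{can}}^*}(\QQ,T_{\chi\nu,j}^\vee(1))$ (here one uses that $T_j^\vee(1)$ restricted from $\QQ(r)$ decomposes, over a suitably large coefficient ring, according to the characters of $\Delta_r$, and that the dual Selmer structure is compatible with this in the sense of \cite[\S2.3, \S5.3]{mr02}). Now $H^1_{\mathcal{F}_{\textup{can}}^*}(\QQ,V_{\chi\nu,j}^*(1))=0$ by Theorem~\ref{THM_weakESargument}, so each summand $H^1_{\mathcal{F}_{\textup{can}}^*}(\QQ,T_{\chi\nu,j}^\vee(1))$ is cofinitely generated and, being the Pontryagin dual of a finitely generated torsion $\cO$-module that becomes trivial after $\otimes\QQ_p$, has finite cardinality. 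A finite direct sum of finite groups is finite, giving the claim.

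The main obstacle — really the only point requiring care rather than bookkeeping — is verifying that the canonical and dual Selmer structures are genuinely compatible with the character decomposition of the induced representation, i.e.\ that for each $\nu$ the local conditions cutting out $H^1_{\mathcal{F}_{\textup{can}}}(\QQ(r),T_j)$ correspond under Shapiro's lemma exactly to the local conditions defining $H^1_{\mathcal{F}_{\textup{can}}}(\QQ,T_{\chi\nu,j})$ (and likewise for the duals). Away from $p$ this is automatic since $\mathcal{F}_{\textup{can}}$ imposes the unramified-or-full conditions that Shapiro transports correctly; at $p$ one must check that the local condition at $p$ for $T_j$ over $\QQ(r)$ — which for $\mathcal{F}_{\textup{can}}$ over an Iwasawa-type module is simply the full $H^1$, as recorded in the Remark following Definition~\ref{def:canonicalselmerstructure} — matches up summand by summand; since $r$ is prime to $p$, $\QQ(r)/\QQ$ is unramified at $p$ and this is routine. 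Everything else is the Lemma before Proposition~\ref{Prop_TauIsGood} (ensuring the hypotheses propagate to all $\psi=\chi\nu$) plus Corollaries~\ref{cor:basecaseSelmer1} and the torsion-freeness input, already in hand.
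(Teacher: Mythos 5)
Your proposal is correct and follows essentially the same route as the paper: both reduce over $\QQ(r)$ to the rank-two/vanishing statements over $\QQ$ via Shapiro's lemma for the characters $\nu$ of $\Delta_r$, using the lemma preceding Proposition~\ref{Prop_TauIsGood} to see that $\psi=\chi\nu$ inherits the hypotheses. The only cosmetic difference is in the second assertion, where the paper quotes global duality over $\QQ(r)$ to pass from the rank computation to finiteness, whereas you decompose $H^1_{\mathcal{F}_{\textup{can}}^*}(\QQ(r),T_j^\vee(1))$ once more by Shapiro and invoke the vanishing $H^1_{\mathcal{F}_{\textup{can}}^*}(\QQ,V_{\chi\nu,j}^*(1))=0$ from Theorem~\ref{THM_weakESargument} directly; both are equivalent and rest on the same inputs.
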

\begin{proof}
For each character $\nu$ of $\Delta_r$, we infer from Theorem~\ref{thm:canselmergroupshaverank2} (applied with the character $\psi=\chi\nu$) that $\dim H^1_{\mathcal{F}_{\textup{can}}}(\QQ,W_{\chi\nu,j})=2|\Delta_r|$ and the first assertion follows by Shapiro's Lemma. 
The second assertion is an immediate consequence of the first and global duality.
\end{proof}
\subsection{Structure of Iwasawa theoretic Selmer groups} \label{subsec_Iwasawa_Theory}
{We shall rely on results  in Section~\ref{subsubsec_sym_square}  to prove our main result (Theorem~\ref{thm_mainSelmerstructure}) of Section~\ref{sec_selmerstructure}, where we describe the structures of certain Iwasawa theoretic Selmer groups.}

 We recall that $T:=T_{\chi,0}=\Sym^2R_f^*(1+\chi)$. Set $\TT_j:=T_j\otimes \LL_{\cO}(\Gamma)^\iota$ and recall that we have $H^1_{\mathcal{F}_{\textup{can}}}(\QQ,\TT_j)=H^1(\QQ,\TT_j)$ by \cite[Lemma 5.3.1]{mr02}. 
When $j=0$, we shall drop $j$ from the notation and simply write $\TT$ in place of $\TT_0=T\otimes\LL_{\cO}(\Gamma)^\iota$.
\begin{corollary}
\label{cor:equivariantSelmersize1}
In the setting of Theorem~\ref{THM_weakESargument},  the $\LL_{\cO}(\Gamma)$-module $H^1_{\mathcal{F}_{\textup{can}}^*}(\QQ(r),{\TT}_j^\vee(1))$ is cotorsion and the $\LL_{\cO}(\Gamma)$-module $H^1(\QQ(r),{\TT}_j)$ has rank $2|\Delta_r|$ for each $r \in \mathcal{N}_\chi$.
\end{corollary}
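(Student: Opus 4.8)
The plan is to deduce both statements from the non-equivariant base case in Corollary~\ref{cor:equivariantSelmersize} via an Iwasawa-descent argument, exactly as in the passage from a rank statement at finite level to a rank statement over $\LL_{\cO}(\Gamma)$. First I would record that $\TT_j = T_j\otimes\LL_{\cO}(\Gamma)^\iota$, so by Shapiro's Lemma $H^1(\QQ(r),\TT_j) = H^1(\QQ(r)\QQ_\infty,T_j)$ as a module over $\LL_r = \cO[[\Delta_r\times\Gamma]]$, and likewise $H^1_{\mathcal{F}_{\textup{can}}^*}(\QQ(r),\TT_j^\vee(1))$ is computed by the inverse limit/direct limit of the finite-level dual Selmer groups along the cyclotomic tower. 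The key input that makes descent work cleanly is Lemma~\ref{lemma_H0Q(r)iszero}, which gives $H^0(\QQ(r)\QQ_\infty,T_{\chi,j}) = 0$; dually this forces $H^2(\QQ(r)\QQ_\infty,\cdot)$-type terms and the cohomology of the residual representation to behave, so that there is no torsion obstruction to lifting the finite-level computation.

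Next I would run the control theorem for the canonical Selmer structure $\mathcal{F}_{\textup{can}}$ in the Iwasawa setting (this is precisely the setup of \cite[\S5.3]{mr02}, and the needed $H^0$-vanishing is Lemma~\ref{lemma_H0Q(r)iszero}). Concretely: the cotorsionness of $H^1_{\mathcal{F}_{\textup{can}}^*}(\QQ(r),\TT_j^\vee(1))$ over $\LL_{\cO}(\Gamma)$ follows because its $\Gamma$-coinvariants inject (by the control theorem, using $H^0(\QQ(r)\QQ_\infty,T_j)=0$) into $H^1_{\mathcal{F}_{\textup{can}}^*}(\QQ(r),T_j^\vee(1))$, which has finite cardinality by Corollary~\ref{cor:equivariantSelmersize}; a module over $\LL_{\cO}(\Gamma)$ whose $\Gamma$-coinvariants are finite is cotorsion by Nakayama. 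For the rank statement, I would combine this with the global Euler characteristic formula (equivalently, the Poitou--Tate nine-term sequence) over $\LL_r$: the generic rank of $H^1(\QQ(r),\TT_j)$ as an $\LL_r$-module equals $\dim_{E}$ of the $\QQ_p$-cohomology $H^1_{\mathcal{F}_{\textup{can}}}(\QQ(r),V_j)$ minus the (now zero, by the cotorsion statement) contribution of the dual Selmer group, i.e. it equals $2|\Delta_r|$ by Corollary~\ref{cor:equivariantSelmersize}; alternatively, specializing at the augmentation ideal of $\Gamma$ and invoking Corollary~\ref{cor:equivariantSelmersize} together with the control theorem gives the rank lower bound, while the dual-Selmer cotorsionness gives the matching upper bound.

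The main obstacle I anticipate is bookkeeping the failure of the strict control theorem at the level of $\mathcal{F}_{\textup{can}}$ versus $\mathcal{F}_{\textup{can}}^*$: one has to be careful that the local conditions at $p$ (which for $\mathcal{F}_{\textup{can}}$ is the full $H^1(\QQ_p,-)$, and dually the strict condition on $\TT_j^\vee(1)$) are compatible with cyclotomic descent, and that the error terms in the control theorem are controlled by $H^0$ and $H^2$ of the relevant local and global representations. Here the vanishing $H^0(\QQ(r)\QQ_\infty,T_{\chi,j})=0$ of Lemma~\ref{lemma_H0Q(r)iszero} and the analogous local vanishing $H^0(\QQ_p,T_{\psi,j})=0$ used in the proof of Theorem~\ref{thm:canselmergroupshaverank2} (valid for every character twist by $\Delta_r$, since those are even $p$-power-order characters) are exactly what kills these error terms. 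Thus the proof reduces to: (1) invoke Shapiro to rewrite everything over $\QQ$ with coefficients twisted by characters of $\Delta_r$; (2) for each such twist $\psi=\chi\nu$, apply Corollary~\ref{cor:equivariantSelmersize1}'s non-equivariant predecessor together with the cyclotomic control theorem of \cite[\S5.3]{mr02}; (3) assemble the pieces, using that $|\Delta_r|$ is prime to $p$ so the $\Delta_r$-isotypic decomposition is exact over $\cO$.

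\begin{proof}
By Shapiro's Lemma, $H^1(\QQ(r),\TT_j)\cong H^1(\QQ(r)\QQ_\infty,T_j)$ as $\LL_r$-modules, and similarly $H^1_{\mathcal{F}_{\textup{can}}^*}(\QQ(r),\TT_j^\vee(1))\cong \varinjlim_n H^1_{\mathcal{F}_{\textup{can}}^*}(\QQ(r)(\mu_{p^n}),T_j^\vee(1))$. Since $p\nmid |\Delta_r|$, we may decompose into $\nu$-isotypic components for $\nu$ running over the characters of $\Delta_r$; for each such $\nu$ set $\psi=\chi\nu$, which (being an even character of $p$-power conductor times $\chi$) satisfies $(\Psi_1)$ and $(\Psi_2^\prime)$, so Theorem~\ref{THM_weakESargument} and Corollary~\ref{cor:equivariantSelmersize} apply with $\psi$ in place of $\chi$.

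Fix such a $\psi$. By Lemma~\ref{lemma_H0Q(r)iszero} we have $H^0(\QQ(r)\QQ_\infty,T_{\psi,j})=0$, and in the proof of Theorem~\ref{thm:canselmergroupshaverank2} we noted $H^0(\QQ_p,T_{\psi,j})=0$. Consequently the control theorem for the Iwasawa-theoretic Selmer structures $\mathcal{F}_{\textup{can}}$ and $\mathcal{F}_{\textup{can}}^*$ in the sense of \cite[\S5.3]{mr02} applies without error terms: the natural maps
\[
H^1_{\mathcal{F}_{\textup{can}}^*}(\QQ,T_{\psi,j}\otimes\LL_{\cO}(\Gamma)^\iota)^\vee(1)\big)_\Gamma\longrightarrow H^1_{\mathcal{F}_{\textup{can}}^*}(\QQ,T_{\psi,j}^\vee(1))
\]
have finite kernel and cokernel. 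By Corollary~\ref{cor:equivariantSelmersize} (applied with $\psi$), $H^1_{\mathcal{F}_{\textup{can}}^*}(\QQ,T_{\psi,j}^\vee(1))$ is finite, so the $\Gamma$-coinvariants of $H^1_{\mathcal{F}_{\textup{can}}^*}(\QQ,T_{\psi,j}\otimes\LL_{\cO}(\Gamma)^\iota)^\vee(1)$ are finite; by Nakayama's lemma this module is finitely generated and torsion over $\LL_{\cO}(\Gamma)$. Summing over $\nu$ (equivalently, applying Shapiro again) shows that $H^1_{\mathcal{F}_{\textup{can}}^*}(\QQ(r),\TT_j^\vee(1))$ is cotorsion over $\LL_{\cO}(\Gamma)$.

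For the rank statement, we use the global Euler characteristic formula over $\LL_r$ together with the Poitou--Tate exact sequence: since the local condition of $\mathcal{F}_{\textup{can}}$ at $p$ is the full local cohomology and $V_{\psi,j}^-$ is two-dimensional over $E$, the same computation as in \cite[Theorem 5.2.15]{mr02} carried out over $\LL_{\cO}(\Gamma)$ (legitimate because $H^0(\QQ\QQ_\infty,T_{\psi,j})=0=H^0(\QQ_p,T_{\psi,j})$) yields
\[
\mathrm{rank}_{\LL_{\cO}(\Gamma)}\,H^1(\QQ,T_{\psi,j}\otimes\LL_{\cO}(\Gamma)^\iota)-\mathrm{corank}_{\LL_{\cO}(\Gamma)}\,H^1_{\mathcal{F}_{\textup{can}}^*}(\QQ,T_{\psi,j}\otimes\LL_{\cO}(\Gamma)^\iota)^\vee(1)=2.
\]
By the cotorsionness just proved, the second term is $0$, so $H^1(\QQ,T_{\psi,j}\otimes\LL_{\cO}(\Gamma)^\iota)$ has rank $2$ over $\LL_{\cO}(\Gamma)$. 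Summing over the $|\Delta_r|$ characters $\nu$ of $\Delta_r$ and invoking Shapiro's Lemma gives $\mathrm{rank}_{\LL_r}\,H^1(\QQ(r),\TT_j)=2|\Delta_r|$, as claimed.
\end{proof}
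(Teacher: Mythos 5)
Your overall strategy matches the paper's: apply the cyclotomic control theorem of \cite[\S5.3]{mr02} to identify the $\Gamma$-coinvariants of the Pontryagin dual of $H^1_{\mathcal{F}_{\textup{can}}^*}(\QQ(r),\TT_j^\vee(1))$ with the (finite, by Corollary~\ref{cor:equivariantSelmersize}) finite-level dual Selmer group, then invoke Nakayama to deduce cotorsion, and finally use Poitou--Tate global duality / the Euler characteristic formula to recover the rank. The key inputs and the logical skeleton are identical.

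However, there is a genuine error in your proposal. You assert twice that $p \nmid |\Delta_r|$ (and use this to claim ``the $\Delta_r$-isotypic decomposition is exact over $\cO$''). This is false by construction: $\QQ(r)/\QQ$ is defined in the paper as the compositum of the unique abelian $p$-sub\-extensions $\QQ(\ell)\subset\QQ(\mu_\ell)$, so $\Delta_r = \Gal(\QQ(r)/\QQ)$ is a $p$-group, and $p \mid |\Delta_r|$ whenever $r>1$. Consequently the $\nu$-isotypic decomposition of $\cO[\Delta_r]$-modules is not exact integrally, and step (3) of your outline does not go through as written. The paper avoids this entirely: Corollary~\ref{cor:equivariantSelmersize} is already stated over $\QQ(r)$, so the control theorem can be invoked directly for the full module without any $\Delta_r$-decomposition. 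Your argument can still be repaired — the cotorsionness of a finitely generated $\LL_\cO(\Gamma)$-module and the $\LL_\cO(\Gamma)$-rank are both detected after inverting $p$, and the isotypic decomposition \emph{is} exact over $E$ — but as stated the claim that the decomposition is exact over $\cO$ is an error, and the detour through isotypic components is unnecessary. The cleaner route is the paper's: apply the control theorem directly over $\QQ(r)$ to the full $\TT_j$, then cite Corollary~\ref{cor:equivariantSelmersize} for finiteness.

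One further minor point: you describe the control map as having ``finite kernel and cokernel'' immediately after saying it ``applies without error terms''; the paper states that under the $H^0$-vanishing hypotheses it is an isomorphism, which is what \cite[Lemma 3.5.3]{mr02} gives here.
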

\begin{proof}
The first assertion follows from the control theorem \cite[Lemma 3.5.3]{mr02}
$$\left(H^1_{\mathcal{F}_{\textup{can}}^*}(\QQ(r),{\TT}_j^\vee(1))^{\vee}\right)_\Gamma\stackrel{\sim}{\lra} H^1_{\mathcal{F}_{\textup{can}}^*}(\QQ(r),{T}_j^\vee(1))^\vee$$
and Corollary~\ref{cor:equivariantSelmersize}. 

{Let us write $\chi(\TT_j,r)$ for the global Euler--Poincar\'e characteristic for the canonical Selmer structure $\cF_{\rm can}$  over the totally real field $\QQ(r)$. Then,
\begin{align*}
\chi(\TT_j,r)&={\rm rank}_{\LL_{\cO}(\Gamma)}\left( H^1_{\mathcal{F}_{\textup{can}}}(\QQ,\TT_j)\right)-{\rm rank}_{\LL_{\cO}(\Gamma)}\left( H^1_{\mathcal{F}_{\textup{can}}^*}(\QQ(r),{\TT}_j^\vee(1))^{\vee}\right)\\
&={\rm rank}_{\LL_{\cO}(\Gamma)}\left( H^1(\QQ,\TT_j)\right)
\end{align*}
where the first equality follows from the definition of the Euler--Poincar\'e characteristic and the vanishing of $H^0(\QQ(r),\overline{T}_j)$, whereas the second from the first and \cite[Lemma 5.3.1]{mr02}. On the other hand, it follows from the global Euler--Poincar\'e characteristic computation in \cite[Theorem 7.8.6]{nekovar06} that 
$$\chi(\TT_j,r)=[\QQ(r):\QQ]\,{\rm rank}_{\cO}T_j^-,$$
where we rely on the fact that the number field $\QQ(r)$ is totally real. Since ${\rm rank}_{\cO}T_j^-={\rm rank}_{\cO}T^-=2$, the proof of our second assertion follows.}
\end{proof}
\begin{corollary}
\label{cor_v2_Lambda_rank}
In the setting of Theorem~\ref{THM_weakESargument}, the $\LL_{\cO}(\Gamma)$-module $H^1(\QQ(r),{\TT})$ has rank $2|\Delta_r|$  for every $r \in \mathcal{N}_\chi$.
\end{corollary}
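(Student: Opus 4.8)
The plan is to obtain this at once from Corollary~\ref{cor:equivariantSelmersize1} by a twisting argument, the point being that $\TT=\TT_0$ differs from $\TT_j$, for any of the integers $j\in[k+2,2k+2]$ already treated, only by a twist by a character of $\Gamma$. Indeed, unwinding the definitions one has $T=T_{\chi,0}$ and $T_j=T_{\chi,j}=T(-j)\otimes\omega^j$, so that $T_j\cong T\otimes\theta_j$ with $\theta_j:=\chi_{\cyc}^{-j}\omega^j$; as both the cyclotomic character $\chi_\cyc$ and the Teichm\"uller character $\omega$ factor through $\Gamma=\Gal(\QQ(\mu_{p^\infty})/\QQ)$, so does $\theta_j$, and hence $\TT_j=T_j\otimes\LL_\cO(\Gamma)^\iota\cong\TT\otimes\theta_j$ as $G_\QQ$-modules, a fortiori as $G_{\QQ(r)}$-modules.

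First I would invoke the twisting isomorphism of \cite[Section~6]{rubin00} (the same formalism already used in the proof of Proposition~\ref{prop:prop:vanishingofX1}): writing $\Tw_{\theta_j}$ for the $\cO$-algebra automorphism of $\LL_\cO(\Gamma)$ determined by $\gamma\mapsto\theta_j(\gamma)\gamma$, one gets a $\Tw_{\theta_j}$-semilinear isomorphism
$$H^1(\QQ(r),\TT)\stackrel{\sim}{\lra}H^1(\QQ(r),\TT_j)$$
of $\LL_\cO(\Gamma)$-modules, which is moreover equivariant for the action of $\Delta_r=\Gal(\QQ(r)/\QQ)$ since $\theta_j$ is inflated from $\Gamma$ while $\QQ(r)$ is linearly disjoint from $\QQ(\mu_{p^\infty})$ over $\QQ$. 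As $\Tw_{\theta_j}$ is a ring automorphism of $\LL_\cO(\Gamma)$ (acting trivially on $\Delta_r$), it preserves ranks over $\LL_\cO(\Gamma)$ and over $\LL_r$. Taking, say, $j=k+2$ and applying Corollary~\ref{cor:equivariantSelmersize1} then yields that $H^1(\QQ(r),\TT)$ has rank $2|\Delta_r|$, as required.

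I do not anticipate any genuine difficulty; the only steps that require a little care are (i) verifying that the relevant twist is indeed by a character of $\Gamma$, so that the twisting formalism is available, and (ii) checking $\Delta_r$-equivariance of the twisting isomorphism, so that the numerical rank transfers correctly between the various Iwasawa algebras in play. If one wished to avoid appealing to Corollary~\ref{cor:equivariantSelmersize1}, the rank could alternatively be computed directly from the global Euler characteristic formula: one has $H^0(\QQ(r)\QQ_\infty,T)=0$ (the argument of Lemma~\ref{lemma_H0Q(r)iszero} applies with $j=0$ as well), the weak Leopoldt conjecture for $T$ along the cyclotomic tower renders $H^2(\QQ(r),\TT)$ a torsion $\LL_\cO(\Gamma)$-module, and the Euler characteristic formula then identifies $\rk_{\LL_\cO(\Gamma)}H^1(\QQ(r),\TT)$ with the dimension of the $(-1)$-eigenspace of complex conjugation on $\Ind_{\QQ(r)/\QQ}T$; since $\QQ(r)$ is totally real and complex conjugation acts on $\Sym^2R_f^*(1+\chi)$ with a two-dimensional $(-1)$-eigenspace (using that $\chi$ is even), this dimension equals $2|\Delta_r|$. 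The drawback of this alternative route is precisely that it invokes weak Leopoldt for $T$, which is why the twisting reduction is preferable.
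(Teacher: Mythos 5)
Your proof is correct and takes essentially the same route as the paper: both reduce the assertion to Corollary~\ref{cor:equivariantSelmersize1} by observing that $\TT$ and $\TT_j$ differ by a twist by the character $\chi_\cyc^{-j}\omega^j$ of $\Gamma$, which induces a rank-preserving (semilinear) isomorphism on cohomology. Your write-up is somewhat more explicit than the paper's one-line argument about why the twist is by a character of $\Gamma$ and why rank is preserved, and the alternative Euler-characteristic route you sketch is a reasonable (if less economical) fallback; but the essential idea is the same.
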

\begin{proof}
This follows from Corollary~\ref{cor:equivariantSelmersize1} on noticing that 
\begin{align*}
H^1(\QQ(r),{\TT}) \stackrel{\sim}{\lra} H^1(\QQ(r),\TT_j)\otimes \chi_\cyc^{j}\omega^{-j}.
\end{align*}
\end{proof}

\begin{lemma}
\label{Lemma_torsionfree}
The $\LL_{\cO}(\Gamma)$-module $H^1(\QQ(r),\TT)$ is {torsion-free}  for every $r \in \mathcal{N}_\chi$.
\end{lemma}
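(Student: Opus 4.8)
The plan is to show that $H^1(\QQ(r),\TT)$ has no nonzero $\LL_{\cO}(\Gamma)$-torsion by the standard argument that the torsion submodule of an Iwasawa cohomology group injects into the cohomology of a residual representation, and then invoking the vanishing of the latter. More precisely, since $\LL_{\cO}(\Gamma)$ is (up to the finite factor coming from $\Gamma_{\rm tors}$) a product of regular local rings, it suffices to control the torsion after projecting to each height-one prime; but the cleanest route is to use the fact that for a finitely generated $\LL_{\cO}(\Gamma)$-module $M$ arising as $H^1$ of a Galois representation that is free over $\LL_{\cO}(\Gamma)^\iota$, the torsion submodule is detected at the level of $\Gamma$-coinvariants via a control sequence.

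First I would recall the setup: $\TT = T\otimes\LL_{\cO}(\Gamma)^\iota$ with $T=\Sym^2 R_f^*(1+\chi)$ a free $\cO$-module, and we have $H^1(\QQ(r),\TT)=H^1_{\mathcal{F}_{\textup{can}}}(\QQ(r),\TT)$. By Shapiro's lemma this may be identified (after twisting) with $H^1_{\mathrm{Iw}}(\QQ(r)(\mu_{p^\infty}), T)$, i.e. $\varprojlim_n H^1(\QQ(r)(\mu_{p^n}), T)$. The key input is that, by \cite[Lemma 5.3.1]{mr02} together with the vanishing $H^0(\QQ(r)\QQ_\infty, T_{\chi,j})=0$ from Lemma~\ref{lemma_H0Q(r)iszero} (which, as noted in the proof of Theorem~\ref{thm:horizontalESforSym2}, also gives $H^0(\QQ(r)\QQ_\infty, W_f^*\otimes W_f^*(1-j)\otimes\omega^j\chi)=0$ and hence the analogous statement for $T$ after untwisting), there is no invariants obstruction to the control maps being injective.

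Then I would argue as follows. Let $M:=H^1(\QQ(r),\TT)$ and let $M_{\rm tors}$ be its $\LL_{\cO}(\Gamma)$-torsion submodule; since $\cO[[\Gamma_1]]\cong\cO[[X]]$ is a two-dimensional regular local ring (after decomposing along characters of $\Gamma_{\rm tors}$), it is enough to show $M[\p]=0$ for every height-one prime $\p$, or more simply that $M$ embeds into a free module. The standard mechanism: the exact sequence $0\to\TT\xrightarrow{\gamma-1}\TT\to T\to 0$ (coming from $\LL_{\cO}(\Gamma_1)^\iota/(\gamma-1)=\cO$, after first isolating the relevant component of $\Gamma_{\rm tors}$) yields
\[
0\lra H^0(\QQ(r)\QQ_\infty,T)\lra M/(\gamma-1)M \lra H^1(\QQ(r)(\mu_{p^\infty}),T)^{\Gamma_1}\,,
\]
wait — rather, the cleaner statement is that $H^1(\QQ(r),\TT)$ is torsion-free precisely because $H^0(\QQ(r)(\mu_{p^\infty}), T)=0$: any pseudo-null or torsion class would have to come from an $H^0$ term in the descent spectral sequence $H^i(\Gamma_1, H^j(\QQ(r)(\mu_{p^\infty}),T))\Rightarrow H^{i+j}$, and the only way for $H^1_{\mathrm{Iw}}$ to acquire torsion is through $H^0$ of the base. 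Concretely, one invokes the well-known fact (see e.g. \cite{mr02} or Perrin-Riou) that for $T$ a free $\cO$-module with continuous $G_{\QQ(r)}$-action, $H^1_{\mathrm{Iw}}(\QQ(r)(\mu_{p^\infty}),T)$ is $\LL$-torsion-free iff $H^0(\QQ(r)(\mu_{p^\infty}),T)=0$; the latter is our Lemma~\ref{lemma_H0Q(r)iszero}.

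**The main obstacle** is purely bookkeeping: matching the twisting conventions so that the vanishing statement of Lemma~\ref{lemma_H0Q(r)iszero}, stated for $T_{\chi,j}$, transfers to $T=T_{\chi,0}$ and then to $\TT$, and making sure the decomposition of $\LL_{\cO}(\Gamma)$ along $\Gamma_{\rm tors}$-characters does not introduce spurious torsion. I do not expect any genuine difficulty here — the argument is a routine application of the control theorem combined with the $H^0$-vanishing already established — so I would keep the proof to two or three lines, citing \cite[Lemma 5.3.1]{mr02} and Lemma~\ref{lemma_H0Q(r)iszero}, and observing that $H^1(\QQ(r),\TT)\cong H^1(\QQ(r),\TT_j)$ up to twist so it suffices to treat any single convenient $j$.
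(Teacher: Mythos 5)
Your proposal takes essentially the same route as the paper: the paper's proof is literally one line, ``This is immediate by Lemma~\ref{lemma_H0Q(r)iszero}'', invoking the standard fact that the $\LL_{\cO}(\Gamma)$-torsion submodule of $H^1(\QQ(r),\TT)$ is controlled by $H^0$ of $T$ over the cyclotomic tower, which vanishes. Your write-up is considerably more verbose (and includes some backtracking mid-argument that would need to be cleaned up before inclusion), but the essential content and the key citation are the same.
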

\begin{proof}
This is immediate from Lemma~\ref{lemma_H0Q(r)iszero}.
\end{proof}
{We recall that $\LL_r=\cO[[\Delta_r\times\Gamma]]$, where $\Delta_r=\Gal(\QQ(r)/\QQ)$.}
\begin{theorem}
\label{thm_mainSelmerstructure}
In the setting of Theorem~\ref{THM_weakESargument}, the $\LL_r$-module $H^1(\QQ(r),\TT)$ is free of rank $2$  for every $r \in \mathcal{N}_\chi$.
\end{theorem}
\begin{proof}
We have a natural injection
$$H^1(\QQ(r),\TT)/(\mathcal{A}_r,\omega^{-j}\chi_\cyc^{j}(\gamma)\gamma-1) \hookrightarrow H^1_{\mathcal{F}_{\textup{can}}}(\QQ,T_j)\,,$$
where $\mathcal{A}_r \subset \cO[\Delta_r]$ is the augmentation ideal. It follows from Nakayama's lemma and Corollary~\ref{cor:basecaseSelmer1} that the $\LL_r$-module $H^1(\QQ(r),\TT)$ may be generated by at most $2$ elements. Let $\{c_1,c_2\}$ be any set of such generators. To prove our theorem, it suffices to check that $c_1$ and $c_2$ do not admit a non-trivial $\LL_r$-linear relation. 

Assume the contrary and suppose that there is a non-trivial relation 
\begin{equation}\label{eqn:linearrelationalphac}
\alpha_1c_1 + \alpha_2c_2=0,\,\,\, \alpha_1,\alpha_2 \in \LL_r\,.
\end{equation}
Write $\mathscr{B}=\{\delta c_j: \delta \in \Delta_r, j=1,2\}$. Notice that $\mathscr{B}$ generates $H^1(\QQ(r),\TT)$ as a $\LL_{\cO}(\Gamma)$-module and 
\begin{equation}
\label{eqn:dimensioncount}
|\mathscr{B}|=2|\Delta_r|=\dim_{\textup{Frac}(\LL_{\cO}(\Gamma))} \left(H^1(\QQ(r),\TT)\otimes_{\LL_{\cO}(\Gamma)}\textup{Frac}(\LL_{\cO}(\Gamma))\right)\,,
\end{equation}
where the final equality is Corollary~\ref{cor_v2_Lambda_rank}. The equation (\ref{eqn:linearrelationalphac}) may be rewritten as 
{
\begin{equation}
\label{eqn:lamdalinearrelationfordeltac}
\sum_{\delta} (\lambda_{\delta,1} \cdot \delta) c_1 + \sum_{\delta} (\lambda_{\delta,2} \cdot \delta) c_2 = 0
\end{equation}}
with $\lambda_{\delta,j} \in \LL_{\cO}(\Gamma)$. Since $H^1(\QQ(r),\TT)$ is $\LL_{\cO}(\Gamma)$-torsion-free by Lemma~\ref{Lemma_torsionfree}, we have a canonical injection 
$$\iota: H^1(\QQ(r),\TT)\hookrightarrow H^1(\QQ(r),\TT) \otimes_{\LL_{\cO}(\Gamma)}\textup{Frac}(\LL_{\cO}(\Gamma))$$ 
and furthermore, as a $\textup{Frac}(\LL_{\cO}(\Gamma))$-vector space, $H^1(\QQ(r),\TT) \otimes_{\LL_{\cO}(\Gamma)}\textup{Frac}(\LL_{\cO}(\Gamma))$ is generated by the set $\iota(\mathscr{B})$\,. It follows from the relation (\ref{eqn:lamdalinearrelationfordeltac}) that 
$$\dim_{\textup{Frac}(\LL_{\cO}(\Gamma))} \left(H^1(\QQ(r),\TT)\otimes_{\LL_{\cO}(\Gamma)}\textup{Frac}(\LL_{\cO}(\Gamma))\right) \leq |\iota(\mathscr{B})|-1=2|\Delta_r|-1,$$
which contradicts (\ref{eqn:dimensioncount}) and concludes our proof.
\end{proof}


\section{Signed Iwasawa theory} \label{sec:signediwasawatheory}
{In this section, we shall generalize the construction of plus and minus Coleman maps of Kobayashi \cite{kobayashi03} to the representation $W_f^*\otimes W_f^*(1+\chi)$. The kernels of these maps are then served to define local Selmer conditions at $p$, which in turn are used to define the so-called doubly  signed Selmer groups. The local theory we develop here is also used to factorize  the Beilinson--Flach elements in Section~\ref{subsec:plocalBF} into bounded elements and to define bounded $p$-adic $L$-functions, generalizing the work of Pollack  \cite{pollack03} on one single modular form. Conjecture~\ref{conj:signedmainconjecture} relates these $p$-adic $L$-functions to the doubly signed Selmer groups, which is a form of the Iwasawa main conjecture in the spirit of Kobayashi's work on supersingular elliptic curves in \cite{kobayashi03}. We prove one inclusion of the conjecture in Theorem~\ref{thm_signedmainconjecture} using the bounded Beilinson--Flach elements we obtain in \S\ref{S:signedBF}.}

 We recall here that Pollack's plus and minus logarithms are defined as follows. Let $\chi_\cyc$ denote the cyclotomic character on $\Gamma$ and recall that $\gamma$ is a fixed topological generator of $\Gamma_1$.  For an integer $r\ge1$, we define
\begin{align*}
\log^+_{p,r}&=\prod_{j=0}^{r-1}\frac{1}{p}\prod_{n=1}^\infty\frac{\Phi_{p^{2n}}(\chi_\cyc^{-j}(\gamma)\gamma)}{p},\\
\log^-_{p,r}&=\prod_{j=0}^{r-1}\frac{1}{p}\prod_{n=1}^\infty\frac{\Phi_{p^{2n-1}}(\chi_\cyc^{-j}(\gamma)\gamma)}{p}.
\end{align*}
Recall from \cite{pollack03} that $\log^\pm_{p,r}\in \cH_{E,r/2}(\Gamma)$, where $\cH_{E,r/2}(\Gamma)$ denotes the set of $E$-valued tempered distributions of order $r/2$ on $\Gamma$.  (in fact, $\log_{p,r}^\pm\sim O(\log_p^{r/2})$). We shall also write 
\[
\log_{p,r}=\prod_{j=0}^{r-1}\log_{p}(\chi_\cyc^{-j}(\gamma)\gamma)\in \cH_{E,r}(\Gamma).
\]

If $n$ is an integer, we write 
\begin{equation}\label{eq:twist}
\Tw_n:\cH_{E,r}(\Gamma)\rightarrow \cH_{E,r}(\Gamma)
\end{equation}
for the $E$-linear map  induced by $\sigma\mapsto\chi_\cyc^n(\sigma)\sigma $ for all $\sigma\in \Gamma$. {Observe that
\begin{equation}\label{eq:twist-divide}
\textup{Tw}_{-n}\log_{p,r}^{?}={\log_{p,r+n}^{?}}/{\log_{p,n}^{?}}\,,\,\,\,\, ?=\emptyset,\pm.
\end{equation}
As a shorthand, we set
\begin{equation} \label{eqn:log(1)}
\log_{p,r}^{?, (1)}\defeq\textup{Tw}_{-1}\log_{p,r}^{?}={\log_{p,r+1}^{?}}/{\log_{p,1}^{?}}\,,\,\,\,\, ?=\emptyset,\pm.
\end{equation}
It follows from \eqref{eq:twist-divide} that
\begin{equation}\label{eq:twist-divide2}
\textup{Tw}_{-n}\log_{p,r}^{?,(1)}={\log_{p,r+n}^{?,(1)}}/{\log_{p,n}^{?,(1)}}\,,\,\,\,\, ?=\emptyset,\pm.
\end{equation}}

\subsection{Local theory}
{We study  a decomposition of the local representation $R_f\otimes R_f|_{G_{\Qp}}$, which relies crucially on our assumption that $a_p(f)=0$. This decomposition allows us to relate the local representation to the setting studied in \cite{lei09}. This relation will be exploited to define the signed Coleman maps in \S\ref{sec:signedcoleman}. }

Let $D$ be the Dieudonn\'e module of $W_f|_{G_{\Qp}}$. Recall that 
\[
\dim_L\Fil^iD=\begin{cases}
2& i \le 0,\\
1& 1\le i\le k+1,\\
0&i\ge k+2.
\end{cases}
\]
Recall that we have assumed the \emph{Fontaine--Laffaille} condition  {$p > k+1$} holds. 
On combing the Wach module basis in \cite[\S3]{bergerlizhu04} with the construction of integral Dieudonn\'e module in \cite[\S IV]{berger04}, there is an $\cO$-lattice $\Dcris(R_f)$ inside $D$, which is generated by $\omega,p^{-k-1}\vp(\omega)$, where $\omega$ is an $\cO$-basis of $\Fil^1\Dcris(R_f)$, which we fix from now on. (See also \cite[Lemma~3.1]{LLZ-CJM} for a similar basis.)

\begin{lemma}\label{lem:localsplit}
The filtered $\vp$-module $\Sym^2D$ is decomposable into $D_1\oplus D_2$, where $D_i$ is of dimension $i$ for both $i=1,2$.
\end{lemma}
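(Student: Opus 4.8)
The plan is to work out the structure of $D$ explicitly and then pass to the symmetric square. First I would record the data we already have: $D$ is a two-dimensional filtered $\varphi$-module over $E$ with basis $\{\omega, p^{-k-1}\varphi(\omega)\}$, with $\Fil^1 D = E\cdot\omega$, $\Fil^{k+2}D = 0$, and with $\varphi$ satisfying $\varphi^2 = -\epsilon_f(p)p^{k+1}$ on $D$ (since $a_p(f)=0$, the Hecke polynomial is $X^2+\epsilon_f(p)p^{k+1}$). In particular, after possibly enlarging $E$ to contain $\alpha$ — though here we only have $\alpha^2 \in E$, so I will be careful — the operator $\varphi$ on $D$ has eigenvalues $\pm\alpha$. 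Set $v_\pm$ for the $\varphi$-eigenvectors with $\varphi v_\pm = \pm\alpha v_\pm$; then $\Sym^2 D$ has basis $\{v_+^2, v_+ v_-, v_-^2\}$ with $\varphi$-eigenvalues $\alpha^2, -\alpha^2, \alpha^2$ respectively.

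Next I would identify the two candidate subspaces. Put $D_1 := E\cdot v_+v_-$, the $(-\alpha^2)$-eigenspace of $\varphi$ on $\Sym^2 D$, and $D_2 := E\cdot v_+^2 \oplus E\cdot v_-^2$, the $(+\alpha^2)$-eigenspace. Since $-\alpha^2 \neq \alpha^2$ (as $p$ is odd and $\alpha \neq 0$), these are $\varphi$-stable and give a direct sum decomposition $\Sym^2 D = D_1 \oplus D_2$ as $\varphi$-modules, with $\dim D_1 = 1$, $\dim D_2 = 2$. It remains to check this is a decomposition of \emph{filtered} $\varphi$-modules, i.e. that the filtration on $\Sym^2 D$ is the direct sum of its intersections with $D_1$ and $D_2$. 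The Hodge–Tate weights of $\Sym^2 W_f$ are $\{0, k+1, 2k+2\}$, so $\Fil^i(\Sym^2 D)$ has dimensions $3,2,1,0$ in the ranges $i\le 0$, $1\le i\le k+1$, $k+2\le i\le 2k+2$, $i\ge 2k+3$. Because $v_+v_- \notin \Fil^1 D\cdot D$ only up to the relevant jump — more precisely $\Fil^1 D = E\omega$ and one computes $\omega = c_+v_+ + c_-v_-$ with both $c_\pm \neq 0$ (else $\omega$ would be a $\varphi$-eigenvector, impossible since $\Fil^1D$ is not $\varphi$-stable) — the one-dimensional piece $\Fil^{2k+2}(\Sym^2 D) = E\cdot\omega^2$ meets $D_2$ (it has nonzero $v_+^2$ and $v_-^2$ components) but a priori not $D_1$. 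This is the one genuine thing to verify: that $\Fil^{k+1}(\Sym^2 D) \cap D_1 = D_1$ while $\Fil^{k+2}(\Sym^2 D)\cap D_1 = 0$, so that $D_1$ inherits a filtration jumping at $k+1$, and correspondingly $D_2$ inherits the jumps at $0$ and $2k+2$ together with a $2$-dimensional space in the middle range — the dimension count $1 + 2 = 3$, $1+1 = 2$, $0+1=1$ then forces compatibility.

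So the key steps, in order, are: (1) diagonalize $\varphi$ on $D$ using $a_p(f)=0$; (2) split $\Sym^2 D$ into $\varphi$-eigenspaces $D_1, D_2$ of the two distinct eigenvalues $-\alpha^2, \alpha^2$; (3) compute $\omega$ in the eigenbasis and observe both coordinates are nonzero; (4) use the known Hodge–Tate weights of $\Sym^2 W_f$ to pin down $\Fil^\bullet(\Sym^2 D)$ and check $\Fil^i(\Sym^2 D) = \bigl(\Fil^i(\Sym^2 D)\cap D_1\bigr)\oplus\bigl(\Fil^i(\Sym^2 D)\cap D_2\bigr)$ for every $i$ by a dimension argument. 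The main obstacle — really the only subtle point — is step (4): verifying filtration-compatibility of the eigenspace splitting, which is not automatic for a general filtered $\varphi$-module and uses crucially that the middle Hodge–Tate weight $k+1$ is exactly the one attached to $D_1$ and that $\omega$ is not a $\varphi$-eigenvector. One small bookkeeping caveat I would flag: since only $\alpha^2 \in E$ and not necessarily $\alpha$, I would phrase steps (1)–(2) so that $D_1$ and $D_2$ are defined as the kernels of $\varphi^2 \mp \alpha^2$ (equivalently, the $\pm\alpha^2$-eigenspaces of the $E$-linear operator $\varphi^2$ together with the sign of $\varphi$ where needed), which are manifestly defined over $E$, rather than invoking individual $\pm\alpha$-eigenvectors that may only live over $E(\alpha)$.
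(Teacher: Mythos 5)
Your proposed decomposition is not the same as the paper's, and unfortunately it does not work: the $\varphi$-eigenspace decomposition of $\Sym^2 D$ into the $(+\alpha^2)$-eigenspace and the $(-\alpha^2)$-eigenspace is \emph{not} compatible with the filtration, which is precisely the step you flag in (4) but do not actually verify. Write $\omega = c_+v_+ + c_-v_-$ with $c_\pm \ne 0$, as you note. Then
\[
\omega^2 = c_+^2 v_+^2 + 2c_+c_-\,v_+v_- + c_-^2 v_-^2,
\]
which has a nonzero $v_+v_-$ component and hence lies in \emph{neither} of your $D_1 = E\cdot v_+v_-$ nor $D_2 = E\cdot v_+^2 \oplus E\cdot v_-^2$. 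Consequently $\Fil^{k+2}(\Sym^2 D) = E\cdot\omega^2$ satisfies $\Fil^{k+2}\cap D_1 = 0 = \Fil^{k+2}\cap D_2$, so $\Fil^{k+2} \ne (\Fil^{k+2}\cap D_1)\oplus(\Fil^{k+2}\cap D_2)$ and the decomposition already fails at the top step of the filtration. (The same obstruction appears at $\Fil^1$: one computes $\omega\,\varphi(\omega) = \alpha(c_+^2v_+^2 - c_-^2v_-^2) \in D_2$ but $\omega^2 \notin D_1\oplus D_2$ componentwise, so $\dim\bigl((\Fil^1\cap D_1)\oplus(\Fil^1\cap D_2)\bigr) = 1 < 2 = \dim\Fil^1$.) Your sentence asserting that the one-dimensional top piece ``meets $D_2$'' conflates having a nonzero component along $D_2$ with actually lying in $D_2$; the former is true, the latter is false, and it is the latter that is needed. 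You also claim $\Fil^{k+1}\cap D_1 = D_1$, but $v_+v_- = \varphi(\omega)^2 - \alpha^2\omega^2$ involves $\varphi(\omega)^2$ and so does not lie in $\Fil^{k+1}(\Sym^2 D) = \langle\omega^2, \omega\varphi(\omega)\rangle$.

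The paper avoids this by \emph{not} taking the $\varphi$-eigenspace decomposition. It sets $D_1 = \langle\omega\otimes\varphi(\omega) + \varphi(\omega)\otimes\omega\rangle$ (a one-dimensional $\varphi$-stable line on which $\varphi$ acts by $\alpha^2$, chosen because it lies inside $\Fil^1$ but outside $\Fil^{k+2}$) and $D_2 = \langle\omega\otimes\omega, \varphi(\omega)\otimes\varphi(\omega)\rangle$ (a two-dimensional $\varphi$-stable subspace that contains $\Fil^{k+2}$ and on which $\varphi$ is not diagonal over $E$, having characteristic polynomial $X^2 - \alpha^4$). With these choices one checks directly that $\Fil^1 = D_1 \oplus \langle\omega\otimes\omega\rangle = D_1 \oplus (\Fil^1\cap D_2)$ and $\Fil^{k+2} = 0 \oplus \langle\omega\otimes\omega\rangle$, so the splitting genuinely respects the filtration. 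The moral is that the Hodge filtration is in general position with respect to $\varphi$ (as you observed, $\omega$ is not a $\varphi$-eigenvector), and one must tailor the $\varphi$-stable summands to the filtration rather than to the $\varphi$-spectrum. This also dissolves your rationality worry: the paper's $D_1$ and $D_2$ are manifestly defined over $E$ without passing to $E(\alpha)$ or invoking $\varphi^2$.
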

\begin{proof}The filtration of $\Sym^2D$ is given by:
\[
\Fil^i\Sym^2D=\begin{cases}
\Sym^2D& i\le0,\\
\langle \omega\otimes \omega,\omega\otimes \vp(\omega)+\vp(\omega)\otimes\omega\rangle& 1\le i\le k+1,\\
\langle \omega\otimes\omega\rangle&{k+2}\le i\le 2k+2,\\
0&i\ge2k+3.
\end{cases}
\]

We define
\begin{align*}
D_1&=\langle \omega\otimes\vp(\omega)+\vp(\omega)\otimes \omega \rangle;\\
D_2&=\langle \omega\otimes \omega,\vp(\omega)\otimes\vp(\omega)\rangle.
\end{align*}  
The fact that $\vp^2(\omega)=\alpha^2\omega$ implies that both $D_1$ and $D_2$ are stable under $\vp$. Furthermore, they both respect the filtration of $\Sym^2D$. Hence, they are both filtered $\vp$-modules as required.
\end{proof}

\begin{corollary}\label{cor:sym2splits}
The $G_{\Qp}$-representation $\Sym^2W_f|_{G_{\Qp}}$ splits into $W_1\oplus W_2$, where $W_i$ is of dimension $i$.
\end{corollary}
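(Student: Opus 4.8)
The plan is to deduce this from Lemma~\ref{lem:localsplit} via the crystalline equivalence of categories. First I would recall that under the Fontaine--Laffaille hypothesis $p>k+1$ (which we have assumed, and which applies to $\Sym^2W_f$ since its Hodge--Tate weights lie in $[0,2k+2]$ with $2k+2<2p-2$ for the relevant range — one should double-check the Fontaine--Laffaille range is respected here, or at least invoke the weakly admissible/crystalline period functor $V_{\mathrm{cris}}$ which requires no such bound), the functor $D\rightsquigarrow V_{\mathrm{cris}}(D)$ from weakly admissible filtered $\varphi$-modules to crystalline $G_{\Qp}$-representations is an equivalence of categories, hence in particular exact and additive, carrying direct sums to direct sums.

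The key steps, in order: (1) observe that $\Sym^2W_f|_{G_{\Qp}}$ is crystalline (as $W_f|_{G_{\Qp}}$ is crystalline, being the restriction of Deligne's representation at a prime $p\nmid N$, and symmetric powers of crystalline representations are crystalline), with associated filtered $\varphi$-module $\Sym^2 D$; (2) apply Lemma~\ref{lem:localsplit} to write $\Sym^2 D = D_1\oplus D_2$ as filtered $\varphi$-modules; (3) note that each $D_i$, being a subobject of the weakly admissible $\Sym^2 D$ that is itself a filtered $\varphi$-submodule, is automatically weakly admissible (a standard fact: a sub-filtered-$\varphi$-module of a weakly admissible module, with induced structures, whose complement is also a submodule, inherits weak admissibility — here it is cleanest to observe $D_1\oplus D_2$ weakly admissible forces each summand weakly admissible by comparing Newton and Hodge numbers additively); (4) set $W_i := V_{\mathrm{cris}}(D_i)$, so that $\Sym^2 W_f|_{G_{\Qp}} = V_{\mathrm{cris}}(\Sym^2 D) = V_{\mathrm{cris}}(D_1)\oplus V_{\mathrm{cris}}(D_2) = W_1\oplus W_2$, and $\dim_L W_i = \dim_L D_i = i$.

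I expect the main (really the only) obstacle to be bookkeeping about weak admissibility of the summands $D_i$ and making sure the Fontaine--Laffaille or crystalline hypothesis is correctly invoked for $\Sym^2$ rather than just for $W_f$; concretely, one wants $p$ large enough that $\Sym^2 D$ lies in the Fontaine--Laffaille window, or else one simply works with $B_{\mathrm{cris}}$-admissibility directly where no such numerical constraint is needed. Since the paper has already committed to $p>k+1$ and (in the introduction) $p\ge 7$ and $p>k+1$, one can note the Hodge--Tate weights of $\Sym^2W_f$ span a range of length $2k+2 \le 2(p-2)$, which is within the Fontaine--Laffaille bound, so the integral theory applies and in particular the splitting can even be taken over $\cO$; but for the present corollary only the rational statement is needed, and the rational statement is immediate from exactness of $V_{\mathrm{cris}}$. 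I would keep the write-up to two or three sentences: state that $\Sym^2W_f|_{G_{\Qp}}$ is crystalline with filtered $\varphi$-module $\Sym^2 D$, invoke the equivalence of categories to transport the decomposition of Lemma~\ref{lem:localsplit}, and record the dimension count.
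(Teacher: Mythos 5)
Your proof is correct and matches the paper's approach: the paper simply invokes "the correspondence between $G_{\Qp}$-representations and filtered $\vp$-modules" to transport the decomposition of Lemma~\ref{lem:localsplit}, which is exactly your step (4). Your observations about weak admissibility of the direct summands and the fact that the rational statement needs only the Colmez--Fontaine equivalence (not the Fontaine--Laffaille bound) are accurate and simply make explicit what the paper leaves implicit.
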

\begin{proof}
This follows from the correspondence between $G_{\Qp}$-representations and filtered $\vp$-modules of Fontaine \cite[Théorème~3.6.5]{fontaine79}. See also \cite[\S 2.2]{bernadette_1998} where a similar decomposition when $W_f$ comes from the Tate module of a $p$-supersingular elliptic curves was studied.
\end{proof}

\begin{remark}We note that this decomposition was exploited in \cite{lei12} in the CM case. In fact, this decomposition holds as $G_\QQ$-representations  (not just $G_{\Qp}$-representations) when $f$ is of CM type. \end{remark}

For $i=1,2$, we define the lattice $R_i=W_i\cap\Sym^2R_f$ inside $W_i$. In particular, we have the decomposition of $G_{\Qp}$-representations
\[
\Sym^2R_f|_{G_{\Qp}}=R_1\oplus R_2.
\]
{We also have the integral Dieudonn\'e modules $\Dcris(R_1)$ and $\Dcris(R_2)$. Furthermore,  $\Dcris(R_1)$ 
 is  generated by $p^{-k-1}(\omega\otimes\vp(\omega)+\vp(\omega)\otimes \omega)$, whereas $\Dcris(R_2)$ is generated by $\omega\otimes\omega$ and $p^{-2k-2}\vp(\omega)\otimes\vp(\omega)$.}

Let $\chi$ be a fixed Dirichlet character as in Section ~\ref{subsec:plocalBF}. In particular,  $\chi$ is unramified at $p$. We 
{have the following isomorphisms of filtered modules
\[
\Dcris(W_f\otimes W_f(\chi^{-1}))\stackrel{\otimes v_{\chi}}{\longrightarrow }\Dcris(W_f\otimes W_f), \quad \Dcris(R_f\otimes R_f(\chi^{-1}))\stackrel{\otimes v_{\chi}}{\longrightarrow }\Dcris(R_f\otimes R_f),
\]
where $\{v_\chi\}$ is an $\cO$-basis of $\Dcris(\cO(\chi))$.
}
 Consequently,  the local representation $\Sym^2W_f(\chi^{-1})|_{G_{\Qp}}$ splits into  $W_{1,\chi}\oplus W_{2,\chi}$ as in Corollary~\ref{cor:sym2splits}. Similarly, we have the integral counterpart 
\[
\Sym^2R_f(\chi^{-1})|_{G_{\Qp}}=R_{1,\chi}\oplus R_{2,\chi}.
\]

We can see from the proof of Lemma~\ref{lem:localsplit} that the Hodge-Tate weight of $W_{1,\chi}$ is $-1-k$, whereas those of $W_{2,\chi}$ are $0$ and $-2-2k$. Furthermore, 
the filtration on $\Dcris(W_{2,\chi})$ is given by
\[
\Fil^i\Dcris(W_{2,\chi})=
\begin{cases}
\Dcris(W_{2,\chi})& i\le 0,\\
\langle\omega\otimes\omega\otimes v_{\chi^{-1}}\rangle&1\le i\le 2k+2,\\
0& i\ge 2k+3.
\end{cases}
\]

By duality, we have the decompositions of $G_{\Qp}$-representations
\begin{equation}\label{eq:localdecomp}
W_f^*\otimes W_f^*(\chi)=W_{0,\chi}^*\oplus W_{1,\chi}^*\oplus W_{2,\chi}^*,\quad R_f^*\otimes R_f^*(\chi)=R_{0,\chi}^*\oplus R_{1,\chi}^*\oplus R_{2,\chi}^*,
\end{equation}
where $W_{0,\chi}^*=\bigwedge^2W_f^*(\chi)$ and $R_{0,\chi}^*=\bigwedge^2R_f^*(\chi)$. Furthermore, 
 $R_{1,\chi}^*$ and $R_{0,\chi}^*$ are rank-one representations of the form $\cO(\psi_i+k+1)$, for some unramified  characters $\psi_i$  on $G_{\Qp}$ sending $p$ to $\pm \epsilon_f\chi(p)$ respectively.
In particular, they both have a single Hodge-Tate weight, namely $k+1$. For the representation $W_{2,\chi}^*$, we have the filtration
\[
\Fil^i\Dcris(W_{2,\chi}^*)=
\begin{cases}
\Dcris(W_{2,\chi}^*)& i\le -2k-2,\\
\langle\omega'\otimes\omega'\otimes v_\chi\rangle&-2k-1\le i\le 0,\\
0& i\ge 1,
\end{cases}
\]
for some basis $\omega'$ that generates the $\cO$-module $\Fil^0\Dcris(R_f^*)$. Note that $\{\omega',\vp(\omega')\}$ is an $\cO$-basis of $\Dcris(R_f^*)$, which implies that $\Dcris(R_{2,\chi}^*)$ is generated by $\omega'\otimes\omega'\otimes v_\chi$ and $\vp(\omega')\otimes\vp(\omega')\otimes v_\chi$.

Let $F/\Qp$ be a finite unramified extension. Given a crystalline $E$-linear representation $W$ of $G_{F}$ whose Hodge-Tate weights are all non-negative, we write
\[
\cL_{W,F}:\HIw(F,W)\rightarrow F\otimes\cH_{E,r}(\Gamma)\otimes\Dcris(W)
\]
for the Perrin-Riou regulator map (c.f., \cite[\S3.1]{leiloefflerzerbes11} and \cite[Appendix B]{LZ-IJNT}). Here, $r$ denotes the largest slope of $\vp$ on $\Dcris(W)$. {We now study the images of the Perrin-Riou maps for  the direct summands in \eqref{eq:localdecomp}.}
\begin{lemma}\label{lem:divisible1}
Let $W=W_{1,\chi}^*$ or $W_{0,\chi}^*$ and $F/\Qp$ a finite unramified extension. For all $z\in \HIw(F,W)$, we have
$$
\cL_{W,F}(z)\in \log_{p,k+1}F\otimes \Lambda_{\cO}(\Gamma)\otimes \Dcris({W}).
$$
Let  $R=R_{1,\chi}^*$ or $ R_{0,\chi}^*$. If  $z\in \HIw(F,R)$, then
$$
\cL_{W,F}(z)\in \log_{p,k+1}\cO_{F}\otimes\Lambda_{\cO}(\Gamma)\otimes \Dcris(R),
$$
where $\Dcris(R)$ is the $\cO$-lattice inside $\Dcris(W)$ as defined in \cite[\S IV]{berger04}.
\end{lemma}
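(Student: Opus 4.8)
The plan is to reduce the statement to a computation about the Perrin-Riou regulator for a crystalline character, since $W_{1,\chi}^*$ and $\bigwedge^2 W_f^*(\chi)$ are both one-dimensional with a single Hodge-Tate weight equal to $k+1$. Concretely, by the discussion preceding the lemma, each of these representations is of the form $\cO(\psi + k+1)$ for an unramified character $\psi$ of $G_{\Qp}$. First I would recall the factorization of the regulator map through twisting: writing $\cL_{W,F} = \Tw_{-(k+1)}\circ \cL_{W(-(k-1)),F}\circ(\text{twist on cohomology})$ or, more directly, invoking the description of $\cL$ for representations that become unramified after a twist by the cyclotomic character to the $(k+1)$-st power. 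Since the Frobenius slope on $\Dcris(W)$ is $0$ (the relevant eigenvalue is a $p$-adic unit up to the twist: $\vp$ acts as a unit times a power of $p$, but the \emph{slope} governing the order in the lemma statement via $r$ is what matters), the regulator lands in $\Lambda_{\cO}(\Gamma)$ up to the predicted power of Pollack's logarithm.

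The key input is the explicit formula for $\cL$ on a crystalline character, together with Perrin-Riou's/Berger's description of its image. The second step is therefore to invoke the known divisibility: for a crystalline representation with Hodge-Tate weight $h \ge 1$ and Frobenius slope $0$, the Perrin-Riou regulator of any Iwasawa cohomology class is divisible by $\prod_{j=0}^{h-1}\log_p(\chi_\cyc^{-j}(\gamma)\gamma) = \log_{p,h}$ — this is precisely the content of the interpolation formula for $\cL$ at the characters $\chi_\cyc^j$ for $0 \le j \le h-1$ vanishing (these are the points where $\Fil$ forces the dual exponential to vanish), combined with the fact that $\log_p(\chi_\cyc^{-j}(\gamma)\gamma)$ generates the ideal of distributions vanishing at $\chi_\cyc^{j}$. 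With $h = k+1$ here, this gives the first assertion after extending scalars to $F$. For the integral refinement, the third step is to use the Wach module: Berger's and Loeffler--Zerbes' construction furnishes a $\Lambda_{\cO}(\Gamma)$-lattice $\Dcris(R) \otimes \Lambda_{\cO}(\Gamma)$ and the Wach-module-theoretic description of $\cL$ (as in \cite[Appendix B]{LZ-IJNT}) shows that $\cL_{W,F}$ carries $\HIw(F,R)$ into $N(R)^{\vp=1}$-type integral structures; dividing by $\log_{p,k+1}$ and tracking the integral basis $\Dcris(R)$ fixed in the excerpt (generated by $p^{-k-1}(\omega\otimes\vp(\omega)+\vp(\omega)\otimes\omega)$, resp. the basis of $\bigwedge^2 R_f^*$) yields membership in $\log_{p,k+1}\cO_F\otimes\Lambda_{\cO}(\Gamma)\otimes\Dcris(R)$.

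The main obstacle I anticipate is the integral statement rather than the rational one: one must be careful that the integral Dieudonné module $\Dcris(R)$ chosen here (via the Fontaine--Laffaille/Wach normalization of Berger) is exactly the one for which the Perrin-Riou regulator is known to be integral on $\HIw(F,R)$, i.e. that the normalizations of \cite[\S IV]{berger04}, \cite[\S3]{bergerlizhu04} and of the regulator map in \cite[\S3.1]{leiloefflerzerbes11}, \cite[Appendix B]{LZ-IJNT} are compatible. Once this compatibility is in place — and it is, since $W_{1,\chi}^*$ and $\bigwedge^2 R_f^*(\chi)$ are Fontaine--Laffaille in the relevant range $p>k+1$, so their Wach modules are explicit rank-one modules — the divisibility by $\log_{p,k+1}$ is forced by the vanishing of the relevant Bloch--Kato logarithm/dual-exponential maps at the $k+1$ Tate twists in the Hodge--Tate range, exactly as in the classical Kobayashi--Pollack computations and their generalization by Lei--Loeffler--Zerbes. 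I would also note that replacing $\chi$ by a further unramified twist does not affect the Hodge--Tate weight, so the proof is uniform in $\chi$.
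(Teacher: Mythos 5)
Your overall plan is sound and the integral step is in the same spirit as the paper's argument, but there are two issues worth flagging, one of which is a genuine inaccuracy.

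First, on the rational part: the paper does not argue via interpolation and vanishing of $\exp^*$. It instead identifies $\HIw(F,R)$ with $\NN_F(R)^{\psi=1}$ and realizes $\cL_{W,F}$ as the map $1-\vp:\NN_F(W)^{\psi=1}\to(\vp^*\NN_F(W))^{\psi=0}$; the image lands in the free $\cO_F\otimes\Lambda_\cO(\Gamma)$-module generated by $(1+\pi)\vp(n)$ for a Wach-module basis $n$, and the $\log_{p,k+1}$ factor falls out of the Mellin transform of $\vp(t/\pi)^{k+1}(\AQp)^{\psi=0}$. This gives the rational \emph{and} integral assertions simultaneously. Your interpolation route (vanishing of $\exp^*$ at $\chi_\cyc^{j}\theta$ for $0\le j\le k$ because $\Fil^0\Dcris(W(-j))=0$) is a legitimate alternative for the rational statement, but it is ultimately redundant once you do the Wach-module computation, which you in any case need for the integral refinement.

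Second, the genuine error: you assert that the Frobenius slope on $\Dcris(W)$ is $0$. That is not correct. $W\cong E(\psi+k+1)$ with $\psi$ unramified, so $\vp$ acts on the one-dimensional $\Dcris(W)$ by a unit times $p^{\mp(k+1)}$; the relevant growth parameter $r$ in $\cH_{E,r}(\Gamma)$ is $k+1$, not $0$. This matters: if the slope were genuinely $0$ the regulator would already land in $\Lambda_\cO(\Gamma)$, and divisibility by the unbounded element $\log_{p,k+1}$ would force $\cL_{W,F}(z)=0$. The correct statement is that the regulator has growth $O(\log_p^{k+1})$, exactly matching the growth of $\log_{p,k+1}$, which is why the quotient is bounded. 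Your conclusion is right, but the reason you give for it contradicts it.

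Finally, your treatment of the integral refinement is too thin. You say the Wach-module description ``shows that $\cL_{W,F}$ carries $\HIw(F,R)$ into $N(R)^{\vp=1}$-type integral structures,'' which does not capture the actual mechanism. The key point, which the paper makes explicit, is Berger's statement that the Wach basis $n$ and $(t/\pi)^{k+1}v$ (with $v=n\bmod\pi$ a basis of the integral Dieudonn\'e module) agree up to a unit in $\Brig$, \emph{and the further observation that this unit actually lies in $\AQp$ because both $n$ and $v$ are integral}. That last refinement is what lets one conclude $(1-\vp)(x)\in\vp(t/\pi)^{k+1}\cO_F\otimes(\AQp)^{\psi=0}v$, which via the Mellin transform lands in $\log_{p,k+1}\cO_F\otimes\Lambda_\cO(\Gamma)v$. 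Without articulating the $\AQp$-rationality of the unit, the integral conclusion does not follow from ``tracking the integral basis.''
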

\begin{proof}
Since $k\ge0$, we have the identification 
\[
\HIw(F,R)=\NN_{F}(R)^{\psi=1}
\]
where $\NN_{F}(R)=\cO_F\otimes_{\Zp}\NN_{\Qp}(R)$ denotes the Wach module of $R$ over $F$ for $R=R_{1,\chi}^*$ or $ R_{0,\chi}^*$ (see \cite[\S2.7]{LZ-IJNT}). Similarly, we may identify $\HIw(F,W)$ with $\NN_F(W)^{\psi=1}$.

We recall that the construction of $\cL_{W,F}$ can be realized as
\[
1-\vp:\NN_F(W)^{\psi=1}\rightarrow \left(\vp^*\NN_F(W)\right)^{\psi=0}.
\]
The right-hand side is contained inside $F\otimes(\Brig)^{\psi=0}\otimes \Dcris(W)$, which in turn is isomorphic to  $F\otimes\cup_r\cH_{\Qp,r}(\Gamma)\otimes \Dcris(W)$ via the Mellin {transforms}. 

The image of $\NN_F(R)^{\psi=1}$ under $1-\vp$ lies inside $\left(\vp^*\NN_{\Qp}(R)\right)^{\psi=0}$, which is a free $\cO_F\otimes\Lambda_{\cO}(\Gamma)$-module  generated by $(1+\pi)\vp(n)$ for some $\cO\otimes\AQp$-basis $n$ of $\NN_{\Qp}(R)$ by \cite[Theorem~3.5]{leiloefflerzerbes10}.  By \cite[proof of Proposition V.2.3]{berger04}, $v: =n\mod \pi$  is a basis of $\Dcris(R)$. Furthermore, \cite[Proposition~III.2.1]{berger04} tells us that $n$ and $(t/\pi)^{k+1}v$ agree up to a unit in $\Brig$. But since both $v$ and $n$ are defined integrally, the aforementioned unit is in fact defined over $\AQp$. Consequently, if $x\in \NN_F(R)^{\psi=1}$, we have $(1-\vp)(x)\in \vp(t/\pi)^{r}\cO_F\otimes(\AQp)^{\psi=0}v$. On taking Mellin transform, this lies inside $\log_{p,k+1}\cO_F\otimes\Lambda_\cO(\Gamma)v$ by \cite[Theorem~5.4]{leiloefflerzerbes10} and \cite[Theorem~2.1]{LLZ-CJM}. 
\end{proof}

\begin{lemma}\label{lem:divisible2}
Let $F/\Qp$ be a finite unramified extension. {There exist $\LL_\cO(\Gamma)$-homomorphisms
\[
\cL_{\pm,F}:\HIw(F,W_{2,\chi}^*)\rightarrow  \log^\pm_{p,2k+2}F\otimes \Lambda_{\cO}(\Gamma)
\]
 such that for all $z\in \HIw(F,W_{2,\chi}^*)$,}
\[
\cL_{W_{2,\chi}^*,F}(z)=\cL_{+,F}(z)\omega'\otimes\omega'\otimes v_\chi+\cL_{-,F}(z)\vp(\omega')\otimes\vp(\omega')\otimes v_\chi.
\] 
 Furthermore, if $z\in\HIw(F,R_{2,\chi}^*)$, then $\cL_{\pm,F}(z)\in \log^\pm_{p,2k+2}\cO_{F}\otimes\Lambda_{\cO}(\Gamma)$. 
\end{lemma}
\begin{proof}
Recall that $\omega'\otimes\omega'\otimes v_\chi$ is a basis of $\Fil^0\Dcris(W_{2,\chi}^*)$ and its image under $\vp$ is, up to a unit, equal to $\vp(\omega')\otimes\vp(\omega')\otimes v_\chi$. These two elements of $\Dcris(W_{2,\chi}^*)$ give rise to two maps 
$$\cL_{\pm,F}:\HIw(F,W_{2,\chi}^*)\lra F\otimes\cH_{E,k+1}(\Gamma) $$
as given by \cite[(18)]{leiloefflerzerbes10} (see also the construction in \cite{lei09}, \S3.2). These two maps then decompose  $\cL_{W_{2,\chi}^*,F}$ in a manner as explained in \cite[\S5A]{leiloefflerzerbes11} (note that our maps here differ from those given in op. cit. by units). This proves the first assertion of the lemma. 

 For the integrality statement, we may argue as in Lemma~\ref{lem:divisible1} on using the Wach module basis of $\NN_{\Qp}(R_{2,\chi}^*)$ as given in \cite[\S A]{berger04} and \cite[\S5.2]{leiloefflerzerbes10}.
\end{proof}

Let $\omega'$ be a fixed basis of $\Fil^0\Dcris(R_f^*)$ as above. The eigenvalues  of $\vp$ on $\Dcris(W_f^*)$ are $\pm\frac{1}{\alpha}$ and we have the eigenvectors
\begin{equation}
v_\lambda=\vp(\omega')+\frac{1}{\lambda}\omega',\label{eq:evecs}
\end{equation}
for $\lambda=\pm{\alpha}$. Then, the four { $\vp$-eigenvectors $v_\lambda\otimes v_\mu\otimes v_\chi$ decompose $\Dcris(W_f^*\otimes W_f^*(\chi))$  into a direct sum of one-dimensional subspaces. This in turn allows us to decompose the Perrin-Riou map as follows.}

\begin{defn}
\label{def:thePRmapsforthesymmetricproduct}
For any finite unramified extension $F$ of $\Qp$, we define $\cL_{\pm\alpha,\pm\alpha,F},\cL_{\circ,F}$ and $\cL_{\bullet,F}$   to be the unique $\LL_\cO(\Gamma)$-morphisms from $\HIw(F,W_f^*\otimes W_f^*(\chi))$ to $F\otimes\cH_{E,k+1} (\Gamma)$ satisfying the equation
\begin{align*}
\cL_{W_f^*\otimes W_f^*(\chi),F}(z)&=\sum_{\lambda,\mu\in\{\alpha,-\alpha\}}\cL_{\lambda,\mu,F}(z)v_\lambda\otimes v_\mu\otimes v_\chi\\
&=\cL_{\circ,F}(z)v_{\circ}+\cL_{\bullet,F}(z)v_{\bullet}+\cL_{+,F}(z)\omega'\otimes\omega'\otimes v_\chi+\cL_{-,F}(z)\vp(\omega')\otimes\vp(\omega')\otimes v_\chi,
\end{align*}
for all $z\in \HIw(F,W_f^*\otimes W_f^*(\chi))$, 
where 
\begin{align*}
v_{\circ}&=\omega'\otimes\vp(\omega')\otimes v_\chi-\vp(\omega')\otimes\omega'\otimes v_\chi\in \Dcris(R_{0,\chi}^*),\\
v_{\bullet}&=\omega'\otimes\vp(\omega')\otimes v_\chi+\vp(\omega')\otimes\omega'\otimes v_\chi\in\Dcris(R_{1,\chi}^*),
\end{align*}
 and $\cL_{\pm,F}$ are defined as in Lemma~\ref{lem:divisible2}.
\end{defn}

\begin{lemma}\label{lem:factorisation}
Let $F/\Qp$ be a finite unramified extension. For all $z\in \HIw(F,R_f^*\otimes R_f^*(\chi))$, we have
\[
\begin{pmatrix}
1&1&1&1\\
\alpha^2&\alpha^2 &-\alpha^2&-\alpha^2\\
2\alpha&-2\alpha&0&0\\
0&0&-2\alpha&2\alpha
\end{pmatrix}
\begin{pmatrix}
\cL_{\alpha,\alpha,F}(z)\\
\cL_{-\alpha,-\alpha,F}(z)\\
\cL_{\alpha,-\alpha,F}(z)\\
\cL_{-\alpha,\alpha,F}(z)
\end{pmatrix}\in \begin{pmatrix}
\log_{p,2k+2}^-\\ \log_{p,2k+2}^+\\ \log_{p,k+1}\\\log_{p,k+1}
\end{pmatrix}\\ \cO_{F}\otimes\Lambda_{\cO}(\Gamma).
\]
\end{lemma}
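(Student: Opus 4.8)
The plan is to change basis on the Dieudonn\'e module so that the four Perrin-Riou functionals $\cL_{\lambda,\mu,F}$ get recombined into the functionals $\cL_{+,F},\cL_{-,F},\cL_{\bullet,F}$ and $\cL_{\circ,F}$ attached to the sub-Dieudonn\'e modules produced in Lemmas~\ref{lem:divisible1} and \ref{lem:divisible2}, and then to invoke the divisibility statements already proved there. First I would record the linear algebra: from \eqref{eq:evecs} one computes
\[
v_\lambda\otimes v_\mu=\vp(\omega')\otimes\vp(\omega')+\tfrac{1}{\mu}\vp(\omega')\otimes\omega'+\tfrac{1}{\lambda}\omega'\otimes\vp(\omega')+\tfrac{1}{\lambda\mu}\omega'\otimes\omega'\,,
\]
so summing over the four pairs $(\lambda,\mu)\in\{\pm\alpha\}^2$ with the coefficients appearing in the four rows of the matrix and using $\lambda\mu=\pm\alpha^2$, $1/\lambda\mu=\pm\alpha^{-2}$ one finds that the four entries of the left-hand column vector are precisely (up to the harmless nonzero scalars that the matrix is designed to absorb) the coordinates $\cL_{-,F}(z)$, $\cL_{+,F}(z)$, $\cL_{\bullet,F}(z)$ and $\cL_{\circ,F}(z)$ of $\cL_{W_f^*\otimes W_f^*(\chi),F}(z)$ in the basis $\{\vp(\omega')\otimes\vp(\omega'),\ \omega'\otimes\omega',\ v_\bullet,\ v_\circ\}$ of Definition~\ref{def:thePRmapsforthesymmetricproduct}. (The first two rows isolate the $W_{2,\chi}^*$-part, the last two rows isolate $v_\bullet\in W_{1,\chi}^*$ and $v_\circ\in\bigwedge^2W_f^*(\chi)$; the factors $\alpha^2$ and $2\alpha$ in the matrix are exactly what is needed to clear denominators and make this identification integral.)

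Next I would invoke the $G_{\Qp}$-equivariant decomposition
\[
R_f^*\otimes R_f^*(\chi)=\bigwedge^2R_f^*(\chi)\oplus R_{1,\chi}^*\oplus R_{2,\chi}^*
\]
established above, so that $z\in\HIw(F,R_f^*\otimes R_f^*(\chi))$ decomposes as $z=z_{\wedge}\oplus z_1\oplus z_2$ with $z_\wedge\in\HIw(F,\bigwedge^2R_f^*(\chi))$, $z_1\in\HIw(F,R_{1,\chi}^*)$, $z_2\in\HIw(F,R_{2,\chi}^*)$; the regulator map is additive with respect to this decomposition, and $\Dcris(R_{1,\chi}^*)$ is generated by $v_\bullet$ (a scalar multiple of $\omega'\otimes\vp(\omega')+\vp(\omega')\otimes\omega'$), $\Dcris(\bigwedge^2R_f^*(\chi))$ by $v_\circ$, and $\Dcris(R_{2,\chi}^*)$ by $\omega'\otimes\omega'$ and $\vp(\omega')\otimes\vp(\omega')$. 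Then the third and fourth components of the displayed vector are $\cL_{R_{1,\chi}^*,F}(z_1)$ and $\cL_{\bigwedge^2R_f^*(\chi),F}(z_\wedge)$ in the chosen integral bases, which lie in $\log_{p,k+1}\cO_F\otimes\Lambda_\cO(\Gamma)$ by Lemma~\ref{lem:divisible1}; and the first and second components are $\cL_{-,F}(z_2)$ and $\cL_{+,F}(z_2)$, which lie in $\log_{p,2k+2}^-\cO_F\otimes\Lambda_\cO(\Gamma)$ and $\log_{p,2k+2}^+\cO_F\otimes\Lambda_\cO(\Gamma)$ respectively by Lemma~\ref{lem:divisible2}. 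This is exactly the asserted membership.

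The one genuinely delicate point — the main obstacle — is bookkeeping the integral normalizations so that the matrix entries $1,\alpha^2,2\alpha$ land one in the \emph{same} $\cO$-lattice that appears on the right. Concretely I must check that the integral Dieudonn\'e basis $p^{-k-1}(\omega'\otimes\vp(\omega')+\vp(\omega')\otimes\omega')$ of $\Dcris(R_{1,\chi}^*)$, the basis $v_\circ$ of $\Dcris(\bigwedge^2R_f^*(\chi))$, and the basis $\{\omega'\otimes\omega',\ \vp(\omega')\otimes\vp(\omega')\}$ of $\Dcris(R_{2,\chi}^*)$ (all as fixed earlier in this section) are related to $\{v_\lambda\otimes v_\mu\}$ by the transition matrix above, with no stray power of $\alpha$ or of $p$; this is where the relations $v_\lambda=\vp(\omega')+\lambda^{-1}\omega'$ and $\alpha^2=-\epsilon_f(p)p^{k+1}\in\cO^\times\cdot p^{k+1}$ get used, together with the normalizations fixed just before Lemma~\ref{lem:localsplit}. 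The rest — additivity of $\cL$, reduction to the three cases — is formal once this normalization check is done, and the non-integral statement (replacing $R$ by $W$, $\cO_F$ by $F$) follows by the same computation with Lemmas~\ref{lem:divisible1} and \ref{lem:divisible2} applied in their non-integral form.
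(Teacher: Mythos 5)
Your proof is essentially identical in approach to the paper's: exhibit the change-of-basis matrix from $\{v_\lambda\otimes v_\mu\}$ to $\{\vp(\omega')\otimes\vp(\omega'),\omega'\otimes\omega',v_\bullet,v_\circ\}$, decompose $R_f^*\otimes R_f^*(\chi)$ as $\bigwedge^2R_f^*(\chi)\oplus R_{1,\chi}^*\oplus R_{2,\chi}^*$, and apply Lemmas~\ref{lem:divisible1} and~\ref{lem:divisible2} componentwise. One small slip in your normalization paragraph: the $\cO$-basis of $\Dcris(R_{1,\chi}^*)$ is $\omega'\otimes\vp(\omega')+\vp(\omega')\otimes\omega'$ with \emph{no} $p^{-k-1}$ factor (the $p^{-k-1}$ belongs to $\Dcris(R_1)$ inside $\Sym^2 R_f$, not its dual, since the paper fixes $\omega',\vp(\omega')$ as an $\cO$-basis of $\Dcris(R_f^*)$), and your check would produce the extra scalars $\alpha^4$ and $2\alpha^2$ in the second and third rows rather than ``no stray power''; this is harmless, though, because those scalars lie in $\cO$ and the lemma only claims a containment.
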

\begin{proof}
We have the change of basis matrix \[\begin{pmatrix}
\vp(\omega')\otimes\vp(\omega')\\
\omega'\otimes\omega'\\
\vp(\omega')\otimes\omega'+\omega'\otimes \vp(\omega')\\
\vp(\omega')\otimes\omega'-\omega'\otimes \vp(\omega')
\end{pmatrix}
=1/4\begin{pmatrix}
1&1&1&1\\
\alpha^2&\alpha^2 &-\alpha^2&-\alpha^2\\
2\alpha&-2\alpha&0&0\\
0&0&-2\alpha&2\alpha
\end{pmatrix}
\begin{pmatrix}
v_\alpha\otimes v_\alpha\\
v_{-\alpha}\otimes v_{-\alpha}\\
v_{\alpha}\otimes v_{-\alpha}\\
v_{-\alpha}\otimes v_\alpha
\end{pmatrix}.
\] Hence, our result follows from Lemmas~\ref{lem:divisible1} and~\ref{lem:divisible2}.
\end{proof}

{We finish this subsection by introducing certain projection maps on global cohomology groups based on the local maps that we have defined above. }

\begin{defn}
\label{define_projectionviaotsukifunctionals}
For any number field $K$ which is unramified at all primes above $p$ and given an element 
$$z=z_1\wedge z_2\in {\bigwedge\!}^2 H^1(K,W_f^*\otimes W_f^*(\chi)\otimes \Lambda_{\cO}(\Gamma)^\iota)$$ as well as $\lambda,\mu\in\{\pm \alpha\}$, we define 
\begin{align*}
\pr_{\lambda,\mu}(z)&=\cL_{\lambda,\mu,K}(\res_p (z_1))z_2-\cL_{\lambda,\mu,K}(\res_p (z_2))z_1\\
&\in H^1_\Iw(K,W_f^*\otimes W_f^*(\chi))\widehat{\otimes} \cH_{E,k+1}(\Gamma),
\end{align*}
where $\res_p:H^1(K,W_f^*\otimes W_f^*(\chi)\otimes \Lambda_{\cO}(\Gamma)^\iota)\rightarrow \bigoplus_{v|p}H^1(K_v,W_f^*\otimes W_f^*(\chi)\otimes \Lambda_{\cO}(\Gamma)^\iota)$  is defined by the local restriction maps and $\cL_{\lambda,\mu,K}$ is the shorthand for the sum ${\displaystyle \Sigma_{v|p}\cL_{\lambda,\mu,K_v}}$.
\end{defn}
\begin{proposition}
\label{prop:rankreductionviaotsukisfunctionals}
For $K$ and $z\in\bigwedge^2 H^1(K,R_f^*\otimes R_f^*(\chi)\otimes \Lambda_{\cO}(\Gamma)^\iota) $ as in Definition~\ref{define_projectionviaotsukifunctionals}, we have
$$\begin{pmatrix}
1&1&1&1\\
\alpha^2&\alpha^2 &-\alpha^2&-\alpha^2\\
2\alpha&-2\alpha&0&0\\
0&0&-2\alpha&2\alpha
\end{pmatrix}
\begin{pmatrix}
\pr_{\alpha,\alpha}(z)\\
\pr_{-\alpha,-\alpha}(z)\\
\pr_{\alpha,-\alpha}(z)\\
\pr_{-\alpha,\alpha}(z)
\end{pmatrix} \in \begin{pmatrix}
\log_{p,2k+2}^-\\ \log_{p,2k+2}^+\\ \log_{p,k+1}\\\log_{p,k+1}
\end{pmatrix}H^1(K,R_f^*\otimes R_f^*(\chi)\otimes  \Lambda_{\cO}(\Gamma)^\iota)\,.
$$
\end{proposition}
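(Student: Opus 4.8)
The plan is to reduce Proposition~\ref{prop:rankreductionviaotsukisfunctionals} to the local statement of Lemma~\ref{lem:factorisation} by unwinding the definition of the functionals $\pr_{\lambda,\mu}$. Recall that for $z = z_1\wedge z_2$ with $z_1,z_2\in H^1(K,R_f^*\otimes R_f^*(\chi)\otimes\Lambda_{\cO}(\Gamma)^\iota)$, the element $\pr_{\lambda,\mu}(z)$ is by definition the $\cH_{E,k+1}(\Gamma)$-linear combination $\cL_{\lambda,\mu,K}(\res_p z_1)\,z_2 - \cL_{\lambda,\mu,K}(\res_p z_2)\,z_1$. First I would observe that each of the four rows of the matrix product is, after expansion, of the shape $\big(\textstyle\sum_{\lambda,\mu} c_{\lambda,\mu}\,\cL_{\lambda,\mu,K}(\res_p z_1)\big) z_2 - \big(\textstyle\sum_{\lambda,\mu} c_{\lambda,\mu}\,\cL_{\lambda,\mu,K}(\res_p z_i)\big) z_1$ with the \emph{same} scalars $c_{\lambda,\mu}$ (the entries of the corresponding row), because the matrix acts on the column vector of $\pr$'s entrywise and each $\pr_{\lambda,\mu}(z)$ is built from $\cL_{\lambda,\mu,K}$ applied to $\res_p z_1$ and $\res_p z_2$ with coefficients in $H^1(K,\ldots)$ that do not involve $\lambda,\mu$. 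Hence the matrix product equals
\[
\begin{pmatrix}
1&1&1&1\\
\alpha^2&\alpha^2&-\alpha^2&-\alpha^2\\
2\alpha&-2\alpha&0&0\\
0&0&-2\alpha&2\alpha
\end{pmatrix}
\begin{pmatrix}
\cL_{\alpha,\alpha,K}(\res_p z_1)\\ \cL_{-\alpha,-\alpha,K}(\res_p z_1)\\ \cL_{\alpha,-\alpha,K}(\res_p z_1)\\ \cL_{-\alpha,\alpha,K}(\res_p z_1)
\end{pmatrix}\otimes z_2 \;-\; \begin{pmatrix}
1&1&1&1\\
\alpha^2&\alpha^2&-\alpha^2&-\alpha^2\\
2\alpha&-2\alpha&0&0\\
0&0&-2\alpha&2\alpha
\end{pmatrix}
\begin{pmatrix}
\cL_{\alpha,\alpha,K}(\res_p z_2)\\ \cL_{-\alpha,-\alpha,K}(\res_p z_2)\\ \cL_{\alpha,-\alpha,K}(\res_p z_2)\\ \cL_{-\alpha,\alpha,K}(\res_p z_2)
\end{pmatrix}\otimes z_1\,.
\]

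Next I would apply Lemma~\ref{lem:factorisation} to each of $\res_p z_1$ and $\res_p z_2$. Since $K$ is unramified at all primes above $p$ and $\cL_{\lambda,\mu,K}=\sum_{v\mid p}\cL_{\lambda,\mu,K_v}$ with each $K_v/\Qp$ finite unramified, Lemma~\ref{lem:factorisation} (applied at each $v$ and summed) shows that the $j$-th row of the matrix applied to the column vector $(\cL_{\lambda,\mu,K}(\res_p z_i))_{\lambda,\mu}$ lies in $\log^{?}_{p,\bullet}\,\cO_{K_v}\otimes\Lambda_{\cO}(\Gamma)$, hence in $\log^{?}_{p,\bullet}\,\mathcal{O}_K\otimes\Lambda_{\cO}(\Gamma)$, where the superscript $?$ and index $\bullet$ are those recorded in the target column $(\log_{p,2k+2}^-,\log_{p,2k+2}^+,\log_{p,k+1},\log_{p,k+1})^{t}$. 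Therefore, row $j$ of our expression equals $\log^{?_j}_{p,\bullet_j}\cdot(a_j\, z_2 - b_j\, z_1)$ for suitable $a_j, b_j \in \mathcal{O}_K\otimes\Lambda_{\cO}(\Gamma)$; tensoring with the $z_i\in H^1(K,R_f^*\otimes R_f^*(\chi)\otimes\Lambda_{\cO}(\Gamma)^\iota)$ lands it in $\log^{?_j}_{p,\bullet_j}\,H^1(K,R_f^*\otimes R_f^*(\chi)\otimes\Lambda_{\cO}(\Gamma)^\iota)$, which is exactly the asserted containment.

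A couple of routine points need to be dispatched. One is the compatibility of $\cL_{W_f^*\otimes W_f^*(\chi),F}$ (whose components appear in Definition~\ref{def:thePRmapsforthesymmetricproduct}) with $\res_p$ and with the integral structure $\HIw(F,R_f^*\otimes R_f^*(\chi))$: this is built into the statement of Lemma~\ref{lem:factorisation}, which is phrased precisely for $z\in\HIw(F,R_f^*\otimes R_f^*(\chi))$, so one only needs that $\res_p$ carries the global integral cohomology into the local one, which is immediate. The other is bookkeeping of which logarithmic factor accompanies which row — this is dictated by the change-of-basis matrix displayed in the proof of Lemma~\ref{lem:factorisation}, matching $\vp(\omega')\otimes\vp(\omega')\leftrightarrow\log^-_{p,2k+2}$, $\omega'\otimes\omega'\leftrightarrow\log^+_{p,2k+2}$, and $v_\bullet, v_\circ\leftrightarrow\log_{p,k+1}$. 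I do not anticipate a genuine obstacle here: the whole content is the local divisibility already secured in Lemmas~\ref{lem:divisible1}, \ref{lem:divisible2} and packaged in Lemma~\ref{lem:factorisation}; the ``hardest'' step is merely making precise that the matrix multiplication commutes with the formation of $\pr_{\lambda,\mu}$ because the matrix entries are scalars independent of $\lambda,\mu$, which is exactly the linearity observation above.
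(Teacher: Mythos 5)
Your proposal is correct and follows the same route the paper takes: the matrix passes through the definition of $\pr_{\lambda,\mu}$ by linearity (the entries are scalars independent of $\lambda,\mu$), and then Lemma~\ref{lem:factorisation} applied at each completion $K_v$ to $\res_p z_1$ and $\res_p z_2$ gives the required $\log$-divisibilities entry by entry. The paper's own proof is a one-liner citing only Lemma~\ref{lem:divisible2}; strictly speaking that covers the first two rows (the $\log^\pm_{p,2k+2}$ divisibilities coming from the $W_{2,\chi}^*$-component) but the last two rows also require Lemma~\ref{lem:divisible1}, so your citation of the packaged statement Lemma~\ref{lem:factorisation} is actually the more precise reference.
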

\begin{proof}
This follows immediately from Lemma~\ref{lem:factorisation}.
\end{proof}

\begin{remark}\label{rk:twistmaps}
We may define similar maps   on the Tate twists of $W_f^*\otimes W_f^*(\chi)$ as follows. We have the local maps 
$$\HIw(F, W_f^*\otimes W_f^*(\chi+j))\lra  \HIw(F, W_f^*\otimes W_f^*(\chi))\stackrel{\cL_{\lambda,\mu,F}}{\lra} \cH_{E,k+1}(\Gamma)\stackrel{\Tw_{j}}{\lra}\cH_{E,k+1}(\Gamma).$$
We can then define the semi-local map
\[
 \HIw(\QQ(m), W_f^*\otimes W_f^*(\chi+j))\lra \cH_{E,k+1}(\Gamma)
\]
 and the projection map
\[
{\bigwedge\!}^2 \HIw(\QQ(m), W_f^*\otimes W_f^*(\chi+j))\lra
\HIw(\QQ(m), W_f^*\otimes W_f^*(\chi+j))\widehat{\otimes}\cH_{E,k+1}(\Gamma)
\]
for every integer $j$ and $m\in \mathcal{N}_\chi$ as in Definition~\ref{define_projectionviaotsukifunctionals}. {We shall  denote the resulting maps by $\cL_{\lambda,\mu,m}^{(j)}$ and $\pr_{\lambda,\mu}^{(j)}$ respectively. When the dependence on $j$ is clear from the context, we shall drop the superscript $(j)$ from the notation.} 
\end{remark}

\subsection{Signed Coleman maps and Selmer conditions} \label{sec:signedcoleman}

Let $F$ be a finite unramified extension of $\Qp$. Recall that  we have set $T:=\textup{Sym}^2R_f^*(1)\otimes\chi$ and $V:=\textup{Sym}^2W_f^*(1)\otimes\chi$. 
As in Remark~\ref{rk:twistmaps}, we define the twisted version of the maps $\cL_{\pm,F}$ in Lemma~\ref{lem:divisible2} as well as $\cL_{\bullet,F}$ and $\cL_{\circ,F}$ in Definition~\ref{def:thePRmapsforthesymmetricproduct}. {The twisted maps  are denoted by  $\cL_{?,F}^{(1)}$.} Given any element $z\in \HIw(F, V)$, we have
 \[
\cL_{\pm,F}^{(1)}(z)\in \log_{p,2k+2}^{\pm,(1)} F\otimes \Lambda_{\cO}(\Gamma) \mbox{ and } \cL^{(1)}_{\bullet, F}(z)\in \log_{p, k+1}^{(1)} F\otimes \Lambda_{\cO}(\Gamma).
\] 
Furthermore, if $z\in \HIw(F, T)$, then
 \[
\cL_{\pm,F}^{(1)}(z)\in \log_{p,2k+2}^{\pm,(1)}\cO_F\otimes \Lambda_{\cO}(\Gamma) \mbox{ and } \cL_{\bullet, F}^{(1)}(z)\in \log_{p, k+1}^{(1)} \cO_F\otimes \Lambda_{\cO}(\Gamma)
\] 
(thanks to Lemmas~\ref{lem:divisible1} and ~\ref{lem:divisible2}). We now define the signed Coleman maps as follows.
\begin{defn} \label{defn:signedcolemanmaps}
For $\clubsuit \in \{ +,-,\bullet \}$, we let $\log_{p}^{\clubsuit}$ denote $\log_{p, 2k+2}^{+,(1)}$, $\log_{p, 2k+2}^{-,(1)}$ and $\log_{p, k+1}^{(1)}$ respectively. We define the signed Coleman maps $\col_{F}^\clubsuit$ by setting
\begin{align*}
\col_F^\clubsuit:\HIw(F,T)&\lra \cO_F\otimes \Lambda_{\cO}(\Gamma)\\
z&\longmapsto \cL_{\clubsuit,F}^{(1)}(z)/\log_{p}^\clubsuit.
\end{align*}
\end{defn} 
Let $\eta$ be a character on $\Gamma_{\rm tors}$. We may identify $e_\eta\Lambda_{\cO}(\Gamma)$ with the power series ring $\cO[[X]]$, where $X$ is given by $\gamma-1$ and $\gamma$ is a topological generator of $\Gamma_1$. The images of the plus and minus Coleman maps for $\QQ_p$ can be described as follows:
\begin{proposition}
For $\eta$ as above, $e_\eta\image(\col^\pm_{\QQ_{p}})$ is pseudo-isomorphic to $\prod_{j\in S_\eta^\pm}(X-\chi_\cyc^j(\gamma)-1)\mathbb{Z}_{p}\otimes\cO[[X]]$, where $S_{\eta}^\pm$ is some subset of $\{1,2,\ldots, 2k+2\}$.
\end{proposition}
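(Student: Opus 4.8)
The plan is to reduce the computation of $e_\eta\image(\col^\pm_{\QQ_p})$ to known descriptions of the images of Pollack-style plus/minus Coleman maps for crystalline representations with a single negative Hodge--Tate weight, as worked out in the non-ordinary CM setting. First I would recall that, by definition, $\col^\pm_{\QQ_p}(z) = \cL_{\pm,\QQ_p}(z)/\log_p^\pm$ where $\cL_{\pm,\QQ_p}$ is the coordinate of the Perrin-Riou regulator $\cL_{W_{2,\chi}^*,\QQ_p}$ (appropriately Tate-twisted, as in Remark~\ref{rk:twistmaps}) along the basis vectors $\omega'\otimes\omega'$ and $\vp(\omega')\otimes\vp(\omega')$ of $\Dcris(R_{2,\chi}^*(1))$. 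The representation $W_{2,\chi}^*(1)$ is crystalline of dimension $2$ with Hodge--Tate weights $\{1, 2k+3\}$ after the twist (or $\{0,-2k-2\}$ before), $\vp$ acting with characteristic polynomial $X^2 - \chi(p)^{-2}\alpha^4$; so it is exactly the kind of representation to which the Kobayashi/Pollack machinery of signed Coleman maps applies, and Lemma~\ref{lem:divisible2} already pins down the divisibility by $\log_{p,2k+2}^\pm$ that makes $\col^\pm$ land in $\cO[[X]]$.

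Next I would invoke the explicit image computation. The key input is the analysis in \cite{lei09} (and its refinements in \cite{leiloefflerzerbes10,leiloefflerzerbes11}) describing, for a two-dimensional crystalline representation with slopes $\pm r$, the precise image of the plus/minus Coleman maps inside each $\eta$-isotypic component $e_\eta\Lambda_\cO(\Gamma) \cong \cO[[X]]$: one shows that the image is, up to pseudo-isomorphism, the principal ideal generated by a product $\prod_{j\in S_\eta^\pm}(X - (\chi_\cyc^j(\gamma) - 1))$ — i.e. a product of ``cyclotomic'' linear factors corresponding to the Hodge--Tate weights in the range $\{0,1,\dots,2k+1\}$, with the parity of $j$ (even vs.\ odd) deciding whether the factor appears in $S_\eta^+$ or $S_\eta^-$. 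Concretely, this comes from the Wach-module description $\HIw(\QQ_p, R_{2,\chi}^*) = \NN_{\QQ_p}(R_{2,\chi}^*)^{\psi=1}$ used already in the proof of Lemma~\ref{lem:divisible2}, combined with the Mellin-transform identities for $\log_{p,r}^\pm$ recalled in Section~\ref{sec:signediwasawatheory}; one computes $(1-\vp)$ on a Wach-module basis, reads off the two coordinate distributions, and divides by $\log_{p,2k+2}^\pm$, whereupon the surviving factors are precisely those $(X - \chi_\cyc^j(\gamma) - 1)$ with $j \in \{0,\dots,2k+1\}$ whose parity is opposite to that determined by $\pm$ together with the twist $\eta$.

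The main obstacle I anticipate is bookkeeping rather than conceptual: one must carefully track the Tate twist by $1+\chi$ and the action of the twisting operators $\Tw_n$ on the plus/minus logarithms (using the relation $\Tw_{-n}\log_r^? = \log_{r+n}^?/\log_n^?$ from the excerpt), so that the indexing set $S_\eta^\pm \subset \{0,1,\dots,2k+1\}$ comes out correctly and does not, for instance, accidentally include $j = 2k+2$ or omit $j=0$. I would handle this by first doing the untwisted case for $\Dcris(W_{2,\chi}^*)$ with its stated filtration $\Fil^i = \langle \omega'\otimes\omega'\rangle$ for $-2k-1 \le i \le 0$, extracting $S_\eta^\pm$ there, and then applying $\Tw_{1+\chi}$ uniformly. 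A secondary point to be careful about is that the statement only claims a pseudo-isomorphism, not an equality; this is standard — the Coleman map need not be surjective onto its ``expected'' image on the nose, but the discrepancy is supported on height-two primes, which is invisible to characteristic ideals and hence harmless for the Iwasawa-theoretic applications that follow.
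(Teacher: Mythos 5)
Your proposal is correct and takes essentially the same route as the paper, whose entire proof is the one-line citation ``This follows from \cite[Corollary~4.15]{leiloefflerzerbes11}.'' You invoke the same Lei--Loeffler--Zerbes image computation (via \cite{lei09,leiloefflerzerbes10,leiloefflerzerbes11}), merely unpacking the Wach-module and Mellin-transform internals that the cited corollary already encapsulates.
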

\begin{proof}
This follows from \cite[Corollary~4.15]{leiloefflerzerbes11}. 
\end{proof}

\begin{defn}
Following \cite{kobayashi03, lei09}, we define the signed Selmer conditions
\[
H^1_{\Iw,\clubsuit}(F,T)=\ker(\col_F^\clubsuit).
\]
for $\clubsuit \in \{+,-,\bullet \}$. Further, if $j$ is any integer, we define  $H^1_{\Iw,\clubsuit}(F,T(j))$ to be the natural image of $H^1_{\Iw,{\clubsuit}}(F,T)$ under the twisting morphism $H^1_{\Iw}(F,T)\rightarrow H^1_{\Iw}(F,T(j))$. 
\end{defn}

Fix an integer $m \in \mathcal{N}_\chi$. We may combine the signed Coleman maps $\col^{\clubsuit}_{\QQ(m)_v}$ for primes $v$ of $\QQ(m)$ above $p$ to obtain
\[ \col^{\clubsuit}_{m} =\oplus_{v|p}\col^\clubsuit_{\QQ(m)_v}\]
on
$ H^{1}(\QQ(m)_p, \TT){=\bigoplus_{v|p}H^1(\QQ(m)_v,\TT)}$ (recall that $\TT=T\otimes \LL_{\cO}(\Gamma)^\iota$).
When $m=1$, we will write $\col^{\clubsuit}$ in place of $\col^{\clubsuit}_m$.

For $\clubsuit \in \{+, -, \bullet \}$, we define the (compact) signed Selmer group $H^{1}_{\mathcal{F}_{\clubsuit}}(\QQ(m), \mathbb{T})$ by setting

 \[H^1_{\mathcal{F}_{\clubsuit}}(\QQ(m),\mathbb{T}) := \ker\left(H^1(\QQ(m),\mathbb{T})\lra  \frac{H^1(\QQ(m)_{p},\mathbb{T})}{\ker(\col^{\clubsuit}_m)} \right).\]

\subsection{Signed Beilinson--Flach classes}\label{S:signedBF}
We now give the proof of  Theorem~\ref{thm_theoremA} stated in the introduction { modulo Theorem~\ref{thm_positionofBFintheanalyticselmer}, which we shall prove in \S\ref{sec:proofoftheorem}. We note that Theorem~\ref{thm_positionofBFintheanalyticselmer} below is the key technical ingredient in the construction of signed Beilinson--Flach classes attached to symmetric squares in the non-ordinary setting. It crucially supplements earlier ideas in this direction by addressing the lack of a ``sufficiently large range of interpolation'' to compensate the growth of denominators.}

For $\lambda,\mu\in\{\pm\alpha\}$ and $m\in \mathcal{N}_\chi$, let 
$\BF_{m,\chi}^{\lambda,\mu}$ be the  Beilinson--Flach element  from \S\ref{subsec:plocalBF}. Via \cite[Proposition~2.4.5]{LZ1}, we make the following identification:
\begin{equation}\label{eq:identificatio_H}
 H^1(\QQ(m),W_f^*\otimes W_f^*(1+\chi)\otimes \cH_{E,k+1}(\Gamma)^\iota)= \cH_{E,k+1}(\Gamma)\, \widehat\otimes\, \HIw(\QQ(m),W_f^*\otimes W_f^*(1+\chi)) .
\end{equation}
Let us write 
\[
\BF_{m,\chi}^{\lambda,\mu}=\sum F^{\lambda,\mu}_i z_i,
\]
where $\sum F^{\lambda,\mu}_i \in \cH_{E,k+1}(\Gamma)$ and $\{z_i\}$ is some fixed $\LL_\cO(\Gamma)$-basis of $ H^1_\Iw(\QQ(m),W_f^*\otimes W_f^*(1+\chi))$. We recall from \cite[\S3.1]{BL16b} that  if $1\le j\le k+1$ and $\theta$ is a Dirichlet character of conductor $p^n>1$, then
\[
F_i^{\lambda,\mu}(\chi^j\theta)=(\lambda\mu)^{-n}c_{n,i,j}
\]
for some constant $c_{n,i,j}$ that is independent of  $\lambda$ and $\mu$. This property is crucially used in the proof of \cite[Theorem~5.4.1]{BLLV}, which can be  recast in our current setting in the following explicit manner:

\begin{lemma}\label{lem:firsttwists}
There exists $\widetilde{\BF}_{m,\chi}^{\lambda,\mu}\in H^1(\QQ(m),W_f^*\otimes W_f^*(1+\chi)\otimes \cH_{E,k+1}(\Gamma)^\iota)$, $\lambda,\mu\in\{\pm\alpha\}$ such that
\[
\begin{pmatrix}
1&1&1&1\\
\alpha^2&\alpha^2 &-\alpha^2&-\alpha^2\\
2\alpha&-2\alpha&0&0\\
0&0&-2\alpha&2\alpha
\end{pmatrix}
\begin{pmatrix}
\BF_{m,\chi}^{\alpha,\alpha}\\
\BF_{m,\chi}^{-\alpha,-\alpha}\\
\BF_{m,\chi}^{\alpha,-\alpha}\\
\BF_{m,\chi}^{-\alpha,\alpha}\end{pmatrix}= 
\begin{pmatrix}
\log_{p,k+1}^{+,(1)}\widetilde{\BF}_{m,\chi}^{\alpha,\alpha}\\ \log_{p,k+1}^{-,(1)}\widetilde{\BF}_{m,\chi}^{-\alpha,-\alpha}\\ \log_{p,k+1}^{(1)}\widetilde{\BF}_{m,\chi}^{\alpha,-\alpha}\\ \log_{p,k+1}^{(1)}\widetilde{\BF}_{m,\chi}^{-\alpha,\alpha}
\end{pmatrix}.
\] 
\end{lemma}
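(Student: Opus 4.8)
The plan is to exploit the interpolation property of the Beilinson--Flach classes along the cyclotomic tower that is recalled right before the statement, namely that for $0\le j\le k$ and a Dirichlet character $\theta$ of conductor $p^n>1$ one has $F_i^{\lambda,\mu}(\chi^j\theta)=(\lambda\mu)^{-n}c_{n,i,j}$ with $c_{n,i,j}$ independent of $\lambda,\mu$. The point is that the matrix appearing in the statement is designed precisely so that applying it to the four-tuple of $F_i^{\lambda,\mu}$ isolates the dependence on $\lambda\mu\in\{\alpha^2,-\alpha^2\}$: the first two rows pick out the combinations supported on pairs with $\lambda\mu=\alpha^2$ and $\lambda\mu=-\alpha^2$ respectively, while the last two rows pick out $\BF^{\alpha,-\alpha}-(\pm)\BF^{-\alpha,\alpha}$, which involve only $\lambda\mu=-\alpha^2$. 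So each of the four output entries is, componentwise against the basis $\{z_i\}$, a distribution in $\cH_{E,k+1}(\Gamma)$ whose values at $\chi^j\theta$ (for $0\le j\le k$, $\theta$ of conductor $p^n>1$) are governed by a single factor of the form $(\alpha^2)^{-n}$ up to an $n$-independent constant.

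First I would fix a component index $i$ and set $G_i^{\clubsuit}$ equal to the corresponding entry of the left-hand vector, so that $G_i^{+}= F_i^{\alpha,\alpha}+F_i^{-\alpha,-\alpha}+F_i^{\alpha,-\alpha}+F_i^{-\alpha,\alpha}$ and similarly for the other three (this is the content of the matrix product; I will write it out honestly in the proof). Then I would compute the values $G_i^{\clubsuit}(\chi^j\theta)$ using the displayed interpolation formula; for the $+$ and $-$ rows one gets a nonzero multiple of $(\alpha^2)^{-n}$ while the interior rows similarly produce $(\alpha^2)^{-n}$ times an $n$-independent constant (here the cancellation forced by the antisymmetric combination $\BF^{\alpha,-\alpha}\mp\BF^{-\alpha,\alpha}$ must be tracked, since $\BF^{\alpha,-\alpha}$ and $\BF^{-\alpha,\alpha}$ have the same $\lambda\mu$). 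The key input is then Pollack's computation: a distribution in $\cH_{E,k+1}(\Gamma)$ whose values at all such finite-order twists $\chi^j\theta$, $0\le j\le k$, $\mathrm{cond}(\theta)=p^n>1$, grow exactly like $(\alpha^2)^{-n}=\bigl(\pm\epsilon_f(p)p^{k+1}\bigr)^{-n}$ is divisible in $\cH_{E,k+1}(\Gamma)$ by $\log_{p,k+1}^{+,(1)}$ (resp.\ $\log_{p,k+1}^{-,(1)}$ for the sign corresponding to $\lambda\mu=-\alpha^2$), while the interior rows --- whose values grow only like $(\alpha^2)^{-n}$ with no parity obstruction --- are divisible by the full $\log_{p,k+1}^{(1)}$. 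This is exactly the division lemma underlying Kobayashi's and Pollack's plus/minus theory, applied to the ``order $k+1$'' setting as in \cite{lei09,BL16b}; I would cite that computation rather than reprove it.

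Carrying out this division componentwise against the basis $\{z_i\}$ and reassembling, one obtains elements $\widetilde{\BF}_{m,\chi}^{\lambda,\mu}\in H^1(\QQ(m),W_f^*\otimes W_f^*(1+\chi)\otimes\cH_{E,k+1}(\Gamma)^\iota)$ with the asserted identity; one should note that the quotient of a distribution of order $k+1$ by $\log_{p,k+1}^{?,(1)}$ --- which is itself a distribution --- again lands in $\cH_{E,k+1}(\Gamma)$ because these logarithmic functions are (up to $O(\log_p^{(k+1)/2})$) of strictly smaller order and, crucially, their zero sets are simple and disjoint from one another in the relevant range, so no spurious denominators appear. The main obstacle I anticipate is precisely keeping the bookkeeping of \emph{which} logarithm divides \emph{which} output entry correct: the $+$/$-$ dichotomy on the first two rows is dictated by the sign of $\epsilon_f(p)$ entering through $\alpha^2$, and one must check that the interpolation data $F_i^{\lambda,\mu}(\chi^j\theta)$ genuinely has the parity-in-$n$ behaviour that forces divisibility by $\log_{p,k+1}^{\pm,(1)}$ and not merely by $\log_{p,k+1}^{(1)}$ --- this is the step where the structure of the non-ordinary $a_p=0$ hypothesis is really used, and it is where I would be most careful to align conventions with \cite{pollack03,lei09,BL16b}. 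Everything else (the matrix algebra, Shapiro's lemma at tame level $\QQ(m)$, and the passage between $\cH_{E,k+1}(\Gamma)$ and its $\iota$-twist) is routine.
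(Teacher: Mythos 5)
Your approach is exactly the one the paper has in mind (the paper's ``proof'' is literally the sentence ``This implies the following lemma,'' so you are reconstructing the Pollack-style argument, which is the right thing to do). However, several of your intermediate claims are wrong, and at least one of them is the crux of the argument. The key computation is this: since $F_i^{\lambda,\mu}(\chi_\cyc^j\theta)=(\lambda\mu)^{-n}c_{n,i,j}$ depends on $\lambda,\mu$ only through $\lambda\mu$, we have $F_i^{\alpha,\alpha}(\chi_\cyc^j\theta)=F_i^{-\alpha,-\alpha}(\chi_\cyc^j\theta)=(\alpha^2)^{-n}c_{n,i,j}$ and $F_i^{\alpha,-\alpha}(\chi_\cyc^j\theta)=F_i^{-\alpha,\alpha}(\chi_\cyc^j\theta)=(-\alpha^2)^{-n}c_{n,i,j}$. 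Consequently, writing $a=(\alpha^2)^{-n}$, the four output entries evaluate to $(2a+2(-1)^na)\,c_{n,i,j}$, $\alpha^2(2a-2(-1)^na)\,c_{n,i,j}$, $0$, and $0$ respectively. So the first entry \emph{vanishes} at every $\chi_\cyc^j\theta$ with $n$ odd, the second vanishes at every such character with $n$ even, and the third and fourth vanish identically on all the interpolation points. This is what aligns with the zero sets of $\log_{p,k+1}^{+,(1)}$, $\log_{p,k+1}^{-,(1)}$ and $\log_{p,k+1}^{(1)}$ and lets you invoke the Pollack/Lei divisibility criterion (vanishing at the prescribed simple zeros, plus an order bound).

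Concretely, your write-up contradicts this computation in three places. First, you say the first two rows ``pick out the combinations supported on pairs with $\lambda\mu=\alpha^2$ and $\lambda\mu=-\alpha^2$''; in fact both of those rows involve all four pairs, and it is the \emph{third} row that isolates $\lambda\mu=\alpha^2$ and the fourth that isolates $\lambda\mu=-\alpha^2$. Second, you say the interior rows ``produce $(\alpha^2)^{-n}$ times an $n$-independent constant'' with ``no parity obstruction'' and therefore are divisible by $\log_{p,k+1}^{(1)}$; in fact those two entries are \emph{identically zero} on the interpolation set, and that total vanishing is the only reason divisibility by the full $\log_{p,k+1}^{(1)}$ can hold. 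If the values were nonzero multiples of $(\alpha^2)^{-n}$ as you wrote, they would obstruct divisibility by $\log_{p,k+1}^{(1)}$, not permit it. Third, the phrase ``forces divisibility by $\log_{p,k+1}^{\pm,(1)}$ and not merely by $\log_{p,k+1}^{(1)}$'' has the logic inverted: $\log_{p,k+1}^{\pm,(1)}$ divides $\log_{p,k+1}^{(1)}$, so divisibility by the latter is the stronger statement, and the first two rows are precisely the ones where you only get the weaker $\log^{\pm}$ divisibility because the nonvanishing values for the complementary parity obstruct more. Finally, ``grow exactly like $(\alpha^2)^{-n}$'' is not the input to Pollack's lemma---vanishing on the right zero set is; the growth estimate (of the type used later in Proposition~\ref{prop:latticeBF}) only controls the order of the quotient once divisibility is established. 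Fixing these four points turns your sketch into the paper's argument.
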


\begin{remark}\label{rk:denominators}
 On comparing denominators, we see that 
\begin{align*}
\widetilde{\BF}_{m,\chi}^{\alpha,\alpha}, \widetilde{\BF}_{m,\chi}^{-\alpha,-\alpha}&\in \cH_{E,(k+1)/2}(\Gamma)\,  \widehat\otimes\, \HIw(\QQ(m),W_f^*\otimes W_f^*(1+\chi)),    \\
\widetilde{\BF}_{m,\chi}^{\alpha,-\alpha}, \widetilde{\BF}_{m,\chi}^{-\alpha,\alpha} &\in \HIw(\QQ(m),W_f^*\otimes W_f^*(1+\chi))
\end{align*}
under the identification \eqref{eq:identificatio_H}.
{We shall show in Theorem~\ref{thm_positionofBFintheanalyticselmer}  that the elements $\widetilde{\BF}_{m,\chi}^{\alpha,\alpha}$ and $\widetilde{\BF}_{m,\chi}^{\alpha,\alpha}$ are further divisible by $\Tw_{-k-1}\log_{p,k+1}^{+,(1)}$ and $\Tw_{-k-1}\log_{p,k+1}^{-,(1)}$, respectively.  Recall from \eqref{eq:twist-divide2} that  we have the equality $\log_{p,k+1}^{\pm,(1)}\Tw_{-k-1}\log_{p,k+1}^{\pm,(1)}=\log_{p,2k+2}^{\pm,(1)}$. This} in turn allows us to define the bounded Beilinson--Flach classes described in Theorem~\ref{thm_theoremA} in the introduction.
\end{remark}

\begin{theorem}
\label{thm_positionofBFintheanalyticselmer}
Suppose that the Dirichlet character $\chi$ verifies the conditions \ref{item_Psi_1} and \ref{item_Psi_2}. Assume also that \ref{item_NV} and \ref{item_Im} hold true. {Let $\cH=\cup_{r\ge0}\cH_{E,r}(\Gamma_1)$}. For $m\in \mathcal{N}_\chi$ and $\eta\in\widehat{\Delta}_m$, we write $e_\eta$ for the corresponding idempotent. For all four choices of $\lambda,\mu\in\{\pm\alpha\}$, there exist $c_m\in \QQ_p[\Delta_m]\otimes \textup{Frac}(\mathcal{H})$ and $z_m\in \bigwedge^2 H^1_\Iw(\QQ(m),W_f^*\otimes W_f^*(1+\chi))$ $($both of which depend only on $m$ and not on the choice of the pair $\lambda,\mu$$)$ satisfying the following properties.
\begin{itemize}
\item[i)]$\BF_{m,\chi}^{\lambda,\mu}= \delta c_m\times\pr_{\lambda,\mu}(z_m)$, where $\delta\in\{\pm\}$ {is} determined according to $\lambda\mu=\delta\alpha^2$.
\item[ii)]  For each $\eta \in \widehat{\Delta}_m$, the element $c_\eta:=e_\eta c_m \in \textup{Frac}(\cH)$ is non-zero.
\item[iii)] For each $\eta \in \widehat{\Delta}_m$, we write $c_\eta=d_\eta/h_\eta$, where $d_\eta,h_\eta\in \cH$ are coprime. Then, $h_\eta$ is coprime to ${\displaystyle\frac{\log_{p,2k+2}^{(1)}}{\log_{p,k+1}^{(1)}}}$\,.
\end{itemize}
\end{theorem}
The proof of this theorem requires the theory of $(\vp,\Gamma)$-modules and Selmer complexes. It will be presented in Section~\ref{sec:proofoftheorem}.

\begin{corollary} \label{cor_BFfactorisation}
In the setting of Theorem~\ref{thm_positionofBFintheanalyticselmer}, there exist 
$$\BF_{m,\chi}^{+},\BF_{m,\chi}^{-},\BF_{m,\chi}^{\bullet}, \BF_{m,\chi}^{\circ}\in H^1_\Iw(\QQ(m),W_f^*\otimes W_f^*( 1+\chi))$$ such that
\begin{equation}\label{eq:BFfactorisation}
\begin{pmatrix}
1&1&1&1\\
\alpha^2&\alpha^2 &-\alpha^2&-\alpha^2\\
2\alpha&-2\alpha&0&0\\
0&0&-2\alpha&2\alpha
\end{pmatrix}
\begin{pmatrix}
\BF^{\alpha,\alpha}_{m,\chi}\\
\BF_{m,\chi}^{-\alpha,-\alpha}\\
\BF_{m,\chi}^{\alpha,-\alpha}\\
\BF_{m,\chi}^{-\alpha,\alpha}
\end{pmatrix}= \begin{pmatrix}
\log_{p,2k+2}^{+,(1)}\BF_{m,\chi}^{+}\\ \log_{p,2k+2}^{-,(1)}\BF_{m,\chi}^{-}\\ \log_{p,k+1}^{(1)}\BF_{m,\chi}^{\bullet}\\\log_{p,k+1}^{(1)}\BF_{m,\chi}^{\circ}
\end{pmatrix}.
\end{equation}
\end{corollary}
\begin{proof}
{The assertion concerning the bottom two rows of \eqref{eq:BFfactorisation}  is a direct consequence of Lemma~\ref{lem:firsttwists} (see also the discussion in Remark~\ref{rk:denominators}).}
We shall prove the divisibility on  the first row; that for the second row can be proved in a similar fashion. 

It follows from {the second row of the factorization given in} Proposition~\ref{prop:rankreductionviaotsukisfunctionals} and Theorem~\ref{thm_positionofBFintheanalyticselmer}(iii) that 
\[
\left(\pr_{\alpha,\alpha}+\pr_{-\alpha,-\alpha}-\pr_{\alpha,-\alpha}-\pr_{-\alpha,\alpha}\right)(c_mz_m)\in{\frac{\log_{p,2k+2}^{+,(1)}}{\log_{p,k+1}^{+,(1)}}}\cH_{E,(k+1)/2}(\Gamma)\,\widehat\otimes\,\HIw(\QQ(m), W_f^*\otimes W_f^*(1+\chi)).
\]
Therefore, if we write 
$$\BF^{\alpha,\alpha}_{m,\chi}+\BF^{-\alpha,-\alpha}_{m,\chi}+\BF^{\alpha,-\alpha}_{m,\chi}+\BF^{-\alpha,\alpha}_{m,\chi}=\sum F_iz_i,$$
where $F_i\in\cH_{E,k+1}(\Gamma)$ and $\{z_i\}$ is  a $\Lambda_\cO(\Gamma)$-basis of $H^1_\Iw(\QQ(m),W_f^*\otimes W_f^*(1+\chi))$, then 
 Theorem~\ref{thm_positionofBFintheanalyticselmer}(i) tells us that
each $F_i$ is divisible by ${\displaystyle\frac{\log_{p,2k+2}^{+,(1)}}{\log_{p,k+1}^{+,(1)}}}$.
Furthermore, Lemma~\ref{lem:firsttwists} says that all the $F_i$'s are also divisible by $\log^{+,(1)}_{p,k+1}$. 
The conclusion follows from growth order considerations.
\end{proof}

\begin{proposition} \label{prop:latticeBF}
In the setting of Theorem~\ref{thm_positionofBFintheanalyticselmer}, there exists an integer $C$ independent of $m$ such that 
\[
C\times\BF_{m,\chi}^{\clubsuit}\in H^1_\Iw(\QQ(m),R_f^*\otimes R_f^*(1+\chi))
\]
for all four choices of $\clubsuit\in\{+,-,\bullet,\circ\}$.
\end{proposition}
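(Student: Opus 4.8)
The plan is to deduce the statement from the factorisation of Corollary~\ref{cor:BFfactorisation}, combined with the integral rank-reduction of Proposition~\ref{prop:rankreductionviaotsukisfunctionals} and with the integrality, up to a bounded constant, of the unmodified Beilinson--Flach classes.

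First I would make the geometric input of Theorem~\ref{thm:positionofBFintheanalyticselmer} integral. Since $\HIw(\QQ(m),W_f^*\otimes W_f^*(1+\chi))\cong\HIw(\QQ(m),R_f^*\otimes R_f^*(1+\chi))\otimes_{\cO}E$, and --- by Theorem~\ref{thm:mainSelmerstructure}, the $\bigwedge^2$-part of the coefficients being negligible in view of Corollary~\ref{cor:vanishingofX} --- the latter is a free $\LL_r$-module of rank $2$ up to an index bounded independently of $m$, there is a constant $C_2$, independent of $m$, with $C_2\,z_m\in\bigwedge^2\HIw(\QQ(m),R_f^*\otimes R_f^*(1+\chi))$. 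Applying Proposition~\ref{prop:rankreductionviaotsukisfunctionals} to the integral wedge $C_2 z_m$ in its form for the twist $1+\chi$ (via Remark~\ref{rk:twistmaps} and \S\ref{sec:signedcoleman}, so that the logarithmic factors occurring there are $\log^{\pm,(1)}_{p,2k+2}$ and $\log^{(1)}_{p,k+1}$), the four matrix combinations of the $\pr_{\lambda,\mu}(C_2 z_m)$ appearing in \eqref{eq:BFfactorisation} lie in $\log^{\clubsuit}_p\cdot\HIw(\QQ(m),R_f^*\otimes R_f^*(1+\chi))$.

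Next I would substitute the identity $\BF^{\lambda,\mu}_{m,\chi}=\delta\,c_m\,\pr_{\lambda,\mu}(z_m)$ of Theorem~\ref{thm:positionofBFintheanalyticselmer}(1) into \eqref{eq:BFfactorisation}. Tracking the sign $\delta$, fixed by $\lambda\mu=\delta\alpha^2$, against the columns of the change-of-basis matrix of \eqref{eq:BFfactorisation}, the $\clubsuit$-th row of \eqref{eq:BFfactorisation} becomes $c_m$ times one of the matrix combinations of the previous paragraph, divided by $C_2$ and by a fixed power $\alpha^{e_{\clubsuit}}$ with $e_{\clubsuit}\in\{-2,0,2\}$ and $v_p(\alpha^2)=k+1$. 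Since that combination is divisible by $\log^{\clubsuit}_p$ inside the lattice cohomology and $\log^{\clubsuit}_p$ is a non-zero-divisor in the torsion-free module at hand, it cancels, leaving
\[
\BF^{\clubsuit}_{m,\chi}=\alpha^{e_{\clubsuit}}\,C_2^{-1}\,c_m\,\zeta^{\clubsuit}_m,\qquad\zeta^{\clubsuit}_m\in\HIw(\QQ(m),R_f^*\otimes R_f^*(1+\chi)),
\]
for all four $\clubsuit\in\{+,-,\bullet,\circ\}$, with the same $c_m$ throughout.

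It then remains to bound, uniformly in $m$, the $p$-power denominator contributed by the analytic factor $c_m$; equivalently, to see that $c_m\,\zeta^{\clubsuit}_m$ already lies in a bounded multiple of $\HIw(\QQ(m),R_f^*\otimes R_f^*(1+\chi))$. I expect this to be the main obstacle: Theorem~\ref{thm:positionofBFintheanalyticselmer} describes $c_m=d_\eta/h_\eta$ only as a ratio of $p$-adic analytic functions (with $h_\eta$ coprime to a prescribed logarithm), which per se imposes no control on $p$-power denominators. The way to close the gap is to return to the construction of $c_m$ in Section~\ref{sec:proofoftheorem}: on the one hand $c_m$ is built from the twisted Beilinson--Flach classes $\BF^{\lambda,\mu}_{m,\chi}$, which are integral up to a constant independent of $m$ --- one combines the integrality of the non-$p$-stabilised Beilinson--Flach classes of \cite{LZ1} with the $p$-stabilisation and twisting recipes recalled in \S\ref{subsec:plocalBF} and with the denominator $D$ of Theorem~\ref{thm:horizontalESforSym2}(ii); on the other hand $c_m$ is obtained from these by dividing by the quantities $\pr_{\lambda,\mu}(C_2 z_m)$, which by the first paragraph lie in $\log^{?}_p$-multiples of the lattice cohomology. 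Using multiplicativity of content on $\Lambda_{\cO}$, one then checks that the $p$-power appearing in $c_m$ and the $p$-power dividing $\zeta^{\clubsuit}_m$ cancel in the product, so that $c_m\,\zeta^{\clubsuit}_m$ --- hence $\BF^{\clubsuit}_{m,\chi}$ --- has $\cO$-denominator bounded independently of $m$. Taking $C$ to be $p^{k+1}C_2$ times that bound then settles all four cases.
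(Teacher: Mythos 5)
Your strategy diverges from the paper's at the critical juncture, and the gap you flag in your third paragraph is a genuine gap that your last paragraph does not close.

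The decomposition $\BF^{\lambda,\mu}_{m,\chi}=\delta\, c_m\, \pr_{\lambda,\mu}(z_m)$ from Theorem~\ref{thm:positionofBFintheanalyticselmer} is not canonical: $z_m\in\bigwedge^2\HIw(\QQ(m),W_f^*\otimes W_f^*(1+\chi))$ may be rescaled by any non-zero element of $\mathrm{Frac}(\cH_E)$ at the cost of inversely rescaling $c_m$, so $c_m$ carries no intrinsic $p$-adic content. Moreover part (3) of that theorem only constrains $h_\eta$ to be coprime to $\log_{p,2k+3}/\log_{p,k+2}$, which says nothing at all about $p$-power denominators, and in any case $d_\eta, h_\eta$ live in $\cH_E$, which is not a UFD with a well-behaved content function in the way $\Lambda_\cO$ is (its elements have unbounded coefficients). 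The sentence ``Using multiplicativity of content on $\Lambda_\cO$, one then checks that the $p$-power appearing in $c_m$ and the $p$-power dividing $\zeta^\clubsuit_m$ cancel'' therefore has no actual argument behind it; the route through $c_m$ and $z_m$ cannot, as set up, yield a uniform integral bound.

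The paper's actual proof does not pass through $c_m$ or $z_m$ at all. It works directly with the power-series coefficients $F_i$ of the four linear combinations $\cBF^\clubsuit$ with respect to a fixed basis of $\HIw(\QQ(m),W_f^*\otimes W_f^*(1+\chi))$, and invokes \cite[Theorem 8.1.4(ii)]{LZ1}: the specializations $x_r^{\lambda,\mu}$ at finite levels satisfy $C_0\,p^{(k+1)r}x_r^{\lambda,\mu}\in H^1(\QQ(m)(\mu_{p^r}),R_f^*\otimes R_f^*(1+\chi))$ with $C_0$ independent of $r,m,\lambda,\mu$. Translated into the norm $\|\cdot\|_{\rho_r}$ of \cite[\S2.1]{BL16b}, this says $p^{-(k+1)r}\|F_i\|_{\rho_r}$ is bounded uniformly. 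Combined with $\log^\clubsuit\mid F_i$ from Corollary~\ref{cor:BFfactorisation} and with $F_i = O(\log_p^{k+1})$, a direct comparison of norms on annuli shows $\|F_i/\log^\clubsuit\|_{\rho_r}$ is bounded uniformly in $i,r,m,\clubsuit$, i.e.\ $\cBF^\clubsuit/\log^\clubsuit$ are bounded classes. The crucial ingredient you are missing is precisely this finite-level integrality result of Loeffler--Zerbes and the resulting growth control; once one has it, the argument is a Pollack-style norm computation and never needs to see $c_m$ again.
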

\begin{proof}
Let $\lambda,\mu\in\{\pm\alpha\}$ and fix $m$.
Note that $\frac{\lambda\mu}{p^{k+1}}$ is a $p$-adic unit given that $v_p(\lambda)=v_p(\mu)=(k+1)/2$. Write $x_{r}^{\lambda,\mu} \in H^1(\QQ(m)(\mu_{p^r}),W_f^*\otimes W_f^*(1+\chi))$ for the image of the Iwasawa theoretic Beilinson--Flach class $\BF_{m,\chi}^{\lambda,\mu}$. Then by \cite[Theorem 8.1.4(ii)]{LZ1}
\[
C_0\times p^{(k+1)r}x_r^{\lambda,\mu}\in H^1(\QQ(m)(\mu_{p^r}),R_f^*\otimes R_f^*(1+\chi))
\]
for some integer $C_0$ that is independent of $r$, $m$, $\lambda$ and $\mu$.

Let $\cBF^\clubsuit$ be any one of the four linear combinations of Belinson-Flach classes on the left-hand side of \eqref{eq:BFfactorisation} and expand $\cBF^\clubsuit$ with respect to the basis  $\{z_i\}$ of $\HIw(\QQ(m),W_f^*\otimes W_f^*(1+\chi))$, say $\cBF^\clubsuit=\sum F_iz_i$. Let $\log^\clubsuit$ be the corresponding logarithm on the right-hand side of \eqref{eq:BFfactorisation}. Then,
\begin{itemize}
\item $\log^\clubsuit\mid F_i$ for all $i$;
\item $F_i=O(\log_p^{k+1})$;
\item For all $r\ge1$, we have $p^{-(k+1)r}||F_i||_{\rho_r}$ is bounded independently of $i$, $r$, $m$ and the choice of $\clubsuit$.
\end{itemize}
Here $\rho_r=p^{-1/p^{r-1}(p-1)}$ and $||\bullet||_{\rho_r}$ is the sup-norm on power series as defined  in \cite[\S2.1]{BL16b}. Consequently $||F_i/\log^\clubsuit||_{\rho_r}$ is bounded independently of $i$, $r$, $m$ and the choice of $\clubsuit$. Hence, $\cBF^\clubsuit/\log^\clubsuit$ are bounded classes as required.
\end{proof}
 Recall that $T := \Sym^2 R_f^{*}(1 + \chi)$ and $\mathbb{T} := T \otimes \LL_\cO(\Gamma)^{\iota}$.  

\begin{corollary} \label{cor:signedBFinsymmsquare}
In the setting of Theorem~\ref{thm_positionofBFintheanalyticselmer} and for $\clubsuit \in \{ +,-,\bullet \}$, we have
\[ C\times\BF_{m,\chi}^{\clubsuit}\in H^1(\QQ({m}),\TT),\qquad \BF_{m,\chi}^{\circ} = 0, \]
where we consider $H^1(\QQ({m}),\TT)$ as a subgroup of $\HIw(\QQ(m),R_f^*\otimes R_f^*(1+\chi))$ via the decomposition of $G_{\QQ(m)}$-representations $R_f^*\otimes R_f^*(1+\chi)=T\oplus \bigwedge^2 R_f^*(1+\chi)$.
\end{corollary}
\begin{proof}
The first part of the corollary follows from Proposition \ref{prop:thedichotomy}, Corollary \ref{cor_BFfactorisation} and Proposition \ref{prop:latticeBF}. For the second part of the corollary, note that
\[ \BF_{m,\chi}^{\circ} = (\BF_{m,\chi}^{-\alpha,\alpha} - \BF_{m,\chi}^{\alpha,-\alpha})/2\alpha \] 
Under our assumption on the parity of the Dirichlet character $\chi$, we show that 
\[ \BF_{m,\chi}^{\lambda,\mu} = \BF_{m,\chi}^{\mu,\lambda} \]
for any choices of $\lambda,\mu \in \{\pm\alpha\}$ in Section~\ref{sec:proofoftheorem}. In particular, see Remark~\ref{rem:sym2orsymmetricproductdoesntmatter}. Thus, on taking $\lambda=-\mu=\alpha$, this shows that $\BF_{m,\chi}^{\circ} = 0$, as required.
\end{proof}

{We now show that the bounded Beilinson--Flach classes  satisfy the Selmer conditions we defined in \S\ref{sec:signedcoleman}. This allows us to apply the Euler system machinery to obtain one inclusion of Conjecture~\ref{conj:signedmainconjecture} (see Theorem~\ref{thm_signedmainconjecture}). For the rest of the section,}
we assume that the character $\chi$ verifies \ref{item_Psi_1} and \ref{item_Psi_2}. Suppose also that (\textbf{Im}) holds true. We also fix an integer $m\in \mathcal{N}_\chi$.
Let us  recall the following ``geometric'' property of the unbounded Beilinson--Flach classes.

\begin{proposition}\label{prop:BFgeom}
For an integer $j\in [-k,0]$ and $\lambda,\mu\in\{\pm\alpha\}$, the natural image of $\loc_p\left(\BF_{m,\chi}^{\lambda,\mu,(j)}\right)$ in $H^1(\QQ(m)(\mu_{p^r})_p,W_f^*\otimes W_f^*(1+j+\chi))$ belongs to the Bloch-Kato subgroup $H^1_{\rm f}(\QQ(m)(\mu_{p^r})_p,W_f^*\otimes W_f^*(1+j+\chi))$.
\end{proposition}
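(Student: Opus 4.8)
The plan is to reduce the statement, via the interpolation properties of Beilinson--Flach classes, to a local assertion at $p$ that can be checked on the level of $(\varphi,\Gamma)$-modules. First I would recall that for $\lambda,\mu\in\{\pm\alpha\}$ the class $\BF_{m,\chi}^{\lambda,\mu}$ is the image of the Loeffler--Zerbes Beilinson--Flach element $_c\BF_{m}^{\lambda,\mu}$, which by \cite[Theorem~5.4.2]{LZ1} lives in $H^1(\QQ(\mu_m),W_f^*\otimes W_f^*\otimes\cH_{E,k+1}(\Gamma)^\iota)$ and therefore specializes, at a character $\chi_{\cyc}^{-j}$ with $j$ in the relevant range, to a class that is known to be cristalline (in the $H^1_{\rm f}$) locally at $p$. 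Concretely, the ``geometric'' nature of the $p$-stabilized Beilinson--Flach classes in this weight range is exactly the content of the Bloch--Kato/Perrin-Riou compatibility established by Loeffler--Zerbes: the image of $_c\BF_{m}^{[f,f,j]}$ under $\loc_p$ lands in $H^1_{\rm g}=H^1_{\rm f}$ for the appropriate twist when $j\in[0,k]$ (equivalently, after our Tate twist conventions, for $1+j+\chi$ with $j\in[-k,0]$), because the de Rham cohomology class it comes from (the Rankin--Eisenstein class) has trivial image in $H^1/H^1_{\rm f}$ by the construction via motivic cohomology and the syntomic regulator.

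The key steps, in order, would be: (i) translate the twist by $\chi$ (carried out via corestriction as in Definition~\ref{define:twistedBF}) into a statement about $W_f^*\otimes W_f^*(1+\chi)$, noting that since $\chi$ is unramified at $p$ the Bloch--Kato subgroup is insensitive to this twist away from a Tate shift; (ii) apply the twisting map $\Tw_j$ / specialization at $\chi_{\cyc}^j$ to pass from the $\Lambda$-adic class to $\BF_{m,\chi}^{\lambda,\mu,(j)}$ localized in $H^1(\QQ(m)(\mu_{p^r})_p,W_f^*\otimes W_f^*(1+j+\chi))$; (iii) invoke the explicit reciprocity law of Loeffler--Zerbes (the relevant portion of \cite[\S8]{LZ1}, or the reformulation through $\cL_{W_f^*\otimes W_f^*(\chi),F}$ recalled in Definition~\ref{def:thePRmapsforthesymmetricproduct}), which identifies the Perrin-Riou regulator of $\BF_{m,\chi}^{\lambda,\mu}$ with a $p$-adic $L$-value and shows, by comparison with the interpolation formula, that the specialization at $j\in[-k,0]$ lies in the image of the Bloch--Kato exponential, i.e.\ in $H^1_{\rm f}$. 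Since $H^1_{\rm f}(\QQ(m)(\mu_{p^r})_p,-)=\exp(D_{\rm dR}/\Fil^0)$ and the range $j\in[-k,0]$ is precisely the range where the relevant $\Fil$ condition on $\Dcris(W_f^*\otimes W_f^*(1+j+\chi))$ forces the dual exponential to vanish, the claim follows. Alternatively, and perhaps more robustly, one can run the argument uniformly in $r$ on Wach modules: the localization $\loc_p\BF_{m,\chi}^{\lambda,\mu}$ lies in $H^1_{\Iw}(\QQ(m)_p(\mu_{p^\infty}),-)\cong \NN(-)^{\psi=1}$, and the condition $j\in[-k,0]$ ensures that the specialization map lands in the Wach module filtration piece corresponding to $H^1_{\rm f}$, again by \cite[\S2.7]{LZ-IJNT} and the Fontaine--Laffaille hypothesis $p>k+1$.

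The main obstacle I expect is bookkeeping the precise range: the Beilinson--Flach elements only interpolate geometric $L$-values over \emph{half} of the critical range for the symmetric square (as emphasized in the introduction), so one must be careful that the interval $j\in[-k,0]$ stated here really does fall inside the range where the geometric/crystalline property is available from \cite{LZ1}, rather than the complementary range where the classes are only ``analytic.'' Once the compatibility of the Tate twist conventions between the present paper and \cite{LZ1,LZ2} is pinned down (the remark in the excerpt about ``$-j$ versus $1-j$'' is exactly this kind of shift), the rest is a direct appeal to the cited reciprocity laws; I would therefore structure the write-up as: fix $j\in[-k,0]$, unwind the twist, cite \cite[Proposition~2.4.4 and the discussion preceding Theorem~8.1.4]{LZ1} for the crystalline property of $_c\BF^{[f,f,j]}_m$, and conclude by functoriality of $H^1_{\rm f}$ under the linear-combination operations defining the various $\BF^{\lambda,\mu}$.
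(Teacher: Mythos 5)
Your core idea matches the paper's actual one-line proof: the Beilinson--Flach classes in this twist range are \'etale realizations of Rankin--Eisenstein classes coming from motivic cohomology, hence their $p$-local image lies in $H^1_g$, and one then observes $H^1_g=H^1_f$ here. The paper compresses this to exactly two citations, namely \cite[Proposition 3.3.3]{KLZ2} for the $H^1_g$ membership and \cite[Proposition 8.1.3]{LZ1} for the equality $H^1_f=H^1_g$.

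However, the concrete routes you offer in step (iii) do not work, and your citations are off-target. \cite[Proposition 2.4.4]{LZ1} governs the unramified conditions away from $p$ (that the classes satisfy $\mathcal{F}_{\mathrm{can}}$ at $\ell\neq p$); it says nothing about crystallinity at $p$, which is the whole content here. The detour via the explicit reciprocity law is circular: the reciprocity law computes $\exp^*$ of the localization against a $p$-adic $L$-value, but knowing a value of $\exp^*$ does not place a class in $H^1_f$; indeed $H^1_f=\ker(\exp^*)$, and the nonvanishing $L$-value statements you invoke are used elsewhere precisely to show $\exp^*\neq 0$ on the singular quotient. Likewise the Wach-module route overreaches: specializing an element of $\NN(-)^{\psi=1}$ at a finite-order character lands in $H^1$, not automatically in $H^1_f$, and there is no ``Wach filtration piece corresponding to $H^1_f$'' in the way you suggest. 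Finally, the claim that ``the $\Fil$ condition forces the dual exponential to vanish'' on this range is not the right mechanism; what forces $H^1_f=H^1_g$ is the absence of the Frobenius eigenvalue $1$ on $\Dcris$, which is exactly \cite[Proposition 8.1.3]{LZ1}. Trimmed to your opening observation plus those two precise references, the argument becomes correct and is the paper's.
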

\begin{proof}
This is \cite[Proposition~3.3.3]{KLZ2}, since $H^1_{\rm f}=H^1_{\rm g}$ in this case (see \cite[Proposition 8.1.3]{LZ1}).
\end{proof}

\begin{proposition}
\label{PROP_PositionofSignedBF}Let $\clubsuit \in \{+, -, \bullet \}$ and $v$ a prime of $\QQ(m)$ above $p$. Then
$\res_v\left(\BF_{m,\chi}^\clubsuit\right)\in \ker\col_{\QQ(m)_v}^\clubsuit$, where $\res_v$ denotes the localization map 
\[
H^1_{\Iw}(\QQ(m),W_f^*\otimes W_f^*(1+\chi))\rightarrow H^1_{\Iw}(\QQ(m)_v,W_f^*\otimes W_f^*(1+\chi)).
\]
\end{proposition}
\begin{proof}
We shall only consider the ``{$\clubsuit = +$}" case. {The other two cases can be proved similarly.}
Let us set $F=\QQ(m)_v$  and write 
\[
z:=\res_v\left(\BF_{m,\chi}^{\alpha,\alpha}+\BF_{m,\chi}^{-\alpha,-\alpha}+\BF_{m,\chi}^{\alpha,-\alpha}+\BF_{m,\chi}^{-\alpha,\alpha}\right).
\]
Proposition~\ref{prop:BFgeom} tells us that the image of $z$ in $H^1(F(\mu_{p^r}),W_f^*\otimes W_f^*(1+\chi))$ belongs to the Bloch-Kato subspace $H^1_{\rm f}(F(\mu_{p^r}),W_f^*\otimes W_f^*(1+\chi))$ for all $r\ge0$.
By the interpolative properties of Perrin-Riou's map, this implies that 
$$\cL_{W_f^*\otimes W_f^*(1+\chi),F}(z)\in F\otimes \cH_E\otimes \Dcris(W_f^*\otimes W_f^*(1+\chi))$$ 
vanishes at all finite characters on $\Gamma$. Let $\cL^{(1)}_{\pm,F}$ be the morphism given in \S\ref{sec:signedcoleman}. Then, both $\cL_{+,F}^{(1)}(z)$ and $\cL_{-,F}^{(1)}(z)$ vanish at all finite characters of $\Gamma$. 

By an abuse of notation, we shall denote $\cL_{+,F}^{(1)}$ (respectively $\col_F^+$) composed with the projection map $W_f^*\otimes W_f^*(\chi)\lra W_{2}^*(1+\chi)$ by the same symbol.
Note that 
\[
\cL_{+,F}^{(1)}(z)=\left(\log_{p,2k+2}^{+,(1)}\right)^2\col_F^+\circ\loc_p\left(\BF_{m,\chi}^+\right).
\]
Therefore, $\col_F^+\circ\loc_p\left(\BF_{m,\chi}^+\right)$ vanishes at infinitely many finite characters of $\Gamma$ (the ones that do not vanish at $\log_{p,2k+2}^{+,(1)}$). This forces $\col_F^+\circ\loc_p\left(\BF_{m,\chi}^+\right)$ to vanish, as required.
\end{proof}

{
\begin{corollary}
We have $C \times \BF_{m, \chi}^{\clubsuit} \in H^1_{\mathcal{F}_{\clubsuit}}(\QQ(\mu_m),\mathbb{T})$ for $\clubsuit \in \{+, -, \bullet \}$.
\end{corollary}
\begin{proof}
This follows immediately from Proposition~\ref{PROP_PositionofSignedBF} and Corollary~\ref{cor:signedBFinsymmsquare}. 
\end{proof} 
}

\subsection{Doubly signed main conjectures}
\label{sec:nonordIwasawaESargument}

Recall that $T:=\textup{Sym}^2R_f^*(1 + \chi)$ and $\mathbb{T} = T \otimes \Lambda^{\iota}$. We now define doubly signed compact and discrete Selmer groups as well as doubly signed $p$-adic $L$-functions in the spirit of \cite{BLLV}. 
\begin{defn}
Let $\mathcal{S}$ denote the set of pairs $\{(+, -), (+, \bullet), (-, \bullet) \}$. For $\mathfrak{S} = (\clubsuit, \spadesuit) \in \mathcal{S}$, we define the following objects
\begin{itemize}
 \item A compact Selmer group $H^1_{{\mathfrak{S}}}(\QQ,\mathbb{T})$, given by
 $$H^1_{{\mathfrak{S}}}(\QQ,\mathbb{T}) := \ker\left(H^1(\QQ,\mathbb{T})\longrightarrow\frac{H^1(\Qp,\mathbb{T})}{\ker\left(\col^{\clubsuit}\right) \cap \ker\left(\col^{\spadesuit}\right)}\right)\,.$$
\item A discrete Selmer group $\Sel_{\mathfrak{S}}(T^\vee(1)/\QQ(\mu_{p^\infty}))$, given by the kernel of the restriction map
 \[
  H^1(\QQ(\mu_{p^\infty}),T^\vee(1))\longrightarrow \prod_{v|p}\frac{H^1(\QQ(\mu_{p^\infty})_v,T^\vee(1))}
  {H^1_{\mathfrak{S}}(\QQ(\mu_{p^\infty})_v,T^\vee(1))} \times \prod_{v \nmid p} \frac{H^1(\QQ(\mu_{p^\infty})_v,T^\vee(1))}{H^1_{\rm f}(\QQ(\mu_{p^\infty})_v,T^\vee(1))},
 \]
 where $v$ runs through all primes of $\QQ(\mu_{p^\infty})$, and for $v \mid p$ the local condition $H^1_{\mathfrak{S}}(\QQ(\mu_{p^\infty})_v,T^\vee(1))$ is the orthogonal complement of $\ker\left(\col^{\clubsuit}\right)\cap\ker\left(\col^{\spadesuit}\right)$ under the local Tate pairing. 
 \item In the setting of Theorem~\ref{thm_positionofBFintheanalyticselmer}, we define the \emph{doubly-signed $p$-adic $L$-function} by setting
$$\mathcal{L}_{\mathfrak{S}}:= \col^{\clubsuit}\circ\res_p\left(\BF^{\spadesuit}_{1, \chi}\right) \in C^{-1}\LL_\cO(\Gamma).$$
\end{itemize}
\end{defn} 
\begin{remark} Interchanging the roles of $\clubsuit$ and $\spadesuit$ has the effect of multiplying $\mathcal{L}_{\mathfrak{S}}$ by $-1$. This is the content of \cite[Proposition 5.3.4]{BLLV} (see Proposition~\ref{prop_antisymofcolBF} below for its incarnation in our setting). Since we are only interested in the ideal generated by $\mathcal{L}_{\mathfrak{S}}$, the ambiguity of sign is not an issue for us.
\end{remark}

We are now in a position to formulate a doubly-signed Iwasawa main conjecture for the symmetric square representation of a non-$p$-ordinary eigenform.
\begin{conjecture} \label{conj:signedmainconjecture}
For every $\mathfrak{S} \in \mathcal{S}$ and every character $\eta$ of $\Gamma_{\mathrm{tors}}$, the module $e_{\eta}\Sel_{\mathfrak{S}}(T^\vee(1)/\QQ(\mu_{p^\infty}))$ is $\Lambda(\Gamma_{1})$-cotorsion and
\[ \mathrm{char}_{\Lambda_\cO(\Gamma_{1})}\left( e_{\eta}\Sel_{\mathfrak{S}}(T^\vee(1)/\QQ(\mu_{p^\infty}))^{\vee}\right) = (e_{\eta}\mathcal{L}_{\mathfrak{S}}) \]
as ideals of $\Lambda(\Gamma_{1})\otimes\QQ_p$, with equality away from the support of $\mathrm{coker}(\col^{\clubsuit})$ and $\mathrm{coker}(\col^{\spadesuit})$.
\end{conjecture}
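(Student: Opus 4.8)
The plan is to deduce Conjecture~\ref{conj:signedmainconjecture} by combining the Euler system divisibility that is already in hand (Theorem~\ref{thm:intro}) with a reverse divisibility imported from the analytic (Pottharst-style) side, and then propagating the resulting equality across all three pairs $\mathfrak{S}\in\mathcal{S}$ and all characters $\eta$ of $\Gamma_{\mathrm{tors}}$. The statement only asks for equality \emph{away from the support of} $\mathrm{coker}(\col^{\clubsuit})$ and $\mathrm{coker}(\col^{\spadesuit})$, and that weakening is exactly what the comparison with the analytic conjecture provides.

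\emph{Step 1: the Euler system bound.} By Corollary~\ref{cor:corollaryB} each $C\,\BF^{\clubsuit}_{m,\chi}$, for $\clubsuit\in\{+,-,\bullet\}$, is a rank-one Euler system for $\Sym^2R_f^*(1+\chi)$, and by construction $\mathcal{L}_{\mathfrak{S}}=\col^{\clubsuit}\circ\res_p(\BF^{\spadesuit}_{1,\chi})$. Feeding these into the Euler system machinery in its signed formulation (as in \cite{BLLV}, using \textup{(\textbf{Im})} and the non-triviality established in Section~\ref{sec:p-adicLfunctions}) yields, for any component $(\eta,\mathfrak{S})$ with $e_\eta\mathcal{L}_{\mathfrak{S}}\neq 0$, that $e_\eta\Sel_{\mathfrak{S}}(T^\vee(1)/\QQ(\mu_{p^\infty}))^\vee$ is $\Lambda(\Gamma_1)$-torsion and
\[ \mathrm{char}_{\Lambda(\Gamma_{1})\otimes\QQ_{p}}\!\left(e_{\eta}\Sel_{\mathfrak{S}}(T^\vee(1)/\QQ(\mu_{p^\infty}))^{\vee}\right)\ \big|\ (e_{\eta}\mathcal{L}_{\mathfrak{S}})\,. \]
This is the content of Theorem~\ref{thm:intro}, so this half is available; the only strengthening required for the full conjecture is the non-vanishing of $e_\eta\mathcal{L}_{\mathfrak{S}}$ for \emph{every} $\mathfrak{S}$ and \emph{every} $\eta$, which I address in Step~3.

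\emph{Step 2: the reverse divisibility (main obstacle).} The hard direction $(e_\eta\mathcal{L}_{\mathfrak{S}})\mid\mathrm{char}(e_\eta\Sel_{\mathfrak{S}}^\vee)$ I would extract from the analytic main conjecture. Theorem~\ref{thm:analyticmainconjecture} already provides one divisibility between $e_{\omega^j}\widetilde{H}^2_{\Iw}(\QQ,V,\mathbb{D}_{\chi}^{\lambda})$ and a product of $L_{p}^{\mathrm{geom}}(\Sym^{2}f_{\lambda}\otimes\chi^{-1})$, a Kubota--Leopoldt factor, and $\mathrm{char}(\mathrm{coker}\,\col^{\clubsuit})$. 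What is missing is the opposite divisibility on the analytic side. I would obtain it by an Eisenstein-congruence argument for the twisted symmetric square, carried out over the finite-slope eigenvariety rather than the ordinary locus: construct a $p$-adic family of Klingen-type Eisenstein sections on a suitable symplectic or unitary group (e.g. $\mathrm{GSp}_4$ or $\mathrm{U}(2,2)$) interpolating $\Sym^2 f_\lambda\otimes\chi$, bound its constant term by the relevant geometric $p$-adic $L$-function (using Arlandini's factorization, Theorem~\ref{thm:DLZfactorizationtheorem}), and produce cuspidal congruences whose Galois-theoretic shadow lower-bounds $\widetilde{H}^2_{\Iw}$. One then passes from the analytic (``bounded'', $\mathcal{H}$-coefficient) conjecture to the integral signed one via the standard dictionary between the Pottharst-style Selmer complex for the $\lambda$-submodule $\mathbb{D}_{\chi}^{\lambda}$ and the signed Selmer group $\Sel_{\mathfrak{S}}$, as in the $\pm$-theory of Lei--Loeffler--Zerbes and \cite{BLLV}; comparing characteristic ideals on the two sides introduces precisely the factors $\mathrm{char}(\mathrm{coker}\,\col^{\clubsuit})$ and $\mathrm{char}(\mathrm{coker}\,\col^{\spadesuit})$, which is why equality in the conjecture is asserted only off their support. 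I expect this step to be the principal difficulty: the non-ordinary Eisenstein-congruence input has not been carried out, and this is plausibly where genuinely new work is unavoidable. An alternative route to the same reverse bound would be the rank-two Beilinson--Flach Euler system promised in the authors' sequel, whose non-triviality would directly feed the Euler system machine in the opposite direction.

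\emph{Step 3: propagation over $\mathfrak{S}$ and $\eta$.} It remains to remove the restriction $e_\eta\mathcal{L}_{\mathfrak{S}}\neq 0$ and to obtain all three pairs at once. The $p$-adic $L$-functions $\mathcal{L}_{(+,-)}$, $\mathcal{L}_{(+,\bullet)}$, $\mathcal{L}_{(-,\bullet)}$ are all built from the \emph{same} unbounded classes $\BF^{\lambda,\mu}_{1,\chi}$ via the matrix of Corollary~\ref{cor:BFfactorisation}, and the reciprocity laws of \cite{LZ1,LZ2} compute the relevant pairings in terms of $L_{p}^{\mathrm{geom}}(\Sym^{2}f_{\lambda}\otimes\chi^{-1})$ times Kubota--Leopoldt factors times the explicit logarithms $\log_p^{\clubsuit}$; combining this with Theorems~\ref{thm:nonvanishingofSym2Lvalues} and~\ref{coro:trivialzeroes} (and Rohrlich-type non-vanishing for the auxiliary Dirichlet $L$-factors) shows that, for each fixed $\eta$, at least one—and generically all—of the $e_\eta\mathcal{L}_{\mathfrak{S}}$ are non-zero. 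For a pair $\mathfrak{S}$ for which $e_\eta\mathcal{L}_{\mathfrak{S}}$ vanishes, I would instead compare $\Sel_{\mathfrak{S}}$ with a pair $\mathfrak{S}'$ for which it does not: the three local conditions $\ker(\col^{\clubsuit})\cap\ker(\col^{\spadesuit})$ fit into short exact sequences controlled by the cokernels $\mathrm{coker}(\col^{\clubsuit})$, so a Poitou--Tate/snake-lemma comparison of the three doubly-signed Selmer groups, matched against the corresponding relations among the $\mathcal{L}_{\mathfrak{S}}$, transports the equality of Step~2 from one pair to the other two, again up to the support of the relevant cokernels. Since the entire construction—classes, signed Coleman maps, Selmer conditions—is equivariant for twisting by characters of $\Gamma_{\mathrm{tors}}$, running the argument uniformly over the finitely many $\eta$ completes the proof.
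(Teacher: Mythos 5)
The statement you are trying to prove is stated in the paper as a \emph{conjecture} (Conjecture~\ref{conj:signedmainconjecture}); the paper does not prove it. What the paper establishes is only the single divisibility of Theorem~\ref{thm:signedmainconjecture}, under the hypotheses $(\Psi_1)$--$(\Psi_3)$ and \textup{(\textbf{Im})}, and only for $\eta=\omega^j$ with $j$ even in $\{k+2,\dots,2k+2\}$ and for at least one pair $\mathfrak{S}$ for which $e_{\omega^j}\mathcal{L}_{\mathfrak{S}}\neq 0$ (Proposition~\ref{prop:nonzerop-adicLfunction}). Your Step~1 correctly matches that, but the conjecture asks for every $\eta$ of $\Gamma_{\mathrm{tors}}$ and every $\mathfrak{S}\in\mathcal{S}$, together with the opposite divisibility, none of which is available.

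The genuine gaps are in your Steps~2 and~3, and you have in fact named the first one yourself: the Eisenstein-congruence construction for non-ordinary twisted symmetric squares on a finite-slope eigenvariety does not exist, so the reverse divisibility on the analytic side is an open problem, not a step. Moreover, the direction of transfer you propose runs against the paper's own logic: Theorem~\ref{thm:analyticmainconjecture} is \emph{deduced from} the signed divisibility via Propositions~\ref{prop:boundsforH2} and~\ref{prop_analyticMCstep1}, and that comparison involves the auxiliary elements $\mathcal{D},\mathcal{E}_1,\mathcal{E}_2,\mathfrak{a},\mathfrak{b}_1$ whose ratios are only partially controlled (Lemma~\ref{lemma_helpfulsymmetriesofcorrectionterms}); inverting it to pass from an analytic lower bound back to the signed Selmer group is not a ``standard dictionary'' and would require controlling these error terms exactly. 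Finally, Step~3 cannot work as stated: if $e_\eta\mathcal{L}_{\mathfrak{S}}=0$ identically for some component, the conjectured equality of characteristic ideals forces $e_\eta\Sel_{\mathfrak{S}}^\vee$ to be non-torsion, contradicting the cotorsion assertion; so proving the conjecture requires proving non-vanishing of \emph{every} $e_\eta\mathcal{L}_{\mathfrak{S}}$, whereas the paper's non-vanishing input (via the reciprocity law and Theorem~\ref{thm:nonvanishingofSym2Lvalues}) only reaches the even $\omega^j$ with $j$ in the critical range, and only for some $\mathfrak{S}$. No Poitou--Tate comparison between the three doubly signed conditions can manufacture the missing non-vanishing.
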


\begin{proposition} \label{prop:nonzerop-adicLfunction}
Suppose we are in the setting of Theorem~\ref{thm_positionofBFintheanalyticselmer}. Then there exists a choice of $\mathfrak{S} \in S$ such that $e_{\omega^{j}}\mathcal{L}_{\mathfrak{S}} \neq 0$.
\end{proposition}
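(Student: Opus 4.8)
The plan is to reduce the non-vanishing of at least one doubly-signed $p$-adic $L$-function to the non-triviality of the singly-signed Beilinson-Flach classes $\BF_{1,\chi}^{\clubsuit}$, which in turn follows from the reciprocity laws and the non-vanishing of the unbounded classes $\BF_{1,\chi}^{\lambda,\mu}$ established in Section~\ref{sec:p-adicLfunctions}. First I would recall that by Corollary~\ref{cor:BFfactorisation} and the matrix relation \eqref{eq:BFfactorisation}, the three classes $\BF_{1,\chi}^{+},\BF_{1,\chi}^{-},\BF_{1,\chi}^{\bullet}$ are obtained (up to the logarithmic factors, which are non-zero-divisors in $\cH_E(\Gamma)$) as explicit non-degenerate linear combinations of $\BF_{1,\chi}^{\alpha,\alpha},\BF_{1,\chi}^{-\alpha,-\alpha},\BF_{1,\chi}^{\alpha,-\alpha},\BF_{1,\chi}^{-\alpha,\alpha}$ (recall $\BF_{1,\chi}^{\circ}=0$ by Corollary~\ref{cor:signedBFinsymmsquare}). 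Since the four unbounded classes are all non-zero by Corollary~\ref{cor:nontriviality} and the subsequent corollary (using $(\Psi_2)$), and since the $4\times 4$ matrix is invertible, I would argue that not all of $\BF_{1,\chi}^{+},\BF_{1,\chi}^{-},\BF_{1,\chi}^{\bullet}$ can vanish simultaneously — indeed if all three vanished then all four unbounded classes would vanish.

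Next, I would pin this argument to the specific idempotent $e_{\omega^j}$ appearing in the statement. The key input is Corollary~\ref{cor:nontriviality}, which shows that for even $j$ with $k+2<j\leq 2k+2$ the image of $\BF_{1,\chi}^{\lambda,\lambda}$ in $H^1(\QQ_p,\Sym^2W_f^*(1+\chi)(-j))$ is non-zero, hence $\res_p(\BF_{1,\chi}^{\lambda,\mu})$ has non-trivial $\omega^j$-component; combined with the reciprocity law (Proposition~\ref{prop:explicitreciprocitylaw}) and the factorization through $L_p^{\geom}(\Sym^2f_\lambda\otimes\chi^{-1},j)$, which is non-vanishing at such $j$ by Corollary~\ref{coro:trivialzeroes}. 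Working in the localization at $p$, I would transport the invertible matrix relation to the $e_{\omega^j}$-components of the Coleman-map images: applying $\col^{\clubsuit}\circ\res_p$ to \eqref{eq:BFfactorisation} expresses $e_{\omega^j}\cL_{\mathfrak{S}}$-type quantities (for the three pairs $\mathfrak{S}\in\mathcal S$) in terms of $e_{\omega^j}\col^{\clubsuit}\circ\res_p(\BF_{1,\chi}^{\lambda,\mu})$, and the explicit reciprocity law identifies the latter, up to non-vanishing factors (Gauss sums, the factor $c^2-c^{\cdots}$ fixed via Remark~\ref{remark:fixingc}, and the Kubota–Leopoldt factor, all handled as in Lemma~\ref{lem:junkfactors}), with $e_{\omega^j}L_p^{\geom}(\Sym^2f_\lambda\otimes\chi^{-1})$. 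Since the latter is non-zero at $j$, at least one of the three doubly-signed $L$-functions must have non-zero $e_{\omega^j}$-component.

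Concretely, the cleanest route is: suppose for contradiction that $e_{\omega^j}\cL_{\mathfrak{S}}=0$ for all three $\mathfrak{S}\in\mathcal S$. Unravelling the definition $\cL_{\mathfrak{S}}=\col^{\clubsuit}\circ\res_p(\BF_{1,\chi}^{\spadesuit})$ for $\mathfrak{S}=(\clubsuit,\spadesuit)$, together with Proposition~\ref{PROP_PositionofSignedBF} (which says $\res_p(\BF_{1,\chi}^{\spadesuit})$ lands in $\ker(\col^{\spadesuit})$), one sees that the vanishing of all $e_{\omega^j}\cL_{\mathfrak{S}}$ forces $e_{\omega^j}\res_p(\BF_{1,\chi}^{\spadesuit})$ to lie in $\bigcap_{\clubsuit}\ker(\col^{\clubsuit})$ for each $\spadesuit\in\{+,-,\bullet\}$; but this intersection being trivial (or rather, forcing the relevant regulator components $\cL_{\pm},\cL_{\bullet}$ to vanish) would then, via the reciprocity law, force $L_p^{\geom}(\Sym^2 f_\lambda\otimes\chi^{-1},j)=0$, contradicting Corollary~\ref{coro:trivialzeroes} and Theorem~\ref{thm:nonvanishingofSym2Lvalues}. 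I would then just cite these results to conclude.

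The main obstacle I anticipate is the bookkeeping in passing between the three-dimensional space of doubly-signed data and the four-dimensional space of the $\BF_{1,\chi}^{\lambda,\mu}$: one must check that the \emph{specific} linear combinations defining the $\cL_{\mathfrak{S}}$ (via the singly-signed Coleman maps $\col^+,\col^-,\col^\bullet$, each of which kills a different $2$-dimensional local piece) together detect all of $\res_p(\BF_{1,\chi}^{\lambda,\mu})$ on the $e_{\omega^j}$-component — equivalently, that the non-vanishing of $L_p^{\geom}(\Sym^2 f_\lambda\otimes\chi^{-1},j)$ genuinely obstructs simultaneous vanishing, rather than being consistent with it for a degenerate reason (e.g.\ if $\res_p(\BF_{1,\chi}^{\lambda,\mu})$ happened to lie in a subspace killed by two of the three Coleman maps at once). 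Tracking which logarithmic factor $\log_p^{\clubsuit}$ divides which component, and ensuring it is invertible after applying $e_{\omega^j}$ (using that $j\notin S_\eta^{\pm}$ can be arranged or that $\log_p^{\clubsuit}(\omega^j)\neq 0$ for the $j$ in question), is the delicate point; everything else is a formal consequence of the reciprocity laws and the non-vanishing statements already in hand.
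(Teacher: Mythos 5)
Your strategy is the right one and essentially matches the paper's: reduce to the non-vanishing of $L_p^{\mathrm{geom}}(\Sym^2 f_\lambda\otimes\chi^{-1},j)$ at $j\in[k+2,2k+2]$ via the reciprocity law, using the matrix relations to pass between the four unbounded classes and the three doubly-signed $L$-functions (with $\BF^\circ=0$ cutting the count from four to three). However, you flag the linear-algebra bookkeeping as "the main obstacle" and "the delicate point" without resolving it, and that is precisely where the content of the proof lives. Your contradiction argument reduces to showing that the vanishing of all $\col^{\clubsuit}\circ\res_p(\BF^{\spadesuit})$ for $\clubsuit,\spadesuit\in\{+,-,\bullet\}$ would force $\cL_{\alpha,\alpha}\circ\res_p(\BF^{\alpha,-\alpha})$ (a non-zero multiple of $L_p^{\mathrm{geom}}$ by \eqref{eqn:antisymmetryrelation}) to vanish. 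But this needs two non-obvious facts that you do not establish: \emph{(a)} the first row of the inverse of the $4\times 4$ factorization matrix has a zero in its last entry, so that $\cL_{\alpha,\alpha}(z)$ is an $\cH$-linear combination of $\col^{+}(z),\col^{-}(z),\col^{\bullet}(z)$ only, with no $\col^{\circ}$ contribution; and \emph{(b)} the third row of the same inverse has a zero in its third entry, so that $\BF^{\alpha,-\alpha}_{1,\chi}$ is an $\cH$-linear combination of $\BF^{+}_{1,\chi},\BF^{-}_{1,\chi},\BF^{\circ}_{1,\chi}$ with no $\BF^{\bullet}$ contribution. Combined with $\BF^{\circ}_{1,\chi}=0$ and the vanishing of the diagonal terms $\col^{\clubsuit}\circ\res_p(\BF^{\clubsuit}_{1,\chi})$ (Proposition~\ref{PROP_PositionofSignedBF}), these exactly produce an expression for $\cL_{\alpha,\alpha}(\res_p\BF^{\alpha,-\alpha}_{1,\chi})$ as an $\cH$-linear combination of the $\cL_{\mathfrak{S}}$ for $\mathfrak{S}\in\mathcal{S}$, whence the conclusion. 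Without checking (a) and (b) explicitly, the scenario you worry about (``lying in a subspace killed by two of the three Coleman maps at once'') is not ruled out by what you wrote; indeed a priori $\cL_{\alpha,\alpha}$ might have involved $\col^{\circ}$, which is invisible to the three $\cL_{\mathfrak{S}}$. So the proposal is strategically on target but has a genuine gap at the decisive computation, which is what the paper's short proof supplies.
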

\begin{proof}
By Lemma ~\ref{lem:factorisation} and Definition ~\ref{defn:signedcolemanmaps}, we have 
\[
\begin{pmatrix}
1&1&1&1\\
\alpha^2&\alpha^2 &-\alpha^2&-\alpha^2\\
2\alpha&-2\alpha&0&0\\
0&0&-2\alpha&2\alpha
\end{pmatrix}
\begin{pmatrix}
\cL_{\alpha,\alpha,F}^{(1)}\\
\cL_{-\alpha,-\alpha,F}^{(1)}\\
\cL_{\alpha,-\alpha,F}^{(1)}\\
\cL_{-\alpha,\alpha,F}^{(1)}
\end{pmatrix} = \begin{pmatrix}
\log_{p,2k+2}^{-,(1)}\col^{-}\\ \log_{p,2k+2}^{+,(1)}\col^{+}(z)\\ \log_{p,k+1}^{(1)}\col^{\bullet}(z)\\\log_{p,k+1}^{(1)}\col^{\circ}
\end{pmatrix}.
\]
In particular,
\[ \cL_{\alpha,\alpha}^{(1)}(z) = \frac{\log_{p,2k+2}^{-,(1)}}{4}\col^{-}(z)+\frac{\log_{p,2k+2}^{+,(1)}}{4\alpha^2}\col^{+}(z)+\frac{\log_{p,k+1}^{(1)}}{4\alpha}\col^{\bullet}(z)\,. \]
Similarly,  Corollary ~\ref{cor_BFfactorisation} gives
\[ \BF_{1,\chi}^{\alpha,-\alpha} = \frac{\log_{p,2k+2}^{+,(1)}}{4}\BF_{1,\chi}^{+}-\frac{\log_{p,2k+2}^{-,(1)}}{4\alpha^2}\BF_{1,\chi}^{-}-\frac{\log_{p,k+1}^{(1)}}{4\alpha}\BF_{1,\chi}^{\circ}\,. \]
By Corollary~\ref{cor:signedBFinsymmsquare}, we know that $\BF_{1,\chi}^{\circ} = 0$. Hence, $\cL_{\alpha,\alpha}^{(1)}(\BF_{1,\chi}^{\alpha,-\alpha})$ is an {$\mathcal{H}_{E,k+1}(\Gamma)$}-linear combination of the terms $\col^{\clubsuit}\circ\mathrm{res}_{p}(\BF_{1,\chi}^{\spadesuit})$ for $(\clubsuit, \spadesuit) \in \mathcal{S}$. By (\ref{eqn:antisymmetryrelation}), we know that  $\cL_{\alpha,\alpha}^{(1)}(\BF_{1,\chi}^{\alpha,-\alpha})$ is a non-zero multiple of the \emph{geometric $p$-adic $L$-function} and hence is non-zero. We conclude that there exists at least one $\mathfrak{S} = (\clubsuit, \spadesuit) \in \mathcal{S}$ such that $\col^{\clubsuit}\circ\mathrm{res}_{p}(\BF_{1,\chi}^{\spadesuit})$ is non-zero.
\end{proof}

We can now give the proof of Theorem~\ref{thm:intro}.

\begin{theorem} \label{thm_signedmainconjecture} 
Suppose that the hypotheses \ref{item_Psi_1}--\ref{item_Psi_3}, \ref{item_NV} and \ref{item_Im}  hold true. Then for every $j \in \{k+2,\ldots,2k+2 \}$ even and $\mathfrak{S} \in \mathcal{S}$ that validates the conclusion of Proposition~\ref{prop:nonzerop-adicLfunction}, the $\omega^{j}$-isotypic component $e_{\omega^{j}}\Sel_{\mathfrak{S}}(T^\vee(1)/\QQ(\mu_{p^\infty}))$ is $\Lambda_\cO(\Gamma_{1})$-cotorsion and we have
\[\mathrm{char}_{\Lambda_\cO(\Gamma_{1})}\left( e_{\omega^{j}}\Sel_{\mathfrak{S}}(T^\vee(1)/\QQ(\mu_{p^\infty}))^{\vee}\right)\,\,\big{|}\,\, (e_{\omega^{j}}\mathcal{L}_{\mathfrak{S}}) \]
as ideals of $\Lambda_\cO(\Gamma_{1}) \otimes \QQ_{p}$.
\end{theorem}
\begin{proof}
This theorem  follows from the same proof of \cite[Theorem 6.2.4]{BLLV}, using the (rank-one) locally restricted Euler system machinery we have defined  above. The quadruple sign $\mathfrak{S} = \{(\bigtriangleup,\square),(\bullet,\circ)\}$ used therein corresponds to our double sign $\mathfrak{S} = (\clubsuit, \spadesuit) \in \mathcal{S}$.  The additional hypothesis \ref{item_Psi_3}  ensures the big image condition on $T$ in order to apply the Euler system machinery holds (c.f. \cite[\S5.2]{LZ2}).
\end{proof}

\section{$(\vp,\Gamma)$-modules and analytic main conjectures} \label{sec:analyticmainconjectures}
Our main goal in this section is to give proofs of Theorem~\ref{thm_horizontalESforSym2}(iv), Theorem~\ref{thm_positionofBFintheanalyticselmer} and Corollary~\ref{cor:signedBFinsymmsquare}. These results are crucial for the construction of bounded Beilinson--Flach classes as well as to translate our results on the signed Iwasawa main conjectures into the \emph{analytic} language of Pottharst~\cite{jayanalyticselmer} and Benois~\cite{Benois2015}.

Fix once and for all an integer $r \in \mathcal{N}_\chi$ as in Definition~\ref{def:Kolyvaginprimesfortwists}, where the Dirichlet character $\chi$ is given as in Section~\ref{sec_selmerstructure} verifying \ref{item_Psi_1} and \ref{item_Psi_2}. Fix also  a character $\nu \in \widehat{\Delta}_r$ and set $\psi:=\chi\nu$. As in Section~\ref{sec_selmerstructure}, we also fix an integer $j$ in the interval $[k+2,2k+2]$. Recall that  $V_{\psi,j}$ denotes $\textup{Sym}^2W_f^*(1-j)\otimes\omega^j\psi$,
which sits inside
$$W_{\psi,j}=W_f^*\otimes W_f^*(1-j) \otimes \omega^j\psi\,=\left({\bigwedge\!}^2 W_f^*(1-j)\otimes\omega^j\psi\right)\oplus V_{\psi,j}.$$
For notational simplicity, we write $\mathscr{V}_{\psi}=V_{\psi,j}\otimes \omega^{-j}$ and $\mathscr{W}_{\psi}=W_{\psi,j}\otimes \omega^{-j}$ throughout this section.

We shall make use of the identification  (which arises from the inflation-restriction sequence)
$$H^1_\Iw(K^{\Delta},W_{\psi,j})\stackrel{\sim}{\lra} H^1_\Iw(K,\mathscr{W}_{\psi})^{\omega^{-j}}$$
(and likewise, for the representations $\mathscr{V}_{\psi}$ and $V_{\psi,j}$) for any abelian extension $K$ of $\QQ$ that contains $\mu_{p}$, where $\Delta=\textup{Gal}(\QQ(\mu_p)/\QQ)$ and $K^\Delta$ denotes the fixed field of $\Delta$.
\subsection{Local preparation}
\label{subsec_Localanalysis}
{In this subsection, we introduce the $p$-adic Hodge-theoretic objects that we shall rely on in the proofs of Theorem~\ref{thm_horizontalESforSym2}(iv) and Theorem~\ref{thm_positionofBFintheanalyticselmer}. We also prove Lemma~\ref{lemma_twobfclasssesareLI}, which is a statement on local classes and will serve as a key ingredient in the proof of Theorem~\ref{thm_horizontalESforSym2}(iv).}

We fix a generator $\{\varepsilon_{(n)}\}_{n}$ of $\varprojlim \mu_{p^n}=:\ZZ_p(1)$.
Recall from  Corollary~\ref{cor:sym2splits} the $G_{\QQ_p}$-subrepresentations $W_1$ and $W_2$ of $\textup{Sym}^2W_f$. We define for $i=1,2$ the Dieudonn\'e module
$$D_i:=\Dcris\left(W_i^*(1-j)\otimes\psi\right).$$
We also set 
$$D_0:=\Dcris\left({\bigwedge\!}^2W_f^*(1-j)\otimes\psi\right).$$
In particular, this gives the decompositions
$$\Dcris(\mathscr{W}_{\psi})=D_0\oplus D_1\oplus D_2,\quad \Dcris(\mathscr{V}_{\psi})=D_1\oplus D_2\,.$$
The crystalline Frobenius $\varphi$ acts on $D_1$ by { $\alpha_{\psi,j}:=p^{j-1}\psi(p)/\alpha^2=-p^{j-k-2}\epsilon_f^{-1}\psi(p)$, whereas it acts on $D_0$ by $-\alpha_{\psi,j}=p^{j-k-2}\epsilon_f^{-1}\psi(p)$}. Recall from $\eqref{eq:evecs}$ the $\varphi$-eigenvectors $v_{\pm\alpha} \in \Dcris(W_f^*)$. If we fix a non-zero vector $v_{j,\psi} \in \Dcris(E(1-j)\otimes\psi)$, we have the following $\vp$-eigenvectors in  $\Dcris(\mathscr{W}_{\psi})$: $\omega_{\pm,\pm}:=v_{\pm\alpha}\otimes v_{\pm\alpha} \otimes v_{j,\psi}$. {We can check} that 
\begin{itemize}
\item $D_0 =\textup{span}\{\omega_{+-}-\omega_{-+}\}$,
\item $D_1 =\textup{span}\{\omega_{++}-\omega_{--}\}$,
\item $D_2 =\textup{span}\{\omega_{++}+\omega_{--}, \omega_{+-}+\omega_{-+}\}$.
\end{itemize}

\begin{defn}
Given $\lambda \in \{\pm \alpha\}$, we let $\delta_\lambda:\QQ_{p}^\times\ra E^\times$ denote the character that is given by $\delta_\lambda(p)=1/\lambda$ and $\delta(u)=1$ for $u\in \ZZ_{p}^\times$. We also write $\delta_{\psi,j}:\QQ_{p}^\times\ra E^\times$ for the character which is given by $\delta_{\psi,j}(x)=|x|^{1-j}_px^{1-j}\delta_\psi(x)$ where $\delta_{\psi}(p)=\psi^{-1}(p)$ and $\delta_{\psi}(u)=1$.
\end{defn}

For each $\varphi$-eigenspace  $\Dcris(W_f^*)^{\varphi=1/\lambda}$ of  $\Dcris(W_f^*)$ (where $\lambda=\pm\alpha$ as above), there is a unique rank-one $(\varphi,\Gamma)$-submodule $\mathbb{D}_\lambda\subset\mathbb{D}_f:=D_{\textup{rig}}^\dagger(W_f^*)$, which is of the form $\mathcal{R}_{E}(\delta_\lambda)$, where $\mathcal{R}_{E}$ is the Robba ring over $E$ (see \cite[\S2.2]{benois14}). More precisely, $\mathbb{D}_\lambda$ is the free $\mathcal{R}_{E}-$module generated by an element $e_\lambda \in \mathbb{D}_\lambda$ for which we have
$$\varphi(e_\lambda)=\delta_{\lambda}(p)\cdot e_\lambda\,\,\,, \,\,\, \tau(e_\lambda)=\delta_\lambda(\chi_{\cyc}(\tau))\cdot e_\lambda\,\, (\forall \tau \in \Gamma)\,.$$
We also set $\mathbb{D}_{\psi,j}$ to be $\mathcal{R}_{E}(\delta_{\psi,j})$ and define 
$$\mathbb{D}_{f,j}:=\mathbb{D}_f\otimes\mathbb{D}_{\psi,j} \cong D_{\textup{rig}}^\dagger(W_f^*(1-j)\otimes\psi)\,\,,\,\,\mathbb{D}_{\lambda,j}:=\mathbb{D}_{\lambda}\otimes \mathbb{D}_{\psi,j}$$

In what follows, the following $(\varphi,\Gamma)$-subquotients (all of which are necessarily crystalline) of $D_{\textup{rig}}^\dagger(\mathscr{W}_{\psi})$ will play a crucial role. Let $\lambda,\mu \in \{\pm\alpha\}$.
\begin{itemize}
\item $D_{\textup{rig}}^\dagger(\mathscr{W}_{\psi})^{\lambda,\mu}:=\mathbb{D}_\lambda \otimes \mathbb{D}_{\mu,j}$.
\item $D_{\textup{rig}}^\dagger(\mathscr{W}_{\psi})^{\lambda,\circ}:=\mathbb{D}_\lambda \otimes \mathbb{D}_{f,j}$.
\item $D_{\textup{rig}}^\dagger(\mathscr{W}_{\psi})_{/\lambda,\circ}:=D_{\textup{rig}}^\dagger(\mathscr{W}_{\psi})/D_{\textup{rig}}^\dagger(\mathscr{W}_{\psi})^{\lambda,\circ}$.
\item $D_{\textup{rig}}^\dagger(\mathscr{W}_{\psi})_{/\lambda,\mu}:=(\mathbb{D}_{f}/\mathbb{D}_\lambda)\otimes \mathbb{D}_{\mu,j}\subset  D_{\textup{rig}}^\dagger(\mathscr{W}_{\psi})_{/\lambda,\circ}$.
\item $D_{\textup{rig}}^\dagger(\mathscr{W}_{\psi})^{\lambda,\mu,+}:=\left(\mathbb{D}_\lambda\otimes\mathbb{D}_f+\mathbb{D}_f\otimes\mathbb{D}_\mu\right)\otimes \mathbb{D}_{\psi,j}$.
\item  $D_{\textup{rig}}^\dagger(\mathscr{W}_{\psi})^{\lambda,\mu,-}:=D_{\textup{rig}}^\dagger(\mathscr{W}_{\psi})/D_{\textup{rig}}^\dagger(\mathscr{W}_{\psi})^{\lambda,\mu,+}$.
\end{itemize}

Let  $D=D_{\textup{rig}}^\dagger(\mathscr{W}_{\psi})_{?}$ or  $D_{\textup{rig}}^\dagger(\mathscr{W}_{\psi})^{?}$ be one of the $(\vp,\Gamma)$-modules above. We write $\Dcris(\mathscr{W}_{\psi})_?$ (or $\Dcris(\mathscr{W}_{\psi})^?$) for the corresponding Dieudonn\'e module $\Dcris(D)$, {where $\Dcris(-)$ denotes the functor defined in \cite[\S2, P.341]{nakamura17}}.
We write $\frak{d}_{/\lambda,\circ}$ for  the natural projection map
$$\frak{d}_{/\lambda,\circ}:\Dcris(\mathscr{W}_{\psi})\lra \Dcris(\mathscr{W}_{\psi})_{/\lambda,\circ}\,.$$ 
We also have the following canonical short exact sequence:
\begin{equation}
\label{eqn:pmcirccrissequence}
0\lra \Dcris(\mathscr{W}_{\psi})_{/\lambda,\mu}\stackrel{\frak{d}^{*}}{\lra}  \Dcris(\mathscr{W}_{\psi})_{/\lambda,\circ}\stackrel{\frak{d}^-_{\lambda,\mu}}{\lra}  \Dcris(\mathscr{W}_{\psi})^{\lambda,\mu,-}\lra 0.
\end{equation}

\begin{lemma}
\label{lemma:locateclassbyphiaction}
$ \ker(\frak{d}^-_{\lambda,\mu}) = \left(\Dcris(\mathscr{W}_{\psi})_{/\lambda,\circ}\right)^{\varphi=\frac{p^{j-1}\psi^{-1}(p)}{-\lambda\mu}}$
\end{lemma}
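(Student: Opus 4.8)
The plan is to obtain both inclusions from the exact sequence \eqref{eqn:pmcirccrissequence} once $\ker(\mathfrak{d}^{-}_{\lambda,\mu})$ has been identified with the Dieudonn\'e module of a rank-one crystalline $(\varphi,\Gamma)$-module and the $\varphi$-eigenvalue on that line has been computed. Write $\alpha_{\psi,j}=p^{j-1}\psi^{-1}(p)/\alpha^{2}$, the scalar by which $\varphi$ acts on $D_{1}$; since $\lambda,\mu\in\{\pm\alpha\}$, the target eigenvalue $p^{j-1}\psi^{-1}(p)/(-\lambda\mu)$ equals $-\alpha_{\psi,j}$ if $\lambda\mu=\alpha^{2}$ and $+\alpha_{\psi,j}$ if $\lambda\mu=-\alpha^{2}$.

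First I would note that $\mathfrak{d}^{*}$ in \eqref{eqn:pmcirccrissequence}, being induced by applying $\Dcris$ to an inclusion of $(\varphi,\Gamma)$-modules, is $\varphi$-equivariant, so the sequence identifies $\ker(\mathfrak{d}^{-}_{\lambda,\mu})$ ($=$ the image of $\mathfrak{d}^{*}$) with $\Dcris\big(D_{\textup{rig}}^\dagger(\mathscr{W}_{\psi})_{/\lambda,\mu}\big)$ as a $\varphi$-module. Because $D_{\textup{rig}}^\dagger(\mathscr{W}_{\psi})_{/\lambda,\mu}=(\mathbb{D}_{f}/\mathbb{D}_{\lambda})\otimes\mathbb{D}_{\mu,j}$ is a rank-one crystalline $(\varphi,\Gamma)$-module, its $\Dcris$ is a $\varphi$-eigenline whose eigenvalue is the product of that of $\varphi$ on $\Dcris(\mathbb{D}_{f}/\mathbb{D}_{\lambda})$ and that of $\varphi$ on $\Dcris(\mathbb{D}_{\mu,j})$. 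The former is the eigenvalue of $\varphi$ on $\Dcris(W_{f}^{*})$ complementary to $1/\lambda$, i.e. $-1/\lambda$; the latter is $\tfrac{1}{\mu}\cdot p^{j-1}\psi^{-1}(p)$, using that $\mathbb{D}_{\mu}=\mathcal{R}_{E}(\delta_{\mu})$ with $\delta_{\mu}$ unramified and $\delta_{\mu}(p)=1/\mu$, and that $\mathbb{D}_{\psi,j}\cong D_{\textup{rig}}^\dagger(E(1-j)\otimes\psi)$ with $\varphi$ acting on $\Dcris(E(1-j)\otimes\psi)$ by $p^{j-1}\psi^{-1}(p)$ (the same twist factor entering the action on $D_{1}$). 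Multiplying yields $p^{j-1}\psi^{-1}(p)/(-\lambda\mu)$, hence the inclusion $\ker(\mathfrak{d}^{-}_{\lambda,\mu})\subseteq\big(\Dcris(\mathscr{W}_{\psi})_{/\lambda,\circ}\big)^{\varphi=p^{j-1}\psi^{-1}(p)/(-\lambda\mu)}$.

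For the reverse inclusion I would count dimensions. From $\Dcris(\mathscr{W}_{\psi})=D_{0}\oplus D_{1}\oplus D_{2}$ and the eigenvector description of $D_{0},D_{1},D_{2}$, the operator $\varphi$ acts on $\Dcris(\mathscr{W}_{\psi})$ semisimply with eigenvalues $\pm\alpha_{\psi,j}$, each of multiplicity two. The subspace $\Dcris(\mathscr{W}_{\psi})^{\lambda,\circ}=\Dcris(\mathbb{D}_{\lambda}\otimes\mathbb{D}_{f,j})$ is two-dimensional, and multiplying the eigenvalue $1/\lambda$ of $\varphi$ on $\Dcris(\mathbb{D}_{\lambda})$ by the two eigenvalues $\pm p^{j-1}\psi^{-1}(p)/\alpha$ of $\varphi$ on $\Dcris(\mathbb{D}_{f,j})$ shows that its $\varphi$-eigenvalues are $\alpha_{\psi,j}$ and $-\alpha_{\psi,j}$. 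Passing to the quotient, $\Dcris(\mathscr{W}_{\psi})_{/\lambda,\circ}$ is two-dimensional with $\varphi$ semisimple and each of $\pm\alpha_{\psi,j}$ of multiplicity exactly one. Since $p^{j-1}\psi^{-1}(p)\neq0$ (the character $\psi$ being unramified at $p$), the two eigenvalues are distinct, so $\big(\Dcris(\mathscr{W}_{\psi})_{/\lambda,\circ}\big)^{\varphi=p^{j-1}\psi^{-1}(p)/(-\lambda\mu)}$ is one-dimensional; together with the inclusion just established, this forces equality.

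I do not foresee a genuine obstacle; the delicate point is the bookkeeping of $\varphi$-eigenvalues under linear duality, Tate twist and the unramified twist by $\psi$ — in particular confirming that $\Dcris(\mathbb{D}_{f}/\mathbb{D}_{\lambda})$ carries the eigenvalue $-1/\lambda$ and not $1/\lambda$, so that the scalar matches the statement exactly. The two background inputs I would use without comment are that $\Dcris$ is exact on short exact sequences of crystalline $(\varphi,\Gamma)$-modules (legitimising \eqref{eqn:pmcirccrissequence} itself as well as $\Dcris(\mathbb{D}_{f}/\mathbb{D}_{\lambda})=\Dcris(\mathbb{D}_{f})/\Dcris(\mathbb{D}_{\lambda})$) and that $\mathcal{R}_{E}(\delta)$ for an unramified $\delta$ is crystalline with $\varphi$ acting on its $\Dcris$ by $\delta(p)$, both consistent with the normalisations fixed earlier in this section.
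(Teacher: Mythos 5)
Your argument is correct and matches the paper's proof: both identify $\ker(\mathfrak{d}^{-}_{\lambda,\mu})$ with $\Dcris\bigl(D_{\textup{rig}}^\dagger(\mathscr{W}_{\psi})_{/\lambda,\mu}\bigr)$ via $\mathfrak{d}^{*}$ in the exact sequence \eqref{eqn:pmcirccrissequence}, recognise this as the Dieudonn\'e module of a rank-one crystalline $(\varphi,\Gamma)$-module isomorphic to $\mathbb{D}_{-\lambda}\otimes\mathbb{D}_{\mu,j}$, and compute the resulting $\varphi$-eigenvalue. The only thing you add is making explicit the eigenvalue bookkeeping and the dimension count showing the $\frac{p^{j-1}\psi^{-1}(p)}{-\lambda\mu}$-eigenspace of $\Dcris(\mathscr{W}_{\psi})_{/\lambda,\circ}$ is one-dimensional, which the paper asserts parenthetically.
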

\begin{proof}
Note that $\Dcris(\mathscr{W}_{\psi})_{/\lambda,\mu}$ is one dimensional over $E$. By comparing the action of  $\varphi$, we see that it is isomorphic to $\Dcris(\mathbb{D}_{-\lambda}\otimes\mathbb{D}_{\mu,j})$ as $\vp$-modules. The proof now follows from the exact sequence (\ref{eqn:pmcirccrissequence}), as the image of $\frak{d}^*$ may be identified with the (one dimensional) $\displaystyle\frac{{p^{j-1}}\psi^{-1}(p)}{-\lambda\mu}$-eigenspace for the $\varphi$-action on $\Dcris(\mathscr{W}_{\psi})_{/\lambda,\circ}$.
\end{proof}
Given an integer $m\in \{1,\cdots,p-1\}$, we let $[m]\in \QQ_p^\times$ denote its Teichm\"uller lift. We also let $\textup{Tr}_j: \QQ_p(\mu_p)^{\omega^{-j}}\ra \QQ_p$ denote the twisted trace map induced by 
$$\frac{1}{p-1}\sum_{r=1}^{p-1}\, [r]^j\varepsilon_{(1)}^{r} \mapsto 1\,.$$

\begin{defn}
\label{def:theexpstarevaluated}
For any crystalline $G_{\QQ_p}$-representation or a $(\varphi,\Gamma)$-module $D$, we denote the composition of the arrows
\begin{equation}\label{eqn:compositumofexpwithidempotent}
H^1(\QQ_p(\mu_p),D)^{\omega^{-j}}\stackrel{\exp^*}{\lra}  \QQ_p(\mu_p)^{\omega^{-j}} \otimes \Dcris(D)\stackrel{\textup{Tr}_j\otimes 1}{\lra}  \Dcris(D)
\end{equation}
 by $\omega^{-j}\,\circ\, \exp^*$.  {Here, $\exp^*$ is the dual exponential map given in \cite[\S3, P.360]{nakamura17}.}\footnote{Note that once we identify $\QQ_p(\mu_p)$ with $\QQ_p[\Delta]$ via the generator $\varepsilon_{(1)}$ of $\mu_p$,  the map $\textup{Tr}_j$ agrees with the map $\omega^{-j}:\,\QQ_p[\Delta]^{\omega^{-j}}\ra \QQ_p$, which justifies the notation we have chosen for the composition (\ref{eqn:compositumofexpwithidempotent}).} More generally, if $\theta$ is a character of $\Gamma_n:=\Gamma/\Gamma^{p^n}$ that does not factor through $\Gamma_{n-1}$ (where $n$ is a positive integer), we may define a map
 $$\omega^{-j}\circ\,\exp^*:\,H^1(\QQ_p(\mu_{p^n}),D)^{\omega^{-j}\theta}\lra \Dcris(D)$$
 starting off with the map $\omega^{-j}\theta:\,\QQ_p[\Delta\times\Gamma_n]^{\omega^{-j}\theta}\ra \QQ_p$ and identifying it $($via the generator $\varepsilon_{(n)}$ of $\mu_{p^n}$) with a twisted trace map $\QQ_p(\mu_{p^n})\ra \QQ_p$.
 \end{defn}
 \begin{defn}\label{defn:singular}
Given a finite extension $K$ of $\QQ_p$ and a crystalline $G_K$-representation $V$, we set 
$$H^1_s(K,V):=H^1(K,V)/H^1_\textup{f}(K,V)$$
and call it the singular quotient of $H^1(K,V)$. For each positive integer $n$, we further set 
$$\res_{/\textup{f}}: H^1(\QQ(\mu_{p^n}),V)\lra H^1_s(\QQ_p(\mu_{p^n}),V) $$
(the singular projection) to denote the composition of the arrows 
$$H^1(\QQ(\mu_{p^n}),V)\stackrel{\res_p}{\lra}H^1(\QQ_p(\mu_{p^n}),V)\stackrel{\frak{s}}{\lra} H^1_s(\QQ_p(\mu_{p^n}),V)\,,$$
where $\frak s$ is the natural projection map.
\end{defn}
\begin{lemma}
\label{lemma_twobfclasssesareLI}
Let $x,y \in H^1(\QQ_p(\mu_p),\mathscr{W}_{\psi})^{\omega^{-j}}$ be two classes with non-trivial singular projection (meaning that their images in $H^1_s(\QQ_p(\mu_p),\mathscr{W}_{\psi})$ under the map $\frak{s}$ given in  Definition~\ref{defn:singular} are non-trivial) such that
\begin{itemize}
\item $\frak{d}_{/\lambda,\circ}\circ\omega^{-j}\,\circ\, \exp^*(x) \in \left(\Dcris(\mathscr{W}_{\psi})_{/\lambda,\circ}\right)^{\varphi=\frac{p^{j-1}\psi^{-1}(p)}{\alpha^2}}$,
\item $\frak{d}_{/\lambda,\circ}\circ\omega^{-j}\,\circ\, \exp^*(y) \in \left(\Dcris(\mathscr{W}_{\psi})_{/\lambda,\circ}\right)^{\varphi=-\frac{p^{j-1}\psi^{-1}(p)}{\alpha^2}}\,.$
\end{itemize}
Then the classes $\frak{s}(x)$ and $\frak{s}(y)$ are linearly independent {over $E$} in $H^1_s(\QQ_p(\mu_p),\mathscr{W}_{\psi})$. 
\end{lemma}
\begin{proof}
This is clear, as the images of $x$ and $y$ under the dual exponential map (that factors through the singular quotient) composed with the map $\frak{d}_{/\lambda,\circ}\circ\omega^{-j}$ fall within different eigenspaces.   
\end{proof}
\subsection{Linear independence of Beilinson--Flach classes} 
{We are now ready to complete the proof of Theorem~\ref{thm_horizontalESforSym2}{(iv)}, which follows as an immediate consequence of Corollary~\ref{cor_linearlyindependenttwistedBF} to Theorem~\ref{thm_linearlyindependent} below. Theorem~\ref{thm_linearlyindependent}  concerns the $p$-local images of the Beilinson--Flach elements and its proof relies on the criterion on linear independence  established in Lemma~\ref{lemma_twobfclasssesareLI} above.}

For $\lambda,\mu\in\{\pm\alpha\}$, the Beilinson--Flach classes $\BF_{r,\chi}^{\lambda,\mu}\in H^1(\QQ(r),W_f^*\otimes W_f^*(1+\chi)\otimes \cH_{E,k+1}(\Gamma)^\iota)$ from \S\ref{subsec:plocalBF} give rise to the  classes in $\BF_{\psi}^{\lambda,\mu,(j)} \in H^1_\Iw(\QQ(\mu_p),\mathscr{W}_{\psi}\otimes  \cH_{E,k+1}(\Gamma)^\iota)$. 
We let $\mathbf{bf}^{\lambda,\mu} \in H^1(\QQ(\mu_p),\mathscr{W}_{\psi})^{\omega^{-j}}$ denote the images of $\BF_{\psi}^{\lambda,\mu,(j)}$ under the composition 
$$H^1_\Iw(\QQ,\mathscr{W}_{\psi}\otimes \cH_{E,k+1}(\Gamma)^\iota)\lra H^1(\QQ(\mu_p),\mathscr{W}_{\psi})\lra H^1(\QQ(\mu_p),\mathscr{W}_{\psi})^{\omega^{-j}}\,.$$

\begin{remark}\label{rk:comapre_classes}
The classes $c_1^{\lambda,\mu} \in H^1(\QQ,W_{\psi,j})$ that we have considered in the proof of Theorem~\ref{thm_horizontalESforSym2} maps to the class $\mathbf{bf}^{\lambda,\mu}$ under the canonical isomorphism 
$$H^1(\QQ,W_{\psi,j})\lra H^1(\QQ(\mu_p),\mathscr{W}_{\psi})^{\omega^{-j}}.$$ 
\end{remark}

\begin{theorem}
\label{thm_linearlyindependent}
The classes $\res_{/\textup{f}}(\mathbf{bf}^{\lambda,\mu})$ and $\res_{/\textup{f}}(\mathbf{bf}^{\lambda,-\mu})$ are linearly independent in $H^1_s(\QQ_p(\mu_p),\mathscr{W}_{\psi})^{\omega^{-j}}$.
\end{theorem}
\begin{proof}
By Lemmas~\ref{lemma:locateclassbyphiaction} and \ref{lemma_twobfclasssesareLI} , the theorem will follow once we verify the following two properties.
\begin{itemize}
\item[(i)] $\frak{d}_{/\lambda,\circ}\circ\omega^{-j}\,\circ\, \exp^*_{\mathscr{W}_{\psi}^*(1)}\circ\, \res_p(\mathbf{bf}^{\lambda,\mu}) \in \ker(\frak{d}_{\lambda,\mu}^-)$ for $\mu\in\{\alpha,-\alpha\}$.
\item[(ii)] $\res_p(\mathbf{bf}^{\lambda,\mu})$ and $\res_p(\mathbf{bf}^{\lambda,-\mu})$ are non-trivial.
\end{itemize} 
The  property (i) is immediate by the commutativity of the following diagram
$$\xymatrix{H^1_\Iw(\QQ_p,D_{\textup{rig}}^\dagger(\mathscr{W}_{\psi}))^{\omega^{-j}}\ar[r]\ar[d]&H^1_\Iw(\QQ_p,D_{\textup{rig}}^\dagger(\mathscr{W}_{\psi})^{\lambda,\mu,-})^{\omega^{-j}}\ar[d]\\
H^1(\QQ_p(\mu_p),D_{\textup{rig}}^\dagger(\mathscr{W}_{\psi}))^{\omega^{-j}}\ar[r]\ar[d]_{\omega^{-j}\circ\, \exp^*} &H^1(\QQ_p(\mu_p),D_{\textup{rig}}^\dagger(\mathscr{W}_{\psi})^{\lambda,\mu,-})^{\omega^{-j}}\ar[d]^{\omega^{-j}\circ\, \exp^*}\\
\Dcris(\mathscr{W}_{\psi})\ar[r]^{\frak{d}_{/\lambda,\mu,-}}&\Dcris(D_{\textup{rig}}^\dagger(\mathscr{W}_{\psi})^{\lambda,\mu,-})
}$$
together with the fact that $\BF_{\psi}^{\lambda,\mu,(j)}$ belongs to the kernel of the top horizontal arrow by \cite[Theorem 7.1.2]{LZ1}. We now prove property (ii) by arguing as in the proof of Theorem 8.2.1(v) in \cite{LZ1}. 

To ease notation, we let  $H^1_\Iw(X)$ (resp., $H^1(X)$) denote $H^1_\Iw(\QQ_p,X)$ (resp., $H^1(\QQ_p(\mu_p),X)$) in the following commutative diagram:
$$\xymatrixcolsep{1.4pc}\xymatrix{H^1_{\textup{Iw}}(\mathscr{W}_{\psi}\otimes\cH_{E,k+1}(\Gamma)^\iota)^{\omega^{-j}}\ar[r]^(0.55){\frak{d}_{/\lambda,\circ}}\ar[d]_{\frak{a}}&H^1_{\textup{Iw}}(D_{\textup{rig}}^\dagger(\mathscr{W}_{\psi})_{/\lambda,\circ})^{\omega^{-j}}\ar[d]_{\frak{a}}& H^1_{\textup{Iw}}(D_{\textup{rig}}^\dagger(\mathscr{W}_{\psi})_{/\alpha,\mu})^{\omega^{-j}}\ar@{_{(}->}[l] \ar[d]^{\frak{a}}\\
 H^1(\mathscr{W}_{\psi})^{\omega^{-j}}\ar[r]^(.42){\frak{d}_{/\lambda,\circ}}\ar[d]_{\omega^{-j}\circ\,\exp^*}&H^1(D_{\textup{rig}}^\dagger(\mathscr{W}_{\psi})_{/\lambda,\circ})^{\omega^{-j}}\ar[d]_{\omega^{-j}\circ\,\exp^*}&H^1(D_{\textup{rig}}^\dagger(\mathscr{W}_{\psi})_{/\lambda,\mu})^{\omega^{-j}}\ar@{_{(}->}[l]\ar[d]^{\omega^{-j}\circ\,\exp^*}\\
\Dcris(\mathscr{W}_{\psi})\ar[r]&\Dcris(\mathscr{W}_{\psi})_{/\lambda,\circ}&\Dcris(\mathscr{W}_{\psi})_{/\alpha,\mu}\ar@{_{(}->}[l]
}$$
We remind the reader that for the \'etale $(\varphi,\Gamma)$-module $D_{\textup{rig}}^\dagger(\mathscr{W}_{\psi})$, we have identified its cohomology with the cohomology of $\mathscr{W}_{\psi}$ in order to define the horizontal arrows on the left.
 
It follows from Theorem 7.1.2 in op.cit. that the image 
$$\frak{d}_{/\lambda,\circ}\circ\,\res_p\left(\BF_{\psi}^{\lambda,\mu,(j)}\right) \in H^1_{\textup{Iw}}(\QQ_p, D_{\textup{rig}}^\dagger(\mathscr{W}_{\psi})_{/\lambda,\circ})^{\omega^{-j}}$$ 
of the Iwasawa theoretic Beilinson--Flach class in fact falls in the image of 
$$H^1_{\textup{Iw}}(\QQ_{p},D_{\textup{rig}}^\dagger(\mathscr{W}_{\psi})_{/\lambda,\mu})^{\omega^{-j}}\hookrightarrow H^1_{\textup{Iw}}(\QQ_{p},D_{\textup{rig}}^\dagger(\mathscr{W}_{\psi})_{/\lambda,\circ})^{\omega^{-j}}{.}$$ 
We let $d_\Iw \in H^1_{\textup{Iw}}(\QQ_{p},D_{\textup{rig}}^\dagger(\mathscr{W}_{\psi})_{/\lambda,\mu})^{\omega^{-j}}$ denote the unique element that maps to $\frak{d}_{/\lambda,\circ}\circ\,\res_p(\BF_{\psi}^{\lambda,\mu,(j)})$. The commutative diagram above shows that 
$$\frak{d}_{/\lambda,\circ}\circ\,\res_p\left(\mathbf{bf}^{\lambda,\mu}\right)=\frak{a}(d_\Iw)$$ 
It therefore suffices to prove that $\frak{a}(d_\Iw)$ is non-trivial. Theorem 7.1.5 of op. cit. reduces this to verifying that $L_p(f,f,\omega^{-j},j) \neq 0$. Observe that in place of the variable $j$ in op. cit., we have used $j-1$ and furthermore, we have projected to the $\omega^{-j}$-eigenspaces. As a result, the relevant $p$-adic $L$-value in the current work is $L{_{p}}(f,f,,\omega^{-j},j)$ in place of $L_p(f,f,1+j)$ in op. cit.
{The desired non-vanishing of the $p$-adic $L$-value now follows from the hypothesis (\textbf{NV}). }
\end{proof}
\begin{remark}
\label{rem:candowithothertwists}
The attentive reader will realize that we could have in fact worked over the fields $\QQ(\mu_{p^n})$ in place of $\QQ(\mu_p)$ above and considered the $\omega^{-j}\theta$ invariants (for characters of $\Gamma_n:=\Gamma/\Gamma^{p^n}$, where $n$ is an arbitrary non-negative integer). The same proof would apply and prove for $\mu\in \{\alpha,-\alpha\}$ that
\[
\frak{d}_{/\lambda,\circ}\circ\omega^{-j}\theta\,\circ\, \exp^*\circ\,\res_p \left(\mathbf{bf}_{n}^{\lambda,\mu}\right)\in \left(\Dcris(\mathscr{W}_{\psi})_{/\lambda,\circ}\right)^{\varphi=-\frac{p^{j-1}\psi^{-1}(p)}{\lambda\mu}},
\]
where $\mathbf{bf}_{n}^{\lambda,\mu} \in H^1_{\mathcal{F}_{\textup{can}}}(\QQ(\mu_{p^n}),\mathscr{W}_{\psi})^{\omega^{-j}\theta}$ is the image of $\BF_{\psi}^{\lambda,\mu,(j)}$ and the morphism $\omega^{-j}\theta\,\circ\, \exp^*$ is given as in Definition~\ref{def:theexpstarevaluated}. This allows us to conclude that the classes $\res_{/\textup{f}}(\mathbf{bf}_{n}^{\lambda,\mu})$ and $\res_{/\textup{f}}(\mathbf{bf}_{n}^{\lambda,-\mu})$ are linearly independent.
\end{remark}
{The following corollary follows immediately  from Remark~\ref{rk:comapre_classes} and Theorem~\ref{thm_linearlyindependent}.}
\begin{corollary}
\label{cor_linearlyindependenttwistedBF}
The classes $c_1^{\lambda,\alpha}, c_1^{\lambda,-\alpha} \in H^1_{\mathcal{F}_{\textup{can}}}(\QQ,W_{\psi,j})$ are linearly independent.
\end{corollary}
\subsection{Analytic Selmer groups}
{Based on the local analysis in Section~\ref{subsec_Localanalysis}, we introduce in this subsection the $(\varphi,\Gamma)$-modules associated to the twists of the symmetric square representations, as well as their triangulations. With these objects in hand, we then define the Pottharst-style Selmer groups in our current setting. Proposition~\ref{prop_globallocalseqandcomparisonwithBK} below explains the relation of these  Selmer groups to Bloch--Kato Selmer groups as well as determines their size, relying chiefly on Theorem~\ref{thm_linearlyindependent}.}

Recall from \S\ref{subsec_Localanalysis} the decompositions
$$D_0\oplus D_1\oplus D_2=\Dcris(\mathscr{W}_{\psi})\supset \Dcris(\mathscr{V}_{\psi})=D_1\oplus D_2$$
of filtered $\varphi$-modules. We fix throughout this section a choice of $\lambda,\mu \in \{\pm\alpha\}$.
We define the $(\varphi,\Gamma)$-modules $\mathbb{D}_{\psi}^{+\circ}\subset\mathbb{D}_{\psi}^+$ by setting
\begin{align*}
\mathbb{D}_{\psi}^+&:=D_{\textup{rig}}^\dagger(\mathscr{W}_{\psi})^{\lambda,\mu,+}\cap D_{\textup{rig}}^\dagger(\mathscr{V}_{\psi})\,, \\
\mathbb{D}_{\psi}^{+\circ}&:=\mathbb{D}_\lambda\otimes\mathbb{D}_\lambda\otimes\mathbb{D}_{\psi,j}.
\end{align*}
\begin{lemma}
The crystalline $(\varphi,\Gamma)$-submodule $\mathbb{D}_{\psi}^+\subset D_{\textup{rig}}^\dagger(\mathscr{V}_{\psi})$ is a saturated  $(\varphi,\Gamma)$-submodule of rank $2$. Likewise, the submodule $\mathbb{D}_{\psi}^{+,\circ}\subset \mathbb{D}_{\psi}^+$ is  saturated of rank one.
\end{lemma}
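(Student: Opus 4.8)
The plan is to analyze the three constituents $\mathbb{D}_\lambda\otimes\mathbb{D}_\mu$, $\mathbb{D}_\lambda\otimes\mathbb{D}_{f,j}$ and $D_{\textup{rig}}^\dagger(\mathscr{V}_{\psi})$ in terms of their slopes and Hodge--Tate weights, and to check the two defining conditions of saturation: that the quotient by the proposed submodule is torsion-free over $\mathcal{R}_E$, and that the submodule is $\varphi$- and $\Gamma$-stable. First I would record that $\mathbb{D}_\lambda\otimes\mathbb{D}_\mu\otimes\mathbb{D}_{\psi,j}$ is free of rank one over $\mathcal{R}_E$ (a tensor product of three rank-one pieces), so $\mathbb{D}_{\psi}^{+,\circ}=\mathbb{D}_\lambda\otimes\mathbb{D}_\lambda\otimes\mathbb{D}_{\psi,j}$ has rank one; that it is a $(\varphi,\Gamma)$-submodule is immediate since each factor is. For $\mathbb{D}_{\psi}^+$, note $D_{\textup{rig}}^\dagger(\mathscr{W}_{\psi})^{\lambda,\mu,+}=(\mathbb{D}_\lambda\otimes\mathbb{D}_f+\mathbb{D}_f\otimes\mathbb{D}_\mu)\otimes\mathbb{D}_{\psi,j}$ has rank $3$ inside the rank-$4$ module $D_{\textup{rig}}^\dagger(\mathscr{W}_{\psi})$, and $D_{\textup{rig}}^\dagger(\mathscr{V}_{\psi})$ has rank $3$ inside that same rank-$4$ module; by the decomposition $\mathscr{W}_{\psi}=(\bigwedge^2 W_f^*(1-j)\otimes\omega^j\psi)\oplus\mathscr{V}_{\psi}$ (tensored appropriately) the rank-one piece $D_0$-component of $D_{\textup{rig}}^\dagger(\mathscr{W}_{\psi})^{\lambda,\mu,+}$ lands in the wedge-square direction precisely along $\mathbb{D}_\lambda\otimes\mathbb{D}_\mu - \mathbb{D}_\mu\otimes\mathbb{D}_\lambda$, so the intersection with $D_{\textup{rig}}^\dagger(\mathscr{V}_{\psi})$ drops the rank by one, giving rank $2$ as claimed.

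Next I would verify saturation. A $(\varphi,\Gamma)$-submodule $\mathbb{D}'\subset\mathbb{D}$ over $\mathcal{R}_E$ is saturated iff $\mathbb{D}/\mathbb{D}'$ is free (equivalently, $\mathbb{D}'=\mathbb{D}\cap(\mathbb{D}'\otimes_{\mathcal{R}_E}\mathcal{R}_E[1/t])$). For $\mathbb{D}_{\psi}^+$, since it is defined as the intersection of two saturated submodules of $D_{\textup{rig}}^\dagger(\mathscr{W}_{\psi})$ — namely $D_{\textup{rig}}^\dagger(\mathscr{W}_{\psi})^{\lambda,\mu,+}$ (which is saturated because it is the kernel of the projection to the crystalline quotient $D_{\textup{rig}}^\dagger(\mathscr{W}_{\psi})^{\lambda,\mu,-}$, which is again a $(\varphi,\Gamma)$-module over $\mathcal{R}_E$) and $D_{\textup{rig}}^\dagger(\mathscr{V}_{\psi})$ (a direct summand) — the intersection of two saturated submodules is saturated, because the quotient $D_{\textup{rig}}^\dagger(\mathscr{V}_{\psi})/\mathbb{D}_{\psi}^+$ embeds into $D_{\textup{rig}}^\dagger(\mathscr{W}_{\psi})/D_{\textup{rig}}^\dagger(\mathscr{W}_{\psi})^{\lambda,\mu,+}$, which is torsion-free, and a submodule of a torsion-free $\mathcal{R}_E$-module is torsion-free. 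For $\mathbb{D}_{\psi}^{+,\circ}\subset\mathbb{D}_{\psi}^+$ I would argue by comparing Hodge--Tate weights and Frobenius slopes: $\mathbb{D}_{\psi}^{+,\circ}=\mathbb{D}_{\lambda,j}\otimes\mathbb{D}_\lambda$ is the unique rank-one $(\varphi,\Gamma)$-submodule on which $\varphi$ acts with the smallest slope appearing in $\mathbb{D}_{\psi}^+$, and the rank-one quotient $\mathbb{D}_{\psi}^+/\mathbb{D}_{\psi}^{+,\circ}$ is forced, by the list of $\varphi$-eigenvalues on $\Dcris$, to be isomorphic to a twist of $\mathbb{D}_{-\lambda}\otimes\mathbb{D}_{\mu,j}$ or $\mathbb{D}_\lambda\otimes\mathbb{D}_{-\mu,j}$ (as in Lemma~\ref{lemma:locateclassbyphiaction}); since this quotient is again a rank-one $(\varphi,\Gamma)$-module over $\mathcal{R}_E$, it is torsion-free, hence $\mathbb{D}_{\psi}^{+,\circ}$ is saturated.

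The main obstacle I expect is the bookkeeping in the rank count for $\mathbb{D}_{\psi}^+$: one has to be careful that $D_{\textup{rig}}^\dagger(\mathscr{W}_{\psi})^{\lambda,\mu,+}$, although of rank $3$, does not simply contain $D_{\textup{rig}}^\dagger(\mathscr{V}_{\psi})$ (which would force rank $3$ for the intersection) nor meet it in rank $1$; the correct answer rank $2$ comes from the fact that $D_{\textup{rig}}^\dagger(\mathscr{W}_{\psi})^{\lambda,\mu,+}$ contains $\mathbb{D}_\lambda\otimes\mathbb{D}_\lambda\otimes\mathbb{D}_{\psi,j}$ and $\mathbb{D}_\mu\otimes\mathbb{D}_\mu\otimes\mathbb{D}_{\psi,j}$ (both inside $\mathscr{V}_{\psi}$) but contains exactly one of the two ``mixed'' rank-one lines $\mathbb{D}_\lambda\otimes\mathbb{D}_\mu\otimes\mathbb{D}_{\psi,j}$ modulo the wedge-square, and one must check that the symmetric combination $\mathbb{D}_\lambda\otimes\mathbb{D}_\mu+\mathbb{D}_\mu\otimes\mathbb{D}_\lambda$ (tensored with $\mathbb{D}_{\psi,j}$) meets $D_{\textup{rig}}^\dagger(\mathscr{V}_{\psi})$ in a rank-one module. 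This is a concrete linear-algebra computation in $\Dcris$ using the eigenvectors $\omega_{\pm\pm}$ and the identification of $D_0,D_1,D_2$ given in \S\ref{appBsubsec:Localanalysis}, which I would carry out explicitly to pin down the rank; everything else is formal from saturation being preserved by intersections and by passage to $(\varphi,\Gamma)$-module quotients.
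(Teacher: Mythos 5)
Your overall strategy — establish saturation by realizing the relevant submodules as kernels of maps to (or intersections inside) free $(\varphi,\Gamma)$-modules so that the quotient embeds in something torsion-free, and then pin down ranks by Bezout-domain dimension counting inside $D_{\textup{rig}}^\dagger(\mathscr{W}_\psi)$ — is the same one the paper takes, and for $\mathbb{D}_\psi^+$ your argument is essentially correct. The paper expresses $\mathbb{D}_\psi^+$ as the kernel of the composite
$D_{\textup{rig}}^\dagger(\mathscr{V}_{\psi})\hookrightarrow D_{\textup{rig}}^\dagger(\mathscr{W}_{\psi}) \to D_{\textup{rig}}^\dagger(\mathscr{W}_{\psi})^{\lambda,\mu,-}$,
and shows rank $2$ by exhibiting an explicit vector of $D_{\textup{rig}}^\dagger(\mathscr{V}_{\psi})$ not in the kernel; your route via the rank formula for the intersection of two rank-$3$ saturated submodules of a rank-$4$ module, plus the observation that neither contains the other, is logically equivalent, though the phrase ``the $D_0$-component of $D_{\textup{rig}}^\dagger(\mathscr{W}_{\psi})^{\lambda,\mu,+}$ lands in the wedge-square direction precisely along $\mathbb{D}_\lambda\otimes\mathbb{D}_\mu - \mathbb{D}_\mu\otimes\mathbb{D}_\lambda$'' is an abuse of notation that should be replaced by ``the projection of $D_{\textup{rig}}^\dagger(\mathscr{W}_{\psi})^{\lambda,\mu,+}$ to the wedge-square summand is non-zero,'' which is all you need.

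The genuine gap is in your argument for saturation of $\mathbb{D}_\psi^{+\circ}$ in $\mathbb{D}_\psi^+$. You write that the quotient $\mathbb{D}_\psi^+/\mathbb{D}_\psi^{+\circ}$ is ``forced, by the list of $\varphi$-eigenvalues on $\Dcris$, to be isomorphic to \dots since this quotient is again a rank-one $(\varphi,\Gamma)$-module over $\mathcal{R}_E$.'' But the statement that the quotient is a free rank-one $(\varphi,\Gamma)$-module is precisely the content of saturation — it is the conclusion, not a premise. A priori the quotient of two free $(\varphi,\Gamma)$-modules could acquire $t$-torsion, and since torsion $(\varphi,\Gamma)$-modules over $\mathcal{R}_E$ have vanishing $\Dcris$, the eigenvalue bookkeeping you invoke cannot detect (and hence cannot exclude) such torsion. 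The paper sidesteps this by writing down an explicit short exact sequence $0\to\mathbb{D}_\psi^{+\circ}\to\mathbb{D}_\psi^+\to(\mathbb{D}_f/\mathbb{D}_\lambda\otimes\mathbb{D}_f/\mathbb{D}_\lambda)\otimes\mathbb{D}_{\psi,j}\to 0$, in which the cokernel is manifestly a free rank-one module. If you prefer not to verify that particular sequence, a robust alternative along the lines you already used for $\mathbb{D}_\psi^+$ is to note that $\mathbb{D}_\psi^{+\circ}=\textup{Sym}^2\mathbb{D}_\lambda\otimes\mathbb{D}_{\psi,j}$ is saturated in $D_{\textup{rig}}^\dagger(\mathscr{V}_{\psi})=\textup{Sym}^2\mathbb{D}_f\otimes\mathbb{D}_{\psi,j}$ (the symmetric square of the saturated inclusion $\mathbb{D}_\lambda\subset\mathbb{D}_f$ has torsion-free cokernel, filtered by $\mathbb{D}_\lambda\otimes(\mathbb{D}_f/\mathbb{D}_\lambda)$ and $(\mathbb{D}_f/\mathbb{D}_\lambda)^{\otimes2}$); then $\mathbb{D}_\psi^+/\mathbb{D}_\psi^{+\circ}$, being a submodule of the torsion-free $D_{\textup{rig}}^\dagger(\mathscr{V}_{\psi})/\mathbb{D}_\psi^{+\circ}$, is itself torsion-free.
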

\begin{proof}
Notice that 
$$\mathbb{D}_{\psi}^+=\ker\left(D_{\textup{rig}}^\dagger(\mathscr{V}_{\psi})\hookrightarrow D_{\textup{rig}}^\dagger(\mathscr{W}_{\psi}) \lra D_{\textup{rig}}^\dagger(\mathscr{W}_{\psi})^{\lambda,\mu,-}\right).$$
The first claim follows since 
$$e_\lambda\otimes e_{-\lambda}-e_{\mu}\otimes e_{-\mu} \not\in \ker\left(\mathbb{D}_f\otimes\mathbb{D}_f\lra \mathbb{D}_f/\mathbb{D}_\lambda \otimes \mathbb{D}_f/\mathbb{D}_\mu\right)$$
if $\lambda=-\mu$, whereas
$$e_\lambda\otimes e_{\lambda}-e_{-\lambda}\otimes e_{-\lambda} \not\in \ker\left(\mathbb{D}_f\otimes\mathbb{D}_f\lra \mathbb{D}_f/\mathbb{D}_\lambda \otimes \mathbb{D}_f/\mathbb{D}_\mu\right)$$
if $\lambda=\mu$, so that $\mathbb{D}_{\psi}^+\subsetneq D_{\textup{rig}}^\dagger(\mathscr{V}_{\psi})$. The second part follows from the exactness of the following sequence of $(\varphi,\Gamma)$-modules:
\begin{equation}
\label{en:themannerpluscircsitsinplus}
0\lra \mathbb{D}_{\psi}^{+\circ}\lra \mathbb{D}_{\psi}^+\lra \left(\mathbb{D}_f/\mathbb{D}_\lambda \otimes \mathbb{D}_f/\mathbb{D}_\lambda\right)\otimes \mathbb{D}_{\psi,j}\lra 0\,.
\end{equation}
\end{proof}
\begin{lemma}
\label{lemma:regularitystepone}
$\mathbb{D}_{\psi}^+\cap D_{\textup{rig}}^\dagger(D_1)=
\begin{cases} 
D_{\textup{rig}}^\dagger(D_1) & \hbox{ if } \mu = -\lambda\\
\,\,\,\,\,\,\,0 & \hbox{ if } \mu=\lambda
\end{cases}$.
\end{lemma}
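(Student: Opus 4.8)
The plan is to recast this equality of two crystalline $(\varphi,\Gamma)$-modules as a linear-algebra computation inside the four-dimensional space $\Dcris(\mathscr{W}_\psi)$, carried out in the explicit $\varphi$-eigenbasis from \S\ref{appBsubsec:Localanalysis}.

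First I would record that $\mathbb{D}_\psi^+$ and $D_{\textup{rig}}^\dagger(D_1)$ --- the latter meaning $D_{\textup{rig}}^\dagger(W_1^*(1-j)\otimes\psi)$, which is a direct summand of $D_{\textup{rig}}^\dagger(\mathscr{V}_\psi)$ because $\Sym^2 W_f^*=W_1^*\oplus W_2^*$ as $G_{\QQ_p}$-representations --- are both saturated crystalline sub-$(\varphi,\Gamma)$-modules of $D_{\textup{rig}}^\dagger(\mathscr{V}_\psi)$; for $\mathbb{D}_\psi^+$ this is precisely the preceding lemma. Their intersection is then again saturated, and being a saturated sub-$(\varphi,\Gamma)$-module of the rank-one module $D_{\textup{rig}}^\dagger(D_1)$ it can only be $0$ or $D_{\textup{rig}}^\dagger(D_1)$. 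To distinguish the two possibilities I would pass to Dieudonn\'e modules via the exact, fully faithful functor $D\mapsto\Dcris(D)$ on crystalline $(\varphi,\Gamma)$-modules over the Robba ring (cf.\ \cite{benois14}): a saturated crystalline submodule of $D_{\textup{rig}}^\dagger(\mathscr{V}_\psi)$ is contained in another one if and only if the corresponding inclusion holds between their $\Dcris$-images in $\Dcris(\mathscr{V}_\psi)$, and $\Dcris$ commutes with the relevant intersections and quotients. Since $\Dcris(D_{\textup{rig}}^\dagger(D_1))=D_1\subseteq\Dcris(\mathscr{V}_\psi)$, the lemma will follow once I show that $D_1\subseteq\Dcris(\mathbb{D}_\psi^+)$ when $\mu=-\lambda$, and $D_1\not\subseteq\Dcris(\mathbb{D}_\psi^+)$ when $\mu=\lambda$ (in which case $D_1\cap\Dcris(\mathbb{D}_\psi^+)=0$, $D_1$ being a line).

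Next I would compute $\Dcris(\mathbb{D}_\psi^+)$. From $\mathbb{D}_\psi^+=D_{\textup{rig}}^\dagger(\mathscr{W}_\psi)^{\lambda,\mu,+}\cap D_{\textup{rig}}^\dagger(\mathscr{V}_\psi)$ and exactness of $\Dcris$ this equals $\Dcris(D_{\textup{rig}}^\dagger(\mathscr{W}_\psi)^{\lambda,\mu,+})\cap\Dcris(\mathscr{V}_\psi)$, and using the identity $M\otimes N/(A\otimes N+M\otimes B)\cong(M/A)\otimes(N/B)$ one identifies $D_{\textup{rig}}^\dagger(\mathscr{W}_\psi)^{\lambda,\mu,+}$ with the kernel of the natural surjection $D_{\textup{rig}}^\dagger(\mathscr{W}_\psi)\twoheadrightarrow(\mathbb{D}_f/\mathbb{D}_\lambda)\otimes(\mathbb{D}_f/\mathbb{D}_\mu)\otimes\mathbb{D}_{\psi,j}$, hence $\Dcris(D_{\textup{rig}}^\dagger(\mathscr{W}_\psi)^{\lambda,\mu,+})=\bigl(\langle v_\lambda\rangle\otimes\Dcris(W_f^*)+\Dcris(W_f^*)\otimes\langle v_\mu\rangle\bigr)\otimes\langle v_{j,\psi}\rangle$. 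In the $\varphi$-eigenbasis $\{v_{\pm\alpha}\otimes v_{\pm\alpha}\otimes v_{j,\psi}\}$ of $\Dcris(\mathscr{W}_\psi)$ this is exactly the coordinate hyperplane cut out by the vanishing of the coordinate along $v_{-\lambda}\otimes v_{-\mu}\otimes v_{j,\psi}$. On the other hand $D_1=\langle v_\alpha\otimes v_\alpha\otimes v_{j,\psi}-v_{-\alpha}\otimes v_{-\alpha}\otimes v_{j,\psi}\rangle$ has nonzero coordinates only along the two ``diagonal'' basis vectors $v_{\pm\alpha}\otimes v_{\pm\alpha}\otimes v_{j,\psi}$. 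Consequently $D_1$ lies in that hyperplane precisely when $v_{-\lambda}\otimes v_{-\mu}$ is not one of $v_\alpha\otimes v_\alpha$, $v_{-\alpha}\otimes v_{-\alpha}$, that is, when $-\lambda\neq-\mu$, i.e.\ $\mu=-\lambda$; and since $D_1\subseteq\Dcris(\mathscr{V}_\psi)$ automatically, this gives $D_1\subseteq\Dcris(\mathbb{D}_\psi^+)$ exactly in that case. Feeding this back through the reduction of the previous paragraph yields both cases of the lemma.

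The main obstacle will not be the combinatorics --- which is the short bookkeeping just indicated --- but making the first paragraph precise: one needs the dictionary between inclusions of crystalline $(\varphi,\Gamma)$-submodules of $D_{\textup{rig}}^\dagger(\mathscr{V}_\psi)$ and inclusions of their Dieudonn\'e modules, together with the compatibility of $\Dcris$ with intersections and cokernels of such submodules. This is standard --- it is the exactness and full faithfulness of $\Dcris$ on the category of crystalline objects --- and it is what allows us to forget the $\Gamma$-action and the Hodge filtration and argue purely with the $\varphi$-eigendecomposition of $\Dcris(\mathscr{W}_\psi)$.
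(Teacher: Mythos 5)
Your proposal is correct and the underlying computation is the same as the paper's: both proofs reduce the lemma to checking whether the generator $e_\alpha\otimes e_\alpha-e_{-\alpha}\otimes e_{-\alpha}$ (resp.\ $v_\alpha\otimes v_\alpha-v_{-\alpha}\otimes v_{-\alpha}$) of $D_1$ lies in the kernel of the natural projection to $\mathbb{D}_f/\mathbb{D}_\lambda\otimes\mathbb{D}_f/\mathbb{D}_\mu$, which it does precisely when $\lambda\neq\mu$. The only cosmetic difference is that you carry out the bookkeeping at the level of Dieudonn\'e modules via the exactness and full faithfulness of $\Dcris$, whereas the paper works directly with the $\mathcal{R}_E$-generators $e_{\pm\alpha}$ of the $(\varphi,\Gamma)$-modules.
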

\begin{proof}
We have the identification of $(\varphi,\Gamma)$-modules
$$D_{\textup{rig}}^\dagger(D_1)=\mathcal{R}_{E}(e_\alpha\otimes e_{\alpha}-e_{-\alpha}\otimes e_{-\alpha})\otimes \mathbb{D}_{\psi,j}.$$
 When $\lambda=\mu$, the conclusion follows from noting that 
$$e_\alpha\otimes e_\alpha-e_{-\alpha}\otimes e_{-\alpha} \not\in \ker\left(\mathbb{D}_f\otimes\mathbb{D}_f\lra \mathbb{D}_f/\mathbb{D}_\lambda \otimes \mathbb{D}_f/\mathbb{D}_\mu\right).$$
 When $\lambda=-\mu$, it follows from
$$e_\alpha\otimes e_\alpha-e_{-\alpha}\otimes e_{-\alpha} \in \ker\left(\mathbb{D}_f\otimes\mathbb{D}_f\lra \mathbb{D}_f/\mathbb{D}_\lambda \otimes \mathbb{D}_f/\mathbb{D}_\mu\right).$$
\end{proof}
\begin{corollary}
\label{cor_regularity_dcris}
$\Dcris(\mathbb{D}_{\psi}^{+\circ})\cap \textup{Fil}^0\Dcris(\mathscr{V}_{\psi})=0$.
\end{corollary}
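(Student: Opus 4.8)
The plan is to unwind the definitions and reduce the claimed vanishing of $\Dcris(\mathbb{D}_{\psi}^{+\circ})\cap \textup{Fil}^0\Dcris(\mathscr{V}_{\psi})$ to a statement about Hodge--Tate weights. First I would record that, since $\mathbb{D}_{\psi}^{+\circ}=\mathbb{D}_\lambda\otimes\mathbb{D}_\lambda\otimes\mathbb{D}_{\psi,j}$ is a crystalline (in fact triangulable) rank-one $(\varphi,\Gamma)$-module, the functor $\Dcris(-)$ is exact on the relevant short exact sequences and we have $\dim_E\Dcris(\mathbb{D}_{\psi}^{+\circ})=1$. So the intersection in question is either $0$ or all of $\Dcris(\mathbb{D}_{\psi}^{+\circ})$, and it suffices to exhibit a single element of $\Dcris(\mathbb{D}_{\psi}^{+\circ})$ lying outside $\textup{Fil}^0\Dcris(\mathscr{V}_{\psi})$, i.e. to compute the Hodge--Tate weight of $\mathbb{D}_{\psi}^{+\circ}$ and check it is negative (with the convention that a $(\varphi,\Gamma)$-module with positive Hodge--Tate weight has $\Dcris$ entirely in $\textup{Fil}^0$, and one with a negative weight has a line outside).

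Next I would do the weight bookkeeping. The module $\mathbb{D}_\lambda\subset \mathbb{D}_f=D_{\textup{rig}}^\dagger(W_f^*)$ is the rank-one submodule $\mathcal{R}_E(\delta_\lambda)$ with $\delta_\lambda$ unramified, $\delta_\lambda(p)=1/\lambda$; its Hodge--Tate weight is $0$ (it is the ``unit root / étale-in-Hodge-degree-zero'' line, corresponding to $\Fil^0$ of $\Dcris(W_f^*)$). The twisting module $\mathbb{D}_{\psi,j}=\mathcal{R}_E(\delta_{\psi,j})$ with $\delta_{\psi,j}(u)=u^{1-j}$ contributes Hodge--Tate weight $1-j$ (this is the $(\varphi,\Gamma)$-module of $E(1-j)\otimes\psi$ with $\psi$ unramified). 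Hence $\mathbb{D}_{\psi}^{+\circ}$ has Hodge--Tate weight $0+0+(1-j)=1-j$. Since $j\in[k+2,2k+2]$ and $k\ge 0$, we get $1-j\le -(k+1)<0$, so $\Dcris(\mathbb{D}_{\psi}^{+\circ})$ is concentrated in $\Fil^{1-j}$ but has trivial intersection with $\Fil^0$. I would then note that the inclusion $\mathbb{D}_{\psi}^{+\circ}\hookrightarrow D_{\textup{rig}}^\dagger(\mathscr{V}_{\psi})$ is compatible with filtrations on the associated $\Dcris$'s (this is part of the functoriality recorded just before the corollary, coming from the exact sequences \eqref{en:themannerpluscircsitsinplus} and the discussion in \S\ref{appBsubsec:Localanalysis}), so that $\Dcris(\mathbb{D}_{\psi}^{+\circ})\cap\textup{Fil}^0\Dcris(\mathscr{V}_{\psi})$ is computed inside $\Dcris(\mathbb{D}_{\psi}^{+\circ})$ itself as $\Dcris(\mathbb{D}_{\psi}^{+\circ})\cap\textup{Fil}^0=0$.

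The only genuinely delicate point — and the one I would be most careful about — is a sign/normalization check: making sure the Hodge--Tate weight conventions used in \S\ref{appBsubsec:Localanalysis} (inherited from Benois and Pottharst) really do place the weight of $\mathbb{D}_\lambda$ at $0$ rather than at some nonzero Fontaine--Laffaille weight, and that the twist by $\mathbb{D}_{\psi,j}$ shifts weights by $1-j$ in the direction I claimed. Concretely I would cross-check against the explicit filtrations already written down in the excerpt: $\Dcris(W_{2,\chi}^*)$ there has $\Fil$ jumping at $-2k-1$ and $0$, which pins down the normalization for $W_f^*$ (weights $0$ and $-k-1$ for $\Dcris(W_f^*)$, matching $\Fil^1\Dcris(R_f)$ generated by $\omega$). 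With that normalization, $\mathbb{D}_\lambda$ corresponds to the weight-$0$ line, everything above goes through, and the corollary follows. Should the convention in loc.\ cit.\ instead be the opposite one, the same argument applies verbatim after replacing $\textup{Fil}^0$-intersection by the corresponding statement with the roles of the two Hodge--Tate weights swapped; either way the arithmetic $1-j<0$ for $j\ge k+2$ is what makes regularity hold.
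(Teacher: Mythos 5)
Your approach is genuinely different from the paper's, but it contains a mathematical error that I don't think the hedge about conventions can rescue.

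The paper does \emph{not} do Hodge--Tate weight bookkeeping at all. Instead it observes that $\Dcris(\mathbb{D}_\psi^{+\circ})$ is a $\varphi$-stable line (being the $\Dcris$ of a $(\varphi,\Gamma)$-submodule), that $\Fil^0\Dcris(\mathscr{V}_\psi)=D_1\oplus\Fil^0\Dcris(D_2)$ contains $D_1$ as its \emph{unique} nonzero $\varphi$-stable subspace (since $\varphi(\omega'\otimes\omega')$ has a component outside this two-dimensional space), and that $\Dcris(\mathbb{D}_\psi^{+\circ})\neq D_1$ --- which is where Lemma~\ref{lemma:regularitystepone} and the exact sequence~\eqref{en:themannerpluscircsitsinplus} come in. This completely sidesteps the question of which Hodge--Tate weight $\mathbb{D}_\lambda$ ``has'' and in which normalization.

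The concrete error in your proposal is the parenthetical ``(it is the `unit root / \'etale-in-Hodge-degree-zero' line, corresponding to $\Fil^0$ of $\Dcris(W_f^*)$)''. That identification is false. By construction $\Dcris(\mathbb{D}_\lambda)=\Dcris(W_f^*)^{\varphi=1/\lambda}=E\cdot v_\lambda$ where $v_\lambda=\varphi(\omega')+\tfrac{1}{\lambda}\omega'$ as in~\eqref{eq:evecs}, whereas $\Fil^0\Dcris(W_f^*)=E\cdot\omega'$. These two lines are transverse: if they coincided, $\Fil^0\Dcris(W_f^*)$ would be $\varphi$-stable and $W_f^*|_{G_{\QQ_p}}$ would be reducible, contradicting $a_p(f)=0$. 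Since $\Dcris(\mathbb{D}_\psi^{+\circ})$ is therefore $E\cdot(v_\lambda\otimes v_\lambda)$ (suitably twisted) and $v_\lambda\otimes v_\lambda=\varphi(\omega')\otimes\varphi(\omega')+\tfrac{1}{\lambda}(\omega'\otimes\varphi(\omega')+\varphi(\omega')\otimes\omega')+\tfrac{1}{\lambda^2}\omega'\otimes\omega'$ has a \emph{nonzero} $\varphi(\omega')\otimes\varphi(\omega')$-component, the line does land outside $\Fil^0\Dcris(\mathscr{V}_\psi)$ --- so the corollary is true --- but the reason is not that $\mathbb{D}_\lambda$ sits in ``Hodge degree zero'', it is precisely that $v_\lambda$ sits in the \emph{deepest} filtration step. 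Your computation of the weight of $\mathbb{D}_\psi^{+\circ}$ as $0+0+(1-j)$ feeds the wrong input at the $\mathbb{D}_\lambda$ step; tracing the induced filtration on $\Dcris(\mathbb{D}_\lambda)\subset\Dcris(W_f^*)$ one finds the jump at $-(k+1)$, not at $0$, and this propagates to a jump for $\mathbb{D}_\psi^{+\circ}$ at $j-2k-3$ rather than at $j-1$.

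Finally, the ``either convention works'' fallback does not hold up: your own sentences use mutually incompatible sign conventions (you compute the weight of $E(1-j)$ as $1-j$, which is the Fontaine convention, yet you assert ``positive Hodge--Tate weight has $\Dcris$ entirely in $\Fil^0$'', which is the opposite convention). Moreover you cross-check against the explicit filtration on $\Dcris(W_{2,\chi}^*)$, which pins the paper to the convention in which a positive weight places $\Dcris$ \emph{outside} $\Fil^0$ --- and in that convention $1-j<0$ gives the \emph{wrong} conclusion. A corrected weight calculation can be made to work, but it ultimately reduces to checking exactly the transversality fact above, and at that point the paper's $\varphi$-stability argument, which needs neither the normalization nor the strictness discussion, is both shorter and safer.
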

\begin{proof}
As $\Dcris(\mathbb{D}_{\psi}^{+\circ})$ is one-dimensional over $E$, its intersection with $\textup{Fil}^0\Dcris(\mathscr{V}_{\psi})$
is either trivial or  $\Dcris(\mathbb{D}_{\psi}^{+\circ})$. Suppose that the latter holds. Then, $\Dcris(\mathbb{D}_{\psi}^{+\circ})$ is a  $\varphi$-stable subspace of $\subset \textup{Fil}^0\Dcris(\mathscr{V}_{\psi})$. But the unique $\varphi$-stable subspace of 
$$\textup{Fil}^0\Dcris(\mathscr{V}_{\psi})=D_1\oplus\textup{Fil}^0\Dcris(D_2)$$
is $D_1$. Thus, $\Dcris(\mathbb{D}_{\psi}^{+\circ})=D_1\,.$ When $\lambda=\mu$, this  contradicts Lemma~\ref{lemma:regularitystepone}. Therefore, the intersection is $0$ as required. When $\lambda=-\mu$, the same conclusion  follows from the fact that $D_{\textup{rig}}^\dagger(D_1)$ does not fit in the exact sequence  \eqref{en:themannerpluscircsitsinplus}). 
\end{proof}

Corollary~\ref{cor_regularity_dcris} tells us  that the submodule $\mathbb{D}_{\psi}^{+\circ}\subset D_{\textup{rig}}^\dagger(\mathscr{V}_{\psi})$ is regular in the sense of Benois and Perrin-Riou, c.f. \cite[\S2.1]{Benois2015} for an elaboration on this  property.

\begin{defn}
For  each natural number $n$ and  $\frak{D}=\mathbb{D}_{\psi}^{+\circ}$ or $\mathbb{D}_{\psi}^{+}$, we let $S^{\bullet}(\QQ(\mu_{p^n}),\mathscr{V}_{\psi},\frak{D})$ denote the $($analytic$)$ Selmer complex, given as in \cite[Definition 2.4]{benoisbuyukboduk} (with the base field taken as $\QQ(\mu_{p^n})$ in place of $\QQ$) and let $\mathbf{R}\Gamma(\QQ(\mu_{p^n}),\mathscr{V}_{\psi},\frak{D})$  denote the corresponding  class  in the derived category of $E$-vector spaces. 

We also define, following \cite[\S2.3]{Benois2015},  the Iwasawa theoretic $($analytic$)$ Selmer complex $S^{\bullet}_\Iw(\QQ(\mu_{p^n}),\mathscr{V}_{\psi},\frak{D})$ and the corresponding class $\mathbf{R}\Gamma_\Iw(\QQ(\mu_{p^n}),\mathscr{V}_{\psi},\frak{D})$ in the derived category of $\cH_E(\Gamma^{(n)})$-modules $($here, $\cH_E(\Gamma^{(n)})$ stands for $\cup_{r>0}\cH_{E,r}(\Gamma^{(n)})$ and $\Gamma^{(n)}:=\Gal(\QQ(\mu_{p^n})/\QQ))$. 

{For each natural number $n$} we set 
$$\widetilde{H}^i_?(\QQ(\mu_{p^n}),\mathscr{V}_{\psi},\frak{D}):=\mathbf{R}\Gamma_?^i(\QQ(\mu_{p^n}),\mathscr{V}_{\psi},\frak{D}) \hbox{ for } ?=\emptyset,\Iw$$ 
and call them analytic Selmer groups. 
\end{defn}
We have the following control theorem in the context of analytic Selmer complexes:
\begin{equation}
\label{eqn:controltheoremforSelmercomplex}
{\mathbf{R}\Gamma_\Iw(\QQ(\mu_{p^n}),\mathscr{V}_{\psi},\frak{D}) \otimes^{\mathbb{L}}_{\mathcal{H}_E(\Gamma^{(n)})}E \stackrel{\sim}{\lra} \mathbf{R}\Gamma(\QQ(\mu_{p^n}),\mathscr{V}_{\psi},\frak{D})\,,}
\end{equation}
{where $\otimes^{\mathbb{L}}$ denotes  the derived tensor product.} 

\begin{lemma}
\label{lem:structureoflocalcohomologygroups}
Let $D$ be a $(\varphi,\Gamma)$-module over $\mathcal{R}_{E}$ such that $H^0(\QQ_p,D)=H^2(\QQ_p,D)=0$. Then $H^1(\QQ_p,D)$ is an $E$-vector space of dimension $\textup{rank}_{\mathcal{R}_E}D$.
\end{lemma}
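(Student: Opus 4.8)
The plan is to deduce the statement directly from the Euler--Poincaré characteristic formula for $(\varphi,\Gamma)$-modules over the Robba ring. Recall first that $H^\bullet(\QQ_p,D)$ is computed by the Fontaine--Herr complex
\[
C^\bullet_{\varphi,\gamma}(D):\qquad D \longrightarrow D\oplus D \longrightarrow D,
\]
concentrated in degrees $0,1,2$, with differentials built out of $\varphi-1$ and $\gamma-1$ (here $\Gamma$ is procyclic since $p\geq 7$ and $\gamma$ is our fixed topological generator of $\Gamma_1$); in particular $H^i(\QQ_p,D)=0$ for all $i\geq 3$ for trivial reasons. The substantive ingredient is the theorem of Liu (building on Herr's computation in the étale case and on Tate's local Euler characteristic formula, and reproved in the setting of families by Kedlaya--Pottharst--Xiao): each cohomology group $H^i(\QQ_p,D)$ is a finite-dimensional $E$-vector space, and
\[
\dim_E H^0(\QQ_p,D)-\dim_E H^1(\QQ_p,D)+\dim_E H^2(\QQ_p,D)=-\,\textup{rank}_{\mathcal{R}_E} D\,.
\]

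Granting this, the lemma is immediate: substituting the hypotheses $H^0(\QQ_p,D)=0$ and $H^2(\QQ_p,D)=0$ into the displayed identity yields $\dim_E H^1(\QQ_p,D)=\textup{rank}_{\mathcal{R}_E} D$, which is precisely the assertion (and in particular $H^1(\QQ_p,D)$ is finite-dimensional over $E$ of the stated dimension).

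I do not expect any genuine obstacle here: the entire content is packaged in the cited Euler characteristic formula, and the hypotheses on $H^0$ and $H^2$ serve only to collapse the alternating sum to the single term $\dim_E H^1$. Should one prefer not to quote the general formula, one could argue directly from the three-term complex above, exploiting that $\varphi-1$ acts as a Fredholm operator on $D$ and bookkeeping the resulting indices against the action of $\gamma-1$; but invoking Liu's theorem is cleaner and entirely adequate for our applications.
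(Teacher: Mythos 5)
Your argument is correct and coincides with the paper's: both invoke Liu's local Euler characteristic formula for $(\varphi,\Gamma)$-modules over the Robba ring and substitute the vanishing of $H^0$ and $H^2$ to read off $\dim_E H^1(\QQ_p,D)=\textup{rank}_{\mathcal{R}_E}D$. The extra discussion of the Fontaine--Herr complex is harmless context but not needed beyond the cited formula.
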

\begin{proof}
This is an immediate consequence of Liu's local Euler characteristic formula proved in \cite{LiuIMRN2008}. 
\end{proof}
\begin{defn}
For any Dirichlet character $\eta:G_\QQ\ra E^\times$, we let $E_\eta$ denote the one dimensional $E$-vector space on which $G_\QQ$ acts via $\eta$.
\end{defn}
\begin{proposition}
\label{prop_globallocalseqandcomparisonwithBK}
Fix a non-negative integer $n$ and let $\theta$ be a character of the quotient group $\Gamma_n$. Suppose at least one of the following two conditions holds:
\begin{itemize}
\item[a)] $\omega^{-j}\theta$ is non-trivial.
\item[b)] If $j=k+2$, then $\epsilon_f\psi^{-1}(p)\neq \pm 1$.
\end{itemize}
Then:
\item[i)] We have the following exact sequence:
\[0\lra \widetilde{H}^1(\QQ(\mu_{p^n}),\mathscr{V}_{\psi},\frak{D})^{\omega^{-j}\theta}\lra H^1_{\mathcal{F}_\textup{can}}(\QQ(\mu_{p^n}),\mathscr{V}_{\psi})^{\omega^{-j}\theta}
\lra {H^1(\QQ_p(\mu_{p^n}),D_{\rm rig}^{\dagger}(\mathscr{V}_{\psi})/\frak{D})}^{\omega^{-j}\theta}
\]
where $\frak{D}=\mathbb{D}_{\psi}^{+\circ}$ or $\mathbb{D}_{\psi}^{+}$.
\item[ii)]  The Selmer group $\widetilde{H}^1(\QQ(\mu_{p^n}),\mathscr{V}_{\psi},\mathbb{D}_{\psi}^{+\circ})^{\omega^{-j}\theta}$ is canonically isomorphic to the Bloch-Kato Selmer group $H^1_\textup{f}(\QQ(\mu_{p^n}),\mathscr{V}_{\psi})^{\omega^{-j}\theta}$, whereas  $\widetilde{H}^2(\QQ(\mu_{p^n}),\mathscr{V}_{\psi},\mathbb{D}_{\psi}^{+\circ})^{\omega^{-j}\theta}$ is isomorphic to $H^1_\textup{f}(\QQ(\mu_{p^n}),\mathscr{V}_{\psi}^*(1))^{\omega^{-j}\theta}$.
\item[iii)]  The Selmer group $H^1_\textup{f}(\QQ(\mu_{p^n}),\mathscr{V}_{\psi})^{\omega^{-j}\theta}$ is trivial, whereas $\widetilde{H}^1(\QQ(\mu_{p^n}),\mathscr{V}_{\psi},\mathbb{D}_{\psi}^{+})^{\omega^{-j}\theta}$ is one dimensional over $E$.
\end{proposition}

\begin{remark}
{We have checked in Lemma~\ref{lemma_propagate_hypo_psi_2} that Condition $($\textup{b}$)$ in the statement of Proposition~\ref{prop_globallocalseqandcomparisonwithBK} holds true when we assume the validity of \ref{item_Psi_2}. We recall that the hypothesis \ref{item_Psi_2} is required to avoid exceptional zeros, see Corollary~\ref{coro_trivialzeroes}.}
\end{remark}

\begin{proof}[Proof of Proposition~\ref{prop_globallocalseqandcomparisonwithBK}]
\item[i)] This portion follows from the definition of the Selmer complex as a mapping cone, once we verify that $H^0(\QQ_p(\mu_{p^n}), D_{\rm rig}^{\dagger}(\mathscr{V}_{\psi})/\frak{D})^{\omega^{-j}\theta}=0$. {When $\omega^{-j}\theta$ is non-trivial, it is a Dirichlet character ramified at $p$. Since the $G_{\QQ_p}$-representation $\mathscr{V}_\psi$ is crystalline, the desired vanishing of $H^0(\QQ_p(\mu_{p^n}), D_{\rm rig}^{\dagger}(\mathscr{V}_{\psi})/\frak{D})^{
\omega^{-j}\theta}$ follows.} Thus, we are reduced to checking that $H^0(\QQ_p, D_{\rm rig}^{\dagger}(\mathscr{V}_{\psi})/\frak{D})=0$ assuming (b). Note that if $H^0(\QQ_p,D_{\rm rig}^{\dagger}(\mathscr{V}_{\psi})/\frak{D})$ is non-zero, then we necessarily have $p^{j-1}\psi(p)=\pm \alpha^{2}$, which can only hold if $j=k+2$ and $\epsilon_f\psi^{-1}(p)=\pm 1$ (recall that $\alpha^{2}=-\epsilon_f(p)p^{k+1}$). Therefore,  the condition (b) implies the vanishing $H^0(\QQ_p(\mu_{p^n}), D_{\rm rig}^{\dagger}(\mathscr{V}_{\psi})/\frak{D})^{
\omega^{-j}\theta}=0$, as required.
 \item[ii)]{The first half of  this portion is immediate by \cite[Proposition 3.7(3)]{jayanalyticselmer}. The second assertion follows from  global duality (Theorem 1.15 of op.cit.). Note that the conditions of  \cite[Proposition 3.7(3)]{jayanalyticselmer} are valid thanks to Lemma~\ref{lemma:regularitystepone} and our hypotheses in this proposition. }
\item[iii)] {We follow the proof of \cite[Theorem 8.2.1]{LZ1} to prove the first assertion in (iii). We start off with the following exact sequence:
$$0\ra H^1_\textup{f}(\QQ(\mu_{p^n}),\mathscr{V}_{\psi})^{\omega^{-j}\theta} \lra H^1_{\mathcal{F}_\textup{can}}(\QQ(\mu_{p^n}),\mathscr{V}_{\psi})^{\omega^{-j}\theta}\stackrel{\res_{/\textup{f}}}{\lra} H^1_s(\QQ(\mu_{p^n}),\mathscr{V}_{\psi})^{\omega^{-j}\theta}\,.
$$
We have seen that the canonical Selmer group $H^1_{\mathcal{F}_\textup{can}}(\QQ(\mu_{p^n}),\mathscr{V}_{\psi})^{\omega^{-j}\theta}$ is of dimension $2$ and moreover, the image of $\res_{/\textup{f}}$ is also two-dimensional thanks to Theorem~\ref{thm_linearlyindependent} and Remark~\ref{rem:candowithothertwists}. This proves that $H^1_\textup{f}(\QQ(\mu_{p^n}),\mathscr{V}_{\psi})^{\omega^{-j}\theta} =0$.}

{We now prove the second assertion of (iii). On taking  $\frak{D}=\mathbb{D}^{+}_\psi$ in the exact sequence of part (i), we see that 
$$\dim \widetilde{H}^1(\QQ(\mu_{p^n}),\mathscr{V}_{\psi},\frak{D})^{\omega^{-j}\theta} +{H^1(\QQ_p(\mu_{p^n}),D_{\rm rig}^{\dagger}(\mathscr{V}_{\psi})/\frak{D})}^{\omega^{-j}\theta}\geq 
\dim H^1_{\mathcal{F}_\textup{can}}(\QQ(\mu_{p^n}),\mathscr{V}_{\psi})^{\omega^{-j}\theta}=2\,.$$
Lemma~\ref{lem:structureoflocalcohomologygroups} (applied with $D=\left(D_{\textup{rig}}^\dagger(\mathscr{V}_{\psi})/\mathbb{D}^+_\psi\right)\otimes D_{\textup{rig}}^\dagger(E_{\omega^j\theta^{-1}})$ and $K=\QQ_p$) tells us that $$\dim {H^1(\QQ_p(\mu_{p^n}),D_{\rm rig}^{\dagger}(\mathscr{V}_{\psi})/\frak{D})}^{\omega^{-j}\theta}=\dim H^1(\QQ_p,D)=1\,.$$ 
Therefore,
\begin{equation}\label{eq:ge1}
\dim\widetilde{H}^1(\QQ(\mu_{p^n}),\mathscr{V}_{\psi},\mathbb{D}_{\psi}^{+})^{\omega^{-j}\theta}\ge1.
\end{equation} 

Now, consider the exact sequence
\[
0\lra  \widetilde{H}^1(\QQ(\mu_{p^n}),\mathscr{V}_{\psi},\mathbb{D}_{\psi}^{+\circ})^{\omega^{-j}\theta}\lra \widetilde{H}^1(\QQ(\mu_{p^n}), \mathscr{V}_{\psi},\mathbb{D}_{\psi}^{+})^{\omega^{-j}\theta}
\lra H^1(\QQ_p(\mu_{p^n}),\mathbb{D}_{\psi}^{+}/\mathbb{D}_{\psi}^{+\circ})^{\omega^{-j}\theta}
\]
(which follows from the definitions of extended Selmer groups) shows that 
\begin{align*}
    \dim \widetilde{H}^1(\QQ(\mu_{p^n}), \mathscr{V}_{\psi},\mathbb{D}_{\psi}^{+})^{\omega^{-j}\theta} &\leq \dim \widetilde{H}^1(\QQ(\mu_{p^n}),\mathscr{V}_{\psi},\mathbb{D}_{\psi}^{+\circ})^{\omega^{-j}\theta} + H^1(\QQ_p(\mu_{p^n}),\mathbb{D}_{\psi}^{+}/\mathbb{D}_{\psi}^{+\circ})^{\omega^{-j}\theta}\\
    &=H^1(\QQ_p(\mu_{p^n}),\mathbb{D}_{\psi}^{+}/\mathbb{D}_{\psi}^{+\circ})^{\omega^{-j}\theta}\\
    &=1\,,
\end{align*}
where the first equality follows from (ii) combined with the vanishing of the Bloch-Kato Selmer group $H^1_\textup{f}(\QQ(\mu_{p^n}),\mathscr{V}_{\psi})^{\omega^{-j}\theta}$ (which is the first assertion of (iii)), and the second equality follows from  Lemma~\ref{lem:structureoflocalcohomologygroups} applied with $D=\left(\mathbb{D}_{\psi}^{+}/\mathbb{D}_{\psi}^{+\circ}\right)\otimes D_{\textup{rig}}^\dagger(E_{\omega^j\theta^{-1}})$. Combining this with \eqref{eq:ge1}, the second assertion of (iii) follows.}
\end{proof}
\begin{corollary}
\label{for:trivialityofthecorrectselmersubquotient}
In the setting of Proposition~\ref{prop_globallocalseqandcomparisonwithBK}, we have 
$$\widetilde{H}^2(\QQ(\mu_{p^n}),\mathscr{V}_{\psi},\mathbb{D}_{\psi}^{+})^{\omega^{-j}\theta}=0\,.$$ 
\end{corollary}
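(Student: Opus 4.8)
The plan is to compare the analytic Selmer complex attached to $\mathbb{D}_{\psi}^{+}$ with the one attached to its rank-one submodule $\mathbb{D}_{\psi}^{+\circ}$, whose cohomology we already control via Proposition~\ref{prop:globallocalseqandcomparisonwithBK}. Since $\mathbb{D}_{\psi}^{+\circ}$ and $\mathbb{D}_{\psi}^{+}$ differ only as local conditions at $p$, the mapping cone of $\mathbf{R}\Gamma(\mathscr{V}_{\psi},\mathbb{D}_{\psi}^{+\circ})\to\mathbf{R}\Gamma(\mathscr{V}_{\psi},\mathbb{D}_{\psi}^{+})$ is the local complex $\mathbf{R}\Gamma(\QQ_p(\mu_{p^n}),\mathbb{D}_{\psi}^{+}/\mathbb{D}_{\psi}^{+\circ})$, and by \eqref{en:themannerpluscircsitsinplus} the quotient $Q:=\mathbb{D}_{\psi}^{+}/\mathbb{D}_{\psi}^{+\circ}\cong\left(\mathbb{D}_f/\mathbb{D}_\lambda\otimes\mathbb{D}_f/\mathbb{D}_\lambda\right)\otimes\mathbb{D}_{\psi,j}$ is a crystalline $(\varphi,\Gamma)$-module of rank one. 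Passing to $\omega^{-j}\theta$-isotypic parts I would then write down the long exact sequence
\[\cdots\to\widetilde{H}^i(\QQ(\mu_{p^n}),\mathscr{V}_{\psi},\mathbb{D}_{\psi}^{+\circ})^{\omega^{-j}\theta}\to\widetilde{H}^i(\QQ(\mu_{p^n}),\mathscr{V}_{\psi},\mathbb{D}_{\psi}^{+})^{\omega^{-j}\theta}\to H^i(\QQ_p(\mu_{p^n}),Q)^{\omega^{-j}\theta}\to\widetilde{H}^{i+1}(\QQ(\mu_{p^n}),\mathscr{V}_{\psi},\mathbb{D}_{\psi}^{+\circ})^{\omega^{-j}\theta}\to\cdots\]
and feed into it the following inputs: by Proposition~\ref{prop:globallocalseqandcomparisonwithBK}(ii),(iii) we have $\widetilde{H}^1(\QQ(\mu_{p^n}),\mathscr{V}_{\psi},\mathbb{D}_{\psi}^{+\circ})^{\omega^{-j}\theta}\cong H^1_{\textup{f}}(\QQ(\mu_{p^n}),\mathscr{V}_{\psi})^{\omega^{-j}\theta}=0$, $\widetilde{H}^2(\QQ(\mu_{p^n}),\mathscr{V}_{\psi},\mathbb{D}_{\psi}^{+\circ})^{\omega^{-j}\theta}\cong H^1_{\textup{f}}(\QQ(\mu_{p^n}),\mathscr{V}_{\psi}^*(1))^{\omega^{-j}\theta}$, and $\dim_E\widetilde{H}^1(\QQ(\mu_{p^n}),\mathscr{V}_{\psi},\mathbb{D}_{\psi}^{+})^{\omega^{-j}\theta}=1$; moreover $\widetilde{H}^0$ of either complex embeds into $H^0(\QQ,\mathscr{V}_{\psi}\otimes(\omega^{-j}\theta)^{-1})$ and $\widetilde{H}^3$ is a quotient of $H^0(\QQ,\mathscr{V}_{\psi}^*(1)\otimes(\omega^{-j}\theta)^{-1})^{\vee}$, and both of these vanish because $\mathscr{V}_{\psi}$ and $\mathscr{V}_{\psi}^*(1)$ are twists of $\Sym^2W_f^*$ and $\Sym^2W_f$, hence irreducible three-dimensional $G_{\QQ}$-representations under \textup{(\textbf{Im})}.

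The second step is the local analysis of $H^{\bullet}(\QQ_p(\mu_{p^n}),Q)^{\omega^{-j}\theta}$. Tracking Hodge--Tate weights and crystalline Frobenius eigenvalues through the definition of $Q$, its $\omega^{-j}\theta$-component is a rank-one crystalline $(\varphi,\Gamma)$-module of Hodge--Tate weight $\geq 1$ whose Frobenius eigenvalue avoids the exceptional value, so that under the hypotheses (i)--(iii) of Proposition~\ref{prop:globallocalseqandcomparisonwithBK} one has $H^0(\QQ_p(\mu_{p^n}),Q)^{\omega^{-j}\theta}=H^2(\QQ_p(\mu_{p^n}),Q)^{\omega^{-j}\theta}=0$; Liu's local Euler characteristic formula (as in Lemma~\ref{lem:structureoflocalcohomologygroups}) then forces $\dim_E H^1(\QQ_p(\mu_{p^n}),Q)^{\omega^{-j}\theta}=1$. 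Substituting everything into the long exact sequence I obtain the four-term exact sequence
\[0\to\widetilde{H}^1(\QQ(\mu_{p^n}),\mathscr{V}_{\psi},\mathbb{D}_{\psi}^{+})^{\omega^{-j}\theta}\to H^1(\QQ_p(\mu_{p^n}),Q)^{\omega^{-j}\theta}\to\widetilde{H}^2(\QQ(\mu_{p^n}),\mathscr{V}_{\psi},\mathbb{D}_{\psi}^{+\circ})^{\omega^{-j}\theta}\to\widetilde{H}^2(\QQ(\mu_{p^n}),\mathscr{V}_{\psi},\mathbb{D}_{\psi}^{+})^{\omega^{-j}\theta}\to0\,.\]
Since the leftmost nontrivial map is an injection of one-dimensional $E$-vector spaces it is an isomorphism, whence $\widetilde{H}^2(\QQ(\mu_{p^n}),\mathscr{V}_{\psi},\mathbb{D}_{\psi}^{+})^{\omega^{-j}\theta}\cong\widetilde{H}^2(\QQ(\mu_{p^n}),\mathscr{V}_{\psi},\mathbb{D}_{\psi}^{+\circ})^{\omega^{-j}\theta}\cong H^1_{\textup{f}}(\QQ(\mu_{p^n}),\mathscr{V}_{\psi}^*(1))^{\omega^{-j}\theta}$, and the corollary is reduced to the vanishing of this last Bloch--Kato group.

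For that final vanishing I would invoke the Bloch--Kato/Fontaine--Perrin-Riou Euler characteristic formula applied to $V:=\mathscr{V}_{\psi}\otimes(\omega^{-j}\theta)^{-1}$, namely $\dim H^1_{\textup{f}}(\QQ,V)-\dim H^0(\QQ,V)=\dim H^1_{\textup{f}}(\QQ,V^*(1))-\dim H^0(\QQ,V^*(1))+t_V-\dim_E V^{c=1}$, where $t_V$ is the number of negative Hodge--Tate weights of $V$. Here $H^0(\QQ,V)=H^0(\QQ,V^*(1))=0$ (irreducibility) and $H^1_{\textup{f}}(\QQ,V)=H^1_{\textup{f}}(\QQ(\mu_{p^n}),\mathscr{V}_{\psi})^{\omega^{-j}\theta}=0$ by Proposition~\ref{prop:globallocalseqandcomparisonwithBK}(iii); the twisting character $\chi_{\cyc}^{1-j}\omega^j\psi\theta^{-1}$ is odd, since $\chi_{\cyc}^{1-j}\omega^j\psi$ is always odd and $\theta$ has $p$-power order, so that $\dim_E V^{c=1}=1$; and a direct weight count for $\Sym^2W_f^*(1-j)$ with $k+2\leq j\leq 2k+2$ gives $t_V=1$. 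Hence $\dim H^1_{\textup{f}}(\QQ,V^*(1))=0$, which finishes the proof. (Equivalently, one may bypass the dual Bloch--Kato group by computing $\chi\bigl(\mathbf{R}\Gamma(\QQ(\mu_{p^n}),\mathscr{V}_{\psi},\mathbb{D}_{\psi}^{+})^{\omega^{-j}\theta}\bigr)=\dim_E V^{c=1}-\mathrm{rank}_{\mathcal{R}_E}\mathbb{D}_{\psi}^{+}=1-2=-1$ from the global Tate and local (Tate and Liu) Euler characteristic formulas, and then using $\widetilde{H}^0=\widetilde{H}^3=0$ and $\dim\widetilde{H}^1=1$.) The main obstacle — and the only place where the running hypotheses (i)--(iii) genuinely enter — is checking that the exceptional local contributions of $Q$, together with the corresponding possible failure of the $H^0$-vanishings, are absent; these occur precisely when $j=k+2$, $\psi(p)=\pm1$ and $\omega^{-j}\theta$ is trivial, which is exactly the degenerate configuration that (i)--(iii) exclude, so away from it all the dimension counts above are forced.
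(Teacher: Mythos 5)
Your proof is correct and follows the same path as the paper's (terse) argument: both reduce, via the mapping-cone surjection $\widetilde{H}^2(\QQ(\mu_{p^n}),\mathscr{V}_{\psi},\mathbb{D}_{\psi}^{+\circ})^{\omega^{-j}\theta}\twoheadrightarrow\widetilde{H}^2(\QQ(\mu_{p^n}),\mathscr{V}_{\psi},\mathbb{D}_{\psi}^{+})^{\omega^{-j}\theta}$ and Proposition~\ref{prop:globallocalseqandcomparisonwithBK}(ii), to the vanishing of $H^1_{\textup{f}}(\QQ(\mu_{p^n}),\mathscr{V}_{\psi}^*(1))^{\omega^{-j}\theta}$, which you correctly deduce from $H^1_{\textup{f}}(\QQ(\mu_{p^n}),\mathscr{V}_{\psi})^{\omega^{-j}\theta}=0$ (part (iii)) by the Fontaine--Perrin-Riou Euler characteristic count (with $t_V=1$ and $\dim_E V^{c=1}=1$), a step the paper compresses into the phrase ``global duality'' in the proof of the proposition immediately following. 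Your additional verification that the surjection is in fact an isomorphism, via $\widetilde{H}^1(\mathbb{D}_{\psi}^{+})^{\omega^{-j}\theta}\xrightarrow{\ \sim\ }H^1(\QQ_p(\mu_{p^n}),Q)^{\omega^{-j}\theta}$, is correct but not needed for the corollary, since the surjection alone suffices once its source vanishes.
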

\begin{proof}
The definition of the Selmer complex as a mapping cone gives rise to a canonical surjection
$$\widetilde{H}^2(\QQ(\mu_{p^n}),\mathscr{V}_{\psi},\mathbb{D}_{\psi}^{+\circ})^{\omega^{-j}\theta}\twoheadrightarrow \widetilde{H}^2(\QQ(\mu_{p^n}),\mathscr{V}_{\psi},\mathbb{D}_{\psi}^{+})^{\omega^{-j}\theta}$$
and the corollary follows from Proposition~\ref{prop_globallocalseqandcomparisonwithBK}(ii)-(iii).
\end{proof}
\subsection{Zeros of characteristic ideals}
\label{subsec_controltheanalyticselmercharideals}
{The main goal of this subsection is to determine a locus where the generators of the characteristic ideals of the Pottharst-style Iwasawa theoretic Selmer groups do not have a zero. In particular, we prove Theorem~\ref{thm_controlthecharidealofanalyticselmer}. Corollary~\ref{cor_nondivisibilityofthecorrecttwist} to this theorem plays a fundamental role in the proof of Theorem~\ref{thm_positionofBFintheanalyticselmer}.}

From now on, we assume that $\epsilon_f\psi^{-1}(p)\neq \pm 1$. {We recall from Lemma~\ref{lemma_propagate_hypo_psi_2} that this assumption holds for all $\psi=\chi\nu$ (where $\nu$ is Dirichlet character of $p$-power order and prime-to-$p$ conductor) under the hypothesis \ref{item_Psi_2}.}
Throughout this section, we continue to work with our fixed choice of  $\lambda,\mu\in \{\pm\alpha\}$. 
\begin{lemma}
\label{lem:structureoflocalcohomologygroupsIwasawa}
Let $D$ be a $(\varphi,\Gamma)$-module of rank $d$ over $\mathcal{R}_{E}$ such that $H^0(\QQ_p(\mu_{p^\infty}),D)=H^2(\QQ_p(\mu_{p^\infty}),D)=0$. Then $H^1_\Iw(\QQ_p,D)$ is a projective {$\mathcal{H}_E(\Gamma)$}-module of rank $d$.
\end{lemma}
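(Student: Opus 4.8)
The plan is to deduce the statement from its finite-layer counterpart, Lemma~\ref{lem:structureoflocalcohomologygroups}, via the formalism of analytic Iwasawa cohomology of $(\varphi,\Gamma)$-modules developed by Pottharst \cite{jaycyclo,jayanalyticselmer}. Recall from loc.\,cit. that $\mathbf{R}\Gamma_\Iw(\QQ_p,D)$ is a perfect complex of $\cH$-modules, concentrated in degrees $1$ and $2$ (the latter bound using that the hypotheses force $H^0_\Iw(\QQ_p,D)=0$), whose cohomology in degree $1$ is naturally an $\cH(\Gamma)$-submodule of $D$ (the invariants of Colmez's operator) and whose cohomology in degree $2$ is $\cH$-torsion; and that for every finite order character $\theta$ of $\Gamma$ there is a base-change quasi-isomorphism $\mathbf{R}\Gamma_\Iw(\QQ_p,D)\otimes^{\mathbb{L}}_{\cH}E(\theta)\simeq\mathbf{R}\Gamma(\QQ_p,D(\theta^{-1}))$.

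First I would establish the rank assertion. Since $D$ is crystalline, the set of finite order characters $\theta$ for which $H^0(\QQ_p,D(\theta^{-1}))$ or $H^2(\QQ_p,D(\theta^{-1}))$ fails to vanish is finite (it is cut out by the $\varphi$-eigenvalues of the rank-one subquotients of $D$ and of $D^*(1)$); the hypotheses $H^0(\QQ_p(\mu_{p^\infty}),D)=H^2(\QQ_p(\mu_{p^\infty}),D)=0$ say precisely that the trivial character does not lie in this set, and after enlarging the set one may also assume that it contains the support of the torsion module $H^2_\Iw(\QQ_p,D)$. Choosing a finite order $\theta$ outside this finite set, the base-change quasi-isomorphism together with $H^0_\Iw(\QQ_p,D)=0$ yields $H^1_\Iw(\QQ_p,D)\otimes_{\cH}E(\theta)\cong H^1(\QQ_p,D(\theta^{-1}))$, which has $E$-dimension $d$ by Lemma~\ref{lem:structureoflocalcohomologygroups}. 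Hence $\rk_{\cH}\, H^1_\Iw(\QQ_p,D)=d$. (Alternatively, one could invoke Pottharst's Iwasawa-theoretic local Euler characteristic formula directly.)

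Next I would deduce projectivity. Being the first cohomology group of a perfect complex, $H^1_\Iw(\QQ_p,D)$ is finitely generated over $\cH$; it is moreover torsion-free, since it is a submodule of $D$, on which $\Gamma$ and hence $\cH(\Gamma)$ acts without torsion. Decomposing $\cH=\bigoplus_{\eta}e_\eta\cH$ along the characters $\eta$ of $\Gamma_{\mathrm{tors}}$, each factor $e_\eta\cH$ is (isomorphic to) a one-variable ring of analytic distributions, which is a B\'ezout domain; over such a ring a finitely generated torsion-free module is free. Therefore $H^1_\Iw(\QQ_p,D)$ is free of rank $d$ over each $e_\eta\cH$, and in particular projective of rank $d$ over $\cH$.

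The step that will require genuine care — and where I expect most of the work to lie — is the middle one: verifying that the exceptional set of characters is genuinely finite (equivalently, that $H^2_\Iw(\QQ_p,D)$ has finite support and that $H^0_\Iw(\QQ_p,D)=0$), and that the base-change quasi-isomorphism and the resulting comparison of ranks are legitimate after localizing away from that set. All of this is contained in Pottharst's finiteness and control theorems for analytic Iwasawa cohomology, to which I would appeal for the details.
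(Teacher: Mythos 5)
Your route is genuinely different from the paper's. The paper argues directly with the $\psi$-complex $C^\bullet_\psi(D)=[D\xrightarrow{\psi-1}D]$ in degrees $1,2$: perfectness (Kedlaya–Pottharst–Xiao) allows one to replace it by a complex $[Q^1\to Q^2]$ of finitely generated projectives in those degrees, and then the hypothesis $H^2_\Iw=0$ makes $Q^1\to Q^2$ a split surjection, so $H^1_\Iw$ is a direct summand of $Q^1$, hence projective, and the Euler characteristic gives the rank $d$. You instead compute the rank via a generic specialization $\theta$ (reducing to the finite-layer Lemma~\ref{lem:structureoflocalcohomologygroups}) and then obtain projectivity from "finitely generated $+$ torsion-free over a Bézout domain $\Rightarrow$ free." The decomposition $\cH=\bigoplus_\eta e_\eta\cH$ and the Bézout property of each $e_\eta\cH$ (Lazard) are correct, so if the two inputs hold, your projectivity step closes.

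The gap is in your torsion-freeness input, not (as you suggest) in the rank step. You assert that $H^1_\Iw=D^{\psi=1}$ is $\cH$-torsion-free "since it is a submodule of $D$, on which $\Gamma$ and hence $\cH(\Gamma)$ acts without torsion." But $D$ is an $\mathcal{R}_E$-module with a semilinear $\Gamma$-action; it is not naturally an $\cH$-module, and even the underlying $\Lambda$-module structure is not torsion-free: for the trivial module $D=\mathcal{R}_E$, the constants $E\subset\mathcal{R}_E^{\psi=1}$ are killed by $\gamma-1$, so $D^{\psi=1}$ genuinely has $\Lambda$-torsion there. What rules this out in the situation at hand is precisely the running hypothesis $H^0(\QQ_p(\mu_{p^\infty}),D)=0$, which your argument never invokes at the torsion-freeness step. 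So as written the reasoning proves torsion-freeness for all crystalline $D$, which is false; the hypothesis must enter, for example via the control theorem identifying the $\p$-torsion of $H^1_\Iw$ with $H^0$ of the corresponding specialization, or via the exact sequence $0\to H^0_\Iw\to D^{\psi=1}\xrightarrow{1-\varphi}D^{\psi=0}$ with $D^{\psi=0}$ free over $\cH$ (so that vanishing of $H^0_\Iw$ embeds $D^{\psi=1}$ in a free module). Either of these would repair the step, but the "submodule of $D$" argument by itself does not. Your instinct that care is needed somewhere is right — it is just located in the projectivity step, not the rank step.
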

\begin{proof}
Only in this proof, we let $\psi$ denote the  left inverse for the Frobenius operator $\varphi$, and not a Dirichlet character $\psi$ that is unramified at $p$. The proof of the lemma follows from the fact that the complex $C^{\bullet}_\psi(D)$ is a perfect complex of $\mathcal{H}_E(\Gamma)$-modules, which may be represented by a single projective module concentrated in degree $1$ thanks to our running hypotheses.
\end{proof}
{We recall that $\cH:=\cup_{r>0} \cH_{E,r}(\Gamma_1)$.}
\begin{proposition}
\label{prop_saturated_sub_selmer}
Let $n$ be a positive integer and $\theta$ a character of $\Gamma_n$. Then $\widetilde{H}^1_\Iw(\QQ(\mu_{p^n}),\mathscr{V}_{\psi},\mathbb{D}_{\psi}^{+})^{\omega^{-j}\theta}$ is a saturated rank one  $\cH$-submodule of 
$$H^1_\Iw(\QQ(\mu_{p^n}),\mathscr{V}_{\psi})^{\omega^{-j}\theta}\otimes_{ \Lambda_\cO(\Gamma_1)} \cH \cong H^1_\Iw(\QQ,V_{\psi,j}\otimes\theta^{-1})\otimes_{ \Lambda_\cO(\Gamma_1)}\cH\,.$$ 
\end{proposition}
We remark that the isomorphism $H^1_\Iw(\QQ(\mu_{p^n}),\mathscr{V}_{\psi})^{\omega^{-j}\theta}\otimes \cH \cong H^1_\Iw(\QQ,V_{\psi,j}\otimes\theta^{-1})\otimes\cH$ {follows as a consequence of a formal twisting argument (c.f. \cite{rubin00}, \S6).}
\begin{proof}[Proof of Proposition~\ref{prop_saturated_sub_selmer}]
The definition of the Selmer complex as a mapping cone (and the fact that Iwasawa cohomology classes are unramified) yields the exact sequence
\begin{align*}
0\lra \widetilde{H}^1_\Iw(&\QQ(\mu_{p^n}), \mathscr{V}_{\psi},\mathbb{D}_{\psi}^{+})^{\omega^{-j}\theta} \lra H^1_\Iw(\QQ(\mu_{p^n}),\mathscr{V}_{\psi})^{\omega^{-j}\theta}\otimes \cH  \\
&\lra H^1_\Iw(\QQ_p(\mu_{p^n}),D_{\textup{rig}}^\dagger(\mathscr{V}_{\psi})/\mathbb{D}_{\psi}^{+})^{\omega^{-j}\theta} \lra \widetilde{H}^2_\Iw(\QQ(\mu_{p^n}),\mathscr{V}_{\psi},\mathbb{D}_{\psi}^{+})^{\omega^{-j}\theta}\,.
\end{align*}
Since the $\cH$-module 
$$H^1_\Iw(\QQ(\mu_{p^n}),\mathscr{V}_{\psi})^{\omega^{-j}\theta}\otimes \cH \cong H^1_\Iw(\QQ,V_{\psi,j}\otimes\theta^{-1})\otimes\cH$$
is free of rank $2$ thanks to Theorem~\ref{thm_mainSelmerstructure}, it suffices to verify that
\begin{itemize}
\item[(i)] The $\cH$-module 
$$H^1_\Iw(\QQ_p(\mu_{p^n}),D_{\textup{rig}}^\dagger(\mathscr{V}_{\psi})/\mathbb{D}_{\psi}^{+})^{\omega^{-j}\theta}\cong  H^1_\Iw\left(\QQ_p,(D_{\textup{rig}}^\dagger(\mathscr{V}_{\psi})/\mathbb{D}_{\psi}^{+})\otimes D_{\textup{rig}}^\dagger(E_{\omega^j\theta^{-1}})\right)$$
is projective of rank one, where the isomorphism follows from the version of Shapiro's Lemma in \cite[Lemma 2.3.5]{KPX} in the context of $(\varphi,\Gamma)$-modules and their cohomology;\\
\item[(ii)] The $\cH$-module  $ \widetilde{H}^2_\Iw(\QQ(\mu_{p^n}),\mathscr{V}_{\psi},\mathbb{D}_{\psi}^{+})^{\omega^{-j}\theta}$ is torsion.
\end{itemize}
The assertion (i) follows from Lemma~\ref{lem:structureoflocalcohomologygroupsIwasawa} thanks to our running hypotheses on $\psi$, so it remains to verify the assertion (ii). To do so, we first consider the exact sequence
\begin{align}
\notag 0\lra  \widetilde{H}^1(\QQ&(\mu_{p^n}),\mathscr{V}_{\psi},\mathbb{D}_{\psi}^{+\circ})^{\omega^{-j}\theta}\lra \widetilde{H}^1(\QQ(\mu_{p^n}),\mathscr{V}_{\psi},\mathbb{D}_{\psi}^{+})^{\omega^{-j}\theta}\lra H^1(\QQ_p(\mu_{p^n}),\mathbb{D}_{\psi}^{+}/\mathbb{D}_{\psi}^{+\circ})^{\omega^{-j}\theta}
\\
\label{eqn_exact_seq_new_Prop542}&\lra \widetilde{H}^2(\QQ(\mu_{p^n}),\mathscr{V}_{\psi},\mathbb{D}_{\psi}^{+\circ})^{\omega^{-j}\theta}\lra \widetilde{H}^2(\QQ(\mu_{p^n}),\mathscr{V}_{\psi},\mathbb{D}_{\psi}^{+})^{\omega^{-j}\theta}\lra 0
\end{align}
of $E$-vector spaces. By Proposition~\ref{prop_globallocalseqandcomparisonwithBK}(ii)-(iii) and global duality, it follows that 
$$\widetilde{H}^2(\QQ(\mu_{p^n}),\mathscr{V}_{\psi},\mathbb{D}_{\psi}^{+\circ})^{\omega^{-j}\theta}\cong H^1_{\textup{f}}(\QQ(\mu_{p^n}),\mathscr{V}_{\psi}^*(1))^{\omega^{-j}\theta}=0.$$ Hence, we deduce from \eqref{eqn_exact_seq_new_Prop542} that 
$$\widetilde{H}^2(\QQ(\mu_{p^n}),\mathscr{V}_{\psi},\mathbb{D}_{\psi}^{+})^{\omega^{-j}\theta}=0\,.$$

The control theorem for Selmer complexes (\ref{eqn:controltheoremforSelmercomplex}) yields an injection 
$$\left(\widetilde{H}^2_\Iw(\QQ(\mu_{p^n}),\mathscr{V}_{\psi},\mathbb{D}_{\psi}^{+})^{\omega^{-j}\theta}\right)_{{\Gamma^{(n)}}}\hookrightarrow \widetilde{H}^2(\QQ(\mu_{p^n}),\mathscr{V}_{\psi},\mathbb{D}_{\psi}^{+})^{\omega^{-j}\theta},$$
which shows {(thanks to the structure theory of coadmissible $\mathcal{H}$-modules {in the sense of \cite{ST}}, given as in \cite[Proposition 3.6]{Benois2015})} that $\widetilde{H}^2_\Iw(\QQ(\mu_{p^n}),\mathscr{V}_{\psi},\mathbb{D}_{\psi}^{+})^{\omega^{-j}\theta}$ does not have positive $\mathcal{H}$-rank and this concludes our proof.
\end{proof}
{If $M$ is a coadmissible torsion $\cH$-module in the sense of \cite{ST}, we may define its characteristic ideal $\Char_\cH(M)$ as in \cite[\S7.2.1]{HP}. Pottharst has shown that the torsion $\cH$-module $\widetilde{H}^2_\Iw(\QQ(\mu_{p}),\mathscr{V}_{\psi},\mathbb{D}_{\psi}^{+})^{\omega^{-j}}$ is coadmissible (see \cite[Proposition 3.10]{Benois2015}) and we shall study its characteristic ideal.} Recall  that $\chi_\cyc$ denotes the cyclotomic character on $\Gamma$, the $p^n$-cyclotomic polynomial is denoted by $\Phi_{p^n}$  and $\Tw_n$ is a twisting map defined as in \eqref{eq:twist}. We define $\Tw_{\langle n \rangle}$ to be the twisting map that acts as $\Tw_n$ on elements in $\Gamma_1$ and the identity on $\Gamma_{\mathrm{tors}}$.
\begin{theorem}
\label{thm_controlthecharidealofanalyticselmer}
For any $m \in [k+1,2k+1]$ and any positive integer $n$, the element $\textup{Tw}_{j-m-1} \Phi_{p^{n}}(\gamma)$ does not divide in $\cH$ the ideal $\Char_\cH\left(\widetilde{H}^2_\Iw(\QQ(\mu_{p}),\mathscr{V}_{\psi},\mathbb{D}_{\psi}^{+})^{\omega^{-j}}\right)$. 
\end{theorem}

\begin{proof}
In order to ease our notation, we shall adopt the following convention in this proof.
\\\\
\emph{Convention}. Let $X$ be a twist of $\mathscr{V}_{\psi}$ by a power of $\chi_\cyc$. We denote the $(\varphi,\Gamma)$-submodule of $D_{\textup{rig}}^\dagger(X)$ that gives rise as a twist of $\mathbb{D}_\psi^+$ also by $\mathbb{D}_\psi^+$.

It follows by the twisting formalism (c.f., \cite[\S 6]{rubin00}) that 
$$\Char_\cH\left(\widetilde{H}^2_\Iw(\QQ(\mu_{p}),\mathscr{V}_{\psi},\mathbb{D}_{\psi}^{+})^{\omega^{-j}}\right)=\textup{Tw}_{\langle j-1\rangle}\Char_\cH\left(\widetilde{H}^2_\Iw(\QQ(\mu_{p}),\mathscr{V}_{\psi}(j-1),\mathbb{D}_{\psi}^{+})^{\omega^{-1}}\right).$$
Our assertion is therefore equivalent to the claim that $\Char_\cH\widetilde{H}^2_\Iw(\QQ(\mu_{p}),\mathscr{V}_{\psi}(j-1),\mathbb{D}_{\psi}^{+})^{\omega^{-1}}$ is not divisible by any linear factor of the product
$$\Phi_{p^{n}}(\chi_\cyc^{-m}(\gamma)\gamma):=\prod_{\eta}\left(\chi_\cyc^{-m}(\gamma)\eta^{-1}(\gamma)\gamma-1\right)$$
for any $r\in [k+1,2k+2]$ and any positive integer $n$, where $\eta$ runs through primitive characters of $\Gamma_{n-1}$.

Assume the contrary, so that there exists a positive integer $n$ and an $E$-valued primitive character $\theta$ of $\Gamma_{n-1}$ (after enlarging $E$ if necessary) such that 
\begin{equation}
\label{eqn_div_1_543}
(\chi_\cyc^{-m}(\gamma)\theta^{-1}(\gamma)\gamma-1)\,\, \big{|}\,\, \Char_\cH\left(\widetilde{H}^2_\Iw(\QQ(\mu_{p}),\mathscr{V}_{\psi}(j-1),\mathbb{D}_{\psi}^{+})^{\omega^{-1}}\right)
\end{equation} 
for some $m \in [k+1,2k+1]$. 

Set $\mathscr{V}_m:=\mathscr{V}_{\psi}(j-m-1)\cong \textup{Sym}^2W_f^*(-m)\otimes\psi$. Since 
$$\textup{Tw}_{\langle m \rangle}\left(\chi_\cyc^{-m}(\gamma)\theta^{-1}(\gamma)\gamma-1\right)=\theta^{-1}(\gamma)\gamma-1,$$ 
observe that the divisibility \eqref{eqn_div_1_543} is equivalent to the divisibility
\begin{align*}
\left(\theta^{-1}(\gamma)\gamma-1\right)\,\, &\big{|}\,\, \textup{Tw}_{\langle m \rangle}\left(\Char_\cH\,\widetilde{H}^2_\Iw(\QQ(\mu_{p}),\mathscr{V}_{\psi}(j-1),\mathbb{D}_{\psi}^{+})^{\omega^{-1}}\right)\\
&=\Char_\cH\left(\widetilde{H}^2_\Iw(\QQ(\mu_{p}),\mathscr{V}_{\psi}(j-1),\mathbb{D}_{\psi}^{+})^{\omega^{-1}}\otimes \langle \chi_\cyc\rangle^{-m}\right)\\
&=\Char_\cH\left(\widetilde{H}^2_\Iw(\QQ(\mu_{p}),\mathscr{V}_{m},\mathbb{D}_{\psi}^{+})^{\omega^{-1-m}}\right)\,,
\end{align*}
where the equality on the second line is immediate from the definition of ${\rm Tw}_{\langle m \rangle}$ and the one on the third line is a direct consequence of the twisting formalism. By the structure theory of admissible $\cH$-modules, this is equivalent to the requirement that 
the quotient 
$$\widetilde{H}^2_\Iw(\QQ(\mu_{p}),\mathscr{V}_m,\mathbb{D}_{\psi}^{+})^{\omega^{-1-m}}\big{/}\left(\theta^{-1}(\gamma)\gamma-1\right)\widetilde{H}^2_\Iw(\QQ(\mu_{p}),\mathscr{V}_m,\mathbb{D}_{\psi}^{+})^{\omega^{-1-m}}$$
is a non-trivial $E$-vector space. Since the Iwasawa theoretic Selmer complex has no cohomology in degree $3$, we have the following canonical isomorphism thanks to the control theorem for Selmer complexes:
\begin{equation}
\label{en:firststepinreductionofiwasawaselmer}
\widetilde{H}^2_\Iw(\QQ(\mu_{p}),\mathscr{V}_m,\mathbb{D}_{\psi}^{+})^{\omega^{-1-m}}\Big{/}\left(\gamma^{p^{n-1}}-1\right)\widetilde{H}^2_\Iw(\QQ(\mu_{p}),\mathscr{V}_m,\mathbb{D}_{\psi}^{+})^{\omega^{-1-m}}\stackrel{\sim}{\lra} \widetilde{H}^2(\QQ(\mu_{p^{n}}),\mathscr{V}_m,\mathbb{D}_{\psi}^{+})^{\omega^{-1-m}}\,.
\end{equation}

Furthermore, since the element $\gamma^{p^{n-1}}-1$ belongs to the ideal of $\cH$ generated by $\theta^{-1}(\gamma)\gamma-1$, the natural surjection 
$$\widetilde{H}^2_\Iw(\QQ(\mu_{p}),\mathscr{V}_m,\mathbb{D}_{\psi}^{+})^{\omega^{-1-m}}\lra \widetilde{H}^2_\Iw(\QQ(\mu_{p}),\mathscr{V}_m,\mathbb{D}_{\psi}^{+})^{\omega^{-1-m}}\Big{/}\left(\theta^{-1}(\gamma)\gamma-1\right)\widetilde{H}^2_\Iw(\QQ(\mu_{p}),\mathscr{V}_m,\mathbb{D}_{\psi}^{+})^{\omega^{-1-m}}$$
factors as 
$$\xymatrixcolsep{1pc}\xymatrixrowsep{1pc}\xymatrix{\widetilde{H}^2_\Iw(\QQ(\mu_{p}),\mathscr{V}_m,\mathbb{D}_{\psi}^{+})^{\omega^{-1-m}}\ar@{->>}[rr]\ar[rd]_(.42){\eqref{en:firststepinreductionofiwasawaselmer}}&&\frac{\widetilde{H}^2_\Iw(\QQ(\mu_{p}),\mathscr{V}_m,\mathbb{D}_{\psi}^{+})^{\omega^{-1-m}}}{\left(\theta^{-1}(\gamma)\gamma-1\right)}.\\
&\frac{\widetilde{H}^2(\QQ(\mu_{p^{n}}),\mathscr{V}_m,\mathbb{D}_{\psi}^{+})^{\omega^{-1-m}}}{\left(\theta^{-1}(\gamma)\gamma-1\right)}\ar[ur]&
}$$
This shows that the finite-dimensional $E$-vector space ${\widetilde{H}^2(\QQ(\mu_{p^{n}}),\mathscr{V}_m,\mathbb{D}_{\psi}^{+})^{\omega^{-1-m}}}\big{/}{\left(\theta^{-1}(\gamma)\gamma-1\right)}$ is non-zero. On the other hand, the exactness of the sequence
\begin{align*}
0\lra \widetilde{H}^2(\QQ(\mu_{p^{n}}),\mathscr{V}_m,\mathbb{D}_{\psi}^{+})^{\omega^{-1-m}\theta}&\lra \widetilde{H}^2(\QQ(\mu_{p^{n}}),\mathscr{V}_m,\mathbb{D}_{\psi}^{+})^{\omega^{-1-m}}\xrightarrow{\times (\theta^{-1}(\gamma)-1)}\\
&\widetilde{H}^2(\QQ(\mu_{p^{n}}),\mathscr{V}_m,\mathbb{D}_{\psi}^{+})^{\omega^{-1-m}} \lra {\widetilde{H}^2(\QQ(\mu_{p^{n}}),\mathscr{V}_m,\mathbb{D}_{\psi}^{+})^{\omega^{-1-m}}}\big{/}{\left(\theta^{-1}(\gamma)\gamma-1\right)}
\end{align*}
shows that $\widetilde{H}^2(\QQ(\mu_{p^{n}}),\mathscr{V}_m,\mathbb{D}_{\psi}^{+})^{\omega^{-1-m}\theta}$ is non-trivial as well. This contradicts Corollary~\ref{for:trivialityofthecorrectselmersubquotient}, which we apply with the choice $j=m+1$. This shows that the divisibility \eqref{eqn_div_1_543} is false and completes the proof.
\end{proof}
Recall that we have set $V_\psi:=\textup{Sym}^2W_f^*(1+\psi)$.

\begin{corollary}
\label{cor_nondivisibilityofthecorrecttwist}
The characteristic ideal of the $\cH$-module $\widetilde{H}^2_\Iw(\QQ,V_\psi,\mathbb{D}_{\psi}^{+})$ is prime to ${\log_{2k+3}^{(1)}/\log_{k+2}^{(1)}}$\,.
\end{corollary}
\begin{proof}
Note that we have
\begin{align*}\Char_\cH\,\widetilde{H}^2_\Iw(\QQ,V_\psi)&=\textup{Tw}_{\langle -j\rangle}\Char_\cH\widetilde{H}^2_\Iw(\QQ,V_\psi\otimes\langle \chi_\cyc\rangle^{-j})\\
&=\textup{Tw}_{\langle-j\rangle}\Char_\cH\widetilde{H}^2_\Iw(\QQ,\mathscr{V}_{\psi}\otimes \omega^{j})\\
&=\textup{Tw}_{\langle-j\rangle}\Char_\cH\widetilde{H}^2_\Iw(\QQ(\mu_p),\mathscr{V}_{\psi})^{\omega^{-j}},
\end{align*}
where the final isomorphism is deduced from the version of Shapiro's Lemma in \cite[Lemma 2.3.5]{KPX} in the context of $(\varphi,\Gamma)$-modules and their cohomology. The assertion in the corollary follows from Theorem~\ref{thm_controlthecharidealofanalyticselmer}.
\end{proof}
Recall the positive integer $r\in \mathcal{R}_\chi$ and $\eta\in \widehat{\Delta}_r$ we have fixed at the start of this section (so that $\psi=\chi\eta$). 
\begin{defn}
\label{def:thedenominators}
We denote by $h_\eta^{\lambda,\mu}\in \Char_\cH\left(\widetilde{H}^2_\Iw(\QQ,V_\psi,\mathbb{D}^+_\psi)\right)$ any fixed generator. We also set 
$$h^{\lambda,\mu}:=\sum_{\eta\in \widehat{\Delta}_r}e_{\eta}h_{\eta^{-1}}^{\lambda,\mu} \in \cH[\Delta_r],$$ 
where $e_{\eta}$ is the idempotent associated to $\eta$. 
\end{defn}
\subsection{Proofs of Theorem~\ref{thm_positionofBFintheanalyticselmer} and Corollary~\ref{cor:signedBFinsymmsquare}} \label{sec:proofoftheorem}
\label{subsec:theproofoftheorem39}
{Before we go into the technical details, we outline the key ideas in the proofs of these two results. As explained in \cite[Proposition 5.3.2]{BLLV}, the proof of the factorization in Corollary~\ref{cor_BFfactorisation} would have been straightforward if the Beilinson--Flach elements $\BF_{r,\chi}^{\lambda,\mu}$ belonged to the image of the Perrin-Riou projectors ${\rm pr}_{\lambda,\mu}$ (which were introduced in Definition~\ref{define_projectionviaotsukifunctionals}). We unfortunately do not know if that is indeed the case. However, Proposition~\ref{prop_imageofperrinrioumapontheglobalcohomology} quantifies the potential failure of this property, in terms of the characteristic ideal of a certain Pottharst-style Selmer group, which we have already studied in Section~\ref{subsec_controltheanalyticselmercharideals}. Using our result on the support of this ideal  (Corollary~\ref{cor_nondivisibilityofthecorrecttwist} above), we can then define the sought after multiplier $c_m \in  \QQ_p[\Delta_r]\otimes  \textup{Frac}(\cH)$ as in Definition~\ref{def_multiplier_c_433}, whose denominator is a generator of the characteristic ideal of the said Pottharst-style Selmer group.}

Recall that for our fixed Dirichlet character $\chi$, we have set $W:=W_{f}^*\otimes W_{f}^*(1+\chi)$, $V:=\Sym^2W_f^*(1+\chi)$ and $V_\psi:=\textup{Sym}^2W_f^*(1+\psi)$.
We  remark that all the Hodge-Tate weights of $V$ are positive. This fact is crucial for our purposes. 
\begin{defn}
Let $r\in \mathcal{R}_\chi$ and let $\eta\in \widehat{\Delta}_r$ be a character. We denote by $\{F_{r,i}\}_i$ the set of completions of $\QQ(r)$ at primes above $p$.  We let 
$$\mathscr{L}_{\lambda,\mu,r}^{(1)}: H^1_\Iw(\QQ(r)_p,V)\lra \QQ(r)\otimes\cH\otimes \Dcris(V)$$
denote the Perrin-Riou map whose restriction to $H^1_\Iw(F_{r,i},V_\psi)$ is the corresponding twist of the morphism $\cL_{\lambda,\mu,F_{r,i}}$  introduced in Definition~\ref{def:thePRmapsforthesymmetricproduct}. For each $\eta\in \widehat{\Delta}_r$ and for $\psi=\chi\eta$ we write
$$\mathscr{L}_{\lambda,\mu,\eta}^{(1)}: H^1(\QQ_p,V_\psi)\lra \cH\otimes \Dcris(V_\psi)$$
for the $\eta^{-1}$-component of $\mathscr{L}_{\lambda,\mu,r}^{(1)}$.
\end{defn}

As in \S\ref{S:signedBF}, we have the Beilinson--Flach element
\begin{equation}\label{eq:BFelementsatlevelrtwistedbycyclochar}
\BF_{r,\chi}^{\lambda,\mu}\in H^1_\Iw(\QQ(r),W)\otimes\cH\stackrel{(\ast)}{=}H^1_\Iw(\QQ(r),V)\otimes\cH
\end{equation}
for each positive integer $r\in \mathcal{R}_\chi$.

\begin{remark}
\label{rem:sym2orsymmetricproductdoesntmatter}
The equality $(\ast)$ in \eqref{eq:BFelementsatlevelrtwistedbycyclochar} follows from Corollary~\ref{cor:vanishingofX} applied with $\psi=\eta\chi$ (where $\eta$ runs through characters of $\Delta_r$) and twisted by the character $\omega^{-j}\chi_\cyc^{j}$ of $\Gamma$. We also have 
$$\mathscr{L}^{(1)}_{\lambda,\mu,r}\Big{|}_{H^1_\Iw(\QQ(r)_p,V)}=\mathscr{L}^{(1)}_{\mu,\lambda,r}\Big{|}_{H^1_\Iw(\QQ(r)_p,V)}$$
for the restriction of the Perrin-Riou maps to the semi-local cohomology for the symmetric square. This combined with $(\ast)$ in turn implies that
$$\mathscr{L}^{(1)}_{\lambda,\mu,r}\circ\, \res_p = \mathscr{L}^{(1)}_{\mu,\lambda,r}\circ\, \res_p\,.$$
Moreover, it follows from Proposition~\ref{prop:thedichotomy} combined with Corollary~\ref{cor:vanishingofX} (which amounts to the vanishing for the Iwasawa cohomology for the odd twists of the alternating square) that 
$$\BF_{r,\chi}^{\lambda,\mu}=\BF_{r,\chi}^{\mu,\lambda}\,.$$
Based on these remarks, we may easily go back and forth between the cohomological  invariants of $V$ and $W$.
\end{remark}
We recall from Definition~\ref{define_projectionviaotsukifunctionals} the projectors
$$\xymatrix{\bigwedge^2 H^1_\Iw(\QQ(r),W)\ar[r]^(.55){\pr_{{\lambda},{\mu}}}\ar@{=}[d]& H^1(\QQ(r,W)\ar@{=}[d]\\
\bigwedge^2 H^1_\Iw(\QQ(r),V)\ar[r]&H^1(\QQ(r,V)
}$$
\begin{defn}
For $h_{\eta}^{\lambda,\mu}$ as in Definition~\ref{def:thedenominators}, we set  $h_\eta:=\prod_{\lambda,\mu}h_\eta^{\lambda,\mu} \in \cH$ and define 
$$h_r:=\sum_{\eta\in \widehat{\Delta}_r}e_\eta h_{\eta^{-1}} \in \cH[\Delta_r].$$
\end{defn}
Note that $h_\eta$ and $\log_{2k+2}^{(1)}/\log_{k+1}^{(1)}$ have no common factor thanks to Corollary~\ref{cor_nondivisibilityofthecorrecttwist}.
\begin{proposition}
\label{prop_imageofperrinrioumapontheglobalcohomology}
For any choice of $\lambda,\mu\in \{\alpha,-\alpha\}$, we have  
$$h_r\cH[\Delta_r]\subset \mathscr{L}^{(1)}_{\lambda,\mu,r}\circ\, \res_p(H^1_\Iw(\QQ(r),V))$$ 
for the image of $H^1_\Iw(\QQ,V)$ under the Perrin-Riou map.
\end{proposition}
\begin{proof}
It suffices to prove this for each isotypic component. Namely, once we verify that 
$$h_\eta\cH\subset \mathscr{L}^{(1)}_{\lambda,\mu,\eta}\circ\, \res_p(H^1_\Iw(\QQ,V_\psi))$$
for each character $\eta \in \widehat{\Delta}_r$ with $\psi=\chi\eta$, the proof will follow. 

By the definition of the Selmer complex as a mapping cone, we have the following exact sequence:
\begin{equation}
\label{eqn:theselmercomlexseqforPRmaps}
0\lra \frac{H^1_\Iw(\QQ,V_\psi)\otimes\cH}{\widetilde{H}^1_\Iw(\QQ,V_\psi,\mathbb{D}^+_\psi)}\stackrel{\res_p}{\lra} H^1_\Iw(\QQ_p,D_{\textup{rig}}^\dagger(V_\psi)/\mathbb{D}_\psi^+)\lra \widetilde{H}^2_\Iw(\QQ,V_\psi,\mathbb{D}^+_\psi)
\end{equation}
where we recall our convention that for twists $\mathscr{V}$ of $\mathscr{V}_{\psi}$ by a character of $\Gamma$ (such as our representation $V_\psi$ here), we denote the $(\varphi,\Gamma)$-submodule of $D_{\textup{rig}}^\dagger(\mathscr{V})$ corresponding to $\mathbb{D}_\psi^+$ also by $\mathbb{D}_\psi^+$. Recall also that the $(\varphi,\Gamma)$-submodule $\mathbb{D}_\psi^+$ depends on our choice of the pair $\lambda,\mu$. Observe further that the map $ \mathscr{L}^{(1)}_{\lambda,\mu,\eta}$ factors as
\begin{equation}
\label{eqn:factorthePRmap}
\xymatrixcolsep{1pc}\xymatrixrowsep{1pc}\xymatrix{H^1_\Iw(\QQ_p,V_\psi)\otimes\cH\ar[rd]\ar[rr]^(.6){ \mathscr{L}^{(1)}_{\lambda,\mu,\eta}}&& \cH\\
&H^1_\Iw(\QQ_p,D_{\textup{rig}}^\dagger(V_\psi)/\mathbb{D}_\psi^+)\ar[ur]&
}\end{equation}
by its very definition. As the Perrin-Riou map
$$ \mathscr{L}^{(1)}_{\lambda,\mu,\eta}: H^1_\Iw(\QQ_p,V_\psi)\otimes\cH\lra \cH$$
is surjective, the proof follows from the exact sequence (\ref{eqn:theselmercomlexseqforPRmaps}) and the choice of $h_\eta$.
\end{proof}
\begin{theorem}
\label{thm_analyticselmervsrankreductionmap}
$h_r\BF^{\lambda,\mu}_{r,\chi} \in \pr_{\lambda,\mu}\left(H^1_\Iw(\QQ(r),V)\otimes\cH\right)$\,.
\end{theorem}
\begin{proof}
We may once again prove this one character at a time: For each $\eta\in \widehat{\Delta}_r$ and $\psi=\chi\eta$, we shall verify that 
$$h_\eta\BF^{\lambda,\mu}_{\eta}\in \pr_{\lambda,\mu}\left(H^1_\Iw(\QQ,V_\psi)\otimes\cH\right)\,.$$
Here, $\BF^{\lambda,\mu}_{\eta} \in H^1_\Iw(\QQ,V_\psi)\otimes\cH$ is the image of the class $\BF^{\lambda,\mu}_{r,\chi}$, on projection to the $\eta^{-1}$-isotypic component.

Recall that the $\cH$-module $H^1_\Iw(\QQ,V_\psi)\otimes\cH$ is free of rank $2$ thanks to Theorem~\ref{thm_mainSelmerstructure}. We fix a basis $\{\mathcal{Y}_1,\mathcal{Y}_2\}$ of this module and observe that 
$$ \pr_{\lambda,\mu}\left(H^1_\Iw(\QQ(r),V)\otimes\cH\right)=\textup{span}_\cH{}\left(\mathscr{L}^{(1)}_{\lambda,\mu,\eta}(\mathcal{Y}_1)\mathcal{Y}_2-\mathscr{L}^{(1)}_{\lambda,\mu,\eta}(\mathcal{Y}_2)\mathcal{Y}_1\right)\,.$$

The fact that 
$${H^1_\Iw(\QQ,V_\psi)\otimes\cH}\big{/}{\widetilde{H}^1_\Iw(\QQ,V_\psi,\mathbb{D}^+_\psi)}\hookrightarrow H^1_\Iw(\QQ_p,D_{\textup{rig}}^\dagger(V_\psi)/\mathbb{D}_\psi^+)$$
is torsion free implies that
$$\cH\cdot\mathcal{Y}_i\cap {\widetilde{H}^1_\Iw(\QQ,V_\psi,\mathbb{D}^+_\psi)}=0$$  for some $i\in \{1,2\}$. The exact sequence (\ref{eqn:theselmercomlexseqforPRmaps}) and factorization (\ref{eqn:factorthePRmap}) yields the following containments:
\begin{align}
\notag\textup{span}_\cH\left(\mathscr{L}^{(1)}_{\lambda,\mu,\eta}\circ\,\res_p(\mathcal{Y}_1)\mathcal{Y}_2-\mathscr{L}^{(1)}_{\lambda,\mu,\eta}\circ\,\res_p(\mathcal{Y}_2)\mathcal{Y}_1\right)&=\pr_{\lambda,\mu}\left(H^1_\Iw(\QQ(r),V)\otimes\cH\right)\\
\label{eqn:containmentsforanalyticselmerotsukireductions}
&\subseteq \widetilde{H}^1_\Iw(\QQ,V_\psi,\mathbb{D}^+_\psi)\\
\notag&\hookrightarrow \left(H^1_\Iw(\QQ,V_\psi)\otimes\cH\right)/\cH\cdot \mathcal{Y}_i.
\end{align}
\emph{Case 1}. $\mathcal{Y}_1 \in \widetilde{H}^1_\Iw(\QQ,V_\psi,\mathbb{D}^+_\psi)$. In this case, it follows from (\ref{eqn:containmentsforanalyticselmerotsukireductions}) applied with $i=2$ that 
$$\mathscr{L}_{\lambda,\mu,\eta}^{(1)}\circ\,\res_p(\mathcal{Y}_2) c\in \pr_{\lambda,\mu}\left(H^1_\Iw(\QQ(r),V)\otimes\cH\right)$$
for every class $c\in  \widetilde{H}^1_\Iw(\QQ,V_\psi,\mathbb{D}^+_\psi)$; in particular, this holds true with the choice $c=\BF^{\lambda,\mu}_{\eta}$. The proof in this case is complete on noticing that 
$$\cH\cdot\mathscr{L}^{(1)}_{\lambda,\mu,\eta}\circ\,\res_p(\mathcal{Y}_2)=\mathscr{L}^{(1)}_{\lambda,\mu,\eta}\circ\,\res_p\left(H^1_\Iw(\QQ,V_\psi)\otimes\cH\right)\stackrel{\textup{Prop. {\ref{prop_imageofperrinrioumapontheglobalcohomology}}}}{\ni} h_\eta$$
since $\mathscr{L}_{\lambda,\mu,\eta}^{(1)}\circ\,\res_p(\mathcal{Y}_1)=0$.
\\\\
\emph{Case 2}. $\mathcal{Y}_2 \in \widetilde{H}^1_\Iw(\QQ,V_\psi,\mathbb{D}^+_\psi)$. The proof of Case 1 carries over. 
\\\\
\emph{Case 3}. $\mathcal{Y}_1,\mathcal{Y}_2 \not\in \widetilde{H}^1_\Iw(\QQ,V_\psi,\mathbb{D}^+_\psi)$. In this case, it follows from (\ref{eqn:containmentsforanalyticselmerotsukireductions}) (applied with both choices of $i\in \{1,2\}$) that
$$\left(r_1\mathscr{L}^{(1)}_{\lambda,\mu,\eta}\circ\,\res_p(\mathcal{Y}_1)+r_2\mathscr{L}^{(1)}_{\lambda,\mu,\eta}\circ\,\res_p(\mathcal{Y}_2)\right)c \in \pr_{\lambda,\mu}\left(H^1_\Iw(\QQ(r),V)\otimes\cH\right)$$
for any $r_1,r_2 \in \cH$ and any $c \in  \widetilde{H}^1_\Iw(\QQ,V_\psi,\mathbb{D}^+_\psi)$. Since we have 
$$\mathscr{L}^{(1)}_{\lambda,\mu,\eta}\circ\,\res_p\left(H^1_\Iw(\QQ,V_\psi)\otimes\cH\right)=\textup{span}_\cH\{\mathscr{L}^{(1)}_{\lambda,\mu,\eta}\circ\,\res_p(\mathcal{Y}_1)\,,\,\mathscr{L}^{(1)}_{\lambda,\mu,\eta}\circ\,\res_p(\mathcal{Y}_2)\},$$ 
this amounts to saying that
$$\mathscr{L}_{\lambda,\mu,\eta}^{(1)}\circ\,\res_p\left(H^1_\Iw(\QQ,V_\psi)\otimes\cH\right)c \subset  \pr_{\lambda,\mu}\left(H^1_\Iw(\QQ(r),V)\otimes\cH\right).$$
By Proposition~\ref{prop_imageofperrinrioumapontheglobalcohomology},
$$h_\eta c \in \mathscr{L}^{(1)}_{\lambda,\mu,\eta}\circ\,\res_p\left(H^1_\Iw(\QQ,V_\psi)\otimes\cH\right)c.$$ 
On taking  $c$ to be $\BF^{\lambda,\mu}_{\eta}$, the proof follows.
\end{proof}
\begin{defn}
For a fixed positive integer $r \in \mathcal{R}_\chi$ and each character $\eta \in \widehat{\Delta}_r$, let $\{\mathcal{Y}_1,\mathcal{Y}_2\}$ be a basis of $H^1_\Iw(\QQ,V_\psi)$. Let $d_{\eta}^{\lambda,\mu}\in \cH$ be the unique element with the property that
$$h_\eta\BF^{\lambda,\mu}_{\eta}=d_{\eta}^{\lambda,\mu}\cdot \left(\mathscr{L}^{(1)}_{\lambda,\mu,\eta}\circ\,\res_p(\mathcal{Y}_1)\mathcal{Y}_2-\mathscr{L}^{(1)}_{\lambda,\mu,\eta}\circ\,\res_p(\mathcal{Y}_2)\mathcal{Y}_1\right)\,.$$
Note that the existence of $d_{\eta}^{\lambda,\mu}$ is guaranteed by Theorem~\ref{thm_analyticselmervsrankreductionmap} and the description of $\pr_{\lambda,\mu}\left(H^1_\Iw(\QQ(r),V)\otimes\cH\right)$ in (\ref{eqn:containmentsforanalyticselmerotsukireductions}).
\end{defn}

\begin{proposition}
\label{prop:loeffler}
The elements $d_\eta^{\lambda,\mu}\in \cH$ are independent of $\lambda,\mu$.
\end{proposition}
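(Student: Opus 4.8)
\textbf{Proof proposal for Proposition~\ref{prop:loeffler}.}

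\emph{Strategy.} The claim is that the scalars $d_\eta^{\lambda,\mu}$, which record the ``denominator'' by which the Beilinson--Flach class $\BF^{\lambda,\mu}_{\eta}$ fails to lie in the image of $\pr_{\lambda,\mu}$ applied to integral Iwasawa cohomology, do not actually depend on the pair $(\lambda,\mu)$. The plan is to compare all four classes $\BF_{\eta}^{\lambda,\mu}$ against the single element $z_m$ (equivalently, its $\eta^{-1}$-component $z_\eta \in \bigwedge^2 H^1_\Iw(\QQ,V_\psi)\otimes\cH$) predicted by the rank~$2$ Euler system picture. First I would invoke Lemma~\ref{lem:firsttwists}, which already provides a partial factorization: there exist classes $\widetilde{\BF}_{m,\chi}^{\lambda,\mu}$ with $\BF_{m,\chi}^{\lambda,\mu}$ equal to $\log_{p,k+1}^{\clubsuit,(1)}\widetilde{\BF}_{m,\chi}^{\lambda,\mu}$ after applying the change-of-basis matrix, and crucially the coefficient functions $F_i^{\lambda,\mu}$ of $\BF_{m,\chi}^{\lambda,\mu}$ satisfy $F_i^{\lambda,\mu}(\chi^j\theta)=(\lambda\mu)^{-n}c_{n,i,j}$ for $0\le j\le k$, with $c_{n,i,j}$ \emph{independent} of $\lambda,\mu$. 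This is the seed of the $(\lambda,\mu)$-independence.

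\emph{Key steps.} (1) Using the definition of $d_\eta^{\lambda,\mu}$ and the expression $\pr_{\lambda,\mu}(z_\eta) = \mathscr{L}_{\lambda,\mu,\eta}\circ\res_p(\mathcal{Y}_1)\mathcal{Y}_2 - \mathscr{L}_{\lambda,\mu,\eta}\circ\res_p(\mathcal{Y}_2)\mathcal{Y}_1$ in a fixed basis $\{\mathcal{Y}_1,\mathcal{Y}_2\}$ of $H^1_\Iw(\QQ,V_\psi)\otimes\cH$, rewrite the defining equation $h_\eta\BF^{\lambda,\mu}_\eta = d_\eta^{\lambda,\mu}\pr_{\lambda,\mu}(z_\eta)$ as an identity in $H^1_\Iw(\QQ,V_\psi)\otimes\cH$, a \emph{free} $\cH$-module. (2) Fix a single integral generator $\mathbf z_\eta \in \bigwedge^2 H^1_\Iw(\QQ,V_\psi)$, independent of $(\lambda,\mu)$; since $h_\eta$ was built precisely (Definition~\ref{def:thedenominators}, Proposition~\ref{prop:imageofperrinrioumapontheglobalcohomology}, Theorem~\ref{thm:analyticselmervsrankreductionmap}) so that $h_\eta\BF^{\lambda,\mu}_\eta \in \pr_{\lambda,\mu}(H^1_\Iw(\QQ,V_\psi)\otimes\cH)$, there is a well-defined scalar $c_\eta^{\lambda,\mu}\in\textup{Frac}(\cH)$ with $h_\eta\BF^{\lambda,\mu}_\eta = c_\eta^{\lambda,\mu}\pr_{\lambda,\mu}(\mathbf z_\eta)$, and $d_\eta^{\lambda,\mu} = h_\eta \cdot (\text{coefficient of }\mathbf z_\eta\text{ in }z_\eta)\cdot c_\eta^{\lambda,\mu}$ up to the common, $(\lambda,\mu)$-independent scalars. (3) Now evaluate both sides at a Zariski-dense set of points: specialize at characters $\chi^j\theta$ of $\Gamma$ with $0\le j\le k$ and $\theta$ of $p$-power conductor $p^n>1$. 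On the left, $h_\eta\BF_\eta^{\lambda,\mu}$ specializes via the coefficients $F_i^{\lambda,\mu}(\chi^j\theta) = (\lambda\mu)^{-n}c_{n,i,j}$. On the right, $\pr_{\lambda,\mu}(\mathbf z_\eta)$ involves $\mathscr{L}_{\lambda,\mu,\eta}$, whose specialization at these points is governed by the interpolation formula of the Perrin-Riou/Loeffler--Zerbes regulator: the $(\lambda,\mu)$-dependence of $\mathscr{L}_{\lambda,\mu,\eta}$ evaluated at $\chi^j\theta$ is again exactly a power of $(\lambda\mu)^{-1}$ times a $(\lambda,\mu)$-independent quantity (this is the same mechanism underlying Lemma~\ref{lem:firsttwists}; c.f. \cite[\S3.1]{BL16b}, \cite[Proposition~4.3.1]{BLLV}). (4) Dividing, the $(\lambda\mu)$-powers cancel, so $d_\eta^{\lambda,\mu}(\chi^j\theta)$ is independent of $(\lambda,\mu)$ for all such points; since these points are dense in $\textup{Spec}\,\cH$ (they form an infinite set along the cyclotomic line with controlled growth $O(\log_p^{k+1})$, matching the order of the distributions involved) and $d_\eta^{\lambda,\mu}\in\cH$ is determined by its values there, we conclude $d_\eta^{\alpha,\alpha}=d_\eta^{-\alpha,-\alpha}=d_\eta^{\alpha,-\alpha}=d_\eta^{-\alpha,\alpha}$. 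Summing over $\eta$ with idempotents gives the statement for $d_r^{\lambda,\mu}$.

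\emph{Main obstacle.} The delicate point is step~(3): one must show that the $(\lambda,\mu)$-dependence of $\mathscr{L}_{\lambda,\mu,\eta}\circ\res_p(\mathcal{Y}_i)$, specialized at the characters $\chi^j\theta$ with $0\le j\le k$, is purely a factor of $(\lambda\mu)^{-n}$ (matching the denominator of the Beilinson--Flach coefficients) times something depending only on $\mathcal{Y}_i$, $j$, $n$, $\eta$ but not on the choice of $\varphi$-eigenvector sign. This requires unwinding the definition of $\mathscr{L}_{\lambda,\mu,\eta}$ through the eigenvectors $v_\lambda\otimes v_\mu$ of Definition~\ref{def:thePRmapsforthesymmetricproduct} and the explicit interpolation property of the Perrin-Riou regulator $\cL_{W,F}$; the key input is that in the critical range $0\le j \le k$ covered by Beilinson--Flach interpolation, the regulator value factors through the Coleman family construction in a way where the $\lambda,\mu$ enter only through $(\lambda\mu)^{-r}$. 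One also needs the density of the evaluation locus in $\cH$ to be genuine, which follows from the fact that $\cH = \varinjlim_m \cH_{E,m}(\Gamma)$ is built from distributions of finite order and a nonzero element of $\cH_{E,k+1}(\Gamma)$ cannot vanish at all $\chi^j\theta$ with $0\le j\le k$, $n\ge 1$. The remaining cases and the passage from $d_\eta$ to $d_r = \sum_\eta e_\eta d_{\eta^{-1}}$ are routine.
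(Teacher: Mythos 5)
Your proposal has a genuine gap at its crucial step~(3). You claim that the $(\lambda,\mu)$-dependence of $\mathscr{L}_{\lambda,\mu,\eta}\circ\res_p(\mathcal{Y}_i)$ evaluated at characters $\chi_\cyc^j\theta$ (with $0\le j\le k$ and $\theta$ of conductor $p^n>1$) is ``exactly a power of $(\lambda\mu)^{-1}$ times a $(\lambda,\mu)$-independent quantity.'' This is not correct for arbitrary classes $\mathcal{Y}_i$. The Perrin--Riou interpolation formula at such a character does produce a factor $(\lambda\mu)^n$ (from applying $\varphi^{-n}$ and projecting to the eigenvector $v_\lambda\otimes v_\mu$, whose $\varphi$-eigenvalue is $1/(\lambda\mu)$), but the remaining factor is the pairing of $\exp^*$ of a layer of $\mathcal{Y}_i$ with the \emph{dual} basis vector $(v_\lambda\otimes v_\mu)^\vee$. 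Since $v_\lambda=\varphi(\omega')+\lambda^{-1}\omega'$, this dual vector carries genuine, non-monomial $(\lambda,\mu)$-dependence, and for a generic Iwasawa class $\mathcal{Y}_i$ there is no cancellation. The interpolation identity $F_i^{\lambda,\mu}(\chi_\cyc^j\theta)=(\lambda\mu)^{-n}c_{n,i,j}$ from \cite[\S3.1]{BL16b} and \cite[Proposition~4.3.1]{BLLV} is a special property of the Beilinson--Flach classes themselves (it arises from their construction along a Coleman family), not a feature of the regulator applied to an arbitrary basis. So the two powers of $\lambda\mu$ you hope will cancel are not the whole story, and step~(4)'s density argument then proves nothing.

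The paper's proof of Proposition~\ref{prop:loeffler} proceeds by a different, essentially algebraic route which sidesteps interpolation entirely. One applies the explicit reciprocity law to the Beilinson--Flach classes, which gives the antisymmetry
\[
h_\eta\,\mathscr{L}_{\lambda,\mu^*}\bigl(\BF^{\lambda,\mu}_{\eta}\bigr)=-\,h_\eta\,\mathscr{L}_{\lambda,\mu}\bigl(\BF^{\lambda,\mu^*}_{\eta}\bigr).
\]
Expanding both sides using the defining relations $h_\eta\BF^{\lambda,\mu}_\eta=d_\eta^{\lambda,\mu}\bigl(\mathscr{L}_{\lambda,\mu}(\mathcal{Y}_1)\mathcal{Y}_2-\mathscr{L}_{\lambda,\mu}(\mathcal{Y}_2)\mathcal{Y}_1\bigr)$ yields the same $2\times2$ determinant $\mathscr{L}_{\lambda,\mu}(\mathcal{Y}_1)\mathscr{L}_{\lambda,\mu^*}(\mathcal{Y}_2)-\mathscr{L}_{\lambda,\mu}(\mathcal{Y}_2)\mathscr{L}_{\lambda,\mu^*}(\mathcal{Y}_1)$ on both sides (with opposite signs), forcing $d_\eta^{\lambda,\mu}=d_\eta^{\lambda,\mu^*}$. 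Combined with the symmetry $\BF^{\lambda,\mu}_\eta=\BF^{\mu,\lambda}_\eta$ (Remark~\ref{rem:sym2orsymmetricproductdoesntmatter}), which gives $d_\eta^{\lambda,\mu}=d_\eta^{\mu,\lambda}$, this gives constancy over all four pairs. If you want to salvage your approach, you would in effect have to prove this determinant identity from scratch; the reciprocity law is the tool that makes it painless.
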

\begin{proof}
To ease notation, we fix $\eta$ and drop it from the notation we use for the Perrin-Riou maps. With a slight abuse, we shall also write $\mathscr{L}^{(1)}_{\lambda,\mu}$ in place of $\mathscr{L}^{(1)}_{\lambda,\mu}\circ\,\res_p$ to ease our notation here.

Notice that we have
\begin{equation}
\label{eqn:Loefflerstep1}
h_\eta\mathscr{L}^{(1)}_{\lambda,\mu^*}\left({\BF^{\lambda,\mu}_{\eta}}\right)=-h_\eta\mathscr{L}^{(1)}_{\lambda,\mu}\left({\BF^{\lambda,\mu^*}_{\eta}}\right)
\end{equation}
by the explicit reciprocity law for Beilinson--Flach elements. On the other hand,
\begin{align}
\label{eqn:Loefflerstep2}
 h_\eta\mathscr{L}^{(1)}_{\lambda,\mu^*}\left({\BF^{\lambda,\mu}_{\eta}}\right)=d_{\eta}^{\lambda,\mu}\cdot \left(\mathscr{L}^{(1)}_{\lambda,\mu}(\mathcal{Y}_1)\mathscr{L}^{(1)}_{\lambda,\mu^*}(\mathcal{Y}_2)-\mathscr{L}^{(1)}_{\lambda,\mu}(\mathcal{Y}_2)\mathscr{L}^{(1)}_{\lambda,\mu^*}(\mathcal{Y}_1)\right)
\end{align}
and
\begin{equation}
\label{eqn:Loefflerstep3}
h_\eta\mathscr{L}^{(1)}_{\lambda,\mu}\left({\BF^{\lambda,\mu^*}_{\eta}}\right)=d_{\eta}^{\lambda,\mu^*}\cdot \left(\mathscr{L}^{(1)}_{\lambda,\mu^*}(\mathcal{Y}_1)\mathscr{L}^{(1)}_{\lambda,\mu}(\mathcal{Y}_2)-\mathscr{L}^{(1)}_{\lambda,\mu^*}(\mathcal{Y}_2)\mathscr{L}^{(1)}_{\lambda,\mu}(\mathcal{Y}_1)\right)
\end{equation}
On comparing (\ref{eqn:Loefflerstep1}), (\ref{eqn:Loefflerstep2}) and (\ref{eqn:Loefflerstep3}), we conclude that $d_{\eta}^{\lambda,\mu}=d_{\eta}^{\lambda,\mu^*}\,.$ The proof follows using in addition the fact that $d_{\eta}^{\lambda,\mu}=d_{\eta}^{\mu,\lambda}$, which we have thanks to Proposition~\ref{prop:thedichotomy} and Remark~\ref{rem:sym2orsymmetricproductdoesntmatter}.
\end{proof}
From now on, we let $d_\eta \in \cH$ denote $d_\eta^{\lambda,\mu}$ (which we have just seen is independent of  $\lambda$ and $\mu$).
\begin{defn}
\label{def_multiplier_c_433}
We set $c_\eta:=d_\eta/h_\eta \in \textup{Frac}(\cH)$ and 
$$c_r=\sum_{\eta\in \widehat{\Delta}_r}e_\eta c_{\eta^{-1}}\in \QQ_p[\Delta_r]\otimes  \textup{Frac}(\cH).$$
\end{defn}

In the statement of Theorem~\ref{thm_positionofBFintheanalyticselmer}, we may take $c_m$ to be  the element given in Definition~\ref{def_multiplier_c_433}. This satisfies properties (i)-(iii) and Theorem~\ref{thm_positionofBFintheanalyticselmer} follows.

\subsection{Analytic main conjectures with $p$-adic $L$-functions} 
\label{subsec_AnaylticMainConj}
{We prove in this subsection results towards Pottharst-style Iwasawa main conjectures (Conjecture~\ref{conj_analyticmainconjecture}). Our main result is a divisibility statement (Theorem~\ref{thm_analyticmainconjecture}) in these Iwasawa main conjectures, which is based on the divisibility in \eqref{eqn_ESmachinery}. This divisibility is deduced using  the Euler system of integral (doubly-signed) Beilinson--Flach elements (that we have constructed in Corollary~\ref{cor:signedBFinsymmsquare} of our main technical result Theorem~\ref{thm_positionofBFintheanalyticselmer}). Using global duality and the reciprocity laws for Beilinson--Flach elements, we give a bound on the Pottharst-style Selmer groups in terms of  $p$-adic Rankin--Selberg $L$-functions} (see Theorem~\ref{thm_analyticmainconjecture}).

Recall that $V:=\textup{Sym}^2W_f^*(1+\chi)$ and $\lambda,\mu \in \{\pm \alpha \}$\,. 
\begin{defn}
Let $\delta_{\chi} :\QQ_p^{\times}\ra E^{\times}$ be the character defined by $\delta_{\chi}(p):=p\chi^{-1}(p)$ and $\delta_{\chi}(u):=u$ for $u \in \ZZ_p^{\times}$. Let $\mathbb{D}_{\chi}$ denote the rank one $(\varphi,\Gamma)$-module $\mathcal{R}_{E}(\delta_{\chi})$. We set
\begin{itemize}
\item $\mathbb{D}^{\lambda,\mu}_{\chi} := (\mathbb{D}_{\lambda}\otimes \mathbb{D}_{f} + \mathbb{D}_{f}\otimes\mathbb{D}_{\mu}) \otimes \mathbb{D}_{\chi} \cap D^{\dagger}_{\mathrm{rig}}(V)$,
\item $\mathbb{D}^{\lambda}_{\chi} := \mathbb{D}_{\lambda}\otimes \mathbb{D}_{\lambda} \otimes \mathbb{D}_{\chi}\,.$
\end{itemize}
\end{defn}

\begin{conjecture}[Analytic Iwasawa main conjecture] \label{conj_analyticmainconjecture}  For $j \in \{k+2,\ldots,2k+2 \}$ even, the $\mathcal{H}$-module $e_{\omega^j}\widetilde{H}^2_{\Iw}(\QQ,V,\mathbb{D}_{\chi}^{\lambda})$ is torsion and \[ \mathrm{char}_{\mathcal{H}}\,e_{\omega^j}\widetilde{H}^2_{\Iw}(\QQ,V,\mathbb{D}_{\chi}^{\lambda}) = e_{\omega^j}L_{p}^{\mathrm{geom}}(\Sym^{2}f_{\lambda}\otimes\chi^{-1})\cdot\mathcal{H}\,. \]
\end{conjecture}

We will explain how our results in \S\ref{sec:nonordIwasawaESargument} on the signed Iwasawa main conjectures lead to partial results towards Conjecture~\ref{conj_analyticmainconjecture}. To this end, we assume until the end of this article that the hypotheses of Theorem~\ref{thm_signedmainconjecture} hold. Fix also an even integer $j \in \{k+2,\ldots,2k+2 \}$ and $\mathfrak{S} = (\clubsuit, \spadesuit) \in \mathcal{S}$  as in Proposition~\ref{prop:nonzerop-adicLfunction}.

\begin{proposition}
\label{prop_antisymofcolBF}
$e_{\omega^{j}}\col^{\clubsuit}\circ\mathrm{res}_{p}(\BF^{\spadesuit}_{1,\chi}) = - e_{\omega^j}\col^{\spadesuit}\circ\mathrm{res}_{p}(\BF^{\clubsuit}_{1,\chi}) \neq 0\,. $
\end{proposition}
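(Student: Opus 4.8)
The plan is to read off the antisymmetry relation from the explicit reciprocity laws of Loeffler--Zerbes combined with the factorization result of Corollary~\ref{cor:BFfactorisation}, exactly as in \cite[Proposition 5.3.4]{BLLV}. First I would unwind the definitions: $\col^{\clubsuit}$ is $\cL_{\clubsuit,F}/\log_p^{\clubsuit}$ (Definition~\ref{defn:signedcolemanmaps}), and $\BF^{\spadesuit}_{1,\chi}$ is the $\spadesuit$-component of the factorization \eqref{eq:BFfactorisation}. Thus $\col^{\clubsuit}\circ\res_p(\BF^{\spadesuit}_{1,\chi})$ is, up to the explicit nonzero scalar matrix in \eqref{eq:BFfactorisation} and powers of the Pollack logarithms, a fixed $E$-linear combination of the quantities $\cL_{\lambda,\mu,F}\circ\res_p(\BF^{\lambda',\mu'}_{1,\chi})$ for $\lambda,\mu,\lambda',\mu'\in\{\pm\alpha\}$. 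The key input is then the ``change of sign'' relation $\cL_{\lambda,\mu^*,F}(\BF^{\lambda,\mu}_{1,\chi}) = -\,\cL_{\lambda,\mu,F}(\BF^{\lambda,\mu^*}_{1,\chi})$ (written as \eqref{eqn:Loefflerstep1} in the proof of Proposition~\ref{prop:loeffler}), which is a manifestation of the explicit reciprocity law for Beilinson--Flach elements and which, after projecting to the $\omega^j$-eigenspace, propagates to the signed classes.

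The key steps, in order, would be: (1) express both $e_{\omega^j}\col^{\clubsuit}\circ\res_p(\BF^{\spadesuit}_{1,\chi})$ and $e_{\omega^j}\col^{\spadesuit}\circ\res_p(\BF^{\clubsuit}_{1,\chi})$ in terms of the $\cL_{\lambda,\mu,F}$-values of the unbounded classes $\BF^{\lambda,\mu}_{1,\chi}$, using the explicit inverse of the $4\times 4$ matrix in \eqref{eq:BFfactorisation} together with the definition of the $\col$-maps via Lemma~\ref{lem:factorisation}; (2) match up the two resulting linear combinations of $\cL_{\lambda,\mu,F}(\BF^{\lambda',\mu'}_{1,\chi})$-terms and verify that they are negatives of one another after invoking the reciprocity-law antisymmetry \eqref{eqn:antisymmetryrelation}/\eqref{eqn:Loefflerstep1} and the symmetry $\BF^{\lambda,\mu}_{1,\chi}=\BF^{\mu,\lambda}_{1,\chi}$ (Proposition~\ref{prop:thedichotomy}, Remark~\ref{rem:sym2orsymmetricproductdoesntmatter}); (3) deduce the sign relation, and then (4) deduce nonvanishing: by hypothesis $\mathfrak{S}$ is the pair produced by Proposition~\ref{prop:nonzerop-adicLfunction}, whose proof shows that $\cL_{\alpha,\alpha}(\BF^{\alpha,-\alpha}_{1,\chi})$ is a nonzero multiple of the geometric $p$-adic $L$-function and hence that at least one $\col^{\clubsuit}\circ\res_p(\BF^{\spadesuit}_{1,\chi})$ with $(\clubsuit,\spadesuit)\in\mathcal{S}$ is nonzero; combined with the sign relation just proved, the chosen $\mathfrak{S}$ gives a nonzero value in the $e_{\omega^j}$-component.

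I expect the main obstacle to be bookkeeping step (2): one must carefully track which $\cL_{\lambda,\mu,F}$ pairs with which $\BF^{\lambda',\mu'}$ under the transpose of the change-of-basis matrix, and confirm that the cross terms that would spoil antisymmetry either cancel (using $\BF^{\lambda,\mu}_{1,\chi}=\BF^{\mu,\lambda}_{1,\chi}$ and the fact that $\cL_{\lambda,\mu,F}$ is symmetric in $\lambda,\mu$ on the symmetric-square part, per Remark~\ref{rem:sym2orsymmetricproductdoesntmatter}) or get flipped in sign precisely by the reciprocity law. A secondary subtlety is that the $\log_p^{\clubsuit}$ denominators differ between the two sides (e.g. $\log^{+}_{p,2k+2}$ versus $\log^{-}_{p,2k+2}$ versus $\log_{p,k+1}$), so one should be careful that the antisymmetry holds at the level of the $\col$-maps and not merely at the level of the $\cL$-maps; this is handled by noting that the reciprocity law relates $\BF^{\lambda,\mu}$ and $\BF^{\lambda,\mu^*}$, which lie in components with the \emph{same} logarithmic denominator, so no extraneous factor is introduced. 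Since the entire argument is a transcription of \cite[Proposition 5.3.4]{BLLV} into the present notation, I would ultimately cite that result for the formal identity and only spell out the nonvanishing, which follows from Proposition~\ref{prop:nonzerop-adicLfunction}.
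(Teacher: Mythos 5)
Your overall plan (cite \cite[Proposition 5.3.4]{BLLV} and appeal to Proposition~\ref{prop:nonzerop-adicLfunction} for nonvanishing) lands in the right place, and the nonvanishing part (your step (4)) is correct. But the account you give in steps (1)--(3) of \emph{why} the antisymmetry holds does not match the paper's argument and would not succeed if you tried to carry it out.

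The issue is in step (2). You propose to deduce the identity
$e_{\omega^j}\col^{\clubsuit}\circ\res_p(\BF_{1,\chi}^{\spadesuit})=-e_{\omega^j}\col^{\spadesuit}\circ\res_p(\BF_{1,\chi}^{\clubsuit})$
by expanding both sides as linear combinations of the values $\cL_{a,b}\circ\res_p(\BF_{1,\chi}^{c,d})$ and then matching terms using the reciprocity-law relation~\eqref{eqn:Loefflerstep1} together with the symmetries $\BF^{\lambda,\mu}=\BF^{\mu,\lambda}$ and $\cL_{\lambda,\mu}=\cL_{\mu,\lambda}$. This is insufficient. If you carry out the expansion for the pair $(\clubsuit,\spadesuit)=(+,-)$, the required vanishing is (up to nonzero scalars)
\[
\cL_{\alpha,\alpha}(\BF^{\alpha,\alpha})+\cL_{-\alpha,-\alpha}(\BF^{-\alpha,-\alpha})+\cL_{\alpha,\alpha}(\BF^{-\alpha,-\alpha})+\cL_{-\alpha,-\alpha}(\BF^{\alpha,\alpha})+4\cL_{\alpha,-\alpha}(\BF^{\alpha,-\alpha})=0,
\]
and none of these terms is touched by~\eqref{eqn:Loefflerstep1}, which only relates $\cL_{\lambda,\mu^*}(\BF^{\lambda,\mu})$ and $\cL_{\lambda,\mu}(\BF^{\lambda,\mu^*})$ (a single pair of indices swapped with the other one fixed). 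Relations such as $\cL_{\lambda,\lambda}(\BF^{\lambda,\lambda})=0$ or $\cL_{\alpha,\alpha}(\BF^{-\alpha,-\alpha})=-\cL_{-\alpha,-\alpha}(\BF^{\alpha,\alpha})$ are genuinely additional input, and they are \emph{not} consequences of the stated reciprocity law plus symmetry.

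What actually furnishes these extra relations, and what the paper's proof relies on, is the rank-reduction structure: Theorem~\ref{thm:analyticselmervsrankreductionmap} (applied with $r=1$), together with Proposition~\ref{prop:loeffler} (which shows the proportionality constant $d_\eta^{\lambda,\mu}$ is independent of $\lambda,\mu$), identifies each $\BF_{1,\chi}^{\lambda,\mu}$, up to a common nonzero factor in $\cH$, with the Perrin-Riou rank-reduction $\cL_{\lambda,\mu}(\mathcal{Y}_1)\mathcal{Y}_2-\cL_{\lambda,\mu}(\mathcal{Y}_2)\mathcal{Y}_1$ of a single wedge $\mathcal{Y}_1\wedge\mathcal{Y}_2$. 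From this the full antisymmetry $\cL_{a,b}(\BF^{c,d})=-\cL_{c,d}(\BF^{a,b})$ (and hence $\col^{\clubsuit}(\BF^{\spadesuit})=-\col^{\spadesuit}(\BF^{\clubsuit})$, after taking the appropriate linear combinations and accounting for the $\delta$-signs in Theorem~\ref{thm:positionofBFintheanalyticselmer}(1)) is a pure $2\times 2$ determinant identity. Note that the paper's proof explicitly flags this: it says the argument is that of \cite[Proposition 5.3.4]{BLLV} ``on replacing reference to Theorem 3.9.1 in op. cit. to Theorem~\ref{thm:analyticselmervsrankreductionmap} here (with $r=1$),'' and Theorem~\ref{thm:analyticselmervsrankreductionmap} is precisely the ingredient missing from your outline. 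The reciprocity law enters only indirectly, via its use in Proposition~\ref{prop:loeffler}. So while your deferral to \cite{BLLV} would technically yield the statement, your description of the underlying mechanism is incorrect, and the missing dependence on Theorem~\ref{thm:analyticselmervsrankreductionmap} is exactly what marks the novelty of the present paper.
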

\begin{proof}
The proof of the asserted equality is identical to the proof of \cite[Proposition 5.3.4]{BLLV}, on replacing reference to Theorem 3.9.1 in op. cit. by Theorem~\ref{thm_analyticselmervsrankreductionmap} here (with $r=1$). The non-vanishing follows immediately from our choice of $\frak{S}$.
\end{proof}

Recall that the  $\Lambda_\cO(\Gamma_1)$-module $e_{\omega^{j}}H^{1}(\QQ, \mathbb{T})$ is free of rank two by Theorem~\ref{thm_mainSelmerstructure}. We fix from now on a $\Lambda_\cO(\Gamma_1)$-basis of this module denoted by  $\{\mathfrak{c}_1,\mathfrak{c}_2\}$.

\begin{proposition} \label{prop:boundsforH2}
There exist non-zero elements  $\mathcal{D},\mathcal{E}_1 ,\mathcal{E}_2\in \Lambda_\cO(\Gamma_1)$  satisfying \[ \mathcal{D}\cdot e_{\omega^{j}}\BF^{\clubsuit}_{1,\chi} = \mathcal{E}_1(\col^{\clubsuit}\circ\mathrm{res}_{p}(\mathfrak{c}_{1})\mathfrak{c}_{2}-\col^{\clubsuit}\circ\mathrm{res}_{p}(\mathfrak{c}_{2})\mathfrak{c}_{1}),\]
 \[\mathcal{D}\cdot e_{\omega^{j}}\BF^{\spadesuit}_{1,\chi} = \mathcal{E}_{2}(\col^{\spadesuit}\circ\mathrm{res}_{p}(\mathfrak{c}_{1})\mathfrak{c}_{2}-\col^{\spadesuit}\circ\mathrm{res}_{p}(\mathfrak{c}_{2})\mathfrak{c}_{1})\,. \]
The first relation holds in $e_{\omega^{j}}H^{1}_{\mathcal{F}_{\clubsuit}}(\QQ,\mathbb{T})$ while the second holds in $e_{\omega^{j}}H^{1}_{\mathcal{F}_{\spadesuit}}(\QQ,\mathbb{T})$.

Furthermore,
$\mathcal{D}\cdot\mathrm{char}_\cH\left(e_{\omega^{j}}H^{2}(\QQ,\TT)\right)$ divides $\mathcal{E}_{1}\cdot{\rm char}_\cH \left(e_{\omega^{j}}{\rm coker}(\col^{\clubsuit})\right)$.
\end{proposition} 
\begin{proof}
The existence of the non-zero elements $\mathcal{D},\mathcal{E}_1$ and $\mathcal{E}_2$ follows from the fact that  the $\Lambda_\cO(\Gamma_1)$-module $e_{\omega^{j}}H^{1}_{\mathcal{F}_{?}}(\QQ,\mathbb{T})$ has rank one for $? \in \{\clubsuit, \spadesuit\}$, which is a consequence of the {locally restricted Euler system machinery} (used as in the proof of Theorem~\ref{thm_signedmainconjecture}).

The proof proceeds as in the proof of \cite[Proposition 7.4.4]{BLLV}. We set 
\[H^{1}_{/\clubsuit}(\QQ_p,\TT) := H^{1}(\QQ_p,\TT)/H^{1}_{\mathcal{F}_{\clubsuit}}(\QQ_p,\TT)\]
By the Poitou-Tate global duality, we have the following long exact sequence
\begin{multline} \label{eqn:poitou-tate-signed}
0\lra H^{1}_{\mathcal{F}_{\clubsuit}}(\QQ,\TT)/(\Lambda_\cO(\Gamma)\cdot\BF^{\clubsuit}_{1,\chi})\lra H^1(\QQ,\TT)/(\BF^{\clubsuit}_{1,\chi},\BF^{\spadesuit}_{1,\chi})\lra \frac{H^{1}_{/\clubsuit}(\QQ_{p},\TT)}{\mathrm{res}_{/\clubsuit}(\BF^{\spadesuit}_{1,\chi})}\\
\lra\mathrm{Sel}_{\clubsuit}(\QQ,\TT^{\vee}(1))^{\vee}\lra H^2(\QQ,\TT)\lra 0,
\end{multline}
where $\mathrm{res}_{/\clubsuit}$ denotes the composition
\[ H^1(\QQ,\TT) \xrightarrow{\mathrm{res}_{p}} H^1(\QQ_p,\TT)\twoheadrightarrow H^1_{/\clubsuit}(\QQ_p,\TT).\]
The {locally restricted Euler system machinery} shows (see Theorem~\ref{thm_signedmainconjecture}) that
\begin{equation} \label{eqn_ESmachinery}
e_{\omega^{j}}\mathrm{char}_\cH\left(\mathrm{Sel}_{\clubsuit}(\QQ,\TT^{\vee}(1))^{\vee}\right)\, \Big{\vert}\, \mathrm{char}_\cH\left(e_{\omega^{j}} H^{1}_{\mathcal{F}_{\clubsuit}}(\QQ,\TT)\Big{/}\Lambda_{\cO}(\Gamma_1)\cdot e_{\omega^{j}}\BF^{\clubsuit}_{1,\chi}\right)\,.
\end{equation}

Combining (\ref{eqn:poitou-tate-signed}) and (\ref{eqn_ESmachinery}), we deduce that
\begin{equation} \label{eqn:divisibility1}
\mathrm{char}_\cH\left(e_{\omega^{j}}H^{2}(\QQ,\TT)\right)\left(\mathrm{char}_\cH\left(e_{\omega^{j}}{\rm coker}(\col^{\clubsuit}\right)\right)^{-1}e_{\omega^{j}}\col^{\clubsuit}\circ\mathrm{res}_{p}(\BF^{\spadesuit}_{1,\chi})\,\,\Big{\vert}\,\,\mathrm{char}_\cH\left(\frac{e_{\omega^{j}}H^1(\QQ,\TT)}{e_{\omega^{j}}(\BF^{\clubsuit}_{1,\chi},\BF^{\spadesuit}_{1,\chi})}\right).
\end{equation}We set
\[ \mathrm{det}\circ\col(\spadesuit,\clubsuit):= \mathrm{det}\begin{pmatrix}
\col^{\spadesuit}\circ\mathrm{res}_{p}(\mathfrak{c}_{1}) & \col^{\clubsuit}\circ\mathrm{res}_{p}(\mathfrak{c}_{1})\\
\col^{\spadesuit}\circ\mathrm{res}_{p}(\mathfrak{c}_{2}) & \col^{\clubsuit}\circ\mathrm{res}_{p}(\mathfrak{c}_{2})
\end{pmatrix}. \]
Note that
\begin{equation} \label{eqn:divisibility2}
\col^{\clubsuit}\circ\mathrm{res}_{p}(e_{\omega^{j}}\BF^{\spadesuit}_{1,\chi}) = \mathcal{D}^{-1}\mathcal{E}_{2}\,\mathrm{det}\circ\col(\spadesuit,\clubsuit).
\end{equation}
By Proposition~\ref{prop:boundsforH2}, we have
\begin{equation} \label{eqn:divisibility3}
\mathrm{char}_\cH\Bigg(\frac{e_{\omega^{j}}H^1(\QQ,\TT)}{e_{\omega^{j}}(\BF^{\clubsuit}_{1,\chi},\BF^{\spadesuit}_{1,\chi})}\Bigg) = \mathcal{D}^{-2}\mathcal{E}_{1}\mathcal{E}_{2}\,\mathrm{det}\circ\col(\spadesuit,\clubsuit).
\end{equation}
Combining (\ref{eqn:divisibility1}), (\ref{eqn:divisibility2}) and (\ref{eqn:divisibility3}), we deduce the stated divisibility
\[\mathcal{D}\cdot\mathrm{char}_\cH\left(e_{\omega^{j}}H^{2}(\QQ,\TT)\right)\,\,\big{|}\,\, \mathcal{E}_{1}\cdot{\rm char}_\cH\left(e_{\omega^{j}}{\rm coker}(\col^{\clubsuit})\right).\]
\end{proof}
We will now use the bounds for the characteristic ideal of $e_{\omega^{j}}H^{2}(\QQ,\TT)$ obtained in Proposition~\ref{prop:boundsforH2} to bound characteristic ideals of  {analytic Selmer groups}.
\begin{defn}
Let $\frak{a},\frak{b}_1,\frak{b}_2\in \mathcal{H}\setminus\{0\}$ be elements satisfying
\begin{align*}
 \frak{a}\cdot\BF_{1,\chi}^{\lambda,\lambda}& =\frak{b}_1\,(\cL_{\lambda,\lambda}^{(1)}\circ\mathrm{res}_{p}(\mathfrak{c}_{1})\mathfrak{c}_{2} - \cL_{\lambda,\lambda}^{(1)}\circ\mathrm{res}_{p}(\mathfrak{c}_{2})\mathfrak{c}_{1})  \\
  \frak{a}\cdot\BF_{1,\chi}^{\lambda,-\lambda} &=\frak{b}_2\,(\cL^{(1)}_{\lambda,-\lambda}\circ\mathrm{res}_{p}(\mathfrak{c}_{1})\mathfrak{c}_{2} - \cL^{(1)}_{\lambda,-\lambda}\circ\mathrm{res}_{p}(\mathfrak{c}_{2})\mathfrak{c}_{1})\,.  
\end{align*}
\begin{lemma}
\label{lemma_helpfulsymmetriesofcorrectionterms}
$\mathcal{E}_1=\mathcal{E}_2$ and $\frak{b}_1=\frak{b}_2$. Moreover, $\frak{a}\mathcal{E}_1=\frak{b}_1\mathcal{D}$.
\end{lemma}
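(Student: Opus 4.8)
\textbf{Proof of Lemma~\ref{lemma_helpfulsymmetriesofcorrectionterms}.}
The plan is to deduce all three assertions from the symmetry relations among the Beilinson--Flach classes and their images under the Perrin--Riou maps, combined with the uniqueness of the elements introduced in Definitions~\ref{defn:lambdaconstants} and after Theorem~\ref{thm:analyticselmervsrankreductionmap}. First I would record that, by Proposition~\ref{prop:thedichotomy} and Remark~\ref{rem:sym2orsymmetricproductdoesntmatter}, we have $\BF_{1,\chi}^{\lambda,\mu}=\BF_{1,\chi}^{\mu,\lambda}$ and $\mathscr{L}_{\lambda,\mu}\circ\res_p=\mathscr{L}_{\mu,\lambda}\circ\res_p$ on the semilocal cohomology for $V$; this is exactly the input that forced $d_\eta^{\lambda,\mu}=d_\eta^{\mu,\lambda}$ in Proposition~\ref{prop:loeffler}. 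The corresponding statement for the Coleman maps is the content of the factorisation in Lemma~\ref{lem:factorisation}: each $\col^\clubsuit$ is, up to the invertible logarithmic factor $\log_p^\clubsuit$, an explicit $\LL_\cO(\Gamma)$-linear combination of the $\cL_{\lambda,\mu,F}$, and the matrix appearing there is symmetric in the obvious way under the involutions $\lambda\mapsto-\lambda$, $\mu\mapsto-\mu$.

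The equality $\mathcal{E}_1=\mathcal{E}_2$ I would extract from Proposition~\ref{prop_antisymofcolBF}, which gives $e_{\omega^j}\col^{\clubsuit}\circ\res_p(\BF^{\spadesuit}_{1,\chi})=-e_{\omega^j}\col^{\spadesuit}\circ\res_p(\BF^{\clubsuit}_{1,\chi})$. Feeding the two defining relations of Definition~\ref{defn:lambdaconstants} into the two sides of this antisymmetry, and using that $\det\circ\col(\spadesuit,\clubsuit)=-\det\circ\col(\clubsuit,\spadesuit)$, the factor $\mathcal{D}^{-1}$ and the common determinant cancel, leaving $\mathcal{E}_1=\mathcal{E}_2$; here one uses that these determinants are non-zero (they are, since $e_{\omega^j}\mathcal{L}_{\mathfrak{S}}\neq 0$ by the choice of $\mathfrak{S}$) to divide. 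The equality $\frak{b}_1=\frak{b}_2$ is the same argument carried out one level up, before applying $\col^\clubsuit$: one uses the anti-symmetry relation \eqref{eqn:antisymmetryrelation} (in its Iwasawa-theoretic incarnation, i.e. the explicit reciprocity law used in Proposition~\ref{prop:loeffler}, equation~\eqref{eqn:Loefflerstep1}), which relates $\mathscr{L}_{\lambda,-\lambda}(\BF^{\lambda,\lambda}_{1,\chi})$ to $-\mathscr{L}_{\lambda,\lambda}(\BF^{\lambda,-\lambda}_{1,\chi})$, together with the description of $\pr_{\lambda,\mu}(H^1_\Iw(\QQ,V)\otimes\cH)$ from \eqref{eqn:containmentsforanalyticselmerotsukireductions}; the determinant factors match after swapping $(\lambda,\lambda)\leftrightarrow(\lambda,-\lambda)$, forcing $\frak{b}_1=\frak{b}_2$.

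Finally, the relation $\frak{a}\mathcal{E}_1=\frak{b}_1\mathcal{D}$ I would obtain by comparing the two presentations of a single Beilinson--Flach class: applying $\col^\clubsuit\circ\res_p$ to the relation $\frak{a}\cdot\BF^{\clubsuit}_{1,\chi}=\cdots$ (obtained from the $\frak{a},\frak{b}$-relations by the same $4\times 4$ change of basis as in \eqref{eq:BFfactorisation}, dividing by the appropriate $\log_p^\clubsuit$) and to the relation $\mathcal{D}\cdot\BF^{\clubsuit}_{1,\chi}=\mathcal{E}_1(\cdots)$ from Definition~\ref{defn:lambdaconstants}, and then equating; both sides are now scalar multiples in $\cH$ of the common nonzero element $\col^{\clubsuit}\circ\res_p(\mathfrak{c}_1)\mathfrak{c}_2-\col^{\clubsuit}\circ\res_p(\mathfrak{c}_2)\mathfrak{c}_1$, so one may cancel it and read off $\frak{a}\mathcal{E}_1=\frak{b}_1\mathcal{D}$ (here one needs that $\col^{\clubsuit}$ converts the $\cL_{\lambda,\mu,F}$-combination into $\log_p^\clubsuit\col^\clubsuit$ exactly, which is Lemma~\ref{lem:factorisation} together with Definition~\ref{defn:signedcolemanmaps}). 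The main obstacle I anticipate is bookkeeping: keeping track of the signs and the logarithmic normalising factors $\log_p^\clubsuit$ when passing between the $\{\pm\alpha,\pm\alpha\}$-indexed classes, the signed classes $\BF^\clubsuit$, and the Coleman maps, so that the cancellations genuinely produce equalities in $\cH$ (or $\LL_\cO(\Gamma_1)$) rather than merely up to a unit; the non-vanishing of the relevant determinants, guaranteed by Proposition~\ref{prop:nonzerop-adicLfunction} and the choice of $\mathfrak{S}$, is what licenses every cancellation.
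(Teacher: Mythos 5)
Your treatment of the first two assertions matches the paper: $\mathcal{E}_1=\mathcal{E}_2$ is extracted from Proposition~\ref{prop_antisymofcolBF} via the determinant antisymmetry exactly as the authors do, and for $\frak{b}_1=\frak{b}_2$ you re-run the explicit reciprocity argument rather than simply citing Proposition~\ref{prop:loeffler}, but the content is identical. So the first two thirds of the proof are fine.

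For the third assertion your high-level idea (compare the $\frak{a}$-presentation of $\BF^{\clubsuit}_{1,\chi}$ obtained via the $4\times4$ change of basis with the $\mathcal{D}$-presentation from Definition~\ref{defn:lambdaconstants}, and cancel a common factor) is what the paper's terse invocation of Theorem~\ref{thm:analyticselmervsrankreductionmap} with $r=1$ is gesturing at. However, the step you actually describe — \emph{``applying $\col^{\clubsuit}\circ\res_p$''} to both relations and then equating — cannot work as written. By Proposition~\ref{PROP_PositionofSignedBF} one has $\col^{\clubsuit}\circ\res_p(\BF^{\clubsuit}_{1,\chi})=0$, and likewise $\col^{\clubsuit}(\col^{\clubsuit}(\mathfrak{c}_1)\mathfrak{c}_2-\col^{\clubsuit}(\mathfrak{c}_2)\mathfrak{c}_1)=0$; so applying $\col^{\clubsuit}\circ\res_p$ annihilates \emph{both} sides of \emph{both} relations, leaving the vacuous identity $0=0$. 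Moreover, you describe the result as being a scalar multiple of the element $\col^{\clubsuit}(\mathfrak{c}_1)\mathfrak{c}_2-\col^{\clubsuit}(\mathfrak{c}_2)\mathfrak{c}_1$, which lives in $H^1(\QQ,\TT)\otimes\cH$, not in $\cH$ — this is incompatible with having just applied $\col^{\clubsuit}$. The correct move is to compare the two presentations of $\BF^{\clubsuit}_{1,\chi}$ directly inside the torsion-free module $H^1(\QQ,\TT)\otimes\cH$ (this is where Theorem~\ref{thm:mainSelmerstructure} and the non-vanishing of $\BF^{\clubsuit}_{1,\chi}$ enter), without post-composing with any Coleman map. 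You correctly flag the sign/logarithm bookkeeping — in particular the fact that the first row of the $4\times4$ matrix produces $\log^{+,(1)}_{p,2k+2}\BF^+_{1,\chi}$ on the class side but $\log^{-,(1)}_{p,2k+2}\col^-$ on the regulator side (Corollary~\ref{cor:BFfactorisation} versus the display in Proposition~\ref{prop:nonzerop-adicLfunction}), an asymmetry traceable to the sign $\delta$ in Theorem~\ref{thm:positionofBFintheanalyticselmer}(1) — as an obstacle, but you do not resolve it, and your parenthetical claim that $\col^{\clubsuit}$ converts the $\cL_{\lambda,\mu}$-combination into $\log^{\clubsuit}_p\col^{\clubsuit}$ exactly is not literally what Lemma~\ref{lem:factorisation} gives. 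The cleanest repair within your own framework is to run the comparison first for $\clubsuit=\bullet$, where the change-of-basis coefficients and logarithmic factors agree on both sides, read off $\frak{a}\mathcal{E}_2=\frak{b}\mathcal{D}$ there, and then transport the identity via $\mathcal{E}_1=\mathcal{E}_2$.
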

\begin{proof}
The first equality follows from Proposition~\ref{prop_antisymofcolBF} and the second from Proposition~\ref{prop:loeffler}. The final assertion is immediate by the definitions of $\mathcal{D},\mathcal{E}_1, \frak{a},\frak{b}_1$ and Theorem~\ref{thm_analyticselmervsrankreductionmap} (applied with $r=1$).
\end{proof}
\end{defn}
\begin{proposition} 
\label{prop_analyticMCstep1}
We have the following divisibility of $\mathcal{H}$-ideals
\[ {\frak{b}\,\mathcal{D}}\,\mathrm{char}_{\mathcal{H}}\left(e_{\omega^{j}}\widetilde{H}^2_{\Iw}(\QQ,V,\mathbb{D}_{\chi}^{\lambda,\lambda})\right) \,\,\, \Big{\vert}\,\,\, {\frak{a}\,\mathcal{E}_{1}}\,\mathrm{char}_\cH(\mathrm{coker}\col^{\clubsuit})\,\mathrm{char}_\cH\left(\frac{e_{\omega^{j}}\widetilde{H}^1_{\Iw}(\QQ,V,\mathbb{D}_{\chi}^{\lambda,\lambda})}{\mathcal{H}\cdot e_{\omega^{j}} \BF^{\lambda,\lambda}_{1,\chi}} \right)\,. \]
\end{proposition}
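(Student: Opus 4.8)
The plan is to relate the three $\mathcal{H}$-modules appearing in the statement via the Poitou--Tate global duality sequence for the Selmer complex attached to $\mathbb{D}^{\lambda,\lambda}_\chi$, and then to substitute the bounds we have already obtained. First I would record the long exact sequence coming from the definition of the analytic Selmer complex $S^\bullet_\Iw(V,\mathbb{D}^{\lambda,\lambda}_\chi)$ as a mapping cone (as in \S\ref{subsec_AnaylticMainConj}): writing $\mathbb{D}^{\lambda,\lambda}_\chi$ for the regular submodule under consideration, there is an exact sequence of $\mathcal{H}$-modules
\[
0\lra \frac{e_{\omega^j}H^1_\Iw(\QQ,V)\otimes\cH}{e_{\omega^j}\widetilde{H}^1_\Iw(\QQ,V,\mathbb{D}^{\lambda,\lambda}_\chi)}\stackrel{\res_p}{\lra} e_{\omega^j}H^1_\Iw(\QQ_p,D^\dagger_{\textup{rig}}(V)/\mathbb{D}^{\lambda,\lambda}_\chi)\lra e_{\omega^j}\widetilde{H}^2_\Iw(\QQ,V,\mathbb{D}^{\lambda,\lambda}_\chi)\lra 0\,,
\]
the final zero coming from the vanishing of $\widetilde{H}^2$ of the sub-$\circ$-complex together with the argument in the proof of Proposition~\ref{prop:imageofperrinrioumapontheglobalcohomology}; here the quotient on the left is torsion-free by Theorem~\ref{thm:mainSelmerstructure}, and the middle term is projective of rank one by Lemma~\ref{lem:structureoflocalcohomologygroupsIwasawa}. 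Passing to characteristic ideals, $\mathrm{char}_\cH(e_{\omega^j}\widetilde{H}^2_\Iw(\QQ,V,\mathbb{D}^{\lambda,\lambda}_\chi))$ equals the characteristic ideal of the cokernel of $\res_p$ on that middle module. The key observation is that, under the Perrin-Riou map $\mathscr{L}_{\lambda,\lambda}$, which by \eqref{eqn:factorthePRmap} factors through $H^1_\Iw(\QQ_p,D^\dagger_{\textup{rig}}(V)/\mathbb{D}^{\lambda,\lambda}_\chi)$ and is surjective onto $\cH$, this cokernel gets identified (up to $\mathrm{char}_\cH(\mathrm{coker}\,\col^\clubsuit)$, accounting for the difference between $\mathscr{L}_{\lambda,\lambda}$ and the signed Coleman map normalisation after dividing by $\log^\clubsuit$) with the cokernel of the composite $\cH$-linear map $e_{\omega^j}H^1_\Iw(\QQ,V)\otimes\cH\to\cH$ sending $\mathfrak{c}_i$ to $\mathscr{L}_{\lambda,\lambda}\circ\res_p(\mathfrak{c}_i)$.

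Next I would invoke the rank-reduction identity of Theorem~\ref{thm:analyticselmervsrankreductionmap} (applied with $r=1$), which gives $\frak{a}\cdot\BF^{\lambda,\lambda}_{1,\chi}=\frak{b}_1(\cL_{\lambda,\lambda}\circ\res_p(\mathfrak{c}_1)\mathfrak{c}_2-\cL_{\lambda,\lambda}\circ\res_p(\mathfrak{c}_2)\mathfrak{c}_1)$, i.e. $\frak{a}\,\BF^{\lambda,\lambda}_{1,\chi}=\frak{b}_1\,\pr_{\lambda,\lambda}(\mathfrak{c}_1\wedge\mathfrak{c}_2)$. This expresses $\BF^{\lambda,\lambda}_{1,\chi}$ as $(\frak{b}_1/\frak{a})$ times a generator of the image $\pr_{\lambda,\lambda}(\wedge^2 H^1_\Iw)$, and by Theorem~\ref{thm:analyticselmervsrankreductionmap} that image sits inside $e_{\omega^j}\widetilde{H}^1_\Iw(\QQ,V,\mathbb{D}^{\lambda,\lambda}_\chi)$, which is a rank-one saturated submodule of $H^1_\Iw(\QQ,V)\otimes\cH$. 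Consequently the quotient $e_{\omega^j}\widetilde{H}^1_\Iw(\QQ,V,\mathbb{D}^{\lambda,\lambda}_\chi)/(\cH\cdot e_{\omega^j}\BF^{\lambda,\lambda}_{1,\chi})$ has characteristic ideal generated, up to a unit, by $(\frak{b}_1/\frak{a})$ times the index of $\pr_{\lambda,\lambda}(\wedge^2 H^1_\Iw)$ inside $\widetilde{H}^1_\Iw(\QQ,V,\mathbb{D}^{\lambda,\lambda}_\chi)$; but that latter index is precisely $\mathrm{char}_\cH$ of $\widetilde{H}^2_\Iw(\QQ,V,\mathbb{D}^{\lambda,\lambda}_\chi)$ again (this is the same determinant-of-Perrin-Riou-maps computation as in Proposition~\ref{prop:boundsforH2}, transported to the analytic setting). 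Combining the two displays, clearing the denominators $\frak{a},\frak{b}_1$, and folding in the factor $\mathrm{char}_\cH(\mathrm{coker}\,\col^\clubsuit)$ from the first paragraph yields exactly the asserted divisibility; the appearance of $\mathcal{D}$ and $\mathcal{E}_1$ rather than $\frak{a}$ and $\frak{b}_1$ is then accounted for by Lemma~\ref{lemma_helpfulsymmetriesofcorrectionterms}, which gives $\frak{a}\mathcal{E}_1=\frak{b}_1\mathcal{D}$ and lets us rewrite $\frak{b}_1/\frak{a}=\mathcal{E}_1/\mathcal{D}$.

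The main obstacle I anticipate is bookkeeping the various correction terms precisely: one must carefully track how the signed Coleman map $\col^\clubsuit$ (a quotient of $\cL_{\clubsuit}$ by $\log^\clubsuit$, landing after choosing a Dieudonn\'e basis as in Definition~\ref{def:thePRmapsforthesymmetricproduct}) compares with the abstract Perrin-Riou functional $\mathscr{L}_{\lambda,\lambda}$ governing the analytic Selmer complex for $\mathbb{D}^{\lambda,\lambda}_\chi$, and to check that the $\mathrm{coker}(\col^\clubsuit)$ term indeed captures exactly this discrepancy and no more (in particular that no extra $\log$-factors from $\log^\clubsuit$ survive, given that $h_\eta$ is prime to $\log_{2k+3}/\log_{k+2}$ by Corollary~\ref{cor:nondivisibilityofthecorrecttwist}). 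A secondary point requiring care is to ensure all the modules in play are genuinely torsion (so that their characteristic ideals are defined in the sense of \cite[\S1]{jaycyclo}): torsionness of $e_{\omega^j}\widetilde{H}^2_\Iw(\QQ,V,\mathbb{D}^{\lambda,\lambda}_\chi)$ follows from the non-triviality of $\BF^{\lambda,\lambda}_{1,\chi}$ established in Corollary~\ref{cor:nontriviality} together with the exact sequence above, exactly as in the proof that $\widetilde{H}^2_\Iw(\QQ(\mu_{p^n}),\mathscr{V}_\psi,\mathbb{D}^+_\psi)$ is torsion. Once these normalisation issues are settled, the divisibility is a formal consequence of the exact sequences and the rank-reduction identity, in the spirit of \cite[Proposition 7.4.4]{BLLV}.
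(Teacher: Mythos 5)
Your argument has a genuine gap rooted in the very first exact sequence, and this propagates through the rest of the sketch.

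The mapping cone defining the analytic Selmer complex does \emph{not} yield the three-term sequence with a trailing zero that you wrote. The correct Poitou--Tate sequence (cf.\ the paper's display, and its antecedent \eqref{eqn:theselmercomlexseqforPRmaps}) continues past $\widetilde H^2_\Iw$ to $e_{\omega^j}H^2_\Iw(\QQ,V)\otimes\cH$: after modding out by the two Beilinson--Flach classes one gets a genuine \emph{five}-term sequence
\[
0\to \frac{e_{\omega^j}\widetilde H^1_\Iw}{\cH\cdot\BF^{\lambda,\lambda}}\to \frac{e_{\omega^j}H^1_\Iw\otimes\cH}{(\BF^{\lambda,\lambda},\BF^{\lambda,-\lambda})}\to \frac{e_{\omega^j}H^1_{/\lambda,\lambda}}{\res_{/\lambda,\lambda}(\BF^{\lambda,-\lambda})}\to e_{\omega^j}\widetilde H^2_\Iw\to e_{\omega^j}H^2_\Iw(\QQ,V)\otimes\cH\to 0,
\]
and the $H^2$ term at the end is precisely where the $\mathrm{coker}(\col^{\clubsuit})$ factor in the statement enters, via Proposition~\ref{prop:boundsforH2} (which bounds $\mathcal{D}\cdot\mathrm{char}(e_{\omega^j}H^2(\QQ,\TT))$ by $\mathcal{E}_1\cdot\mathrm{char}(\mathrm{coker}\,\col^{\clubsuit})$). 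Your justification for the trailing zero --- ``vanishing of $\widetilde H^2$ of the sub-$\circ$-complex'' --- is a statement about the specialisations $\widetilde H^2(\QQ(\mu_{p^n}),-)$ (Corollary~\ref{for:trivialityofthecorrectselmersubquotient}), which by control only ensures $\widetilde H^2_\Iw$ is \emph{torsion}, not that the boundary map to $H^2_\Iw(\QQ,V)\otimes\cH$ is zero. Since $\mathrm{char}_\cH(e_{\omega^j}H^2(\QQ,\TT))$ is not in general trivial, this is a real, not merely cosmetic, omission.

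Two further consequences of this gap. First, your claim that the cokernel of $\res_p$ ``gets identified up to $\mathrm{char}_\cH(\mathrm{coker}\,\col^{\clubsuit})$'' with the cokernel of $\mathscr{L}_{\lambda,\lambda}\circ\res_p$ misattributes the provenance of the $\mathrm{coker}(\col^{\clubsuit})$ factor: it is not a normalisation discrepancy between the Perrin--Riou functional $\mathscr{L}_{\lambda,\lambda}$ and the signed Coleman map, but rather the Euler-system error term produced by \eqref{eqn:ESmachinery} inside Proposition~\ref{prop:boundsforH2}. The surjectivity of $\cL_{\lambda,\lambda}$ already handles the identification of the local quotient with $\cH$ exactly, with no $\log$- or $\mathrm{coker}$-correction. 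Second, the assertion in your second paragraph that the index of $\pr_{\lambda,\lambda}(\wedge^2 H^1_\Iw)$ inside $\widetilde H^1_\Iw$ is ``precisely $\mathrm{char}_\cH(\widetilde H^2_\Iw)$'' is also off by the same $H^2$ term: that index is the characteristic ideal of the cokernel of $\res_p$, i.e.\ of $\ker(\widetilde H^2_\Iw\to H^2_\Iw)$, not of $\widetilde H^2_\Iw$ itself. The fix is to keep the full five-term sequence, invoke surjectivity of $\cL_{\lambda,\lambda}$ to control the third term, and then input Proposition~\ref{prop:boundsforH2} to control the fifth --- that combination is the paper's argument and is genuinely needed here.
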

\begin{proof}
 The proof follows very closely that of \cite[Proposition 7.4.6]{BLLV}. Note that we have the following five term exact sequence of $\mathcal{H}$-modules
\begin{multline*} 
0\longrightarrow\frac{e_{\omega^{j}}\widetilde{H}^1_{\Iw}(\QQ,V,\mathbb{D}_{\chi}^{\lambda,\lambda})}{\mathcal{H}\cdot e_{\omega^{j}}\BF^{\lambda,\lambda}_{1,\chi}}\longrightarrow\frac{e_{\omega^{j}}H^1_{\Iw}(\QQ,V)\otimes\mathcal{H}}{\mathcal{H}\cdot e_{\omega^{j}}\BF_{1,\chi}^{\lambda,\lambda}+\mathcal{H}\cdot e_{\omega^{j}}\BF_{1,\chi}^{\lambda,-\lambda}}\longrightarrow\frac{e_{\omega^{j}}H^1_{/\lambda,\lambda}(\QQ_p,V)}{\mathrm{res}_{/\lambda,\lambda}(e_{\omega^{j}}\BF_{1,\chi}^{\lambda,-\lambda})} \\
\longrightarrow e_{\omega^j}\widetilde{H}^2_{\Iw}(\QQ,V,\mathbb{D}_{\chi}^{\lambda,\lambda}) \longrightarrow e_{\omega^j}H^2_{\Iw}(\QQ,V)\otimes_{\Lambda}\mathcal{H} \longrightarrow 0,
\end{multline*} 
where $H^1_{/\lambda,\lambda}(\QQ_p,V) := H^{1}_{\Iw}(\QQ_{p},V)\otimes\mathcal{H}(\Gamma)\big{/}H^1_{\Iw}(\QQ_p,\mathbb{D}_{\chi}^{\lambda,\lambda})$ and $\mathrm{res}_{/\lambda,\lambda}$ is the composition
\[ \mathrm{res}_{/\lambda,\lambda} : H^{1}_{\Iw}(\QQ,V)\otimes\mathcal{H} \xrightarrow{\mathrm{res}_{p}} H^{1}_{\Iw}(\QQ_p,V)\otimes_{\Lambda}\mathcal{H}(\Gamma) \twoheadrightarrow H^1_{/\lambda,\lambda}(\QQ_p,V)\,. \]
By Proposition~\ref{prop:boundsforH2} and the surjectivity of $\cL_{\lambda,\lambda}^{(1)}:H^1_{/\lambda,\lambda}(\QQ_p,V)\lra\mathcal{H}$,
the proof follows.
\end{proof}
We finally conclude with the following divisibility towards analytic main conjectures, which is Theorem~\ref{thmC} in the introduction.
\begin{theorem} \label{thm_analyticmainconjecture} 
In the setting of Theorem~\ref{thm_signedmainconjecture}, we have
\[ \mathrm{char}_{\mathcal{H}}\,e_{\omega^j}\widetilde{H}^2_{\Iw}(\QQ,V,\mathbb{D}_{\chi}^{\lambda})\,\,\, \big{\vert}\,\, \mathrm{char}_\cH(e_{\omega^j}\mathrm{coker}\col^{\clubsuit})\,\, e_{\omega^j}L_{p,NN_{\chi}}(\chi^{-1}\epsilon_f) \,\,L_{p}^{\mathrm{geom}}(\Sym^{2}f_{\lambda}\otimes\chi^{-1})\]
as ideals of $\mathcal{H}$.
\end{theorem}
\begin{proof}
We start off with the following four-term exact sequence induced by the definition of corresponding Selmer complexes:
\begin{multline} \label{eqn:analyticexactsequence2}
0  \lra \frac{e_{\omega^j}\widetilde{H}^{1}_{\Iw}(\QQ,V,\mathbb{D}_{\chi}^{\lambda,\lambda})}{\mathcal{H}\cdot e_{\omega^j}\BF_{1,\chi}^{\lambda,\lambda}}  \longrightarrow \frac{e_{\omega^j}H^{1}_{\Iw}(\QQ_{p},\mathbb{D}_{\chi}^{\lambda,\lambda})/H^{1}_{\Iw}(\QQ_p,\mathbb{D}_{\chi}^{\lambda})}{\cH\cdot e_{\omega^j}\mathrm{res}_{p}^{\rm s}(\BF_{1,\chi}^{\lambda,\lambda})}\\ \longrightarrow e_{\omega^j} \widetilde{H}^{2}_{\Iw}(\QQ,V,\mathbb{D}_{\chi}^{\lambda}) \longrightarrow  e_{\omega^j}\widetilde{H}^{2}_{\Iw}(\QQ,V,\mathbb{D}_{\chi}^{\lambda,\lambda}) \lra 0 
\end{multline}
where $\mathrm{res}_{p}^{\rm s}$ is the composition of the arrows
\[ \widetilde{H}^{1}_{\Iw}(\QQ,V,\mathbb{D}_{\chi}^{\lambda,\lambda}) \longrightarrow H^{1}_{\Iw}(\QQ_{p},\mathbb{D}_{\chi}^{\lambda,\lambda}) \twoheadrightarrow H^{1}_{\Iw}(\QQ_{p},\mathbb{D}_{\chi}^{\lambda,\lambda})/H^{1}_{\Iw}(\QQ_p,\mathbb{D}_{\chi}^{\lambda})\,.\] 
We note that the first injection in (\ref{eqn:analyticexactsequence2}) is a special case of Proposition~\ref{prop_globallocalseqandcomparisonwithBK}(iii) (which tells us that $\widetilde{H}^{1}_{\Iw}(\QQ,V,\mathbb{D}_{\chi}^{\lambda})=0$). The asserted divisibility now follows on combining Proposition~\ref{prop_analyticMCstep1} and the last identity of Lemma~\ref{lemma_helpfulsymmetriesofcorrectionterms}, together with the definition of the geometric $p$-adic $L$-function.
\end{proof}
\bibliographystyle{amsalpha}
\bibliography{references}
\end{document}